\documentclass[11pt]{article}

\usepackage{amsmath,amssymb,amsthm,hyperref,color}
\usepackage{enumerate}
\usepackage{verbatim}
\usepackage{bookmark}

\usepackage[T1]{fontenc}
\usepackage{libertine} 
\usepackage[libertine]{newtxmath}
\usepackage{tikz}
\tikzset{overlap/.style={fill=yellow!30},
    block wave/.style={thick},
    major tick/.style={semithick},
    axis label/.style={anchor=west},
    x tick label/.style={anchor=north, minimum width=7mm},
    y tick label/.style={anchor=east},
}
\usetikzlibrary{decorations.pathreplacing}
\usepackage{subcaption}

\hypersetup{colorlinks=true,citecolor=blue, linkcolor=blue, urlcolor=blue}{\tiny }
\usepackage[margin = 1in]{geometry}


\newtheorem{lemma}{Lemma}[section]
\newtheorem{theorem}[lemma]{Theorem}

\newtheorem{claim}[lemma]{Claim}
\newtheorem{fact}[lemma]{Fact}

\newtheorem{corollary}[lemma]{Corollary}

\theoremstyle{remark}
\newtheorem{remark}{Remark}
\theoremstyle{definition}

\DeclareMathOperator*{\E}{{\rm E}}
\DeclareMathOperator*{\var}{{\rm Var}}
\DeclareMathOperator*{\cov}{{\rm Cov}}

\usepackage{bbm}

\DeclareMathOperator*{\e}{{\rm e}}

\def\la{\zeta}
\def\ls{\zeta_{\textsc{l}}}
\def\lS{\zeta_{\textsc{r}}}
\def\lc{\zeta_{\textnormal{\textsc{cr}}}}
\newcommand{\taumix}{$\tau_{\rm mix}$}
\newcommand{\taumixCM}{\taumix^{\mathrm{CM}}}
\newcommand{\taumixGD}{\taumix^{\mathrm{GD}}}
\def\gs{\mathrm{gap}}
\def\gsm{\gs^{-1}}

\def\taumix{\tau_{\rm mix}}
\def\Tcoup{T_{\rm coup}}

\def\on{\omega (n)}

\renewcommand{\epsilon}{\varepsilon}

\newcommand{\Z}{\mathbb{Z}}
\newcommand{\R}{\mathbb{R}}
\newcommand{\N}{\mathbb{N}}

\newcommand{\GD}{\mathcal{M}}

\hyphenation{generali-za-tion}

\author{
	Antonio Blanca\thanks{Pennsylvania State University.
		Email: ablanca@cse.psu.edu.
		Research supported in part by NSF grant CCF-1850443.}
	\and
	Alistair Sinclair\thanks{UC Berkeley.
		Email: sinclair@cs.berkeley.edu.
		Research supported in part by NSF grant CCF-1815328.}
	\and
	Xusheng Zhang\thanks{Pennsylvania State University.
		Email: xzz5349@psu.edu.
		Research supported in part by NSF grant CCF-1850443.}
}
\date{\today}

\begin{document}

\title{The Critical Mean-field Chayes-Machta Dynamics}
\maketitle

\begin{abstract}
	The random-cluster model is a unifying framework for studying random graphs, spin systems and electrical networks that
	plays a fundamental role in designing efficient Markov Chain Monte Carlo (MCMC) sampling algorithms for the classical ferromagnetic Ising and Potts models. 
	
	In this paper, we study a natural non-local Markov chain known as the {\it Chayes-Machta dynamics\/} for the mean-field case of the random-cluster model, where the underlying graph is the complete graph on $n$ vertices.
	The random-cluster model is parametrized by an {\it edge probability}~$p$ and a {\it cluster weight}~$q$. Our focus is on the critical regime: $p = p_c(q)$ and $q \in (1,2)$, where $p_c(q)$ is the threshold corresponding to the order-disorder phase transition of the model. We show that the mixing time of the Chayes-Machta dynamics is $O(\log n \cdot  \log \log n)$ in this parameter regime, which reveals that the dynamics does not undergo an exponential slowdown at criticality, a surprising fact that had been predicted (but not proved) by statistical physicists. 
	This also provides a nearly optimal bound (up to the $\log\log n$ factor) for the mixing time of the mean-field Chayes-Machta dynamics in the only regime of parameters where no non-trivial bound was previously known. Our proof consists of a multi-phased coupling argument that combines several key ingredients, including a new local limit theorem,
	a precise bound on the maximum of symmetric random walks with varying step sizes, and tailored estimates for critical random graphs.

	In addition, we derive an improved comparison inequality between the mixing time of the Chayes-Machta dynamics and that of the local Glauber dynamics on general graphs; this results in better mixing time bounds for the local dynamics in the mean-field setting.
\end{abstract}

\thispagestyle{empty}

\newpage

\setcounter{page}{1}

\section{Introduction}

The \emph{random-cluster model} generalizes classical random graph and spin system models, providing a unifying framework for their study~\cite{FK}.
It plays an indispensable role in the design of efficient Markov Chain Monte Carlo (MCMC) sampling algorithms for the ferromagnetic Ising/Potts model~\cite{Ullrich2,BSz2,GuoJ} and has become a fundamental tool in the study of phase transitions~\cite{BDC,DCST,DGHMT}.


The random-cluster model is defined on a finite graph~$G=(V,E)$ with an edge probability parameter
$p\in(0,1)$ and a cluster weight $q>0$.
The set of \textit{configurations} of the model is the set of all subsets of edges $A \subseteq E$. The probability of each configuration $A$ is given by the Gibbs distribution:
\begin{equation}\label{eq:rcmeasure}
\mu_{G,p,q}(A) = \frac{1}{Z} \cdot p^{|A|}(1-p)^{|E|-|A|} q^{c(A)}; 
\end{equation}
where $c(A)$ is the number of connected components in~$(V,A)$ and $Z:=Z(G,p,q)$ is the normalizing factor called the \textit{partition function.}

The special case when $q=1$ corresponds to the independent bond percolation model, where each edge of the graph $G$ appears independently with probability $p$.
Independent bond percolation is also known as the Erd\H{o}s-R\'enyi random graph model when $G$ is the complete graph. 

For integer $q \ge 2$, the random-cluster model is closely related to 
the ferromagnetic $q$-state Potts model. Configurations in the $q$-state Potts model are the assignments of spin values 
$\{1,\dots,q\}$ to the vertices of $G$; the $q=2$ case corresponds to the Ising model. 
A sample $A \subseteq E$ from the random-cluster distribution can be easily transformed into one for the Ising/Potts model by independently assigning a random spin from $\{1,\dots,q\}$ to each connected component of $(V,A)$. 
Random-cluster based sampling algorithms, which include the widely-studied Swendsen-Wang dynamics~\cite{SW}, are an attractive alternative to Ising/Potts Markov chains 
since they are often efficient at ``low-temperatures'' (large $p$).
In this parameter regime, several standard Ising/Potts Markov chains are known to converge slowly.


In this paper we investigate the \textit{Chayes-Machta (CM) dynamics}~\cite{CM}, a natural
Markov chain on random-cluster configurations 
that converges to the random-cluster measure.
The CM dynamics is a generalization to non-integer values of~$q$ of the widely studied Swendsen-Wang dynamics~\cite{SW}.
As with all applications of the MCMC method, the primary object of study is the \textit{mixing time}, i.e., 
the number of steps until the dynamics is close to
its stationary distribution, starting from the worst possible initial configuration.
We are interested in understanding how the mixing time of the CM dynamics
grows as the size of the graph $G$ increases, and in particular how it relates to the phase
transition of the model.

Given a random-cluster configuration $(V,A)$, one step of the CM dynamics is defined as follows:
\begin{enumerate}[(i)]
	\setlength{\itemsep}{0pt}
	\item \label{CMdynamics1} activate each connected component of $(V,A)$ independently with probability $1/q$; 
	\item \label{CMdynamics2} remove all edges connecting active vertices; 
	\item \label{CMdynamics3} add each edge between active vertices independently with probability $p$, leaving the rest of the configuration unchanged.	
\end{enumerate} 
We call \eqref{CMdynamics1} the {\it activation\/} sub-step, and~\eqref{CMdynamics2} and \eqref{CMdynamics3} combined the {\it percolation\/} sub-step.
It is easy to check that this dynamics is reversible with respect to the Gibbs distribution~($\ref{eq:rcmeasure}$)
and thus converges to it~\cite{CM}. 
For integer $q$, the CM dynamics may be viewed as a variant of the Swendsen-Wang dynamics. 
In the Swendsen-Wang dynamics, each connected component of $(V,A)$ receives a random color from $\{1,\dots,q\}$, and the edges are updated within each color class as in \eqref{CMdynamics2} and \eqref{CMdynamics3} above; in contrast,
the CM dynamics updates the edges of exactly \emph{one} color class.
However, note that the Swendsen-Wang dynamics is only well-defined for integer $q$, while 
the CM dynamics is feasible for any real $q > 1$. Indeed, the CM dynamics
was introduced precisely to allow this generalization.

The study of the interplay between phase transitions and the mixing time of Markov chains goes back to pioneering work in mathematical physics in the late 1980s. This connection for the specific case of the CM dynamics on the complete $n$-vertex graph, known as the \emph{mean-field model},
has received some attention in recent years (see~\cite{BSmf,GSVmf,GLP}) and is the focus of this paper. 
As we shall see, the mean-field case is already quite non-trivial, and has historically proven to be a useful starting point in understanding various types of dynamics on more general graphs.
We note that, so far, the mean-field is the only setting in which there are tight mixing time bounds for the CM dynamics; 
all other known bounds 
are deduced indirectly via comparison with other Markov chains,
thus incurring significant overhead~\cite{BSz2,BGV,GLrc,BG,Ullrich2,BSmf}.

The phase transition for the mean-field random-cluster
model is fairly well-understood~\cite{BGJ,LL}. In this setting, it is natural to re-parameterize by setting $p=\la/n$; the phase transition then
occurs at the \emph{critical} value $\la = \lc(q)$, where $\lc(q)=q$ when $q \in (0,2]$
and $\lc(q)=2(\frac{q-1}{q-2})\log(q-1)$ for $q>2$.
For $\la <\lc(q)$ all components are of size $O(\log n)$ with high probability (w.h.p.); that is, with probability tending to $1$ as $n \rightarrow \infty$. 
This regime is known as the \emph{disordered phase}.
On the other hand, for
$\la>\lc(q)$ there is a unique giant component of size $\approx \theta n$, 
where $\theta = \theta(\la,q)$;
this regime of parameters is known as the \emph{ordered phase}.
The phase transition is thus analogous to that in $G(n,p)$ corresponding to the emergence of a giant component.

The phase structure of the mean-field random-cluster model, however, is more subtle and depends crucially on the second parameter $q$. 
In particular, when $q>2$ the model exhibits \emph{phase coexistence} at the critical threshold $\la = \lc(q)$. Roughly speaking, this means that when $\la = \lc(q)$, the set of configurations with all connected components of size $O(\log n)$,
and set of configurations with a unique giant component, contribute each a constant fraction of the probability mass. For $q \le 2$, on the other hand, there is no phase coexistence.
These subtleties are illustrated in Figure~\ref{fig:pt}.

\begin{figure}
\begin{subfigure}[b]{.6\linewidth}
	\begin{tikzpicture}
		\draw[->] (-3.3, 0) -- (3.3, 0)
			node[anchor=north] {$\la$};
	
		 \foreach \x in {-3.3,...,3.3} {
			 \draw (\x, 0) -- (\x, -0.8pt);
		 }
	
		\node[x tick label] at (-1.2, 0) {$\ls$};
		\draw[major tick] (-1.2, 0) -- (-1.2, -1.25pt);
		\node[x tick label] at (0, 0) {$\lc$};
		\draw[major tick] (0, 0) -- (0, -1.25pt);
	
		\node[x tick label] at (1.2, 0) {$\lS$};
		\draw[major tick] (1.2, 0) -- (1.2, -1.25pt);
	
		\draw[thick] (1.2, 0) -- (1.2, 1);
		\draw[thick,->] (1.2, 1) -- (3.3, 1) 
			node[anchor=north,text width=2cm,align=center] {Ordered phase};
	
		\draw[thick] (-1.2, 0) -- (-1.2, 1);
		\draw[thick,->] (-1.2, 1) -- (-3.3, 1)
			node[anchor=north,text width=2cm,align=center] {Disordered phase};

		\draw[thick,->] (0,0) -- (0,1.6)
			node[anchor=south] {Phase coexistence};
	
		\draw [decorate,decoration={brace,amplitude=5pt,mirror,raise=4ex}]
			  (-1.2,0) -- (1.2,0) node[midway,yshift=-3em]{Metastability window};	
	\end{tikzpicture}
	\caption{$q > 2$}
\end{subfigure}
\begin{subfigure}[b]{.38\linewidth}
	\begin{tikzpicture}

		\draw[->] (-2, 0) -- (2, 0)
			node[anchor=north] {$\la$};
	
		 \foreach \x in {-2,...,2} {
			 \draw (\x, 0) -- (\x, -0.8pt);
		 }

		 \node[x tick label] at (0, 0) {$\lc$};
		 \draw[major tick] (0, 0) -- (0, -1.25pt);

		\draw[thick] (0, 0) -- (0, 1);
		\draw[thick,->] (0, 1) -- (2, 1) 
			node[anchor=north,text width=2cm,align=center] {Ordered phase};
	
		\draw[thick] (0, 0) -- (0, 1);
		\draw[thick,->] (0, 1) -- (-2, 1)
			node[anchor=north,text width=2cm,align=center] {Disordered phase};
		
	\end{tikzpicture}
	\caption{$1 < q \le 2$}
\end{subfigure}
\caption{(a)): phase structure when $q>2$. (b): phase structure when $q\in(1,2]$.} 
\label{fig:pt}  
\end{figure}
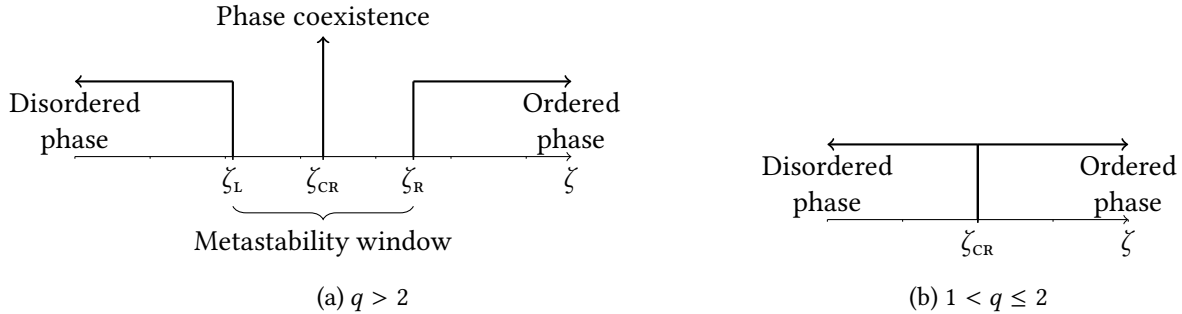

Phase coexistence at $\la =\lc(q)$ when $q > 2$ has significant implications for the speed of convergence of 
Markov chains, including the CM dynamics.
The following detailed connection between the phase structure of the model and the mixing time $\taumixCM$ of the CM dynamics was recently established in~\cite{BSmf,ABthesis,GLP}. 
When $q > 2$, we have:
\vspace{-2mm}
\begin{equation}
	\label{eq:mt-large-q}
	\taumixCM = 
	\begin{cases}
	\Theta(\log n) 			& \textrm{if}~\la \not\in [\ls,\lS); \\
	
	\Theta(n^{1/3})    			& \textrm{if}~\la = \ls; \\
	
	e^{\Omega({n})}    & \textrm{if}~\la \in (\ls,\lS),
	\end{cases}	
\end{equation}
where $(\ls,\lS)$ is the so-called \emph{metastability window}.
It is known that $\lS = q$, but $\ls$ does not have a closed form; see~\cite{BSmf,LL}; we note that $\lc(q) \in (\ls,\lS)$ for $q > 2$.

When $q \in (1,2]$, there is no 
metastability window,
and the mixing time of the mean-field CM dynamics is $\Theta(\log n)$ for all $\la \neq \lc(q)$.
In view of these results, the only case remaining open is when $q \in (1,2]$ and $\la = \lc(q)$.
Our main result shown below concerns precisely this regime, which is particularly delicate and had resisted analysis until now  for reasons we explain in our proof overview. 
\begin{theorem}
	\label{thm:critical:q<2:main}
	The mixing time of the CM dynamics 
	on the complete $n$-vertex graph
	when $\la=\lc(q) = q$ and $q \in (1,2)$ is $O(\log n \cdot \log \log n)$.
\end{theorem}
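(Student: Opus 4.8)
By the standard coupling bound for total-variation mixing, it suffices to construct a coupling $(X_t,Y_t)_{t\ge0}$ of two copies of the CM dynamics on $K_n$ with $p=q/n$, where $X_0$ is arbitrary and $Y_0\sim\mu:=\mu_{K_n,q/n,q}$, under which the copies have coalesced by time $T:=C\log n\log\log n$ with probability at least $3/4$. I would build this coupling in phases. The conceptual point that makes it possible is that $q<2$ rules out phase coexistence at criticality, so there is no metastable trap; the only obstruction is the \emph{size} of critical fluctuations. \emph{Phase 0 (burn-in to critical typicality).} First I would show that from any initial configuration, after $O(\log n)$ steps the configuration is with high probability \emph{critically typical}: largest component $O(n^{2/3}\log^{O(1)}n)$, susceptibility $\sum_i|C_i|^2=O(n^{4/3}\log^{O(1)}n)$, and a component-size spectrum matching, up to polylog factors, that of the near-critical random graph $G(m,q/n)$ with $m\approx n/q$. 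The mechanism is elementary: in the activation sub-step each component is destroyed independently with probability $1/q$, so within $O(\log n)$ steps every initial component has been destroyed at least once; once all the current large components have been destroyed, the active set has $\approx n/q$ vertices and is rebuilt as $G(m,q/n)$, which is critical because $qm/n\approx1$. Tailored critical random-graph estimates then give the stated structure, and one checks by a direct argument (using that the active set is re-percolated at criticality and the inactive part is a sub-collection of critically typical components) that critical typicality is maintained over the whole run. Run Phase 0 in both copies.

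\emph{Phase 1 (growing the common region).} With both copies critically typical, I would grow a vertex set $W_t$ that is a union of components of both $X_t$ and $Y_t$ and on which $X_t$ and $Y_t$ coincide; write $D_t:=n-|W_t|$, so coalescence means $D_t=0$. Couple the activations of components lying in $W_t$ identically, so the common region is never lost. If in a step the remaining activations can be coupled so that the two active sets have \emph{equal size}, $|S_X|=|S_Y|$, then identify them and run the percolation sub-step identically, so all of $S_X=S_Y$ joins the common region; since the active set contains a $\approx 1/q$ fraction of the currently-disagreeing vertices, such a \emph{successful} step shrinks $D_t$ by a factor $\approx 1-1/q$, and $O(\log n)$ successful steps give $D_t=0$. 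The difficulty is producing successful steps. The active-set sizes are weighted Bernoulli sums $|S_\bullet|=\sum_i|C_i^\bullet|\,\xi_i^\bullet$ with independent $\xi_i^\bullet\sim\mathrm{Bernoulli}(1/q)$, so $|S_X|-|S_Y|$ is centered with standard deviation of order $\bigl(\sum_{C\not\subseteq W_t}|C|^2\bigr)^{1/2}$, which for a critically typical configuration is as large as $\approx n^{5/6}$; a crude maximal coupling of the two sums via a local limit theorem would succeed with probability only $\approx n^{-5/6}$ per step, far too slow.

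The fix is a \emph{multi-scale} coupling, and this is where the $\log\log n$ factor enters. Before equalizing the active-set sizes, first force $X$ and $Y$ to agree on their largest components down to a scale $s$ (coupling those activations identically); this collapses the residual discrepancy's standard deviation to $\approx n^{2/3}s^{-1/2}$, so equalizing $|S_X|,|S_Y|$ then succeeds with probability $\approx s^{1/2}n^{-2/3}$, and the identical re-percolation of the equalized active set reproduces identical structure \emph{at all scales within it}, sharply improving the scale to which the next configuration agrees. Iterating, the residual fluctuation shrinks roughly like $f\mapsto\sqrt f$ per round, so after $O(\log\log n)$ rounds it is only $\log^{O(1)}n$; successful steps then occur with probability $1/\log^{O(1)}n$, and $O(\log n)$ further successful steps finish the coupling. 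Each round also costs $O(\log n)$ steps to re-establish critical typicality and to collect enough successful steps, for a total of $O(\log n\log\log n)$. Two estimates carry the weight: (i) a sharp bound on $\max_k\bigl|\sum_{i\le k}|C_i|(\xi_i-\tfrac1q)\bigr|$ — a symmetric random walk with \emph{varying} step sizes — which controls the whole activation trajectory and hence the contribution of the top components to be matched at each scale; and (ii) a \emph{local limit theorem} for such weighted Bernoulli sums, uniform over the critically typical configurations that arise, which lower-bounds $\Pr[|S_X|=|S_Y|]$ at each scale (the varying, partly large step sizes and the $q$-dependent activation probability defeat the textbook local CLT). Both are combined with critical random-graph facts (size spectrum, the effect on a component of adding or deleting a few vertices, near-independence of small components) to propagate critical typicality and the scale-matching from one round to the next.

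I expect the main obstacle to be Phase 1 at the coarsest scales: at criticality the active-set size fluctuates on scale $\Theta(n^{2/3})$ with no restoring drift, so the two active sets cannot be equalized in $O(1)$ steps by brute force, and one genuinely needs the recursive ``square-rooting'' of the fluctuation scale — powered by the uniform local limit theorem and the random-walk-maximum bound — to make each scale's coupling succeed with non-negligible probability while keeping critical typicality intact. Establishing the local limit theorem \emph{uniformly} over the random, correlated family of component spectra produced by the dynamics, rather than for a single fixed deterministic weight sequence, is likely the most delicate point.
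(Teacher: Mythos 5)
Your proposal assembles the right ingredients — a burn-in to critical typicality, an activation coupling that equalizes the number of activated vertices so the percolation sub-steps can be run identically, a local limit theorem for weighted Bernoulli sums, a maximum-of-random-walk estimate with varying step sizes, and critical random-graph spectrum counts — and at that level it matches the paper. But the quantitative accounting in your Phase 1 has two genuine gaps. The first is the per-step success probability. A step in which $|S_X|\neq|S_Y|$ is not harmless: the two active sets must then be re-percolated with different vertex counts, so the newly created components differ and roughly one successful step's worth of the common region is destroyed. You therefore need $\Theta(\log n)$ \emph{consecutive} successes, and a per-step success probability of $1/\log^{O(1)}n$ (let alone $s^{1/2}n^{-2/3}$ at coarse scales) gives a total success probability of order $\exp(-\Theta(\log n\cdot\log\log n))$, which is useless; if instead you treat failures as merely ``no progress,'' the walk on $|W_t|$ drifts the wrong way. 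The paper's resolution is to make each step succeed with probability $1-o(1)$: the discrepancy created by independently activating only the \emph{unmatched} small components has standard deviation $\Theta(\sqrt{Z_t})$, it is cancelled \emph{exactly} by a reflection-principle coupling of the matched small components scale by scale, and since $Z_t$ contracts geometrically the per-step failure probabilities improve along the run, with product $\Omega((\log\log\log n)^{-\beta})$. Consequently the $\log\log n$ in the final bound does not come from $O(\log\log n)$ rounds of length $O(\log n)$ as you propose, but from a single $O(\log n)$-step coupling that succeeds with probability $\Omega(1/\log\log n)$, boosted in the standard way.

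The second gap is the base case of your recursion: ``first force $X$ and $Y$ to agree on their largest components down to a scale $s$ (coupling those activations identically)'' is circular, because activations can only be coupled identically between components of \emph{equal} size, and making the tops of the two size spectra agree is the central difficulty rather than a preprocessing step. The paper devotes a dedicated two-step coupling to this: condition on all components above scale $n^{2/3}/\omega(n)$ being activated simultaneously in both copies (cost $\exp(-O(\omega(n)^9))$ in probability, which is exactly why the large-component threshold must use an $\omega(n)$ as slowly growing as $\log\log\log\log n$), pad one copy with medium components so the resulting deficit is only $O(n^{2/3}\omega(n)^{-6})$, and cancel that deficit exactly via the local limit theorem applied to the small components. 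Without this step there is nothing to iterate on. Relatedly, your plan to ``re-establish critical typicality each round'' understates a real obstruction the paper must work around: the structural hypotheses (component counts in each interval, the bound on the unmatched mass, the isolated-vertex count) provably cannot be maintained w.h.p.\ for $\Theta(\log n)$ steps, which forces the paper to relax them across four sub-phases with progressively weaker scale functions.
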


A $\Omega(\log n)$ lower bound is known for the mixing time of the mean-field CM dynamics that holds for all $p \in (0,1)$ and $q > 1$~\cite{BSmf}. Therefore, our result is tight up to the lower order $O(\log \log n)$ factor, and in fact even better as we explain in Remark~\ref{remark:betterbound}.
The conjectured tight bound when $\la=\lc(q)$ and $q \in (1,2)$ is $\Theta(\log n)$.
We mention that the $\la=\lc(q)$ and $q=2$ case, which is quite different and not covered by Theorem~\ref{thm:critical:q<2:main}, was considered earlier in~\cite{LNNP} for the closely related 
Swendsen-Wang dynamics, and a tight $\Theta(n^{1/4})$ bound was established for its mixing time. 
The same mixing time bound is expected for the CM dynamics in this regime. 

Our result establishes a striking behavior for random-cluster dynamics when $q \in (1,2)$. 
Namely, there is no slowdown (exponential or power law) in this regime at the critical threshold $\la=\lc(q)$. 
Note that for $q > 2$, as described in \eqref{eq:mt-large-q} above, the mixing time of the dynamics undergoes
an exponential slowdown, transitioning from $\Theta(\log n)$ when $\la < \ls$, 
to a power law at $\la = \ls$, and to exponential in $n$ when $\la \in (\ls,\lS)$. 
The absence of a critical slowdown for $q \in (1,2)$ was in fact predicted by the statistical physics community~\cite{Gar},
and our result provides the first rigorous proof of this phenomenon.
See Remark~\ref{remark:q=2} for further comments.

Our second result concerns the local \emph{Glauber dynamics} for the random-cluster model. In each step, the Glauber dynamics updates 
a single edge of the current configuration chosen uniformly at random; a precise definition of this Markov chain is given in Section~\ref{sec:GD}.
In~\cite{BSmf}, it was established that any upper bound on the mixing time $\taumixCM$ of the CM dynamics can be translated to one for the mixing time  $\taumixGD$ of the Glauber dynamics, 
at the expense of a $\tilde{O}(n^4)$ factor; the $\tilde{O}$ notation hides polylogarithmic factors.
In particular, it was proved in~\cite{BSmf} that
$
\taumixGD \le \taumixCM \cdot \tilde{O}(n^4).
$
We provide here an improvement of this comparison inequality.
\begin{theorem}
	\label{thm:intro:comparison}
	For all $q > 1$ and all $\la = O(1)$,
	$
	\taumixGD \le \taumixCM \cdot {O}(n^3 (\log n)^2).
	$
\end{theorem}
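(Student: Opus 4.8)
The plan is to use the standard comparison-of-Markov-chains technology (in the spirit of Diaconis--Saloff-Coste and its continuous-time variant), bounding the Dirichlet form of the Glauber dynamics from below by a constant times the Dirichlet form of the CM dynamics, which directly translates into a spectral-gap comparison, and then into a mixing-time comparison after accounting for the minimum stationary probability. Concretely, for each pair of random-cluster configurations $A, A'$ that can arise from a single CM move (i.e.\ $\mu(A)\mu(A')>0$ and $A'$ is reachable from $A$ by one activation/percolation step), I would exhibit a \emph{canonical path} in the Glauber graph from $A$ to $A'$: since a CM step only adds and removes edges within the active set, such a path can be realized by first deleting, one edge at a time, all edges of $A$ inside the active set that are absent from $A'$, and then adding, one edge at a time, all edges of $A'$ inside the active set that are absent from $A$. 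Each such path has length at most $\binom{n}{2} = O(n^2)$. The key quantity to control is the \emph{congestion}
\[
\rho \;=\; \max_{(B,B')\in E_{\mathrm{GD}}} \frac{1}{\mu(B)P_{\mathrm{GD}}(B,B')}\sum_{\substack{(A,A')\,:\,(B,B')\in\gamma_{A,A'}}} \mu(A)P_{\mathrm{CM}}(A,A')\,|\gamma_{A,A'}|,
\]
and the theorem will follow from $\gs_{\mathrm{GD}}^{-1} \le \rho\cdot \gs_{\mathrm{CM}}^{-1}$ together with the standard relation between the gap and $\taumix$ (incurring a $\log(1/\mu_{\min})=O(n^2)$-type factor, which is where part of the $n^3(\log n)^2$ budget goes — though one must be careful, since $\log(1/\mu_{\min}) = O(n^2)$ would overshoot; see the remark below).

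The heart of the argument is bounding $\rho$. Two ingredients enter. First, a \emph{combinatorial} bound on how many CM transitions $(A,A')$ route through a fixed Glauber edge $(B,B')$: given the intermediate configuration $B$ on the canonical path and knowing the active set $S$, the pair $(A,A')$ is essentially determined by the edges inside $S$ (which split into ``the $A$-part already processed'' and ``the $A'$-part not yet processed''); an encoding/injection argument shows the number of such pairs, weighted appropriately, is controlled. Second, and more delicate, is the \emph{weight} comparison: one must show $\mu(A)P_{\mathrm{CM}}(A,A') \le \mathrm{poly}(n)\cdot \mu(B)P_{\mathrm{GD}}(B,B')$ along the path. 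Here $P_{\mathrm{GD}}(B,B') = \frac{1}{|E|}\cdot\frac{p}{p + (1-p)q^{\pm}}$ is of order $\Theta(1/n^2)$ when $\la = O(1)$ (since $p = \la/n = O(1/n)$, each single-edge Glauber update probability is $\Theta(1/n^2)$), while $P_{\mathrm{CM}}(A,A')$ decomposes as (probability of choosing the active set $S$) $\times$ (probability of the percolation sub-step producing exactly the right edges inside $S$); the latter is a product of $p$'s and $(1-p)$'s over the $\binom{|S|}{2}$ slots. The ratio $\mu(A)/\mu(B)$ contributes a factor $p^{\pm 1}(1-p)^{\mp 1} q^{\pm O(n)}$ from the single edge flipped and the change in component count — but crucially the component count changes by at most $1$ per Glauber step, so this is just a $q^{O(1)}$ factor per edge, and summing telescopes. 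The genuinely dangerous term is the activation probability $q^{-c(A)}$ versus $q^{-c(B)}$: since $c$ can differ by $\Theta(n)$ along the path, one needs the $q^{c(A)}$ in $\mu(A)$ to cancel it, which it does up to a bounded factor because $\mu(A) \propto q^{c(A)}$. Tracking these cancellations carefully is the main technical obstacle, and I expect it to be exactly where the improvement over the $\tilde O(n^4)$ bound of~\cite{BSmf} comes from — presumably by choosing the canonical paths more cleverly (e.g.\ processing edges in an order that keeps $c(\cdot)$ monotone, or routing through configurations of controlled component structure) so that the per-step weight ratio is $O(1)$ rather than $O(n)$, saving one factor of $n$, and by a tighter congestion count saving another, leaving $O(n^3(\log n)^2)$ after the $\log(1/\mu_{\min})$ overhead.

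I would organize the write-up as: (1) recall the comparison theorem for spectral gaps via canonical paths, and the gap-to-mixing relation; (2) define the canonical paths precisely, including the edge-processing order; (3) prove the length bound $|\gamma_{A,A'}| = O(n^2)$ and the per-step weight-ratio bound via the telescoping/component-count argument; (4) prove the congestion bound by the encoding injection; (5) assemble: $\taumixGD \le \gs_{\mathrm{GD}}^{-1}\log(1/\mu_{\min}) \le \rho\,\gs_{\mathrm{CM}}^{-1}\log(1/\mu_{\min}) \le \taumixCM\cdot O(n^3(\log n)^2)$, where the last step uses $\gs_{\mathrm{CM}}^{-1} \le 2\taumixCM$ (wait — one needs the reverse) — more precisely use $\gs^{-1} \le \taumix$ for the CM chain and $\taumix \le \gs^{-1}\log(1/\mu_{\min})$ for the Glauber chain. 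The main obstacle, to repeat, is the weight comparison along paths: naively it costs $n^{O(1)}$ with a large exponent, and extracting the sharp $n^3(\log n)^2$ requires both a judicious choice of paths and careful bookkeeping of the $p$-, $(1-p)$-, and $q$-factors so that the component-count discrepancies cancel against the $q^{c(A)}$ in the Gibbs weight.
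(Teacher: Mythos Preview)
Your proposal has a genuine gap, and you have in fact put your finger on it yourself: the factor $\log(1/\mu_{\min})$. In the mean-field setting with $p=\la/n$ and $m=\Theta(n^2)$, one has $\log(1/\mu_{\min})=\Theta(n^2\log n)$ (driven by the fully open configuration, whose probability is $\sim p^m$). Combined with the spectral-gap comparison $\gsm(\mathrm{GD})\le O(m\log m)\cdot\gsm(\mathrm{CM})=O(n^2\log n)\cdot\gsm(\mathrm{CM})$, this yields exactly the old $\tilde O(n^4)$ bound from~\cite{BSmf}, not $O(n^3(\log n)^2)$. Your hope that a cleverer choice of canonical paths will shave a factor of $n$ off the congestion is misplaced: the $O(m\log m)$ spectral-gap comparison is not where the loss is, and the paper makes no attempt to improve it --- it is taken verbatim from~\cite{BSmf}.

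The actual source of the improvement is entirely on the other side: a sharpened conversion from spectral gap to mixing time for the Glauber dynamics. The paper proves (Theorem~\ref{thm:mt-spectral-gap}) that if there is a subset $\Gamma_0$ and a time $T$ such that $P^t(x,\Gamma\setminus\Gamma_0)\le 1/16$ for all $x$ and all $t\ge T$, then $\taumix^P=O(T+\gsm(P)\log(1/\pi_0))$ with $\pi_0=\min_{\omega\in\Gamma_0}\pi(\omega)$. One then takes $\Gamma_0=\{A:|A|\le 100mp\}$ and $T=O(m\log m)$; after every edge has been updated once, the number of open edges is stochastically dominated by a $G(n,p)$ edge count, so the chain lands in $\Gamma_0$ with high probability. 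The payoff is that $\log(1/\pi_0)=O(n+mp\log p^{-1})=O(n\log n)$ in the mean-field regime, replacing the $\Theta(n^2\log n)$ from $\mu_{\min}$ and yielding $\taumixGD\le O(n^2\log n)\cdot O(n\log n)\cdot\taumixCM=O(n^3(\log n)^2)\cdot\taumixCM$. This ``restrict $\mu_{\min}$ to a typical set'' idea is the missing ingredient in your plan.
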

To prove this theorem, we establish a general comparison inequality that holds for any graph, any $q \ge 1$ and any $p \in (0,1)$;
see Theorem~\ref{thm:GDCM-comparison} for a precise statement.
When combined with the known mixing time bounds for the CM dynamics on the complete graph,
Theorem~\ref{thm:intro:comparison} yields that the random-cluster Glauber dynamics mixes in $\tilde{O}(n^3)$ steps when $q > 2$ and $\la \not\in(\ls,\lS)$, or when $q \in (1,2)$ and $\la  = O(1)$.
In these regimes,
the mixing time of the Glauber dynamics was previously known to be $\tilde{O}(n^4)$ and is conjectured to be $\tilde{O}(n^2)$; 
the improved comparison inequality in Theorem~\ref{thm:intro:comparison} gets us closer to this conjectured tight bound.
We note, however, that even if one showed the conjectured optimal bound for the mixing time of the Glauber dynamics,
the CM is faster, even if we take into account the computational cost associated to implementing its steps.

We conclude this introduction with some brief remarks about our analysis techniques,
which combine several key ingredients in a non-trivial way.
Our bound on the mixing time uses the well-known technique of {\it coupling\/}:  in order to
show that the mixing time is $O(\log n \cdot \log \log n)$, it suffices to couple
the evolutions of two copies of the dynamics, starting from two arbitrary configurations, in such a
way that they arrive at the {\it same\/} configuration after $O(\log n )$ steps with probability $\Omega(1/ \log \log n)$.
(The moves of the two copies can be correlated any way we choose, provided that
each copy, viewed in isolation, is a valid realization of the dynamics.)
Because of the delicate nature of the phase transition in the random-cluster model, combined with the fact that the percolation sub-step of the CM dynamics is critical when $\la = q$,
our coupling is somewhat elaborate and proceeds in multiple phases.
The first phase consists of a \emph{burn-in period},
where the two copies of the chain are run independently and the evolution of their
largest components is observed until they have shrunk to their ``typical'' sizes.
This part of the analysis is inspired by similar arguments in earlier work~\cite{BSmf,LNNP,GSVmf}.

In the second phase, we design a coupling of the activation of the connected components of the 
two copies which uses: 
(i) a local limit theorem, which can be thought of as a stronger version of a central limit theorem; 
(ii) a precise understanding of the distribution of the maximum of symmetric random walks on $\Z$ with varying step sizes; and
(iii) precise estimates for the component structure of random graphs.
We develop tailored versions of these probabilistic tools for our setting and 
combine them to guarantee that the same number of vertices from each copy are activated in each step w.h.p.\ for sufficiently many steps.
This phase of the coupling is the main novelty in our analysis, and allows us to quickly converge to the same configuration.
We give a more detailed overview of our proof in the following section.

\section{Proof sketch and techniques}
\label{section:overview}

We now give a detailed sketch of the multi-phased coupling argument
for proving Theorem \ref{thm:critical:q<2:main}.
We start by formally defining the notions of mixing and coupling times.
Let $\Omega_{\textsc{\tiny RC}}$ be the set of random-cluster configurations of a graph $G$; let
$\GD$ be the transition matrix of a random-cluster Markov chain with stationary distribution $\mu = \mu_{G,p,q}$, and 
let $\GD^t(X_0,\cdot)$ be the distribution of the chain after $t$ steps starting from $X_0 \in \Omega_{\textsc{\tiny RC}}$.
The \emph{$\varepsilon$-mixing time} of $\GD$ is given by
\begin{equation}\label{prelim:eq:mixing}
\taumix^{\GD}(\varepsilon) := \max\limits_{X_0 \in \Omega_{\textsc{\tiny RC}}}\min\limits_{t \ge 0}\left\{ ||\GD^t(X_0,\cdot)-\mu(\cdot)||_{\textsc{\tiny TV}} \le \varepsilon \right\}\nonumber,
\end{equation}
where $||\cdot||_{\textsc{\tiny TV}}$ denotes total variation distance. In particular, the {\it mixing time} of $\GD$ is $\taumix^{\GD} := \taumix^{\GD}(1/4)$.

A {\it (one step) coupling} of the Markov chain $\GD$ specifies, for every pair of states $(X_t, Y_t) \in \Omega_{\textsc{\tiny RC}} \times \Omega_{\textsc{\tiny RC}}$, 
a probability distribution over $(X_{t+1}, Y_{t+1})$ such that the processes $\{X_t\}$ and $\{Y_t\}$ are valid realizations of $\GD$, and if $X_t=Y_t$ then $X_{t+1}=Y_{t+1}$. The {\it coupling time}, denoted $\Tcoup$, is the minimum $T$ such that $\Pr[X_T \neq Y_T] \le 1/4$, starting from the worst possible pair of configurations in $\Omega_{\textsc{\tiny RC}}$. 
It is a standard fact that $\taumix^{\GD} \le \Tcoup$; moreover, when $\Pr[X_T = Y_T] \ge \delta$ for some coupling, then
$\taumix^{\GD} = O(T \delta^{-1})$ (see, e.g.,~\cite{levin2017markov}).

We provide first a high level description of our coupling for the CM dynamics.
For this, we require the following notation. 
For a random cluster configuration $X$, let
$L_i(X)$ denote the size of the $i$-th largest connected component in~$(V,X)$,
and let $\mathcal{R}_i(X):=\sum_{j \ge i} L_j(X)^2$; 
in particular, $\mathcal{R}_1(X)$ is the sum of the squares of the sizes of all the components of~$(V,X)$.
Our coupling has three main phases:
\begin{enumerate}
	\setlength{\itemsep}{0pt}
	\item \emph{Burn-in period:} run two copies $\{X_t\}$, $\{Y_t\}$ independently, starting from a pair of arbitrary initial configurations, until $\mathcal{R}_1(X_T)  = O(n^{4/3})$ and $\mathcal{R}_1(Y_T)  = O(n^{4/3})$.
	\item \emph{Coupling to the same component structure: }starting from $X_T$ and $Y_T$
	such that $\mathcal{R}_1(X_T)  = O(n^{4/3})$ and $\mathcal{R}_1(Y_T)  = O(n^{4/3})$, we design a two-phased coupling that reaches two configurations with the same component structure 
	as follows:			
	\begin{enumerate}
		\setlength{\itemsep}{0pt}
		\item[2a.] A two-step coupling after which the two configurations agree 
		on all ``large components'';
		\item[2b.] A coupling that after $O(\log n)$ additional steps reaches two configurations that will also have 
		the same ``small component'' structure.
	\end{enumerate}
	\item \emph{Coupling to the same configuration:} 
	starting from two configurations with the same component structure, 
	there is a straightforward coupling that couples the two configurations in $O(\log n)$ steps w.h.p.
\end{enumerate}	
We proceed to describe each of these phases in detail.

\subsection{The burn-in period}

During the initial phase,
two copies of the dynamics evolve independently. 
This is called a \emph{burn-in period} and in our case consists of three sub-phases.  

In the first sub-phase of the burn-in period the goal is to reach a configuration $X$ such that $\mathcal{R}_2(X) = O(n^{4/3})$.
For this, we use a lemma from \cite{ABthesis}, which shows that after $T = O(\log n)$ steps of the CM dynamics $\mathcal{R}_2(X_T) = O(n^{4/3})$
with at least constant probability; this holds when $\la=q$ for any initial configuration $X_0$ and any~$q > 1$.

\begin{lemma} [\cite{ABthesis}, Lemma 3.42]
	\label{lemma:critical:burn:initial-cond}
	Let $q>1$ and $\la=q$, and let $X_0$ be an arbitrary random-cluster configuration. Then, for any constant $C \geq 0$,
	after $T=O(\log n)$ steps
	$\mathcal{R}_2(X_T) = O(n^{4/3})$ and $L_1(X_T) > Cn^{2/3}$
	with probability $\Omega(1)$.
\end{lemma}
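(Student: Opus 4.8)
The plan is to control the single quantity $\mathcal{R}_1(X_t)=\sum_j L_j(X_t)^2$, show it has a geometric downward drift toward the critical scale $n^{4/3}$, iterate, and then append a short argument for the lower bound on $L_1$. The starting point is that one step of the CM dynamics at $\la=q$ decomposes as
\[
\mathcal{R}_1(X_{t+1}) \;=\; S_t \;+\; \mathcal{R}_1\!\big(G(a_t,\,q/n)\big),
\]
where $S_t:=\sum_{j:\,C_j\text{ inactive}}L_j(X_t)^2$ is the contribution of the unactivated components, $a_t=|A_t|$ is the number of activated vertices, and $G(a_t,q/n)$ is the fresh Erd\H{o}s--R\'enyi graph on the active set created by the percolation sub-step. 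Because each component is activated independently with probability $1/q$, writing $D_t:=a_t-n/q$ we have $\E[S_t\mid X_t]=(1-\tfrac1q)\mathcal{R}_1(X_t)$, $\E[D_t\mid X_t]=0$, and $\var(a_t\mid X_t)=\tfrac{q-1}{q^2}\mathcal{R}_1(X_t)$; thus the branching parameter of the regenerated graph is $b_t=(a_t-1)q/n=1+\Theta\!\big(\sqrt{\mathcal{R}_1(X_t)}/n\big)$, i.e.\ exactly critical when $\mathcal{R}_1(X_t)\asymp n^{4/3}$.

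The core step is the one-step drift bound
\[
\E\big[\mathcal{R}_1(X_{t+1})\mid X_t\big]\;\le\;\rho(q)\,\mathcal{R}_1(X_t)+C_0\,n^{4/3},
\qquad \rho(q)<1\ \text{ for all } q\in(1,2),
\]
(with $\rho(q)=\tfrac{(q+2)(q-1)}{q^2}+o(1)$, so $\rho(q)\to1$ as $q\to2$). To get this I would bound $\E[\mathcal{R}_1(G(a_t,q/n))\mid X_t]$ by conditioning on $a_t$. If $a_t\le n/q$ the regenerated graph is subcritical and $\mathcal{R}_1(G(a_t,q/n))=O(n^{4/3})$, using the uniform estimate $\sum_jL_j^2=O(m^{4/3})$ for $G(m,c/m)$ with $c\le1$. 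If $a_t>n/q$ it is supercritical with a unique giant of size $\beta(b_t)\,a_t$, and here I would invoke the calculus fact
\[
\beta(b)\;\le\;\frac{2(b-1)}{b}\qquad\text{for }b\in(1,2],
\]
($\beta$ = survival probability of a Poisson-$b$ branching process), which since $1-1/b_t=D_t/a_t+o(1)$ gives $\beta(b_t)a_t\le 2D_t(1+o(1))$ and hence $\mathcal{R}_1(G(a_t,q/n))\le (4+o(1))D_t^2+O(n^{4/3})$ (a short second-moment estimate handles the giant's fluctuations uniformly down to the edge of the critical window, where both sides are $\Theta(n^{4/3})$). Taking expectations, $\E[D_t^2\mathbbm{1}\{D_t>0\}\mid X_t]\le(\tfrac12+o(1))\var(a_t\mid X_t)$ — which holds because the centered activation increments $B_j-1/q$ have non-positive third moment when $q>1$ — so the supercritical contribution is at most $(2+o(1))\tfrac{q-1}{q^2}\mathcal{R}_1(X_t)$, and adding $\E[S_t\mid X_t]=(1-\tfrac1q)\mathcal{R}_1(X_t)$ produces $\rho(q)$. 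Iterating the drift bound from an arbitrary $X_0$ (where $\mathcal{R}_1(X_0)\le n^2$) gives $\E[\mathcal{R}_1(X_T)]\le \rho(q)^Tn^2+\tfrac{C_0}{1-\rho(q)}n^{4/3}$, so taking $T=C(q)\log n$ makes $\E[\mathcal{R}_1(X_T)]=O(n^{4/3})$, and Markov's inequality yields $\mathcal{R}_2(X_T)\le\mathcal{R}_1(X_T)=O(n^{4/3})$ with probability at least $\tfrac34$.

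For the conclusion $L_1(X_T)>Cn^{2/3}$ I would run $O(1)$ additional steps: the drift bound keeps $\mathcal{R}_1$ of order $n^{4/3}$ (Markov), and one checks that $\mathcal{R}_1$ cannot collapse below order $n^{4/3}$ either, since once $\mathcal{R}_1\asymp n^{4/3}$ a negative deviation of $a_t$ of size $\Omega(n)$ has probability $e^{-\Omega(n^{2/3})}$, so $\var(a_t\mid X_t)=\Theta(\mathcal{R}_1(X_t))\asymp n^{4/3}$. Then with probability $\Omega(1)$ (an amount depending on $C$) the active-set size falls in a window $n/q+\Theta(n^{2/3})$ placing $b_t$ slightly above $1$, and standard near-critical random-graph estimates give a largest component exceeding $Cn^{2/3}$ with probability $\Omega(1)$; intersecting with the $O(n^{4/3})$ bound on $\mathcal{R}_1$ completes the proof.

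The main obstacle is the analysis of the regenerated graph $G(a_t,q/n)$ across the subcritical, near-critical, and supercritical regimes at once: the uniform-in-density bound $\sum_jL_j^2=O(m^{4/3})$, and especially the supercritical estimate resting on $\beta(b)\le 2(b-1)/b$ for $b\le2$ combined with second-moment control of the giant uniformly down to the scaling window. This is precisely where the hypothesis $q<2$ enters — for $q=2$ one has $\rho(2)=1$ and the drift degenerates, matching the genuinely different (polynomial) behavior of the dynamics there; a secondary, more routine difficulty is the bookkeeping ensuring $\mathcal{R}_1$ neither overshoots far below $n^{4/3}$ nor spikes far above it, which is what makes the $L_1>Cn^{2/3}$ part go through.
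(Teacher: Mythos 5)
There is a structural mismatch between what you prove and what the lemma asserts. The lemma controls only $\mathcal{R}_2(X_T)$ (the sum of squares \emph{excluding} the largest component) and, as the paper notes just before stating it, holds for \emph{every} $q>1$ — including $q>2$, where the configuration may retain a giant component of size $\Theta(n)$ throughout, so that $\mathcal{R}_1(X_t)=\Theta(n^2)$ forever. Your Lyapunov function is $\mathcal{R}_1$, and your drift coefficient $\rho(q)=\frac{(q-1)(q+2)}{q^2}$ exceeds $1$ for $q>2$; the approach therefore cannot prove the statement in the generality in which it is stated (and cited from [ABthesis]). What you are actually attempting is the stronger conclusion $\mathcal{R}_1(X_T)=O(n^{4/3})$ under the extra hypothesis $q\in(1,2)$ — in the paper's architecture that is Theorem~\ref{theorem:critical:q<2:first-step}, obtained by chaining Lemma~\ref{lemma:critical:burn:initial-cond} (control of $\mathcal{R}_2$ alone, valid for all $q>1$) with Lemmas~\ref{Lemma1} and~\ref{lemma:burn-in-subphase3} (two separate contraction arguments for $L_1$ via the drift function $f(\theta)=\theta-\phi(\theta)$ and near-critical random graph estimates). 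A correct one-shot drift bound for $\mathcal{R}_1$ would be an elegant shortcut through all three lemmas, but it is not a proof of this lemma as stated.

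Even within $q\in(1,2)$, two load-bearing steps are asserted rather than proved. First, the claim $\E\big[D_t^2\,\mathbbm{1}\{D_t>0\}\mid X_t\big]\le\big(\tfrac12+o(1)\big)\var(a_t\mid X_t)$ does not follow from the summands having non-positive third moments; no such implication holds for sums of independent variables in general. The constant $\tfrac12$ is essential: with the trivial bound $\E[D_t^2\mathbbm{1}\{D_t>0\}]\le\var(a_t)$ your drift coefficient becomes $\frac{(q-1)(q+4)}{q^2}$, which contracts only for $q<4/3$. Establishing the $\tfrac12$ uniformly over all component-size profiles requires a genuine argument (e.g.\ Berry--Esseen when no single component dominates the variance, plus a separate direct computation when one does), and this is precisely where the configuration-dependence of the activation variance bites. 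Second, the bound $\E[\mathcal{R}_1(G(a_t,q/n))\mid a_t]\le(4+o(1))D_t^2+O(n^{4/3})$ must hold uniformly from the critical window up through the supercritical regime; the ``w.h.p.\ concentration plus worst case $a_t^2$'' argument fails in the intermediate range where $\varepsilon^3a_t$ is between a large constant and $\log n$ (the tail contributes $n^2e^{-\Omega(\varepsilon^3 a_t)}\gg n^{4/3}$ there), and the cross term $D_t\sqrt{a_t/\varepsilon}$ from the giant's fluctuations also needs to be absorbed. These are fixable with the estimates of Section~\ref{RG} (Lemmas~\ref{T58}, \ref{T512}, \ref{T513}, \ref{lemma:prelim:critical-var}), but as written the two central inequalities of your drift bound are unsupported.
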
 

In the second and third sub-phases of the burn-in period, we use
the fact that when $\mathcal{R}_2(X_t) = O(n^{4/3})$, the number of activated vertices is well concentrated around $n/q$ (its expectation).
This is used to show that the size of the largest component
contracts at a constant rate for $T=O(\log n)$ steps until
a configuration $X_T$ is reached
such that $\mathcal{R}_1(X_T)  = O(n^{4/3})$. 
This part of the analysis is split into two sub-phases because the contraction for $L_1(X_t)$ requires a more delicate analysis when $L_1(X_t) = o(n)$; this is captured in the following two lemmas. 

\begin{lemma}
	\label{Lemma1}
	Let $\la=q$ and $q \in (1,2)$. 
	Suppose $\mathcal{R}_2(X_0) = O(n^{4/3})$.
	Then,
	for any constant $\delta > 0$,
	there exists $T = T(\delta) =  O(1)$ such that $\mathcal{R}_2(X_T)  = O(n^{4/3})$ and ${L_1}(X_T)  \leq \delta n$ with probability $\Omega(1)$.
\end{lemma}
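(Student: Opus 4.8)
The plan is to track the single step evolution of the largest component and show it contracts by a constant factor at each step, then iterate a constant number of times. Fix a configuration $X_0$ with $\mathcal{R}_2(X_0)=O(n^{4/3})$ and $L_1(X_0)\ge \delta n$. Write $\ell := L_1(X_0)$ and think of the percolation sub-step as acting on the active vertices. In the activation sub-step, the giant component is activated with probability $1/q$; condition on this event (this costs only a constant factor in probability, which is absorbed into the final $\Omega(1)$). Since $\mathcal{R}_2(X_0)=O(n^{4/3})$, all other components have size $O(n^{2/3})$, and the number $A$ of activated vertices outside the giant is a sum of independent bounded random variables with mean $(n-\ell)/q$ and variance $O(n^{4/3})$, hence $A = (n-\ell)/q + O(n^{2/3}\sqrt{\log n})$ with high probability by Bernstein/Hoeffding. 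Thus after activation the total number of active vertices is $N := \ell + A = \ell + (n-\ell)/q + o(n)$ concentrated, and the percolation sub-step rebuilds a fresh Erd\H{o}s--R\'enyi graph $G(N,\la/n)$ on these $N$ vertices (the edges among active vertices having been deleted).

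The key quantitative input is the value of $\la N/n$. Using $\la = q$ we get
\begin{equation}
\frac{\la N}{n} = \frac{q\ell + (n-\ell)}{n} = 1 + \frac{(q-1)\ell}{n} \ge 1 + (q-1)\delta =: 1+\eta,
\end{equation}
with $\eta>0$ a constant since $q>1$ and $\ell \ge \delta n$. Hence the percolation sub-step runs a \emph{supercritical} random graph with a density bounded away from $1$; by standard random graph theory (e.g.\ the results used throughout \cite{BSmf, ABthesis}), $G(N,\la/n)$ has a unique giant component of size $(\beta+o(1))N$ w.h.p., where $\beta=\beta(\la N/n)\in(0,1)$ is the survival probability of the associated Poisson($1+\eta$) branching process, and all other components are $O(\log n)$. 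The new largest component $L_1(X_1)$ is either this giant (size $\approx \beta N$) or possibly one of the untouched components outside the active set of size $O(n^{2/3})$; in either case $L_1(X_1) \le \max\{\beta N, O(n^{2/3})\}(1+o(1))$.

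The contraction argument is now: provided $\ell$ is not yet below $\delta n$, we claim $\beta N \le (1-c)\ell$ for some constant $c>0$. Indeed, $N \le \ell + (n-\ell)/q \le n$ always, and more importantly when $\ell = \Theta(n)$ one checks that $\beta(\la N/n)\cdot N$, as a function of $\ell$, has slope strictly less than $1$: writing $\ell = \alpha n$, $N/n = \alpha + (1-\alpha)/q$ and $\la N/n = 1+(q-1)\alpha$, so $\beta N/n = \psi(\alpha)$ where $\psi$ is a fixed smooth function with $\psi(0)=0$; since $\psi'(\alpha) < 1$ uniformly on $[\delta,1]$ for $q<2$ — this is precisely the statement that $\la_c(q)=q$ is the critical point and the map is contracting above it — we get $\psi(\alpha) \le (1-c)\alpha$. (For $q\ge 2$ this slope can reach or exceed $1$, which is exactly why the lemma restricts to $q\in(1,2)$; establishing this strict inequality rigorously is the one calculation I would need to do carefully, working from the explicit formula for $\beta$.) Combining: w.h.p.\ $L_1(X_1) \le (1-c)\ell$ once $\ell \ge \delta n$ and $\ell$ is still $\le$ (say) $n/2$; for the initial range $\ell \in [n/2, n]$ one handles it identically since $\psi'<1$ there too. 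We also need $\mathcal{R}_2(X_1) = O(n^{4/3})$ to be preserved: the new second-largest component and all small components from the active set are $O(\log n)$, contributing $o(n^{4/3})$ to $\mathcal{R}_2$, and the untouched components outside the active set are exactly a subset of the original small components, contributing $O(n^{4/3})$; and with constant probability the giant is again activated and percolated so the bound propagates. Iterating this one-step contraction $O(\log(1/\delta)/c) = O(1)$ times drives $L_1$ below $\delta n$ while keeping $\mathcal{R}_2 = O(n^{4/3})$, and the success probability over these $O(1)$ steps is a product of $O(1)$ terms each $\Omega(1)$, hence $\Omega(1)$ overall.

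The main obstacle I anticipate is the contraction inequality $\psi'(\alpha)<1$ on $[\delta,1]$: one must verify from the implicit equation $\beta = 1 - e^{-(\la N/n)\beta}$ (with $\la N/n$ and $N$ themselves depending on $\alpha$) that $\frac{d}{d\alpha}\bigl(\beta N/n\bigr) < 1$, and this is where the hypothesis $q\in(1,2)$ is essential; everything else is concentration bookkeeping (Bernstein for the activation count, standard $G(n,c/n)$ giant-component estimates for the percolation step) of the kind already developed in \cite{BSmf, ABthesis} and can be quoted.
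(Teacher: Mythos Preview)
Your proposal is essentially correct and follows the same strategy as the paper: condition on activating the giant (probability $1/q$ per step, so $\Omega(1)$ over $O(1)$ steps), use Hoeffding to concentrate the active-vertex count, invoke supercritical $G(N,\la/n)$ estimates for the new giant, and iterate a constant number of steps while tracking $\mathcal{R}_2$.

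The one substantive difference is in how you formulate the contraction. You phrase it as a derivative bound $\psi'(\alpha)<1$ on $[\delta,1]$ together with $\psi(0)=0$ to conclude $\psi(\alpha)\le(1-c)\alpha$; this inference is not quite valid as stated (you would need $\psi'<1$ on all of $[0,1]$, not just $[\delta,1]$, and verifying the derivative bound from the implicit equation for $\beta$ is awkward). The paper instead works additively: it defines the drift function $f(\theta)=\theta-\phi(\theta)$ (your $\alpha-\psi(\alpha)$) and shows directly that for $\la=q\in(1,2)$, $f$ has no positive root, is continuous on $(0,1]$, and satisfies $f(1)>0$; hence $f\ge\xi_1>0$ on the compact interval $[\delta,1]$, giving $L_1(X_{t+1})\le L_1(X_t)-\xi_1 n/2$. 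This is a cleaner route to the same conclusion and is exactly the calculation you flagged as the main obstacle. Your $\mathcal{R}_2$ bookkeeping (new non-giant components are $O(\log n)$ in the strictly supercritical regime) is in fact sharper than the paper's, which quotes a more general estimate (Lemma~\ref{C345}) valid over a wider range of $L_1$.
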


\begin{lemma}
	\label{lemma:burn-in-subphase3}
	Let $\la=q$ and $q \in (1,2)$.
	Suppose $\mathcal{R}_2(X_{0}) = O(n^{4/3})$ 
	and that $L_1(X_{0}) \leq \delta n$ 
	for a sufficiently small constant $\delta$. 	
	Then, with probability $\Omega(1)$, after $T=O(\log n)$ steps $\mathcal{R}_1(X_T) = O(n^{4/3})$.
\end{lemma}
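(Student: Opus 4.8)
The plan is to prove Lemma~\ref{lemma:burn-in-subphase3} by tracking the evolution of $L_1(X_t)$ through the percolation sub-step and showing it contracts geometrically as long as it stays above the ``typical'' scale $\Theta(n^{2/3})$. The starting assumption $\mathcal{R}_2(X_0) = O(n^{4/3})$ guarantees that in the activation sub-step the number of activated vertices $N_t$ is concentrated around $n/q$: indeed, $N_t$ is a sum of independent contributions (one per component, each of size $L_j(X_t)$ activated with probability $1/q$), so $\var(N_t) = \frac{1}{q}(1-\frac1q)\mathcal{R}_1(X_t) \le \frac1q(1-\frac1q)\big(L_1(X_t)^2 + \mathcal{R}_2(X_t)\big)$. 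As long as $L_1(X_t) \le \delta n$ with $\delta$ small and $\mathcal{R}_2(X_t) = O(n^{4/3})$, this variance is $O(\delta^2 n^2)$, so $N_t = (1/q)n(1 + O(\delta))$ with high probability. Then the active vertices are rewired via a fresh Erd\H{o}s--R\'enyi graph $G(N_t, q/n)$ whose density parameter is $N_t \cdot q/n \approx 1$; since the subcriticality gap is $1 - N_t q/n = \Theta(1 - O(\delta)) $ — wait, this is essentially critical, so one must be careful: the point is that conditioned on the active set, the new largest component among the active vertices has size $O_P(n^{2/3})$ by the standard critical random graph estimates (Aldous), which will not be the dominant term.

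The core recursion I would set up is: after one CM step, the new largest component $L_1(X_{t+1})$ is, up to lower-order terms, the largest component among the \emph{inactive} vertices plus whatever active mass merges into it. Each component of $(V,X_t)$ is inactive with probability $1 - 1/q$; a component of $(V,X_t)$ that is inactive survives intact, and in addition inactive components get glued together through active vertices that connect to them in the percolation step. The key quantitative claim is that $\E[L_1(X_{t+1}) \mid X_t] \le (1 - c)L_1(X_t) + O(n^{2/3})$ for some constant $c > 0$ (coming from the $1/q < 1$ survival probability combined with the fact that the extra merging through active vertices contributes only lower-order mass when $L_1(X_t)$ is not too large and $\mathcal{R}_2 = O(n^{4/3})$), together with a concentration statement so that the same holds with high probability for a single step. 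Iterating this for $T = O(\log n)$ steps drives $L_1$ from $\le \delta n$ down to $O(n^{2/3})$, at which point $L_1(X_T)^2 = O(n^{4/3})$ and hence $\mathcal{R}_1(X_T) = L_1(X_T)^2 + \mathcal{R}_2(X_T) = O(n^{4/3})$. I also need to carry the invariant $\mathcal{R}_2(X_t) = O(n^{4/3})$ along the way; this follows from similar critical-random-graph estimates showing that the percolation sub-step does not blow up the sum of squared component sizes (the sum of squares of subcritical/critical component sizes stays $O(n^{4/3})$), combined with the observation that merging inactive components only redistributes, and is controlled via a union bound over the $O(\log n)$ steps since each step succeeds with probability $1 - o(1/\log n)$.

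Concretely, the steps in order: (1) establish the concentration of $N_t$ around $n/q$ given $\mathcal{R}_1(X_t) = O(n^{4/3} + L_1(X_t)^2)$ via Chebyshev or Bernstein; (2) condition on $N_t$ and analyze the percolation sub-step on the active set as a random graph $G(N_t, q/n)$, which is (slightly) subcritical when $L_1(X_t) = \delta n$ makes $N_t q/n$ bounded away from $1$ — actually in the regime $L_1(X_t) = o(n)$ one has $N_t q/n \to 1$ so one uses the critical window estimates to bound the largest active-side component by $O(n^{2/3})$ w.h.p.\ and to bound $\mathcal{R}_2$ of the active part by $O(n^{4/3})$; (3) show the inactive components, each surviving with probability $1 - 1/q$ and then possibly merged through active vertices, produce a largest component of size at most $(1-c)L_1(X_t) + O(n^{2/3})$ in expectation and w.h.p., with $\mathcal{R}_2$ staying $O(n^{4/3})$ — this uses that the expected size of the merged blob containing a fixed inactive component is controlled because the active graph restricted to neighborhoods of inactive components is subcritical-like; (4) iterate the one-step contraction $O(\log n)$ times with a union bound over the failure probabilities. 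The main obstacle is step~(3): quantifying how much mass from active vertices (and from chains of inactive components glued via active vertices) accretes onto the largest surviving inactive component. This requires a careful multi-scale argument — essentially showing that the ``merging graph'' on inactive components induced by active vertices is itself subcritical, so merged blobs have size comparable to the largest individual inactive component rather than aggregating a constant fraction of $n$ — and it is exactly here that the assumption ``$\delta$ sufficiently small'' is used, since small $\delta$ (small $L_1(X_t)$, hence $N_t$ only slightly above the critical density seen by the inactive components) keeps this induced merging process from percolating.
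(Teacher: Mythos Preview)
Your proposal rests on a misunderstanding of the CM dynamics. Activation happens at the level of \emph{whole components}, so in any configuration there are never edges between active and inactive vertices; after steps~(ii)--(iii) the inactive components survive \emph{exactly as before}, and the active part is replaced by a fresh $G(A(X_t),q/n)$. There is no ``gluing of inactive components through active vertices'', and therefore the whole of your step~(3) (bounding merged blobs, showing the induced merging graph is subcritical, etc.) is analyzing a process that does not occur. Consequently your proposed contraction mechanism --- ``$1/q<1$ survival probability of the largest component, plus controlled accretion'' --- is not what drives $L_1$ down: if the largest component is \emph{inactive} (probability $1-1/q$) then $L_1(X_{t+1})\ge L_1(X_t)$ deterministically.

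The paper's argument instead splits on the event $\Lambda_t=\{\text{largest component activated}\}$. On $\neg\Lambda_t$, $L_1$ stays put; on $\Lambda_t$ the active set has size $A(X_t)\approx \frac{n}{q}+\frac{q-1}{q}L_1(X_t)$, so $A(X_t)\cdot q/n\approx 1+(q-1)L_1(X_t)/n>1$: the percolation step is \emph{supercritical}, not critical, and its largest component concentrates near $\tfrac{2(q-1)}{q}L_1(X_t)$ (plus lower-order corrections). Since $q\in(1,2)$, $\tfrac{2(q-1)}{q}<1$, giving a geometric contraction of $L_1$ each time $\Lambda_t$ occurs. This is the content of Lemma~\ref{C344}. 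Your claim that the active-side largest component is $O(n^{2/3})$ ``by critical window estimates'' is therefore wrong precisely in the step that matters. The paper then combines this one-step contraction with the $\mathcal{R}_2$ control of Lemma~\ref{C345} in a multi-scale recursion (Lemma~\ref{Lemma2} iterated over the scales $g_{i+1}=B(\log g_i)^8$), because the per-step failure probability in Lemma~\ref{C344} is only $O(\hat L(X_t)^{-3})$, which is not $o(1/\log n)$ uniformly over the whole run; your ``union bound over $O(\log n)$ steps with failure $o(1/\log n)$'' glosses over exactly this difficulty.
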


Lemmas~\ref{Lemma1} and~\ref{lemma:burn-in-subphase3} are proved in Section~\ref{Appendix Burn-in}.
Combining them with Lemma~\ref{lemma:critical:burn:initial-cond} immediately yields the following theorem.

\begin{theorem}
	\label{theorem:critical:q<2:first-step}
	Let $\la=q$, $q \in (1,2)$ and let $X_0$ be an arbitrary random-cluster configuration of the complete $n$-vertex graph. 
	Then, with probability $\Omega(1)$, after $T=O(\log n)$ steps $\mathcal{R}_1(X_T) = O(n^{4/3})$.
\end{theorem}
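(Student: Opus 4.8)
The plan is to obtain Theorem~\ref{theorem:critical:q<2:first-step} by simply chaining the three burn-in lemmas together, using the Markov property to compose their conditional success probabilities, and splitting the run into three consecutive sub-phases governed by the current size of the largest component. The key preliminary choice is to fix once and for all a constant $\delta > 0$ small enough that Lemma~\ref{lemma:burn-in-subphase3} applies with that $\delta$; this same $\delta$ will then be the contraction target fed into Lemma~\ref{Lemma1}.

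First I would apply Lemma~\ref{lemma:critical:burn:initial-cond} to the arbitrary initial configuration $X_0$: there is a time $T_1 = O(\log n)$ such that $\mathcal{R}_2(X_{T_1}) = O(n^{4/3})$ with probability $\Omega(1)$. Conditioning on this event and treating $X_{T_1}$ as a fresh start (legitimate since the hypotheses of the remaining lemmas depend only on the current configuration), I distinguish two cases according to $L_1(X_{T_1})$. If $L_1(X_{T_1}) \le \delta n$, I do nothing and set $T_2 = 0$. Otherwise $L_1(X_{T_1}) \ge \delta n$, and since $\mathcal{R}_2(X_{T_1}) = O(n^{4/3})$, Lemma~\ref{Lemma1} (invoked with this particular $\delta$, which the lemma permits) yields a time $T_2 = O(1)$ with $\mathcal{R}_2(X_{T_1+T_2}) = O(n^{4/3})$ \emph{and} $L_1(X_{T_1+T_2}) \le \delta n$, again with probability $\Omega(1)$. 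In both cases, after $T_1 + T_2$ steps we are (conditionally) at a configuration $X'$ satisfying $\mathcal{R}_2(X') = O(n^{4/3})$ and $L_1(X') \le \delta n$. Finally I apply Lemma~\ref{lemma:burn-in-subphase3} starting from $X'$: with probability $\Omega(1)$, after a further $T_3 = O(\log n)$ steps the configuration $X_T$, where $T := T_1 + T_2 + T_3$, satisfies $\mathcal{R}_1(X_T) = O(n^{4/3})$.

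Composing the three (conditional) success probabilities via the Markov property gives overall probability $\Omega(1) \cdot \Omega(1) \cdot \Omega(1) = \Omega(1)$, while $T = O(\log n) + O(1) + O(\log n) = O(\log n)$, which is exactly the statement. The only points that require genuine care — rather than the routine bookkeeping of the composition — are the consistent choice of $\delta$ (it must simultaneously be the small threshold demanded by Lemma~\ref{lemma:burn-in-subphase3} and the target used in Lemma~\ref{Lemma1}) and the observation that the middle phase preserves the bound $\mathcal{R}_2 = O(n^{4/3})$ and not merely $\mathcal{R}_1$, so that Lemma~\ref{lemma:burn-in-subphase3} can indeed be applied afterward. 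The substantive difficulty of this part of the paper is of course entirely contained in the three lemmas themselves; the present theorem is their immediate corollary.
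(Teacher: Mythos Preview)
Your proposal is correct and matches the paper's approach exactly: the paper states outright that Theorem~\ref{theorem:critical:q<2:first-step} follows immediately by combining Lemmas~\ref{lemma:critical:burn:initial-cond}, \ref{Lemma1} and~\ref{lemma:burn-in-subphase3}, which is precisely the three-phase chaining you describe. Your care in choosing $\delta$ consistently and in handling the case split after the first sub-phase is appropriate and, if anything, more explicit than the paper itself.
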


\begin{remark}
	\label{remark:q=2}
The contraction of $L_1(X_t)$ established by Lemmas~\ref{Lemma1} and~\ref{lemma:burn-in-subphase3} only occurs
when $q \in (1,2)$; when $q > 2$ the quantity $L_1(X_t)$ may increase in expectation, 
whereas for $q=2$ we have $\E[L_1(X_{t+1}) \mid X_t] \approx L_1(X_t)$, and the contraction of the size of the largest component is due instead 
to fluctuations caused by a large second moment. 
(This is what causes the power law slowdown when $\la=q=2$.) 
\end{remark}

\begin{remark} Sub-steps \eqref{CMdynamics2} and \eqref{CMdynamics3} of the CM dynamics
are equivalent to replacing the active portion of the configuration by a $G(m,q/n)$ random graph, where $m$ is the number of active vertices.
Since $\E[m] = n/q$, one key challenge in the proofs of Lemmas~\ref{Lemma1} and~\ref{lemma:burn-in-subphase3}, and in fact in the entirety of our analysis,
is that the random graph $G(m,q/n)$ is critical or almost critical w.h.p.\ since $m \cdot q/n \approx 1$; consequently 
its structural properties are not well concentrated 
and cannot be maintained for the required $O(\log n)$ steps of the coupling.
This is one of the key reasons why the $\la = \lc(q) =q$
regime is quite delicate.
\end{remark}

\subsection{Coupling to the same component structure}
\label{subsec:couple}

For the second phase of the coupling, we assume that we start from a pair of configurations $X_0$, $Y_0$
such that $\mathcal{R}_1(X_0) = O(n^{4/3})$, $\mathcal{R}_1(Y_0) = O(n^{4/3})$.
The goal is to show that after $T = O(\log n)$ steps, with probability $\Omega(1/\log \log n)$, we reach two configurations $X_T$ and $Y_T$ with the same component structure; i.e., $L_j(X_T) = L_j(Y_T)$ for all $j \ge 1$. 
In particular, we prove the following.

\begin{theorem}
	\label{theorem:critical:q<2:second-step}
	Let $\la=q$, $q \in (1,2)$ and suppose $X_0, Y_0$ are random-cluster configurations such that
	$\mathcal{R}_1(X_0) = O(n^{4/3})$ and $\mathcal{R}_1(Y_0) = O(n^{4/3})$. Then, 
	there exists a coupling of the CM steps such that
	after $T=O(\log n)$ steps $X_T$ and $Y_T$ have the same component structure with probability $\Omega\left( (\log \log n)^{-1} \right)$.
\end{theorem}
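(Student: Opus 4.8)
The plan is to realize the coupling of Theorem~\ref{theorem:critical:q<2:second-step} in two stages: first match the \emph{large} components (those of size at least $\epsilon n^{2/3}$ for a suitably small constant $\epsilon>0$; since $\mathcal{R}_1 = O(n^{4/3})$ there are only $O(\epsilon^{-2}) = O(1)$ of them in each copy), and then match the remaining \emph{small} components. The structural fact we lean on throughout is that a percolation sub-step on $m$ active vertices replaces the active part by $G(m,q/n)$, and since $\mathcal{R}_1 = O(n^{4/3})$ forces the number of activated vertices $N(X)=\sum_i L_i(X)B_i$ to satisfy $N(X) = n/q + O(n^{2/3})$ with constant probability (Chebyshev, using $\var(N(X)) = \Theta(\mathcal{R}_1(X))$), this random graph always lies within the critical window; in particular its component sizes are $\Theta(n^{2/3})$ and $\mathcal{R}_1(G(m,q/n)) = \Theta(n^{4/3})$ with probability $\Omega(1)$.

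\textbf{Stage 2a: matching large components (two steps).} In the first step we run the two copies independently and condition on the event---of probability $\Omega(1)$ in each copy, as it concerns only $O(1)$ components---that \emph{all} large components of $X_0$ (resp.\ $Y_0$) are activated. After this step each copy has the canonical form ``$G(m,q/n)$ on the active vertices $\cup$ inactive components, all of size $<\epsilon n^{2/3}$'', and moreover $\mathcal{R}_1$ is now $\Theta(n^{4/3})$ (the lower bound coming from the fresh random graph). In the second step we couple the activation sub-steps so that (i) all large components of both copies are again activated, and (ii) conditioned on (i), the activated-vertex counts $m_X$ and $m_Y$ are \emph{equal}. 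Event (i) again has probability $\Omega(1)$; for (ii) we invoke a local limit theorem for $N(X)$: conditioned on (i), $N(X)$ and $N(Y)$ have equal mean $n/q$ (since $\sum_i L_i = n$ for both copies) and comparable variances $\Theta(n^{4/3})$, so their maximal coupling succeeds with probability $\Omega(1)$. Given $m_X = m_Y = m$, we identify the $m$ active vertices of the two copies and run the percolation identically; the resulting random graphs coincide, all large components now live inside this shared random graph, and hence $X_2, Y_2$ agree on all large components.

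\textbf{Stage 2b: matching small components ($O(\log n)$ steps).} Write $X_t = C^{(t)} \cup A_X^{(t)}$, where $C^{(t)}$ is the part already coupled (same component structure in both copies, via a size-preserving bijection of vertices) and $A_X^{(t)}, A_Y^{(t)}$ are the still-unmatched parts, consisting of small components only, with $|A_X^{(t)}| = |A_Y^{(t)}|$ (initially $n-m$). At each step we couple activations so that components of $C^{(t)}$ are activated identically in the two copies and the activated-vertex counts inside $A_X^{(t)}$ and $A_Y^{(t)}$ are equal---again a local limit theorem statement, since these counts have equal means $|A_X^{(t)}|/q$ and comparable variances $\Theta(\mathcal{R}_1(A_X^{(t)}))$. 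When this holds, the total activated counts agree; we run the percolation identically, the inactive components of $C^{(t)}$ (which are shared) together with the newly percolated blob form $C^{(t+1)}$, and the unmatched parts $A_X^{(t+1)}, A_Y^{(t+1)}$ are exactly the inactive components of $A_X^{(t)}, A_Y^{(t)}$. Thus $|A_X^{(t+1)}| = |A_Y^{(t+1)}|$ is maintained and $\mathbb{E}[\mathcal{R}_1(A_X^{(t+1)})] = \tfrac{q-1}{q}\mathcal{R}_1(A_X^{(t)})$, so after $T = O(\log n)$ steps $\mathcal{R}_1(A_X^{(T)}) < 1$, i.e.\ $A_X^{(T)} = A_Y^{(T)} = \emptyset$ and $X_T, Y_T$ have the same component structure. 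Keeping the largest components created at each percolation step at scale $\Theta(n^{2/3})$---so that $\mathcal{R}_1$ does not blow up and $C^{(t)}$ retains a well-behaved structure---is achieved via a sharp tail bound for the maximum of a symmetric random walk whose increments have varying magnitudes (the magnitudes being the sizes of the merged components), together with the tailored critical-random-graph estimates.

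\textbf{Main obstacle.} The crux is the accumulation of error over the $\Theta(\log n)$ steps of Stage~2b. Matching the activated-vertex counts in a single step via the local limit theorem succeeds with probability $1-o(1)$ when the second-moment discrepancy between the two copies is small, but only with constant probability while $\mathcal{R}_1(A_X^{(t)})$ and $\mathcal{R}_1(A_Y^{(t)})$ differ by a constant factor---which can persist for the first $\Theta(\log\log n)$ steps, the time for the geometric decay to drive the discrepancy below a polylogarithmic factor. Establishing that in the ``bulk'' of the steps the local-limit error is $o(1/\log n)$, so only $\Theta(\log\log n)$ steps incur a constant-factor loss, is precisely what produces the overall success probability $\Omega((\log\log n)^{-1})$; this is the technical heart of the argument. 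A secondary, pervasive difficulty is that because $G(m,q/n)$ is critical, none of the relevant quantities ($L_1$, $\mathcal{R}_1$, the number of large components, the activated count) is concentrated, so every estimate must come from a sharp second-moment, local-limit, or random-walk-maximum bound, uniform enough to be applied for $\Omega(\log n)$ consecutive steps, rather than from a routine concentration inequality.
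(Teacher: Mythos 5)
Your outline follows the paper's architecture (match large components first via a local limit theorem, then iterate to kill the unmatched small components), but two of your quantitative choices break the argument. First, your cutoff for ``large'' components is a constant multiple of $n^{2/3}$. That leaves only $O(1)$ large components, but it destroys the tool you rely on to equalize activation counts: the local limit theorem (Theorem~\ref{thm:prelim:llt-cor}) requires the largest summand to be $O(m^{2/3}g(m)^{-1})$ with $g(m)\to\infty$, i.e.\ of strictly lower order than the standard deviation $\sigma_m$ of the activated count, and it requires $\Omega(g(m)^{3\cdot2^{k-1}})$ components at each scale $n^{2/3}g(m)^{-2^k}$. With the cutoff $\epsilon n^{2/3}$ the largest ``small'' component is $\Theta(\sigma_m)$ and the number of components at that top scale is a non-concentrated $\Theta(1)$ random variable, so neither hypothesis can be verified; indeed a single Bernoulli summand of size comparable to $\sigma_m$ makes the distribution of the sum lumpy at scale $\sigma_m$ and the pointwise bound $\Pr[\hat A=a]=\Omega(\sigma_m^{-1})$ over a window of width $\sigma_m$ fails. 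This is precisely why the paper takes $B_\omega=n^{2/3}/\on$ with $\on\to\infty$ (slowly), and why Lemma~\ref{lemma:critical:q<2:second-step-first} first drives $\widetilde{\mathcal{R}}_\omega$ down to $O(n^{4/3}\on^{-1/2})$ before any matching is attempted.

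Second, the probability accounting does not close. In your Stage~2b you equalize the two unmatched activation counts by a maximal coupling justified by ``equal means and comparable variances''; but the LLT only yields an $\Omega(1)$ overlap, and the variances $\mathcal{R}_1(A_X^{(t)})$ and $\mathcal{R}_1(A_Y^{(t)})$ of the two unmatched parts need not agree even asymptotically, so the total variation distance between the two counts stays bounded away from $0$ and the per-step success is only $\Omega(1)$. Over $\Theta(\log n)$ steps that gives $n^{-\Theta(1)}$; even your own more optimistic accounting ($\Theta(\log\log n)$ steps each losing a constant factor) gives $(\log n)^{-\Theta(1)}$, which is far smaller than the claimed $\Omega((\log\log n)^{-1})$ and would only prove a mixing time of $O((\log n)^{1+c})$. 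The mechanism that achieves per-step success $1-o(1)$ is different: activate the unmatched components independently, producing a discrepancy $\mathcal{D}_t=O(\sqrt{Z_t})$, and then steer the activation of the \emph{matched} small components, viewed as a lazy symmetric random walk with increments at the scales $n^{2/3}\on^{-2^k}$, so as to hit $\mathcal{D}_t$ exactly (Theorem~\ref{thm:prelim:rw-coupling} and Lemma~\ref{lemma:critical:q<2:iterative-process}); the failure probability is then roughly $\sqrt{\on/h}$ when $Z_t\le n^{4/3}/h$, which is why the unmatched mass must already be $O(n^{4/3}\on^{-1/2})$ entering this stage and why the only phase paying a constant factor per step can be kept to $O(\log\log\log n)$ steps, yielding the $(\log\log n)^{-1}$ loss. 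You do list the random-walk-maximum bound among your tools, but you deploy it to control $L_1$ of the percolation clusters, which is not where it is needed.
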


Our coupling construction for proving Theorem \ref{theorem:critical:q<2:second-step} has two main sub-phases. The first is a two-step coupling after which the two configurations agree on all the components of size above a certain threshold $B_\omega = {n^{2/3}}/{\on}$, where $\on$ is a slowly increasing function. For convenience and definiteness we set $\on = \log \log \log \log n$.
In the second sub-phase we take care of matching the small component structures. 

We note that when the same number of vertices are activated from each copy of the chain, we can easily couple the percolation sub-step 
(with an arbitrary bijection between the activated vertices) and replace the configuration 
on the active vertices in both chains with the same random sub-graph; consequently, the component structure in the updated sub-graph would be identical. Our goal is thus to design a coupling of the activation of the components that activates the same number of vertices in both copies in every step.

In order for the initial two-step coupling to succeed,
certain (additional) properties of the configurations are required.
These properties are achieved with a continuation of the initial burn-in phase for a small number of $O(\log \on)$ steps.
For a random-cluster configuration $X$, 
let $\widetilde{\mathcal{R}}_\omega(X) = \sum_{j: L_j(X) \le B_\omega} L_j(X)^2$
and let $I(X)$ denote the number of isolated vertices of $X$.
Our extension of the burn-in period is captured by the following lemma.

\begin{lemma}
	\label{lemma:critical:q<2:second-step-first}
	Let $\la=q$, $q \in (1,2)$ and suppose $X_0$ is such that $\mathcal{R}_1(X_0) = O(n^{4/3})$. Then, there exists 
	$T=O(\log \on)$ and
	a constant $\beta > 0$
	such that 
	$ \widetilde{\mathcal{R}}_\omega(X_T) = O(n^{4/3}{\on}^{-1/2})$,
	$\mathcal{R}_1(X_T) = O(n^{4/3})$
	and $I(X_T) = \Omega(n)$ 
	with probability $\Omega(\on^{-\beta})$.
\end{lemma}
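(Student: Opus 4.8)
The plan is to continue the burn‑in phase for $T=\Theta(\log\on)$ further steps, following the evolution of both $\mathcal{R}_1$ and $\widetilde{\mathcal{R}}_\omega$. The mechanism is a one‑step recursion. In a single CM step from $X_t$, each connected component of $X_t$ is left untouched with probability $1-1/q$, while the $m$ active vertices — with $\E[m\mid X_t]=n/q$ and $\var(m\mid X_t)=\tfrac{q-1}{q^2}\mathcal{R}_1(X_t)$ — are resampled as a fresh graph $G(m,q/n)$. Decomposing $\widetilde{\mathcal{R}}_\omega(X_{t+1})$ into the contribution of inactive components of $X_t$ of size $\le B_\omega$ and that of new components of $G(m,q/n)$ of size $\le B_\omega$, the former has conditional expectation exactly $(1-1/q)\widetilde{\mathcal{R}}_\omega(X_t)$, and — provided $m$ is concentrated enough that $G(m,q/n)$ lies in the critical window, i.e.\ $m=n/q+O(n^{2/3})$ — the latter has expectation $O(n^{4/3}\on^{-1/2})$ (the truncated‑moment estimate below). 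The same decomposition gives $\E[\mathcal{R}_1(X_{t+1})\mid X_t]\le(1-1/q)\mathcal{R}_1(X_t)+O(n^{4/3})$ on this event. Iterating the recursion for $T$ steps and choosing $T$ just large enough that $(1-1/q)^T\le\on^{-1/2}$ — which needs only $T=\Theta(\log\on)$, since $1-1/q=(q-1)/q<\tfrac12$ for $q\in(1,2)$ — drives the first term of the recursion for $\widetilde{\mathcal{R}}_\omega$ below the ``equilibrium'' level $O(n^{4/3}\on^{-1/2})$ while keeping $\mathcal{R}_1$ at its equilibrium level $O(n^{4/3})$; and since a single resampling on $\Theta(n)$ vertices leaves a $\Theta(1)$ fraction of them isolated, the last step yields $I(X_T)=\Omega(n)$.

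The key new estimate is that whenever $m=n/q+O(n^{2/3})$,
\[
\E\big[\widetilde{\mathcal{R}}_\omega(G(m,q/n))\big]\;=\;\sum_{k=1}^{B_\omega}k^2\cdot\E\big[\#\{\text{components of size }k\text{ in }G(m,q/n)\}\big]\;=\;O\big(n^{4/3}\on^{-1/2}\big).
\]
Indeed, using the explicit formula for the expected number of size‑$k$ components in $G(m,q/n)$ together with Stirling's approximation, this expected number is $\Theta(m\,k^{-5/2})$ uniformly for $1\le k\le B_\omega$ — the threshold $B_\omega=n^{2/3}/\on$ is chosen precisely so that the relevant correction factors, of sizes $e^{-\Theta((mq/n-1)^2k)}$ and $e^{-\Theta(k^3/m^2)}$, are all $1-o(1)$ on this range — so the sum is $\Theta\big(m\sum_{k\le B_\omega}k^{-1/2}\big)=\Theta\big(m\,B_\omega^{1/2}\big)=\Theta(n^{4/3}\on^{-1/2})$. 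For this lemma only the upper bound is needed. This estimate, together with the contrast with $\mathcal{R}_1(G(m,q/n))=\Theta(n^{4/3})$ — which is dominated by components of size $\Theta(n^{2/3})\gg B_\omega$ — is what produces the $\on^{-1/2}$ gain.

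To make the recursion rigorous I would attach to each of the $T$ steps a ``good event'' recording: that $m$ is within $O(n^{2/3})$ of $n/q$ (Chebyshev, via $\var(m\mid X_t)=\Theta(\mathcal{R}_1(X_t))$); that the fresh $G(m,q/n)$ has $\mathcal{R}_1=O(n^{4/3})$, $\widetilde{\mathcal{R}}_\omega=O(n^{4/3}\on^{-1/2})$, $L_1=O(n^{2/3})$, and $\Omega(n)$ isolated vertices (Markov together with the near‑critical random‑graph estimates, plus a concentration bound for the isolated‑vertex count); and that the inactive contributions to $\mathcal{R}_1$ and $\widetilde{\mathcal{R}}_\omega$ do not exceed their $(1-1/q)$‑fraction means by more than the allowed slack (Chebyshev — essentially lossless for the $B_\omega$‑truncated sum, but only accurate up to constant factors for the full $\mathcal{R}_1$). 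Conditioning on all these events and inducting over the $T$ steps then yields $\widetilde{\mathcal{R}}_\omega(X_T)=O(n^{4/3}\on^{-1/2})$, $\mathcal{R}_1(X_T)=O(n^{4/3})$ and $I(X_T)=\Omega(n)$. I expect the main obstacle to be exactly this last bookkeeping — keeping $\mathcal{R}_1$ at $O(n^{4/3})$, rather than letting it inflate by a polynomial factor in $\on$, across the $\Theta(\log\on)$ steps. This needs genuine contraction, not mere boundedness: one must exploit $1-1/q<1$ (where the hypothesis $q<2$ enters), control the largest component $L_1$ so that the resampling fluctuations — which are of the same order $\Theta(n^{2/3})$ as the window width appearing in the conditions — stay in check, and carry $L_1$ and the second‑moment‑minus‑the‑largest along the induction, much as in the proofs of Lemmas~\ref{Lemma1} and~\ref{lemma:burn-in-subphase3}. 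Finally, because each step's good event can fail with a constant probability $\eta\in(0,1)$ bounded away from $0$ — the resampling genuinely moves $m$ on the scale $\sqrt{\mathcal{R}_1(X_t)}=\Theta(n^{2/3})$, so the $m$‑concentration event cannot be made to hold with probability $1-o(1)$ — the overall success probability over $T=\Theta(\log\on)$ steps is $(1-\eta)^{\Theta(\log\on)}=\Omega(\on^{-\beta})$ for a suitable $\beta>0$, which is exactly the probability claimed.
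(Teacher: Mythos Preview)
Your proposal is correct and matches the paper's argument: the same one-step contraction of $\widetilde{\mathcal{R}}_\omega$ and $\mathcal{R}_1$ via the inactive/fresh decomposition (the paper packages your key truncated-moment estimate as Lemma~\ref{lemma:prelim:small-cmpt-var}), iterated for $\Theta(\log\on)$ steps each succeeding with probability $\Omega(1)$. The bookkeeping you flag as the main obstacle is in fact simpler than you anticipate --- the paper does not track $L_1$ separately but just applies Markov's inequality to the inactive parts of both $\widetilde{\mathcal{R}}_\omega$ and $\mathcal{R}_1-\widetilde{\mathcal{R}}_\omega$ to obtain $\mathcal{R}_1(X_{t+1})\le(1-\tfrac{1}{2q})\mathcal{R}_1(X_t)+O(n^{4/3})\le Cn^{4/3}$ directly, and the hypothesis $q<2$ plays no role in this particular lemma.
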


The proof of Lemma~\ref{lemma:critical:q<2:second-step-first} is provided in Section~\ref{sec:second-step-first}.

With these bounds on $\widetilde{\mathcal{R}}_\omega(X_T)$, $\widetilde{\mathcal{R}}_\omega(Y_T)$, $I(X_T)$
and $I(Y_T)$, we construct the two-step coupling for matching the \emph{large} component structure. 
The construction crucially relies on a new local limit theorem (Theorem~\ref{thm:prelim:llt-cor}).
In particular, under our assumptions, when $\on$ is small enough, there are few components with sizes above $B_\omega$. Hence,
we can condition on the event that all of them are activated simultaneously. 
The difference in the number of active vertices generated by the activation of these large components can then be ``corrected'' by a coupling of the activation of the smaller components; for this we use our new {local limit theorem}. 

Specifically, our local limit theorem applies to the random variables corresponding to the number of activated vertices from the small components of each copy. We prove it using a result of Mukhin~\cite{Muk} and the fact that, among the small components, there are (roughly speaking) many components of many different sizes. 
To establish the latter we require a refinement of known random graph estimates (see Lemma~\ref{lemma:prelim:cmpt-count}).

To formally state our result we introduce some additional notation.
Let
$\mathcal{S}_{\omega}(X)$ be the set of connected components of $X$ with sizes greater than $B_\omega$.
At step $t$,
the activation of the components of two random-cluster configurations $X_t$ and $Y_t$ is done using a maximal matching $W_t$ between the components of $X_t$ and $Y_t$,
with the restriction that only components of equal size are matched to each other.
For an increasing positive function $g$ and each integer $k \ge 0$, define $\hat{N}_k(t, g) := \hat{N}_k(X_t,Y_t, g)$ as the number of matched pairs in $W_t$ whose component sizes are in the interval 
\[\mathcal{I}_{k}(g) = \Big[\frac{\vartheta n^{2/3}}{2g(n)^{2^k}},\frac{\vartheta  n^{2/3}}{g(n)^{2^k}}\Big],\]
where $\vartheta>0$ is a fixed large constant (independent of $n$). 

\begin{lemma}
	\label{lemma:critical:q<2:second-step-second}
	Let $\la=q$, $q \in (1,2)$ and suppose $X_0, Y_0$ are random-cluster configurations such that 
	$\mathcal{R}_1(X_0) = O(n^{4/3})$, $\widetilde{\mathcal{R}}_\omega(X_0) = O(n^{4/3}{\on}^{-1/2})$, $I(X_0)=\Omega(n)$ and similarly for $Y_0$. 
	Then, 
	there exists a two-step coupling of the CM dynamics such that
	$\mathcal{S}_{\omega}(X_2) = \mathcal{S}_{\omega}(Y_2)$ with probability 
	$\exp\left(-O(\on^9)\right)$.
	
	Moreover, $L_1(X_2) = O(n^{2/3} \on)$,	 
	$\mathcal{R}_2(X_2) = O(n^{4/3})$,
	$\widetilde{\mathcal{R}}_\omega(X_2) = O(n^{4/3}{\on}^{-1/2})$, 
	$I(X_2)=\Omega(n)$, 
	$\hat{N}_k(2,\on) = \Omega({\on}^{3 \cdot 2^{k-1}})$ 
	for all $k \ge 1$ such that $n^{2/3}\on^{-2^{k-1}}\rightarrow \infty$,
	and similarly for $Y_2$.
\end{lemma}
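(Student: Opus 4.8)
The plan is to realize the required two‑step coupling as a \emph{regularization} step followed by a \emph{matching} step.

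\smallskip\noindent\emph{First (regularization) step.} Run the two copies under an arbitrary coupling of the CM move and, for definiteness, force the few (at most $O(\on^2)$, since $\mathcal{R}_1(X_0),\mathcal{R}_1(Y_0)=O(n^{4/3})$) components of size $>B_\omega$ to be activated in each copy; this costs a factor $e^{-O(\on^2)}$. A CM step replaces the active portion by a fresh random graph $G(m,q/n)$ on the $m$ active vertices, and $m$ (the number of vertices in the activated components of $X_0$) has $\E[m]=n/q$, $\var(m)=O(\mathcal{R}_1(X_0))=O(n^{4/3})$, with individual summands at most $L_1(X_0)=O(n^{2/3})$; a Bernstein bound gives $|m-n/q|=O(n^{2/3}\on)$ except with probability $e^{-\Omega(\on)}$, so the new active part is a random graph in the weakly critical window with criticality parameter $O(\on)$. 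Feeding this into the random‑graph estimates of Lemma~\ref{lemma:prelim:cmpt-count}, and using the input bound $\widetilde{\mathcal{R}}_\omega(X_0)=O(n^{4/3}\on^{-1/2})$ to control the (now entirely small‑component) un‑activated part, we obtain with probability $1-e^{-\Omega(\on)}$ that $X_1$ --- and symmetrically $Y_1$ --- satisfies $L_1(X_1)=O(n^{2/3}\on)$, $\mathcal{R}_2(X_1)=O(n^{4/3})$, $I(X_1)=\Omega(n)$, $\widetilde{\mathcal{R}}_\omega(X_1)=\Theta(n^{4/3}\on^{-c})$ for some constant $c\in[1/2,1]$ (the lower bound coming from the $\Theta(\on^{3/2})$ components of size $\asymp B_\omega$, or the $\Theta(\on^{3})$ of size $\asymp n^{2/3}\on^{-2}$, that the random graph typically produces), and $\Omega(\on^{3\cdot 2^{k-1}})$ components of sizes in $\mathcal{I}_k(\on)$ for every $k$ in the stated range.

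\smallskip\noindent\emph{Second (matching) step.} Starting from such ``good'' $X_1,Y_1$, force every component of $X_1$ and of $Y_1$ of size $>B_\omega$ to be activated; there are $O(\on^2)$ of these in each copy, so this event has probability $e^{-O(\on^2)}$, and on it the total size $A_X$ (resp.\ $A_Y$) of the activated large components of $X_1$ (resp.\ $Y_1$) is a deterministic integer with $A_X\le L_1(X_1)+\mathcal{R}_2(X_1)/B_\omega=O(n^{2/3}\on)$. Writing $R_X$ (resp.\ $R_Y$) for the number of vertices activated from the remaining ($\le B_\omega$) components, the number of active vertices is $m_X^{(2)}=A_X+R_X$ and $m_Y^{(2)}=A_Y+R_Y$, so equalizing them is exactly achieving $R_X=R_Y+\Delta$ with $\Delta:=A_Y-A_X$, $|\Delta|=O(n^{2/3}\on)$. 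We couple the activations of the small components so as to realize the \emph{maximal coupling} of $R_X$ with $R_Y+\Delta$; conditioned on its success we couple the percolation sub‑steps through an arbitrary bijection between the now equinumerous active‑vertex sets, so $X_2$ and $Y_2$ acquire \emph{identical} active parts. Since all components of size $>B_\omega$ were activated, every such component of $X_2$ (and of $Y_2$) lies in this common active part, whence $\mathcal{S}_\omega(X_2)=\mathcal{S}_\omega(Y_2)$.

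\smallskip\noindent\emph{The crux.} One must show the maximal coupling succeeds with probability $\ge e^{-O(\on^9)}$; its success probability is $\sum_z\min\{\Pr[R_X=z],\Pr[R_Y=z-\Delta]\}$. Each of $R_X,R_Y$ is a sum of independent $\mathrm{Bernoulli}(1/q)$ variables weighted by small‑component sizes, with variance $\Theta(\widetilde{\mathcal{R}}_\omega(\cdot))$ (so standard deviation $\sigma=\Theta(n^{2/3}\on^{-c/2})$) and lattice span $1$ (the $\Omega(n)$ isolated vertices force span $1$). Our new local limit theorem (Theorem~\ref{thm:prelim:llt-cor}), applicable precisely because $X_1$ and $Y_1$ have many components of many distinct sizes --- quantitatively, the $\Omega(\on^{3\cdot 2^{k-1}})$ bounds at all scales $\mathcal{I}_k$ --- gives $\Pr[R_X=z]=(1+o(1))(2\pi\sigma_X^2)^{-1/2}\exp(-(z-\mu_X)^2/2\sigma_X^2)$ uniformly over $z$ within $\mathrm{poly}(\on)$ standard deviations of $\mu_X$, and likewise for $R_Y$. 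The means obey $\mu_X-(\mu_Y+\Delta)=(1/q-1)\Delta$ (using $\mu_X=\tfrac1q(n-A_X)$, $\mu_Y=\tfrac1q(n-A_Y)$), of magnitude $d=\Theta(|\Delta|)=O(n^{2/3}\on)$, and $\sigma_X\asymp\sigma_Y\asymp\sigma$; thus the centres of the two (nearly Gaussian) mass functions are $d/\sigma=O(\on^{1+c/2})=O(\on^{3/2})$ standard deviations apart, and a direct estimate of the overlap of two such mass functions yields $\Theta(\sigma/d)\,e^{-\Theta(d^2/\sigma^2)}=e^{-O(\on^{3})}$, with \emph{no} spurious $n^{-\Omega(1)}$ factor --- the $1/\sigma$ normalizations cancel against the $\Theta(\sigma)$ width of the overlap region. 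Hence the coupling succeeds with probability $\ge e^{-O(\on^2)}\cdot e^{-O(\on^{3})}\ge e^{-O(\on^9)}$. This step is the main obstacle: one must verify the hypotheses of Theorem~\ref{thm:prelim:llt-cor} in a moderate‑deviation regime and carry out the overlap estimate precisely enough to avoid the $n^{-\Omega(1)}$ loss that a naive ``hit a fixed value with a fresh symmetric random walk'' argument would incur.

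\smallskip\noindent\emph{The ``moreover'' bounds.} Conditioned on success, $m^{(2)}:=m_X^{(2)}=m_Y^{(2)}=n/q+O(n^{2/3}\on)$ (the conditioned law of $R_X$ concentrates, within $O(\sigma)$, around the crossover point, which lies $O(n^{2/3}\on)$ from $\mu_X$), so --- the percolation randomness being independent of the activations --- the common active part is a fresh random graph in the weakly critical window with parameter $O(\on)$; by Lemma~\ref{lemma:prelim:cmpt-count} it has, except with conditional probability $e^{-\Omega(\on)}$, $L_1=O(n^{2/3}\on)$, $\mathcal{R}_2=O(n^{4/3})$, $\widetilde{\mathcal{R}}_\omega=O(n^{4/3}\on^{-1/2})$, $I=\Omega(n)$, and $\Omega(\on^{3\cdot 2^{k-1}})$ components in each $\mathcal{I}_k(\on)$ in the stated range. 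Adding the un‑activated components of $X_1$ --- all of size $\le B_\omega$, contributing $\widetilde{\mathcal{R}}_\omega$‑mass $O(n^{4/3}\on^{-1/2})$ and creating no new large component --- preserves all of these for $X_2$; and since $X_2$ and $Y_2$ have identical active parts, every one of those components is matched in $W_2$, so $\hat{N}_k(2,\on)=\Omega(\on^{3\cdot 2^{k-1}})$. Finally, all failure events accumulated above --- the $e^{-\Omega(\on)}$ concentration/random‑graph events of the two steps and the conditional $e^{-\Omega(\on)}$ event here --- are $o(1)$ and enter multiplicatively, while the two forcing events and the maximal coupling contribute $e^{-O(\on^9)}$ in total, so the overall probability of (matched large structure) $\wedge$ (all ``moreover'' bounds) is $e^{-O(\on^9)}$.
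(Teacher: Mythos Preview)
Your two-step scheme is natural, but the ``crux'' step has a genuine gap. Theorem~\ref{thm:prelim:llt-cor} gives only
\[
\Pr[R_X=z]=\frac{1}{\sqrt{2\pi}\,\sigma_X}\exp\Bigl(-\frac{(z-\mu_X)^2}{2\sigma_X^2}\Bigr)+o(\sigma_X^{-1}),
\]
with an \emph{additive} $o(\sigma_X^{-1})$ error. You need to evaluate $\Pr[R_X=z]$ at points $z$ that are $d/(2\sigma)=\Theta(\on^{3/2})$ standard deviations from $\mu_X$ (this is where the two bell curves cross). There the Gaussian main term is $\sigma^{-1}e^{-\Theta(\on^3)}$, which is swamped by the $o(\sigma^{-1})$ error; the local limit theorem as stated says nothing in this regime. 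Your overlap calculation $\Theta(\sigma/d)e^{-\Theta(d^2/\sigma^2)}$ is valid for genuine Gaussians, but transferring it to $R_X,R_Y$ would require a \emph{moderate-deviation} local limit theorem (multiplicative error control out to $\mathrm{poly}(\on)$ standard deviations), which is a substantially stronger statement than Theorem~\ref{thm:prelim:llt-cor} and is neither proved in the paper nor easy to extract from Mukhin's result.

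The paper sidesteps this entirely by a padding trick you are missing. In the first step it also ensures $\Omega(\on^9)$ components with sizes in a special interval $J=[cn^{2/3}\on^{-6},2cn^{2/3}\on^{-6}]$. In the second step, before invoking the LLT, it augments the smaller of $C_X,C_Y$ by a carefully chosen subset $C_{\min}\subset W_Y$ of these $J$-components so that the residual imbalance is at most $2cn^{2/3}\on^{-6}$, i.e.\ $o(\sigma)$ rather than your $\Theta(\on^{3/2})\sigma$. Now the two LLT approximations overlap in the \emph{bulk}, and Theorem~\ref{thm:prelim:llt-cor} yields $\Omega(1)$ coupling probability directly. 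The $e^{-O(\on^9)}$ in the statement comes not from any Gaussian tail but from forcing the $O(\on^9)$ components of $C_X\cup C_Y\cup C_{\min}$ to all be simultaneously active. A secondary issue: several of your ``with probability $1-e^{-\Omega(\on)}$'' claims (for $\mathcal{R}_2$, $\widetilde{\mathcal{R}}_\omega$, $L_1$) actually rest on Markov's inequality via Lemmas~\ref{lemma:prelim:critical-var} and~\ref{lemma:prelim:small-cmpt-var} and only hold with probability $\Omega(1)$; this does not affect the final bound but should be stated correctly.
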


From the first part of the lemma we obtain two configurations that agree on all of their large components, as desired, 
while the second part guarantees additional structural properties for the resulting configurations 
so that the next sub-phase of the coupling can also succeed with the required probability.
The proof of Lemma~\ref{lemma:critical:q<2:second-step-second} is given in Section~\ref{sec:coup:llt}.

In the second sub-phase,
after the large component are matched, 
we can design a coupling 
that activates exactly the same number of vertices from each copy of the chain.
To analyze this coupling we use a precise estimate on the distribution of the maximum of symmetric random walks over integers (with steps of different sizes).
We are first required to run the chains coupled for $T=O(\log \on)$ steps,
so that certain additional structural properties appear.
Let $M (X_t)$ and $M(Y_t)$ be the components in the matching $W_t$ that belong to $X_t$ and $Y_t$, respectively, and let $D(X_t)$ and $D(Y_t)$ be the complements of $M (X_t )$ and $M (Y_t)$.
Let 
$$
Z_t = \sum\nolimits_{\mathcal{C} \in D(X_t) \cup D(Y_t)} |\mathcal{C}|^2.
$$

\begin{lemma}
	\label{lemma:critical:q<2:second-step-2.5}
	Let $\la=q$, $q \in (1,2)$. 
	Suppose $X_0$ and $Y_0$ are random-cluster configurations such that 
	$\mathcal{S}_{\omega}(X_0) = \mathcal{S}_{\omega}(Y_0)$, 
	and $\hat{N}_k(0, \on) = \Omega({\on}^{3 \cdot 2^{k-1}})$ 
	for all $k \ge 1$ such that $n^{2/3}\on^{-2^{k-1}}\rightarrow \infty$.
	Suppose also that 
	$L_1(X_0) = O(n^{2/3} \on)$,	 
	$\mathcal{R}_2(X_0) = O(n^{4/3})$,
	$\widetilde{\mathcal{R}}_\omega(X_0) = O(n^{4/3}{\on}^{-1/2})$, 
	$I(X_0)=\Omega(n)$,
	and similarly for $Y_0$.
	
	Then, there exists a coupling of the CM steps
	such that with probability $\exp(-O ( (\log \on)^2))$ after $T=O(\log \on)$ steps:
	$\mathcal{S}_{\omega}(X_T) = \mathcal{S}_{\omega}(Y_T)$,
	$Z_T = O(n^{4/3}{\on}^{-1/2})$,
	$\hat{N}_k(T, \on^{1/2})= \Omega({\on}^{3 \cdot 2^{k-2}})$
	for all $k \ge 1$ such that $n^{2/3}\on^{-2^{k-1}}\rightarrow \infty$,
	$\mathcal{R}_1(X_T) = O(n^{4/3})$, 
	$I(X_T) = \Omega(n)$,
	and similarly for $Y_T$. 
\end{lemma}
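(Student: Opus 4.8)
The plan is to prove Lemma~\ref{lemma:critical:q<2:second-step-2.5} by induction on the step count, running the two copies under a single one-step coupling that is iterated $T=O(\log\on)$ times, and maintaining throughout a compound invariant that bundles together all the listed properties with slightly relaxed versions of the reservoir bounds. After $t$ steps the invariant asserts that $\mathcal{S}_\omega(X_t)=\mathcal{S}_\omega(Y_t)$, that $Z_t=O(n^{4/3}\on^{-1/2})$, that $\mathcal{R}_1(X_t),\mathcal{R}_1(Y_t)=O(n^{4/3})$, $L_1(X_t),L_1(Y_t)=O(n^{2/3}\on)$, $\widetilde{\mathcal{R}}_\omega(X_t),\widetilde{\mathcal{R}}_\omega(Y_t)=O(n^{4/3}\on^{-1/2})$ and $I(X_t),I(Y_t)=\Omega(n)$, and that the matched-pair reservoirs are controlled: for a parameter $g_t$ interpolating between $\on$ (at $t=0$) and $\on^{1/2}$ (at $t=T$), one has $\hat{N}_k(t,g_t)=\Omega(\on^{e_k})$ with exponents $e_k$ no smaller than those demanded at time $T$. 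The $t=T$ instance of this invariant is exactly the statement of the lemma. The entire difficulty is to show that one coupled step preserves the invariant with probability at least $\exp(-O(\log\on))=\on^{-O(1)}$; multiplying the per-step success probabilities over the $T=O(\log\on)$ steps then yields the claimed $\exp(-O((\log\on)^2))$.

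The one-step coupling is organized around the following fact: if the two copies activate the same number of vertices, the percolation sub-step can be coupled through an arbitrary bijection between the active sets, so that both copies receive the identical random sub-graph; since the matched dormant components---in particular every component of $\mathcal{S}_\omega$---remain in place and agree, the updated large-component structures then coincide and $\mathcal{S}_\omega$ stays matched. Accordingly the coupling (i) activates each matched pair of components jointly (both or neither), reserving a tunable sub-collection among the reservoir pairs counted by the $\hat{N}_k$; (ii) activates the unmatched components of $D(X_t)$ and $D(Y_t)$, which injects a random, mean-zero-by-symmetry discrepancy $\Delta_t$ into the difference between the numbers of active vertices, of typical magnitude $O(\sqrt{Z_t})=O(n^{2/3}\on^{-1/4})$; and (iii) uses the tunable reservoir pairs, whose sizes lie in the geometric scales $\mathcal{I}_k(g_t)$ with multiplicities $\Omega(\on^{\Theta(2^k)})$---calibrated so that the achievable signed partial sums at one scale fill in the granularity of the next coarser scale---together with the $\Omega(n)$ isolated vertices, to cancel $\Delta_t$ down to a residual that is then closed by the isolated vertices, so that the active sets have equal size (or, at worst, differ by $n^{1/3-\Omega(1)}$ vertices, which whp in $n$ cannot reach any large component of the freshly sampled near-critical graph). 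That this cancellation succeeds with probability $\on^{-O(1)}$, rather than exponentially small in $\on$, is exactly where the new local limit theorem (Theorem~\ref{thm:prelim:llt-cor}) enters: it shows that $\Delta_t$ is ``spread out'' at the scale $\sqrt{Z_t}$ and hits targets with probability $\Omega(1/\sqrt{Z_t})$, and combining this with the hierarchy of reservoir scales keeps the number of one-sided activations, and hence the probability loss per step, polynomial in $\on$.

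It then remains to verify that the step restores each part of the invariant with the same $\on^{-O(1)}$ probability. The matching $W_{t+1}$ is taken maximal; $\mathcal{S}_\omega(X_{t+1})=\mathcal{S}_\omega(Y_{t+1})$ follows from the coupled-percolation argument above. For $\mathcal{R}_1=O(n^{4/3})$, $L_1=O(n^{2/3}\on)$, $\widetilde{\mathcal{R}}_\omega=O(n^{4/3}\on^{-1/2})$, $I=\Omega(n)$, and for the regeneration of the reservoir counts $\hat{N}_k$ at each relevant scale (including the new coarsest scale $\mathcal{I}_0(\on)$ that the $\on\to\on^{1/2}$ shift of the invariant requires), we appeal to the refined critical random-graph estimates (Lemma~\ref{lemma:prelim:cmpt-count} and its companions): with $m\approx n/q$ active vertices the fresh graph $G(m,q/n)$ is near-critical, so whp it contributes $O(n^{4/3})$ to the sum of squares, has a largest component of size $O(n^{2/3}\cdot\mathrm{polylog})$, has $\Omega(n)$ isolated vertices, and has $\Theta(\cdot)$ components in each size window; combined with the surviving matched pieces these meet the (relaxed) reservoir targets. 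Finally, $Z_{t+1}=O(n^{4/3}\on^{-1/2})$ is obtained by treating the unmatched second moment as a supermartingale-type quantity---each unmatched component is activated with probability $1/q$, after which it rejoins the coupled active region and is re-matched, so $\E[Z_{t+1}\mid\cdot]\le(1-\Omega(1))Z_t+(\text{small re-injection})$---and bounding its fluctuations, together with the cumulative effect of the per-step discrepancies $\Delta_t$ over the $O(\log\on)$ steps, via our sharp estimate for the running maximum of a symmetric random walk on $\Z$ with varying step sizes.

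The main obstacle, and the technical core of the proof, is step (iii): coupling the number of activated vertices in the two copies using only the limited ``currency'' of reservoir pieces and isolated vertices while giving up only a $\on^{-O(1)}$ factor in probability at each step. This is delicate precisely because the fresh graph $G(m,q/n)$ is near-critical at every one of the $O(\log\on)$ steps, so none of the relevant quantities---$\Delta_t$, the reservoir multiplicities, $L_1$, the isolated-vertex count---concentrates, and all control is obtained only ``with probability $\on^{-O(1)}$''; moreover the reservoir must simultaneously be consumed to perform the cancellation and be regenerated by the very same percolation step for the induction to proceed. It is this two-way pressure on the reservoir---finely calibrated by the geometric scales $\vartheta n^{2/3}g_t^{-2^k}$, the large-power multiplicities, and the $\on\to\on^{1/2}$ slackening of the invariant---that makes the bookkeeping nontrivial, and it is here that the local limit theorem and the symmetric-random-walk-maximum estimate are used in tandem.
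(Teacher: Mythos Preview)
Your invariant is internally inconsistent, and this hides the main point of the lemma. You assert that $\mathcal{R}_1(X_t)=O(n^{4/3})$ and $L_1(X_t)=O(n^{2/3}\on)$ simultaneously, but $\mathcal{R}_1\ge L_1^2$, so the first bound forces $L_1=O(n^{2/3})$; meanwhile the hypotheses only give $\mathcal{R}_2(X_0)=O(n^{4/3})$ and $L_1(X_0)=O(n^{2/3}\on)$, so $\mathcal{R}_1(X_0)$ can be as large as $\Theta(n^{4/3}\on^2)$ and your invariant already fails at $t=0$. The entire purpose of this phase is to \emph{contract} $L_1$ from $O(n^{2/3}\on)$ down to $O(n^{2/3})$ over the $T=O(\log\on)$ steps, and you provide no mechanism for this. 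Your claim that the fresh graph is near-critical with $m\approx n/q$ is also wrong for the same reason: if the largest component is activated, $m$ is shifted by $(1-1/q)L_1(X_t)=\Theta(n^{2/3}\on)$, so the percolation has $\lambda=\Theta(\on)$, not $O(1)$; and if it is not activated, $L_1$ cannot shrink at all.

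The paper's proof tracks $\mathcal{R}_2$ (not $\mathcal{R}_1$) and a geometric contraction $L_1(X_{t+1})\le\alpha L_1(X_t)$ with $\alpha<1$ as separate parts of the invariant. The contraction is obtained by \emph{conditioning} on the largest component being activated (probability $1/q$, absorbed into the per-step $\Omega(\on^{-1})$), after which the percolation is slightly supercritical and Lemma~\ref{C344} gives $L_1(\text{new})\approx\frac{2(q-1)}{q}L_1(X_t)$; it is exactly here that $q<2$ is used. After $C\log\on$ contractions one gets $L_1=O(n^{2/3})$ and hence $\mathcal{R}_1=O(n^{4/3})$. Along the way the matching of active counts is done purely by the local limit theorem (Lemma~\ref{lemma:activation-second-step-2.5}), not by the scale-by-scale random-walk cancellation you describe---that technique belongs to the next phase (Lemma~\ref{lemma:critical:q<2:second-step-third}), where $L_1$ is already $O(n^{2/3})$. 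Finally, the $\on\!\to\!\on^{1/2}$ switch in the reservoir scales is not done by interpolation; the paper maintains the $\hat N_k(\cdot,\on)$ bounds throughout and performs one extra step at the end, once $\mathcal{R}_1=O(n^{4/3})$, to regenerate reservoirs at the $\on^{1/2}$ scales via Corollary~\ref{lemma:critical:q<2:maintain-trees}.
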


\noindent The proof of Lemma~\ref{lemma:critical:q<2:second-step-2.5} also uses 
our local limit theorem (Theorem~\ref{thm:prelim:llt-cor}) and
is provided in Section~\ref{subsec:general-llt}.

The final step of our construction is a coupling of the activation of the components
of size less than $B_\omega$, so that exactly the same number of vertices
are activated from each copy in each step w.h.p.\

\begin{lemma}
	\label{lemma:critical:q<2:second-step-third}
	Let $\la=q$, $q \in (1,2)$ and suppose $X_0$ and $Y_0$ are random-cluster configurations such that
	$\mathcal{S}_{\omega}(X_0) = \mathcal{S}_{\omega}(Y_0)$,
	$Z_0 = O(n^{4/3}{\on}^{-1/2})$,
	and 	$\hat{N}_k\left(0, \on^{1/2}\right)= \Omega({\on}^{3 \cdot 2^{k-2}})$
	for all $k \ge 1$ such that $n^{2/3}\on^{-2^{k-1}}\rightarrow \infty$.
	Suppose also that 
	$\mathcal{R}_1(X_0) = O(n^{4/3})$, 
	$I(X_0) = \Omega(n)$
	and similarly for $Y_0$. 	
	Then, there exist a coupling of the CM steps and a constant $\beta > 0$ such that 
	after $T=O(\log n)$ steps, $X_T$ and $Y_T$ have the same component structure with probability 
	$\Omega \left( (\log \log \log n)^{-\beta} \right)$.
\end{lemma}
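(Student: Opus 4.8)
The plan is to design an explicit coupling that activates the same number of vertices from each copy in every step, using the structural hypotheses on $X_0,Y_0$ as invariants that are maintained (with the stated probabilities) for the $O(\log n)$ steps required. By the discussion preceding Lemma~\ref{lemma:critical:q<2:second-step-second}, once the activated vertex counts agree we may couple the percolation sub-step via an arbitrary bijection, so the two configurations acquire an identical random subgraph on the active vertices. The difficulty is entirely in the activation sub-step: each component is activated independently with probability $1/q$, and we must correlate these Bernoulli choices across the two copies so that the \emph{total} number of activated vertices matches exactly. We will do this in the usual way: first couple the activation of the large components $\mathcal{S}_\omega(X_t) = \mathcal{S}_\omega(Y_t)$ perfectly (they are in bijection and of equal sizes, so activate matched pairs together); this leaves a discrepancy of $0$ from the large components. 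Then, among the small components ($\le B_\omega$), activate the matched pairs in $W_t$ using a coupling of two symmetric integer-valued random walks whose increments are $\pm(\text{component size})$ when the two copies disagree on activating that pair — the key point being that the \emph{net} displacement of this walk equals the difference in activated vertices contributed by matched small components, and we need this net displacement to be cancellable by the unmatched small components $D(X_t)\cup D(Y_t)$.

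The main technical engine is the local limit theorem (Theorem~\ref{thm:prelim:llt-cor}): applied to the number of activated vertices among the unmatched small components of one copy, it shows that this count hits any prescribed target value in a window of width $O(\sqrt{Z_t})$ around its mean with probability $\Omega(Z_t^{-1/2})$, \emph{uniformly}. Concretely, I would (i) use the symmetric-random-walk maximum estimate to control the discrepancy generated by the matched small components — here the hypothesis $\hat N_k(0,\on^{1/2}) = \Omega(\on^{3\cdot 2^{k-2}})$ for all relevant $k$ guarantees enough matched pairs of enough distinct sizes that the walk's range stays $O(n^{2/3}\cdot\mathrm{polylog})$ with good probability, which is the regime where the correction can absorb it; (ii) use the LLT on the unmatched components $D(X_t)$ of the copy with the \emph{smaller} large-component-plus-matched contribution to activate exactly the number of vertices needed to equalize the two totals, and independently activate $D(Y_t)$ of the other copy with its unconditioned law — the probability both land correctly is $\Omega(Z_t^{-1}) = \Omega(n^{-4/3}\on^{1/2})$ per step... but this is where one must be careful, since $n^{-4/3}\on^{1/2}$ is far too small to pay $O(\log n)$ times. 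The resolution — and this is why the lemma only claims success probability $\Omega((\log\log\log n)^{-\beta})$ rather than $\Omega(1)$ — is that the target discrepancy to be corrected is itself only $O(n^{2/3}\,\mathrm{polylog})$ after the matched-component walk, so the LLT is applied at a \emph{local} scale: we condition the unmatched-component activation count to lie in its natural $\Theta(\sqrt{Z_t})$-window (probability $\Omega(1)$ by the CLT part) and then hit the exact target \emph{within} that window, which by the LLT costs only $\Omega(1/\sqrt{Z_t}) \cdot \sqrt{Z_t} = \Omega(1)$ amortized — more precisely the per-step cost is a constant, and the only genuinely small factor, of order $\on^{-\beta} = (\log\log\log\log n)^{-\beta}$, comes from the one-time event (inherited from the input hypotheses and re-established at the end) that the various second-moment and isolated-vertex bounds survive.

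The step-by-step order I would follow: \emph{Step 1} — verify that the coupling described above is a valid coupling of CM steps (each marginal is a genuine CM step: large components coupled trivially, small matched components coupled via a maximal coupling of the walk increments, unmatched components of the two copies coupled via their independent maximal couplings against the conditioned laws). \emph{Step 2} — using the symmetric-random-walk maximum bound (the paper's ingredient (ii)) together with $\hat N_k(0,\on^{1/2})=\Omega(\on^{3\cdot2^{k-2}})$, show that with probability $\exp(-O((\log\on)^2))$ the matched small components generate a discrepancy of magnitude $O(n^{2/3}\mathrm{polylog}\,\on)$, while simultaneously a $\Theta(1)$ fraction of matched pairs in each band $\mathcal{I}_k$ are \emph{not} activated (so $\hat N_k$ only halves, explaining the $\on^{3\cdot 2^{k-2}}$ in the conclusion with exponent halved). \emph{Step 3} — apply Theorem~\ref{thm:prelim:llt-cor} to the unmatched-component activation counts, using $Z_0 = O(n^{4/3}\on^{-1/2})$ and $I(X_0)=\Omega(n)$ (the isolated vertices supply the many unit-size components the LLT needs), to hit the exact target and equalize the totals with probability $\Omega(1)$ per step; iterate for $O(\log n)$ steps, at each step re-deriving $\mathcal{R}_1 = O(n^{4/3})$, $Z_t = O(n^{4/3}\on^{-1/2})$, $I(X_t)=\Omega(n)$, and the $\hat N_k$ bounds from a standard critical-random-graph analysis of the percolation sub-step (the paper's ingredient (iii), Lemma~\ref{lemma:prelim:cmpt-count}), which holds w.h.p.\ and is what forces $T = O(\log n)$ rather than $O(1)$ since the small-component structure equalizes only gradually. \emph{Step 4} — observe that once the activated counts agree for $O(\log n)$ consecutive steps and the percolation is coupled identically, the small-component structures of $X_T$ and $Y_T$ coincide (the large ones already did), giving $L_j(X_T)=L_j(Y_T)$ for all $j$; collect the probabilities, where the per-step $\Omega(1)$'s over $O(\log n)$ steps would collapse unless handled together — hence one runs the whole $O(\log n)$-step block as a single coupling event and shows it succeeds with probability $\Omega((\log\log\log n)^{-\beta})$ by a union bound over the $O(\log n)$ bad events, each of probability $o((\log n)^{-1}(\log\log\log n)^{-\beta})$.

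The main obstacle, I expect, is \emph{Step 3}'s bookkeeping: one must verify that the LLT's hypotheses — enough distinct small component sizes with enough multiplicity, and a second moment $Z_t$ neither too large (else the correction window overshoots the available unmatched mass) nor too small (else there is not enough to correct with) — are preserved under the coupled percolation sub-step for $\Theta(\log n)$ steps, despite the sub-step being a \emph{critical} random graph $G(m,q/n)$ with $mq/n \approx 1$ whose component structure is notoriously non-concentrated. This is precisely the recurring difficulty flagged in the remarks after Theorem~\ref{theorem:critical:q<2:first-step}, and the way through is to track $\mathcal{R}_1$ and $\widetilde{\mathcal{R}}_\omega$ as Lyapunov-type quantities with drift toward their claimed scales, combined with the refined component-count estimate of Lemma~\ref{lemma:prelim:cmpt-count} to control the lower tail of $\hat N_k$.
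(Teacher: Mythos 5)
Your high-level plan (equalize the activation counts at every step, couple the percolation sub-step bijectively, and let the unmatched mass $Z_t$ die out geometrically over $O(\log n)$ steps) matches the paper's, but two of your core mechanisms run in the wrong direction relative to what the hypotheses support, and the probability accounting does not close.

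First, you apply the local limit theorem to the \emph{unmatched} components $D(X_t)\cup D(Y_t)$ and use the matched small components only to generate a controlled random-walk discrepancy. The hypotheses of Theorem~\ref{thm:prelim:llt-cor} --- $\Omega(n)$ unit-size summands and $\Omega(\on^{3\cdot 2^{k-1}})$ summands in each band $\mathcal{I}_k$ --- are guaranteed only for the \emph{matched} part (that is what $I(X_0)=\Omega(n)$ and the $\hat N_k$ hypotheses give you); the unmatched components are constrained only through $Z_0=\sum|\mathcal{C}|^2$, and nothing prevents $D(X_t)$ from consisting of a few components of a single size, for which no local limit theorem holds and no exact-hit coupling against $D(Y_t)$ is available. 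The paper's coupling (Lemma~\ref{lemma:critical:q<2:iterative-process}) runs in the opposite direction: the unmatched components are activated \emph{independently}, Hoeffding bounds the resulting discrepancy by $O(\sqrt{\eta Z_t})$, and that discrepancy is then cancelled in stages by the matched pairs in the dyadic bands $\mathcal{I}_1,\dots,\mathcal{I}_{k^*}$ via the random-walk maximum estimate (Theorem~\ref{thm:prelim:rw-coupling}), with the matched isolated vertices absorbing the final $O(n^{1/3})$ remainder through a binomial coupling (Lemma~\ref{lemma:binomialcoupling}). No local limit theorem is needed in this lemma at all; it is the tool for Lemmas~\ref{lemma:critical:q<2:second-step-second} and~\ref{lemma:critical:q<2:second-step-2.5}.

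Second, your closing union bound --- $O(\log n)$ bad events, each of probability $o\bigl((\log n)^{-1}(\log\log\log n)^{-\beta}\bigr)$ --- is unattainable at time $0$: with $Z_0=\Theta(n^{4/3}\on^{-1/2})$ and $\on=\log\log\log\log n$, any activation coupling of this type fails per step with probability at least polynomial in $1/\on$, which dwarfs $1/\log n$. Your fallback of ``$\Omega(1)$ per step'' compounds to $n^{-\Theta(1)}$ over $O(\log n)$ steps, not $(\log\log\log n)^{-\beta}$. The missing idea is the bootstrap the paper implements as four phases: conditioned on success, $Z_t$ contracts by a factor $1-1/q$ per step, so after a phase of length $T_i$ the per-step failure probability available for the next phase is small enough to sustain exponentially more steps (Claim~\ref{longClaim}, applied with successively larger functions $g$ and $h$). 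Only the first phase, of length $\Theta(\log\log\log n)$ with constant per-step success probability, is paid for at full price, and that is exactly where the $(\log\log\log n)^{-\beta}$ in the statement comes from. Without this multi-scale structure the argument does not produce the claimed success probability.
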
	

We comment briefly on how we prove this lemma.
Our starting point is two configurations with the same ``large'' component structure;
i.e., $\mathcal{S}_{\omega}(X_0) = \mathcal{S}_{\omega}(Y_0)$.
We use the maximal matching $W_0$ to couple the activation
of the large components in $X_0$ and $Y_0$. The small components \emph{not matched} by
$W_0$, i.e., those counted in $Z_0$, are then activated independently.
This creates a discrepancy $\mathcal{D}_0$ between the number of active vertices from each copy.
Since $\E[\mathcal{D}_0] = 0$ and $\var(\mathcal{D}_0) = \Theta(Z_0) = \Theta({n^{4/3}}{\on^{-1/2}})$, it follows from Hoeffding's inequality that $\mathcal{D}_0 \le {n^{2/3}}{\on^{-1/4}}$ w.h.p.
To fix this discrepancy, we use the small components \emph{matched} by $W_0$.
Specifically, under the assumptions in Lemma~\ref{lemma:critical:q<2:second-step-third},
we can construct a coupling of the activation of the small components so that the difference in the number of activated vertices from the small components from each copy is exactly $\mathcal{D}_0$ with probability $\Omega(1)$. 
This part of the construction utilizes random walks over the integers; 
in particular, we use a lower bound for the maximum of such a random walk.

We need to repeat this process until $Z_t = 0$; this takes $O(\log n)$ steps since $Z_t \approx (1-1/q)^t Z_0$. 
However, there are a few complications. 
First, the initial assumptions on the component structure of the configurations are not preserved for this many steps w.h.p., 
so we need to relax the requirements as the process evolves.
This is in turn possible because the discrepancy $\mathcal{D}_t$ decreases with each step, which implies that 
the probability of success of the coupling increases at each step.
See Section~\ref{BigProof} for the detailed proof.

We now indicate how these lemmas lead to a proof of Theorem \ref{theorem:critical:q<2:second-step} stated earlier.

\begin{proof}[Proof of Theorem~\ref{theorem:critical:q<2:second-step}]
	Suppose $\mathcal{R}_1(X_{0}) = O(n^{4/3})$ and $\mathcal{R}_1(Y_{0}) = O(n^{4/3})$.
	It follows from Lemma~\ref{lemma:critical:q<2:second-step-first}, \ref{lemma:critical:q<2:second-step-second}, \ref{lemma:critical:q<2:second-step-2.5} and \ref{lemma:critical:q<2:second-step-third}
	that there exists a coupling of the CM steps such that
	after $T = O(\log n)$ steps, $X_{T}$ and $Y_{T}$ could have the same component structure.
	This coupling succeeds with probability at least
	$$\rho = \Omega(\on^{-\beta_1}) \cdot \exp\big(-O(\on^9)\big) \cdot \exp \big( -O \big( (\log \on)^2 \big) \big) \cdot \Omega \big( (\log \log \log n)^{-\beta_2} \big), $$
	where $\beta_1, \beta_2 > 0$ are  constants.
	Thus,
	$\rho = \Omega\big( (\log \log n)^{-1} \big)$, since $\on = \log \log \log \log n$.
\end{proof}

\begin{remark}
	We pause to mention that this delicate coupling for the activation of the components is not required
	when $\la=q$ and $q > 2$. In that regime, the random-cluster model is super-critical, 
	so after the first $O(\log n)$ steps, the component structure is much simpler, with exactly one large component. On the other hand, when $\la=q$ and $q \in (1,2]$ the model is critical, which, combined with the fact mentioned earlier that the percolation sub-step of the dynamics is also critical when $\la=q$, 
	makes the analysis of the CM dynamics in this regime quite subtle.
\end{remark}

\subsection{Coupling to the same configuration}
In the last phase of the coupling, suppose we start with two configurations $X_0$, $Y_0$ with the same component structure. We are still required to bound the number of steps until the same configuration is reached.
The following lemma from~\cite{BSmf} supplies the desired bound.

\begin{lemma} [\cite{BSmf}, Lemma 24]
	\label{lemma:316}
	Let $q>1$, $\la >0$ and let $X_0$, $Y_0$ be two random-cluster configurations 
	with the same component structure. Then, there exists a coupling of the CM steps such that 
	after $T=O(\log n)$ steps, $X_T = Y_T$ w.h.p.
\end{lemma}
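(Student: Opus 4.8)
The plan is to build a coupling that exploits the fact that one step of the CM dynamics erases the internal structure of every activated component and re-randomizes it. Match the components of $X_0$ and $Y_0$ by size; at each step activate matched pairs simultaneously, so that the active vertex sets $S^X$ and $S^Y$ satisfy $|S^X| = |S^Y| =: m$; sample a single fresh $G(m,p)$, place it on $S^X$, and on $S^Y$ place the image of that graph under a bijection $\psi : S^X \to S^Y$ that respects the component matching. I would maintain the invariant that there is a set $A_t$, a union of components of $X_t$ (with a corresponding union $B_t$ of components of $Y_t$), and an isomorphism $\Psi_t : X_t|_{A_t} \to Y_t|_{B_t}$ compatible with the current component matching, where $A_t = \bigcup_{s<t} S_s^X$ is the set of vertices activated so far. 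The key point, checked by a short induction, is that re-randomizing the active region identically (up to $\psi$) preserves this invariant with $A_{t+1} = A_t \cup S_t^X$, and that on the active region $\Psi_{t+1}$ may be taken to be \emph{any} bijection onto $\Psi_t(S_t^X)$, while off it $\Psi_{t+1} = \Psi_t$ is forced.

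The analysis then splits into two parts, each costing $O(\log n)$ steps. First, since each component of $X_t$ is activated independently with probability $1/q$ at every step, a standard coupon-collector and union-bound argument shows that after $T_1 = O(\log n)$ steps $A_{T_1} = V$ with probability $1 - n^{-\Omega(1)}$; at that point $Y_{T_1} = \Psi_{T_1}(X_{T_1})$, i.e.\ the two copies agree up to a global relabeling $\Psi_{T_1}$ of the vertices. Second, continue the same coupling but now use the freedom in choosing $\Psi_{t+1}$ on the active region to drive $\Psi_t$ toward the identity: tracking $B_t^{\ast} := \{v : \Psi_t(v) \neq v\}$ and decomposing $\Psi_t$ into cycles, within each cycle a maximal run of consecutive vertices that all lie in the current active set can be ``corrected'' into fixed points, all but one, by rerouting the remaining images inside $\Psi_t(S_t^X)$. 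Because activations of distinct components are independent, hence never negatively correlated along a cycle, the expected number of corrections is at least $|B_t^{\ast}|/q^2$; together with $|B_t^{\ast}|$ being non-increasing, this gives a constant per-step probability of shrinking $|B_t^{\ast}|$ by a constant factor, so a Chernoff bound over $O(\log n)$ steps yields $B_t^{\ast} = \emptyset$, i.e.\ $X_T = Y_T$, w.h.p.

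I expect the main obstacle to be the bookkeeping in the first part — verifying that transferring a single random subgraph through the bijection $\psi$ really does preserve a relabeling-compatible isomorphism on the ever-growing region $A_t$, and that all of this is consistent with both chains being faithful copies of the CM dynamics — together with the verification in the second part that independent per-component activation is the worst case for the number of such ``runs'', so that the geometric decay of $|B_t^{\ast}|$ cannot be blocked by an adversarial interaction between the component structure and the cycle structure of $\Psi_t$. Both points are elementary, which is why the lemma can fairly be called straightforward.
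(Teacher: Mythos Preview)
The paper does not give its own proof of this lemma; it quotes the result from \cite{BSmf} and simply says it ``follows from a straightforward coupling argument.'' So there is nothing in the present paper to compare against directly.

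That said, your two-phase argument is correct and is essentially the standard proof. A couple of minor remarks. First, in Phase~1 the requirement that the bijection $\psi$ ``respects the component matching'' is harmless but unnecessary for maintaining the invariant: any bijection $\psi:S_t^X\to S_t^Y$ makes $\Psi_{t+1}$ (equal to $\psi$ on $S_t^X$ and to $\Psi_t$ on $A_t\setminus S_t^X$) a graph isomorphism $X_{t+1}|_{A_{t+1}}\to Y_{t+1}|_{B_{t+1}}$, since both pieces are unions of components and you have already checked $\Psi_t(A_t\cap S_t^X)=B_t\cap S_t^Y$ via the matching. Second, in Phase~2 your cycle/run description is exactly the choice $\psi_t(v)=v$ for every $v\in S_t^X\cap\Psi_t(S_t^X)$, and one then verifies directly that fixed points of $\Psi_t$ remain fixed, so $|B_t^\ast|$ is non-increasing and
\[
\E\big[\,|B_{t+1}^\ast|\;\big|\;\Psi_t\,\big]\;\le\;\Big(1-\tfrac{1}{q^2}\Big)\,|B_t^\ast|,
\]
since for each $v\in B_t^\ast$ the event $\{v\in S_t^X,\ \Psi_t^{-1}(v)\in S_t^X\}$ has probability $\ge 1/q^2$ (equal to $1/q$ when the two vertices share a component, $1/q^2$ otherwise). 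Iterating and applying Markov's inequality gives $\Pr[|B_T^\ast|\ge 1]\le n(1-1/q^2)^T\to 0$ for $T=O(\log n)$, which is slightly cleaner than the constant-probability-of-contraction plus Chernoff route you sketch. The ``worst case'' concern you flag is therefore a non-issue: positive correlation (same component) only helps.
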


Combining the results for each of the phases of the coupling, we now prove Theorem \ref{thm:critical:q<2:main}.

\begin{proof}[Proof of Theorem \ref{thm:critical:q<2:main}]
	By Theorem~\ref{theorem:critical:q<2:first-step},
	after $t_0 = O(\log n)$  steps, with probability $\Omega(1)$, 
	we have
	$\mathcal{R}_1(X_{t_0}) = O(n^{4/3})$ and $\mathcal{R}_1(Y_{t_0}) = O(n^{4/3})$. 
	If this is the case, Theorem~\ref{theorem:critical:q<2:second-step} and Lemma~\ref{lemma:316}
	imply that there exists	a coupling of the CM steps such that with probability 
	$\Omega\left( (\log \log n)^{-1} \right)$ 
	after an additional $t_1 = O(\log n)$ steps, $X_{t_0 + t_1} = Y_{t_0 + t_1}$.
	Consequently, we obtain that $\taumixCM = O(\log n \cdot \log \log n)$ as claimed.
\end{proof}

\begin{remark}
	\label{remark:betterbound}
	The probability of success in Theorem~\ref{theorem:critical:q<2:second-step}, which governs the 
	lower order term $O(\log \log n)$ in our mixing time bound, 
	is controlled by our choice of the function $\on$ for the definition of ``large components''. 
	By choosing $\on $ that goes to $ \infty$ more slowly, 
	we could improve our mixing time bound to $O(\log n \cdot g(n))$
	where $g(n)$ is any function that tends to infinity arbitrarily slowly. 
	However, it seems that new ideas are required to obtain a bound of $O(\log n)$ (matching the known lower bound). In particular, the fact that $\on \rightarrow \infty$ is crucially used in some of our proofs. Our specific choice of $\on$ 
	yields the $O(\log n \cdot \log \log n)$ bound and makes our analysis cleaner.
\end{remark}

\section{Random graph estimates}
\label{RG}
In this section, we compile a number of standard facts about the $G(n,p)$ random graph model
which will be useful in our proofs.
We use $G \sim G(n,p)$ to denote a random graph $G$ sampled from the standard $G(n,p)$ model, in which every edge appears independently with probability $p$.
A $G(n, p)$ random graph is said to be \emph{sub-critical} when $n p < 1$. 
It is called \emph{super-critical} when $n p > 1$ and \emph{critical} when $np=1$.
For a graph~$G$, with a slight abuse of notation, let
$L_i(G)$ denote the size of the $i$-th largest connected component in~$G$,
and let $\mathcal{R}_i(G):=\sum_{j \ge i} L_j(G)^2$; note that the same notation is used for the components of a random-cluster configuration, but it will always be clear from context which case is meant.
 

\begin{fact}
\label{fact:GraphMonotone}
Given $0 < N_1 < N_2$ and $p \in [0, 1]$. 
Let $G_1 \sim G(N_1, p)$ and $G_2 \sim G(N_2, p)$.
For any $K > 0$, $\Pr[L_1(G_1) > K] \leq \Pr[L_1(G_2) > K]$.
\end{fact}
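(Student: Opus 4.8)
The plan is to prove this by an explicit monotone coupling that realizes $G_1$ as an induced subgraph of $G_2$. I would work on the vertex set $\{1,\dots,N_2\}$ and sample $G_2 \sim G(N_2,p)$ in the standard way, including each of the $\binom{N_2}{2}$ potential edges independently with probability $p$. Let $G_1'$ be the subgraph of $G_2$ induced by the first $N_1$ vertices $\{1,\dots,N_1\}$. Since every edge among these vertices is present independently with probability $p$ and no other edges affect $G_1'$, the graph $G_1'$ has exactly the law of $G(N_1,p)$; hence we may take $G_1 = G_1'$ in the coupling.

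Under this coupling, every connected component of $G_1$ is contained in a connected component of $G_2$, because adding vertices and edges can only merge components, never split them. Therefore $L_1(G_2) \ge L_1(G_1)$ holds deterministically, so $\{L_1(G_1) > K\} \subseteq \{L_1(G_2) > K\}$ for every $K$. Taking probabilities and using that marginally $G_1 \sim G(N_1,p)$ and $G_2 \sim G(N_2,p)$ yields $\Pr[L_1(G_1) > K] \le \Pr[L_1(G_2) > K]$, as claimed.

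There is essentially no obstacle here: the only point meriting a word of care is the observation that the subgraph of $G(N_2,p)$ induced on a fixed set of $N_1$ vertices is distributed as $G(N_1,p)$, which is immediate from the independence of edges in the Erd\H{o}s-R\'enyi model. An alternative phrasing notes that $L_1$ is an increasing function of the edge set and that $G(N_1,p)$ is obtained from $G(N_2,p)$ by deleting the vertices $N_1+1,\dots,N_2$ (equivalently, removing all edges incident to them), but the coupling formulation above is the cleanest and needs no further machinery.
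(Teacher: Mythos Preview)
Your proof is correct and takes essentially the same approach as the paper: both couple $G_1$ as an induced subgraph of $G_2$ so that $L_1(G_1)\le L_1(G_2)$ almost surely. Your version is in fact more explicit and self-contained than the paper's, which states the coupling in one line and then (somewhat redundantly) cites Strassen's theorem.
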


\begin{proof}
	Consider the coupling of $(G_1, G_2)$ such that $G_1$ is a subgraph of $G_2$.
	$L_1(G_1) \le L_1(G_2)$ with probability 1.
	Proposition just follows from Strassen's theorem (see, e.g., Theorem 22.6 in \cite{levin2017markov}).
\end{proof}

\begin{lemma}[\cite{LNNP}, Lemma 5.7]
	\label{lemma:rg:isolated} 
	Let $I(G)$ denote the number of isolated vertices in $G$. If $np = O (1)$, then there exists a constant $C > 0$ such that 
	\[\Pr[I (G ) > Cn] = 1 - O(n^{-	1}).\]
\end{lemma}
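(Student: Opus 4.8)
\textbf{Proof proposal for Lemma~\ref{lemma:rg:isolated}.}
The plan is to use a first/second moment argument. Write $I(G) = \sum_{v \in V} \1[v \text{ is isolated in } G]$. Since $np = O(1)$, fix a constant $\Lambda$ with $np \le \Lambda$ for all $n$; in particular $p \le 1/2$ for all sufficiently large $n$. For each vertex $v$ we have $\Pr[v \text{ isolated}] = (1-p)^{n-1}$, and using $1-p \ge \e^{-p/(1-p)}$ this is at least $\e^{-p(n-1)/(1-p)} \ge \e^{-2pn} \ge \e^{-2\Lambda} =: c_0 > 0$. Hence $\E[I(G)] = n(1-p)^{n-1} \ge c_0 n$.

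Next I would bound the variance. By bilinearity, $\var(I(G)) = \sum_v \var\big(\1[v \text{ iso}]\big) + \sum_{u \ne v} \cov\big(\1[u \text{ iso}], \1[v \text{ iso}]\big)$, and the first sum is at most $n$. For a covariance term, note that $u$ and $v$ are both isolated precisely when none of the $2n-3$ edges incident to $\{u,v\}$ is present, so $\Pr[u, v \text{ both iso}] = (1-p)^{2n-3}$, whereas $\Pr[u\text{ iso}]\Pr[v\text{ iso}] = (1-p)^{2n-2}$. Therefore $\cov\big(\1[u \text{ iso}], \1[v \text{ iso}]\big) = (1-p)^{2n-3}\,p \le p$, and summing over the at most $n^2$ ordered pairs gives $\var(I(G)) \le n + n^2 p \le (1+\Lambda)n = O(n)$.

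Finally I would apply Chebyshev's inequality with $t = c_0 n/2$: since $\E[I(G)] \ge c_0 n$,
\[
\Pr\!\left[I(G) \le \tfrac{c_0}{2}\,n\right] \;\le\; \Pr\!\left[\,\big|I(G) - \E[I(G)]\big| \ge \tfrac{c_0}{2}\,n\,\right] \;\le\; \frac{\var(I(G))}{(c_0 n/2)^2} \;=\; O(n^{-1}).
\]
Setting $C = c_0/2$ gives $\Pr[I(G) > Cn] = 1 - O(n^{-1})$; the finitely many small values of $n$ are handled by adjusting the constant $C$.

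The computation is routine, so there is no substantial obstacle; the one point needing (minor) care is the covariance estimate. A crude union bound would only yield $\var(I(G)) = O(n^2)$, which is too weak to conclude a $1 - O(n^{-1})$ bound via Chebyshev. Exploiting the near-independence of the events $\{u \text{ iso}\}$ and $\{v \text{ iso}\}$ — their covariance is $O(p) = O(1/n)$ rather than $O(1)$ — is exactly what drives $\var(I(G))$ down to $O(n)$ and makes the second moment method deliver the claimed tail probability. (If a stronger tail were needed one could instead appeal to a bounded-differences inequality over the vertex-exposure filtration, but this is unnecessary here.)
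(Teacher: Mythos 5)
Your proof is correct. Note that the paper does not prove this lemma at all --- it is imported verbatim as Lemma 5.7 of \cite{LNNP} --- so there is no in-paper argument to compare against; your second-moment derivation is the standard self-contained route. The two key computations both check out: the lower bound $\E[I(G)] = n(1-p)^{n-1} \ge e^{-2\Lambda}n$ (using $np \le \Lambda$ and $p \le 1/2$ for large $n$), and the covariance estimate $\cov(\1[u\ \mathrm{iso}],\1[v\ \mathrm{iso}]) = (1-p)^{2n-3}p \le p$, which correctly exploits that $u$ and $v$ share the edge $\{u,v\}$ among their $2n-3$ incident edges and drives the variance down to $O(n)$. Chebyshev then gives exactly the claimed $1-O(n^{-1})$ tail, and your remark that a naive $O(n^2)$ variance bound would not suffice is the right observation about where the argument actually needs care.
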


Consider the equation
\begin{equation}\label{BetaDef}
e^{-d x} = 1 - x
\end{equation}
and let $\beta(d)$ be defined as its unique positive root. Observe that $\beta$ is well-defined for $d > 1$.

\begin{lemma} [\cite{ABthesis}, Lemma 2.7]
	\label{L27}
	Let $G\sim G(n + m, d_n /n)$ random graph where $|m| = o(n)$ and $\lim_{n\rightarrow \infty} d_n = d$. 
	Assume $1 < d_n = O(1)$ and $d_n$ is bounded away from $1$ for all $n\in \mathbb{N} $. Then,
	For $A = o(\log n)$ and sufficiently large $n$, there exists a constant $c > 0$ such that 
		$$\Pr\left[ \lvert L_1(G) - \beta(d) n \rvert > \lvert m \rvert + A \sqrt{n}\right] \le e^{-cA^2}.$$
\end{lemma}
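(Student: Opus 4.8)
The plan is to reduce the statement to a sharp (sub-Gaussian) concentration bound for the size of the giant component of a single supercritical random graph, and then to transfer the centering from the graph's ``effective edge density'' to the quantity $\beta(d)n$ appearing in the claim.

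The first step is bookkeeping. Writing $N := n+m$ and $p := d_n/n$, we view $G \sim G(N, c_N/N)$ with effective density $c_N := Np = d_n(1+m/n)$. The hypotheses $|m| = o(n)$, $d_n = O(1)$, and $d_n$ bounded away from $1$ (together with $d_n \to d$) ensure that, for all large $n$, $c_N$ lies in a fixed compact subinterval of $(1,\infty)$ and $c_N \to d$; in particular $G$ is uniformly supercritical. If one prefers, Fact~\ref{fact:GraphMonotone} lets us sandwich $L_1(G)$ between the largest components of $G(n,p)$ and $G(n+|m|,p)$, so there is no loss in assuming $m \ge 0$.

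The heart of the proof is the classical sub-Gaussian concentration of the giant component: there is a constant $c_0 > 0$, depending only on an upper bound for $c_N$ and on $\inf_n c_N - 1$, such that for all $1 \le s = o(\sqrt N)$ and all large $n$,
\[
\Pr\big[\, | L_1(G) - \beta(c_N)\, N | > s\sqrt{N} \,\big] \ \le\ e^{-c_0 s^2}.
\]
I would establish this via the breadth-first exploration process that peels off the components of $G$ one at a time: the number of vertices lying in components of size $O(\log N)$ equals $(1-\beta(c_N))\,N\,(1+o(1))$, and its fluctuations are governed by a Doob martingale whose increments are bounded and whose conditional variances sum to $O(N)$; a Freedman/Bernstein-type inequality then yields the displayed tail (for $s = o(\sqrt N)$ we are strictly below the linear large-deviation scale, so no loss occurs in the exponent). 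Since $L_1(G)$ is the complement of that count up to $O(\log N)$ lower-order terms, the same bound holds for $L_1(G)$. Obtaining the full $e^{-\Omega(s^2)}$ rate, uniformly over the stated range of $s$ and with a constant that is uniform in $n$, is the step I expect to require the most care; everything else is comparison and arithmetic.

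Finally I would replace $\beta(c_N)N$ by $\beta(d)n$. Differentiating the defining relation~\eqref{BetaDef} shows that $\beta$ is $C^1$ on $(1,\infty)$, hence Lipschitz (with constant $L_\beta = L_\beta(d)$) on the fixed compact subinterval containing $c_N$ and $d$; therefore
\[
| \beta(c_N)\, N - \beta(d)\, n | \ \le\ |\beta(c_N) - \beta(d)|\, N + \beta(c_N)\, |m| \ \le\ L_\beta\, |c_N - d|\, N + |m| \ =\ O(|m|) + O\big(n\,|d_n - d|\big),
\]
using $|c_N - d| \le |d_n - d| + d_n|m|/n$. In the regimes of interest (indeed whenever $d_n$ is taken constant, as in our applications) this center shift is $O(|m|)$, which together with the $s\sqrt N$ fluctuation is absorbed into the slack $|m| + A\sqrt n$ of the statement; the residual term $O(n|d_n-d|)$ is lower order relative to $A\sqrt n$ for any $A \to \infty$. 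Since $\sqrt N = (1+o(1))\sqrt n$, choosing $s$ to be a suitable constant multiple of $A$ in the concentration bound and recalling $A = o(\log n) = o(\sqrt n)$ gives
\[
\Pr\big[\, | L_1(G) - \beta(d)\, n | > |m| + A\sqrt{n} \,\big] \ \le\ e^{-c A^2}
\]
for a constant $c > 0$, as claimed.
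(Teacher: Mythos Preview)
The paper does not prove this lemma; it is quoted verbatim as Lemma~2.7 of \cite{ABthesis} and used only as a black box (in the proof of Lemma~\ref{Lemma1}). So there is no in-paper proof to compare against, and your outline---sub-Gaussian concentration for $L_1$ of a uniformly supercritical $G(N,c_N/N)$ about $\beta(c_N)N$, followed by a center shift---is the standard route and is what one would expect the original proof to do.

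There is, however, a genuine gap in your center-shift step. You correctly obtain
\[
|\beta(c_N)N - \beta(d)n| \;=\; O(|m|) + O\bigl(n\,|d_n-d|\bigr),
\]
but your claim that ``the residual term $O(n|d_n-d|)$ is lower order relative to $A\sqrt n$ for any $A\to\infty$'' is false: the hypothesis $d_n\to d$ carries no rate, so one may have, e.g., $d_n=d+n^{-1/4}$, whence $n|d_n-d|=n^{3/4}\gg A\sqrt n$ for every $A=o(\log n)$. In that case $L_1(G)$ is typically centered near $\beta(d)n+\Theta(n^{3/4})$ and the stated inequality fails. Thus the lemma \emph{as written} cannot be proved without an extra quantitative hypothesis (such as $|d_n-d|=O(n^{-1/2})$) or with $\beta(d)$ replaced by $\beta(d_n)$ in the centering. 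You are right that in the paper's sole application $d_n$ is constant (so the issue evaporates there), but you should flag this as a defect of the statement rather than assert the term is negligible.
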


\begin{lemma}[\cite{ABthesis}, Lemma 2.16]
	\label{lemma:rg:critical:exp-crude-bound}
	For $np>0$, we have	$\E\left[\mathcal{R}_2(G)\right] = O\left(n^{4/3}\right)$.
\end{lemma}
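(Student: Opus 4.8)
The plan is to reduce the statement to an estimate on the size of the component of a single vertex, and then analyze that quantity via the breadth-first exploration process. Write $C(v)$ for the connected component of $v$ in $G$ and let $C_1$ be a component of maximum size (ties broken arbitrarily). Since $\mathcal{R}_2(G)=\sum_{j\ge2}L_j(G)^2=\sum_{v\in V}|C(v)|\cdot\mathbf{1}[v\notin C_1]$, linearity of expectation gives $\E[\mathcal{R}_2(G)]=n\cdot\E[\,|C(v_0)|\cdot\mathbf{1}[v_0\notin C_1]\,]$ for a fixed vertex $v_0$, so it suffices to show $\E[\,|C(v_0)|\cdot\mathbf{1}[v_0\notin C_1]\,]=O(n^{1/3})$ uniformly in $p\in(0,1)$. (The indicator is essential: when $G$ has a giant component, $\E[|C(v_0)|]$ is of order $n$.) I would split into the cases $np\le1$ and $np>1$.

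For $np\le1$ (and more generally $|np-1|=O(n^{-1/3})$) I would bound $\E[|C(v_0)|]$ directly, dropping the indicator. As $|C(v_0)|$ is pointwise nondecreasing in $p$ under the standard monotone coupling of the edges, it suffices to take $p=1/n$. Exploring $C(v_0)$ by breadth-first search, with $A_t$ the number of active vertices after $t$ steps, one has $\{|C(v_0)|\ge k\}=\{A_t\ge1\text{ for all }0\le t\le k-1\}$, and $A_t$ is dominated, via an explicit coupling that accounts for the depletion of available vertices, by the walk $\widetilde A_t=1+\sum_{s=1}^t(\xi_s-1)$ with independent $\xi_s\sim\mathrm{Bin}(n-s+1,1/n)$. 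The key estimate I would establish is the two-factor tail bound \[\Pr[|C(v_0)|\ge k]\ \le\ C\,k^{-1/2}\exp(-ck^3/n^2),\qquad 1\le k\le n,\] for absolute constants $C,c>0$. The factor $k^{-1/2}$ comes from requiring $\widetilde A$ to stay positive on $[0,k-1]$: discarding the (only helpful) negative drift, this is the classical fluctuation bound that a mean-zero walk with uniformly bounded step variances started at $O(1)$ stays positive for $k$ steps with probability $O(k^{-1/2})$. The factor $\exp(-ck^3/n^2)$ comes from the drift: writing $\widetilde A_t=1+M_t-d_t$ with $M_t$ a martingale and $d_t=\frac{t(t-1)}{2n}$, survival forces $M_t\ge d_t-1\asymp t^2/n$ for some $t\in[k/2,k-1]$, and since $\mathrm{Var}(M_k)\le k$ a maximal Bernstein inequality yields the claimed decay. (This estimate is also extractable from known results on the critical Erd\H{o}s-R\'enyi graph.) Summing and splitting the sum at $k=n^{2/3}$ gives $\E[|C(v_0)|]=\sum_{k\ge1}\Pr[|C(v_0)|\ge k]=O(n^{1/3})$; the sub-case $1<np\le1+O(n^{-1/3})$ is the same, since the initial positive drift $O(n^{-1/3})$ only shifts the effective cutoff $t\asymp n^{2/3}$ by a constant factor.

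For $np>1$ bounded away from $1$ (and $np=O(1)$; the case $np=\omega(1)$ is trivial, since then $\mathcal{R}_2=o(n)$ with probability $1-o(1)$) I would use the discrete duality principle. Conditioning on the largest component $C_1$, on vertex set $S$ with $s:=|S|$, the graph $G[V\setminus S]$ is $G(n-s,p)$ conditioned on having no component of size $\ge s$, and $\mathcal{R}_2(G)=\mathcal{R}_1(G[V\setminus S])$. On the likely event (Lemma~\ref{L27}) that $s$ lies within $o(\sqrt n\log n)$ of its typical value $\beta(np)\,n$, duality makes $(n-s)p$ bounded strictly below $1$, so $G(n-s,p)$ is subcritical and the conditioning event has probability $1-o(1)$; hence $\E[\mathcal{R}_2(G)\mid C_1]\le2\,\E[\mathcal{R}_1(G(n-s,p))]=2(n-s)\,\E[|C_{G(n-s,p)}(v_0)|]=O(n^{4/3})$ by the subcritical case of the bound above. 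On the complementary event, of probability $o(n^{-2})$, the trivial bound $\mathcal{R}_2\le n^2$ suffices. The thin remaining regime $1<np\le 1+\delta_0$ with $np-1\gg n^{-1/3}$ is handled identically, using a concentration estimate for the giant in the barely-supercritical range in place of Lemma~\ref{L27} (the dual is then subcritical with gap $\Theta(np-1)\gtrsim n^{-1/3}$, which is enough).

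The main obstacle is the two-factor tail estimate for $|C(v_0)|$ in the first case. A branching-process comparison that ignores vertex depletion gives only $\Pr[|C(v_0)|\ge k]=O(k^{-1/2})$, which is too weak by a polynomial factor and would yield $\E[|C(v_0)|]=O(n^{1/2})$; one must combine the ``stay-positive'' cost $k^{-1/2}$ with the drift-induced cutoff $e^{-ck^3/n^2}$ in a single bound, so that the tail is already $O(n^{-1/3})$ at $k\asymp n^{2/3}$. A secondary nuisance is making the constants uniform across the subcritical, critical-window, barely-supercritical, and supercritical regimes.
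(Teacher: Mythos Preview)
The paper does not prove this lemma at all: it is simply quoted from \cite{ABthesis} (Lemma~2.16 there), so there is no in-paper argument to compare against. Your proposal supplies a self-contained proof where the paper supplies none, and the overall strategy---the identity $\E[\mathcal{R}_2(G)]=n\,\E\big[|C(v_0)|\cdot\mathbf{1}\{v_0\notin C_1\}\big]$, the direct exploration bound in the subcritical/critical-window regime, and the duality reduction in the supercritical regime---is correct and standard.

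One point deserves more care. You correctly identify that the crux is the two-factor tail bound $\Pr[|C(v_0)|\ge k]\le Ck^{-1/2}\exp(-ck^3/n^2)$ at $p=1/n$, and you are right that taking the \emph{minimum} of the two separate bounds is not enough (it gives only $\E[|C(v_0)|]=O(n^{2/3})$, since $\sum_{k>n^{2/3}}e^{-ck^3/n^2}\asymp n^{2/3}$). However, your sketch does not actually produce the product: the events ``$M_t\ge 0$ for all $t\le k$'' and ``$M_{k-1}\ge d_{k-1}$'' are not independent, so one cannot simply multiply their probabilities. The usual fix is to split the interval, use the Markov property, and control the walk's height at the split point: bound $\Pr[\widetilde A_t\ge 1\ \forall t\le k]$ by conditioning on $\widetilde A_{k/2}$, using the stay-positive bound $O(k^{-1/2})$ on the first half, and on the second half using that survival to time $k$ forces the (independent) increment $M_k-M_{k/2}$ to exceed roughly $3k^2/(8n)-\widetilde A_{k/2}$; an auxiliary tail bound on $\widetilde A_{k/2}$ then closes the argument. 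Alternatively, since you already note the estimate is ``extractable from known results'', you may simply cite it (it appears in the Nachmias--Peres martingale analysis of the critical random graph and in van der Hofstad's book), which is entirely in keeping with how the paper itself treats this lemma.
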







Consider the near-critical random graph $G\left(n, \frac{1 + \epsilon}{n}\right)$ with $\epsilon = \epsilon(n) = o(1)$.

\begin{lemma}[\cite{LNNP}, Theorem 5.9]
	\label{T58}
	Assume $\epsilon^3n \geq 1$, then for any $A$ satisfying $2 \leq A \leq \sqrt{\epsilon^3n}/10$,
	there exists some constant $c > 0$ such that 
	$$\Pr\left[\left|L_1(G) - 2\epsilon n\right| > A \sqrt{\frac{n}{\epsilon}}\right] = O\left(e^{-cA^2}\right).$$
\end{lemma}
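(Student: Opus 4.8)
\emph{Proof proposal.} The plan is to study $G = G(n,(1+\epsilon)/n)$ through its breadth-first exploration process, deriving the concentration of $L_1$ around $2\epsilon n$ from a drift/fluctuation analysis of the associated exploration walk, and to obtain the uniqueness of the giant component by a sprinkling argument on the leftover graph. Fix a small constant $\delta > 0$, an upper threshold $k_+ = \delta \epsilon n$, and a lower threshold $k_-$ of order $\epsilon^{-2}$ up to a slowly growing factor. For a vertex $v$ write $\mathcal{C}(v)$ for its component, and let $N_{\ge k}$ be the number of vertices lying in components of size at least $k$. Comparing $|\mathcal{C}(v)|$ above and below with binomial branching processes of mean $\approx 1+\epsilon$, I would first show $\Pr[|\mathcal{C}(v)| \ge k_-] = 2\epsilon\,(1 + o(1))$ and $\Pr[k_- \le |\mathcal{C}(v)| \le k_+] = o(\epsilon)$ --- i.e.\ components explored past $k_-$ very likely keep growing past $k_+$ --- so that $\E[N_{\ge k}] = 2\epsilon n\,(1+o(1))$ uniformly for $k \in [k_-,k_+]$; when $\epsilon^3 n$ is merely bounded this step degrades to an order-of-magnitude estimate, which is exactly what the constraint $A \le \sqrt{\epsilon^3 n}/10$ reflects.

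The core is the exploration walk $S_t$ recording the number of active (frontier) vertices after $t$ steps of breadth-first search: its increments are $\mathrm{Bin}(n - t - S_t, (1+\epsilon)/n) - 1$, so after subtracting the compensator $f(t) \approx \epsilon t - t^2/(2n)$ the walk is a martingale with increment variance $\Theta(1)$ and quadratic variation $\Theta(t)$. I would apply a Freedman-type (Bernstein) inequality: over $\Theta(\epsilon n)$ steps the martingale deviates by more than $x$ with probability $\exp(-\Omega(x^2/(\epsilon n)))$ provided $x = O(\epsilon n)$. Now $f$ vanishes at $t = 2\epsilon n$ with $f'(2\epsilon n) = -\epsilon$, so the excursion corresponding to the giant terminates at $T^\ast = 2\epsilon n + \Delta$ where $|\Delta| \ge A\sqrt{n/\epsilon}$ forces a martingale deviation of order $\epsilon \cdot A\sqrt{n/\epsilon} = A\sqrt{\epsilon n}$ over $\Theta(\epsilon n)$ steps; by the Freedman bound this has probability $\exp(-\Omega(A^2))$, and since $A \le \sqrt{\epsilon^3 n}/10$ we have $A\sqrt{\epsilon n} = O(\epsilon n)$, so the inequality is genuinely applied in its sub-Gaussian regime. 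Together with $\E[N_{\ge k_-}] = 2\epsilon n(1+o(1))$ this yields $|L_1 - 2\epsilon n| \le A\sqrt{n/\epsilon}$ with the claimed probability, \emph{modulo} knowing that $L_1$ really is the long excursion, i.e.\ that there is a unique large component.

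For uniqueness I would use sprinkling: expose $G(n,(1-\eta)(1+\epsilon)/n)$ first, locate its component of size $(1\pm o(1))2\epsilon n$, and observe that the vertices outside it carry a random graph of edge parameter $\approx (1-2\epsilon)(1+\epsilon) = 1 - \Theta(\epsilon)$, hence subcritical with gap $\Theta(\epsilon)$ and largest component $O(\epsilon^{-2}\log n) = o(\epsilon n)$ w.h.p.; sprinkling the remaining density-$\Theta(\eta\epsilon/n)$ edges then merges any two components of size $\Theta(\epsilon n)$ with failure probability $\exp(-\Omega(\epsilon^2 n)) \le \exp(-\Omega(A^2))$, so $L_2 \le k_-$ and $L_1 = N_{\ge k_-}$ except on an event of the required size. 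I expect the martingale step to be the main obstacle: one must follow $S_t$ across $\Theta(\epsilon n)$ steps with an increment variance that itself drifts through the depletion term and the fluctuating active count, keep the stopping-time/excursion bookkeeping clean, and ensure the Bernstein bound is invoked in the regime where the target deviation is $o(\text{number of steps})$ --- and it is precisely the hypothesis $\epsilon^3 n \ge 1$ (via $A \le \sqrt{\epsilon^3 n}/10$) that guarantees this alignment.
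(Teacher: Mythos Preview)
The paper does not prove this statement at all: it is quoted verbatim as Theorem~5.9 of \cite{LNNP} and used as a black box, so there is no ``paper's own proof'' to compare against. Your proposal is the standard exploration-process approach (as in Nachmias--Peres and the cited \cite{LNNP}): study the breadth-first walk $S_t$, extract the drift $f(t)\approx \epsilon t - t^2/(2n)$, and control fluctuations by a martingale concentration inequality over $\Theta(\epsilon n)$ steps; the arithmetic converting a deviation $A\sqrt{n/\epsilon}$ in the hitting time into a martingale deviation $A\sqrt{\epsilon n}$ is correct, and the restriction $A\le \sqrt{\epsilon^3 n}/10$ is exactly what keeps you in the sub-Gaussian regime of Freedman's inequality. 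This is essentially how the result is proved in the source you would be reproducing.

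One point to tighten: your uniqueness/sprinkling paragraph invokes an $O(\epsilon^{-2}\log n)$ bound on the second component and a $\exp(-\Omega(\epsilon^2 n))$ merging probability, but in the barely-supercritical regime $\epsilon^3 n = O(1)$ these are not small enough on their own (indeed $\epsilon^{-2}\log n$ can exceed $\epsilon n$). The actual argument in \cite{LNNP} handles $L_2$ directly through the exploration walk as well---after the first long excursion ends, the remaining walk has strictly negative drift of order $-\epsilon$, and another Freedman bound shows no further excursion reaches length $A\sqrt{n/\epsilon}$ except with probability $e^{-\Omega(A^2)}$. You should replace the sprinkling sketch with this second-excursion argument; otherwise the plan is sound.
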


\begin{corollary}
	\label{lemma:rg:critical:giant-concentration} 
	Let $G\sim G\left(n, \frac{1 + \epsilon}{n}\right)$ with $\epsilon = o(1)$.
	For any positive constant $\rho \le 1/10$, there exist constants $C \ge 1$ and $c > 0$
	such that if $\varepsilon^3 n \ge C$, then 	
	\[\Pr\left[\left\lvert L_1(G)-2\varepsilon n\right\rvert > \rho \varepsilon n\right] = O(e^{-c\varepsilon^3 n}).\]
\end{corollary}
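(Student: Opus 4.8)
The plan is to obtain the corollary as an immediate consequence of Lemma~\ref{T58} by choosing its free parameter $A$ so that the additive error $A\sqrt{n/\epsilon}$ appearing there equals the target error $\rho\epsilon n$. Solving $A\sqrt{n/\epsilon}=\rho\epsilon n$ gives $A=\rho\sqrt{\epsilon^3 n}$, so the first step is simply to check that this value of $A$ falls inside the admissible window $2\le A\le \sqrt{\epsilon^3 n}/10$ demanded by Lemma~\ref{T58}.

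For the upper endpoint, $A=\rho\sqrt{\epsilon^3 n}\le \sqrt{\epsilon^3 n}/10$ holds exactly because $\rho\le 1/10$ by hypothesis; this is the only place the restriction $\rho\le 1/10$ is used. For the lower endpoint, $A\ge 2$ is equivalent to $\epsilon^3 n\ge 4/\rho^2$, so it suffices to set the constant $C:=\max\{1,\,4/\rho^2\}$: then $\epsilon^3 n\ge C$ forces $A\ge 2$ and also guarantees $\epsilon^3 n\ge 1$, so Lemma~\ref{T58} is applicable. Note $C$ depends only on $\rho$, as required.

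With this choice of $A$, Lemma~\ref{T58} gives $\Pr[\,|L_1(G)-2\epsilon n|>\rho\epsilon n\,]=\Pr[\,|L_1(G)-2\epsilon n|>A\sqrt{n/\epsilon}\,]=O(e^{-cA^2})=O(e^{-c\rho^2\epsilon^3 n})$, where $c>0$ is the universal constant from that lemma; relabeling $c\rho^2$ as the constant $c$ in the statement finishes the proof. There is essentially no obstacle here — the argument is a one-line substitution — and the only care needed is the bookkeeping of constants (that $C$ depends only on $\rho$ and that the final exponential rate $c\rho^2$ is strictly positive), together with the observation that the hypothesis $\epsilon=o(1)$ is inherited verbatim from the setup of Lemma~\ref{T58} and no further regularity on $\epsilon$ beyond $\epsilon^3 n\ge C$ is needed.
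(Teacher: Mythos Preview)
Your proof is correct and matches the paper's intended approach: the corollary is stated without proof immediately after Lemma~\ref{T58} as a direct consequence, and your substitution $A=\rho\sqrt{\varepsilon^3 n}$ is exactly the one-line derivation the paper has in mind.
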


\begin{lemma}[\cite{LNNP}, Theorem 5.12]
	\label{T512}
	Let $\epsilon < 0$, then $\E[\mathcal{R}_1(G)] = O\left(n/|\epsilon|\right).$
\end{lemma}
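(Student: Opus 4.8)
The plan is to reduce the estimate on $\E[\mathcal{R}_1(G)]$ to a bound on the expected size of a single component, and then control the latter by comparing the component exploration process with a subcritical Galton--Watson branching process.

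First I would record the elementary identity $\mathcal{R}_1(G) = \sum_{j\ge 1} L_j(G)^2 = \sum_{v\in V} |\mathcal{C}_v|$, where $\mathcal{C}_v$ denotes the connected component of $G$ containing $v$: each of the $L_j(G)$ vertices lying in the $j$-th largest component contributes exactly $L_j(G)$ to the right-hand sum. Taking expectations and using the vertex-transitivity of $G(n,(1+\epsilon)/n)$ gives $\E[\mathcal{R}_1(G)] = n\,\E[|\mathcal{C}_1|]$, where $\mathcal{C}_1$ is the component of a fixed vertex. Hence it suffices to prove $\E[|\mathcal{C}_1|]\le 1/|\epsilon|$.

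Next I would run the standard breadth-first exploration of $\mathcal{C}_1$, maintaining disjoint sets of explored, active and neutral vertices: starting from a single active vertex, at each step one active vertex is declared explored, and each neutral vertex joins the active set independently with probability $(1+\epsilon)/n$. The number of vertices activated at any given step is a Binomial variable with at most $n-1$ trials and success probability $(1+\epsilon)/n$, hence stochastically dominated by a Binomial$(n,(1+\epsilon)/n)$ variable, whose mean is $1+\epsilon<1$. Coupling this exploration step-by-step with a Galton--Watson process whose offspring law is that dominating Binomial, we obtain that $|\mathcal{C}_1|$ (the number of explored vertices at the time the active set first empties) is stochastically at most the total progeny $T$ of that branching process. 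Since the process is subcritical with offspring mean $\mu = 1+\epsilon$, it has finite expected total progeny, and the first-step decomposition, which expresses $T$ as $1$ plus a sum of offspring-of-the-root-many i.i.d.\ copies of $T$, yields $\E[T] = 1 + \mu\,\E[T]$, i.e.\ $\E[T] = 1/(1-\mu) = 1/|\epsilon|$. Therefore $\E[|\mathcal{C}_1|]\le 1/|\epsilon|$, and combined with the reduction above, $\E[\mathcal{R}_1(G)] = n\,\E[|\mathcal{C}_1|] \le n/|\epsilon| = O(n/|\epsilon|)$.

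There is no essential obstacle here; the only points needing a line of justification are the stochastic domination of each exploration step by the fixed Binomial$(n,(1+\epsilon)/n)$ law (which holds uniformly over the history because the neutral set always has size at most $n-1$ and a Binomial is monotone in its number of trials), and the a priori finiteness of $\E[T]$ needed in order to solve $\E[T] = 1 + \mu\E[T]$ (standard for subcritical Galton--Watson, or obtainable by a truncation argument). If one prefers to avoid branching-process terminology, an equivalent route is to write $\E[|\mathcal{C}_1|] = \sum_{k\ge 1}\Pr[|\mathcal{C}_1|\ge k]$ and bound each term by observing that $\{|\mathcal{C}_1|\ge k\}$ forces the active set to remain positive through $k-1$ exploration steps; a random-walk comparison shows this probability decays geometrically with rate $1+\epsilon$, giving again the bound $1/|\epsilon|$.
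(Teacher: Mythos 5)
Your proof is correct. Note that the paper does not actually prove this statement: Lemma~\ref{T512} is imported verbatim as Theorem~5.12 of~\cite{LNNP}, so there is no in-paper argument to compare against. What you have written is the standard self-contained derivation of the subcritical susceptibility bound: the identity $\mathcal{R}_1(G)=\sum_{v}|\mathcal{C}_v|$ together with exchangeability reduces the claim to $\E[|\mathcal{C}_1|]\le 1/|\epsilon|$, and the breadth-first exploration dominated by a Galton--Watson process with offspring law $\mathrm{Bin}(n,(1+\epsilon)/n)$ (mean $1+\epsilon<1$) gives exactly that via $\E[T]=1/(1-\mu)$. The two points you flag as needing justification are indeed the only ones, and both are routine: the step-by-step domination holds because the neutral set never exceeds $n-1$ vertices and the binomial is stochastically monotone in its number of trials (and $1+\epsilon\ge 0$ since $\epsilon=o(1)$ in the regime where the lemma is applied), and finiteness of $\E[T]$ for a subcritical branching process is standard (or can be obtained by truncating at generation $k$ and letting $k\to\infty$, which avoids having to solve the fixed-point equation a priori). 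Your proof in fact gives the clean constant $n/|\epsilon|$ rather than just $O(n/|\epsilon|)$, which is slightly stronger than what the lemma asserts.
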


\begin{lemma}[\cite{LNNP}, Theorem 5.13]
	\label{T513}
	Let $\epsilon > 0$ and $\epsilon^3 n \geq 1$ for large $n$, then $\E[\mathcal{R}_2(G)] = O\left(n/\epsilon \right)$.
\end{lemma}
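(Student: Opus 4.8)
The plan is to reduce the supercritical estimate to the subcritical bound of Lemma~\ref{T512} via the exploration/duality principle for $G(n,p)$. Let $C_1$ denote the vertex set of the largest component of $G$, and let $G'$ be the subgraph of $G$ induced on $V\setminus C_1$. Since $C_1$ is a full component, the components of $G'$ are exactly the non-giant components of $G$, so (deterministically)
\[
\mathcal{R}_2(G)=\mathcal{R}_1(G)-L_1(G)^2=\mathcal{R}_1(G'),
\qquad\text{hence}\qquad
\E[\mathcal{R}_2(G)]=\E[\mathcal{R}_1(G')].
\]
By the standard discrete duality principle for $G(n,p)$, conditioned on $C_1$ with $|C_1|=m$ the graph $G'$ is distributed as $G\big(n-m,\tfrac{1+\epsilon}{n}\big)$ conditioned on all of its components having size at most $m$. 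When $m$ is of its typical order $2\epsilon n(1+o(1))$, the relevant density is $(n-m)\cdot\tfrac{1+\epsilon}{n}=1-\epsilon(1+o(1))$, i.e.\ $G'$ is subcritical with parameter $-\epsilon'$ for some $\epsilon'=\Theta(\epsilon)$; moreover for such a subcritical graph $L_1=o(m)$ except with probability $e^{-\Omega(\epsilon n)}$, so the extra conditioning is harmless. Lemma~\ref{T512} then gives $\E[\mathcal{R}_1(G')\mid |C_1|=m]=O\big((n-m)/\epsilon'\big)=O(n/\epsilon)$ for every $m$ in the typical range.

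It remains to bound the contribution of atypical giant sizes. We may assume $\epsilon^3 n\ge C$ for the constant $C$ of Corollary~\ref{lemma:rg:critical:giant-concentration}, since otherwise $n/\epsilon=\Omega(n^{4/3})$ and the bound is immediate from Lemma~\ref{lemma:rg:critical:exp-crude-bound}. With $\epsilon^3 n\ge C$, Corollary~\ref{lemma:rg:critical:giant-concentration} gives $\Pr\bigl[\bigl||C_1|-2\epsilon n\bigr|>\rho\epsilon n\bigr]=O(e^{-c\epsilon^3 n})$ for a fixed small $\rho$; on this event we use only the trivial bound $\mathcal{R}_2(G)\le L_1(G)\cdot n\le n^2$, so this contributes $O(n^2 e^{-c\epsilon^3 n})$ to $\E[\mathcal{R}_2(G)]$, which is $O(n/\epsilon)$ whenever $\epsilon^3 n$ exceeds a suitable constant multiple of $\log n$.

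The one regime this leaves open, and the main obstacle, is the genuinely near-critical window $C\le\epsilon^3 n=O(\log n)$: there the $O(n^{4/3})$ bound of Lemma~\ref{lemma:rg:critical:exp-crude-bound} overshoots $n/\epsilon$ by a $(\epsilon^3 n)^{1/3}$ factor, while the giant is not concentrated sharply enough for the exponentially small probability above to kill the $n^2$ loss. Here I would argue directly via the breadth-first exploration of $G\big(n,\tfrac{1+\epsilon}{n}\big)$: the non-giant components are the excursions of the exploration walk in the phase where its time-dependent drift has become negative, of order $\mu(t)\asymp t/n-\epsilon$ for $t>2\epsilon n$, and a ballot-type maximal inequality, valid uniformly in $\epsilon\ge0$, bounds the length $\ell$ of the excursion through exploration-time $t$ by $\Pr[\ell\ge s]\lesssim s^{-1/2}e^{-c\mu(t)^2 s}$. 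Size-biasing this tail shows that the component spanning time $t$ contributes on average $\asymp\mu(t)^{-2}$ to $\mathcal{R}_2$, and $\int_{2\epsilon n}^{n}\mu(t)^{-2}\,dt\asymp n/\epsilon$, with the dominant contribution coming from components of size $\asymp\epsilon^{-2}$ emerging right after the giant. Making this walk heuristic rigorous — in particular establishing the stated tail bound all the way down to the critical scale — is the technical heart of the argument; a shortcut would be to invoke a refinement of Lemma~\ref{lemma:rg:critical:exp-crude-bound} that retains the dependence on $\epsilon$.
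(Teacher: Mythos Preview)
The paper does not prove this lemma; it is quoted verbatim from \cite{LNNP} (their Theorem~5.13) and used as a black box, so there is no in-paper argument to compare against.

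On the merits of your attempt: the duality reduction to Lemma~\ref{T512} is the natural strategy and your analysis is correct in the regime $\epsilon^3 n \gtrsim \log n$. You also correctly identify the real obstruction, namely the window $1 \le \epsilon^3 n = O(\log n)$ where the giant is not concentrated enough for $e^{-c\epsilon^3 n}$ to kill the crude $n^2$ bound, while Lemma~\ref{lemma:rg:critical:exp-crude-bound} overshoots $n/\epsilon$ by exactly the factor $(\epsilon^3 n)^{1/3}$ you name. The exploration-walk heuristic you sketch is the right picture, but as written it is not a proof: the tail bound $\Pr[\ell \ge s] \lesssim s^{-1/2} e^{-c\mu(t)^2 s}$ uniformly down to the critical scale, and the passage from ``size-biased excursion length'' to the integral $\int \mu(t)^{-2}\,dt$, both need justification. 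A partial patch---dyadically decomposing the bad event via Lemma~\ref{T58} over $2 \le A \le \sqrt{\epsilon^3 n}/10$ instead of using a single threshold---handles all shells where $G'$ remains subcritical with parameter $\Theta(\epsilon)$, but still leaves the extreme tail $\{L_1 \lesssim \epsilon n\}$ uncontrolled for the same reason. Closing that last piece genuinely requires either a direct moment computation on the non-giant components (which is what \cite{LNNP} do) or making your exploration argument rigorous; there is no shortcut using only the lemmas available in this paper.
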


For the next results, suppose that $G \sim G(n,\frac{1+\lambda n^{-1/3}}{n})$, where $\lambda = \lambda(n)$ may depend on~$n$. 

	\begin{lemma}
		\label{lemma:prelim:critical-var}
		If $|\lambda| = O(1)$, then
		$\E\left[\mathcal{R}_1(G)\right] = O\left(n^{4/3}\right)$.
	\end{lemma}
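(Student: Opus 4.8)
The plan is to split $\mathcal{R}_1(G) = L_1(G)^2 + \mathcal{R}_2(G)$ and bound the two pieces separately. Since $np = 1+\lambda n^{-1/3} > 0$ for all large $n$, Lemma~\ref{lemma:rg:critical:exp-crude-bound} already gives $\E[\mathcal{R}_2(G)] = O(n^{4/3})$, so it suffices to show $\E[L_1(G)^2] = O(n^{4/3})$. Using $\E[X^2] = 2\int_0^\infty s\,\Pr[X>s]\,ds$ for nonnegative $X$ together with $L_1(G)\le n$, the substitution $s = a n^{2/3}$ gives
\[
\E[L_1(G)^2] \;=\; 2n^{4/3}\int_0^{n^{1/3}} a\,\Pr\!\big[L_1(G) > a n^{2/3}\big]\,da .
\]
The range $a\in[0,A_\ast]$ contributes at most $A_\ast^2$ for any constant $A_\ast$, so the goal reduces to showing $\int_{A_\ast}^{n^{1/3}} a\,\Pr[L_1(G)>a n^{2/3}]\,da = O(1)$ for a suitable large constant $A_\ast$.

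The key is an upper-tail bound on $L_1(G)$ obtained by monotonicity: since adding an edge never shrinks the largest component, the standard coupling $G(n,p)\subseteq G(n,p')$ for $p\le p'$ gives $L_1(G(n,p))\le L_1(G(n,p'))$ pointwise (cf.\ Fact~\ref{fact:GraphMonotone}), hence $\Pr[L_1(G)>t]\le\Pr[L_1(G(n,p'))>t]$ whenever $\tfrac{1+\lambda n^{-1/3}}{n}\le p'$. Fix a slowly growing $\nu(n) = \log\log n$ and split $[A_\ast,n^{1/3}]$ into two regimes. For $A_\ast\le a\le n^{1/3}/\nu(n)$ put $\varepsilon = \tfrac{a}{4}n^{-1/3}$: then $\tfrac{1+\lambda n^{-1/3}}{n}\le\tfrac{1+\varepsilon}{n}$ for large $n$ (provided $A_\ast$ exceeds the $O(1)$ bound on $|\lambda|$), $\varepsilon = o(1)$ uniformly, and $\varepsilon^3 n = (a/4)^3\ge (A_\ast/4)^3\ge C$ once $A_\ast$ is large enough; since $\big(2+\tfrac1{10}\big)\varepsilon n = \tfrac{21}{40}a n^{2/3} < a n^{2/3}$, Corollary~\ref{lemma:rg:critical:giant-concentration} (with $\rho=\tfrac1{10}$) yields $\Pr[L_1(G)>a n^{2/3}]\le\Pr\big[|L_1(G(n,\tfrac{1+\varepsilon}{n}))-2\varepsilon n|>\tfrac1{10}\varepsilon n\big] = O\big(e^{-c(a/4)^3}\big)$. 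For $n^{1/3}/\nu(n)< a\le n^{1/3}$ put instead $\varepsilon = \tfrac1{4\nu(n)}$: again $\tfrac{1+\lambda n^{-1/3}}{n}\le\tfrac{1+\varepsilon}{n}$ for large $n$, $\varepsilon=o(1)$, and $\varepsilon^3 n = n/(64\,\nu(n)^3)\to\infty$; since $a n^{2/3} > n/\nu(n) > \big(2+\tfrac1{10}\big)\varepsilon n$, Corollary~\ref{lemma:rg:critical:giant-concentration} gives $\Pr[L_1(G)>a n^{2/3}] = O\big(e^{-cn/(64\,\nu(n)^3)}\big) = n^{-\omega(1)}$.

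Combining the two regimes,
\[
\int_{A_\ast}^{n^{1/3}} a\,\Pr\!\big[L_1(G)>a n^{2/3}\big]\,da \;\le\; \int_{A_\ast}^{\infty} a\,O\!\big(e^{-c(a/4)^3}\big)\,da \;+\; n^{2/3}\cdot n^{-\omega(1)} \;=\; O(1),
\]
so $\E[L_1(G)^2]=O(n^{4/3})$ and therefore $\E[\mathcal{R}_1(G)] = \E[L_1(G)^2] + \E[\mathcal{R}_2(G)] = O(n^{4/3})$.

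The step I expect to be the main obstacle is precisely the uniform upper-tail bound on $L_1(G)$: it must be summable against $a\,da$ and valid for all $a$ up to $n^{1/3}$ (i.e.\ for component sizes ranging from just above $n^{2/3}$ up to $\Theta(n)$) \emph{without} leaving behind a polynomial-in-$n$ prefactor, which rules out a naive union bound over vertices or exploration-walk drift estimate used in isolation. The device above — comparing the near-critical $G$ to $G(n,\tfrac{1+\varepsilon}{n})$ with a scale-dependent $\varepsilon$ that genuinely vanishes in the macroscopic range — is what lets Corollary~\ref{lemma:rg:critical:giant-concentration} alone cover the whole range. (One can equivalently organize this as a geometric sum over dyadic scales $2^jA_\ast n^{2/3}$ rather than as an integral.)
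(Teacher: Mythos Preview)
Your proof is correct. The paper's own proof is a one-line citation (``Follows from Lemmas 2.13, 2.15 and 2.16 in \cite{ABthesis}''), so your argument is genuinely different in that it is self-contained, using only facts already stated in this paper. The decomposition $\mathcal{R}_1 = L_1^2 + \mathcal{R}_2$, the layer-cake representation of $\E[L_1^2]$, and the scale-dependent comparison to a slightly supercritical $G(n,\tfrac{1+\varepsilon}{n})$ via Corollary~\ref{lemma:rg:critical:giant-concentration} all work as you describe; the resulting tail $O(e^{-c(a/4)^3})$ is integrable against $a\,da$, and the second regime is handled cleanly by the super-polynomial decay. Two minor remarks: (i) Fact~\ref{fact:GraphMonotone} as stated is monotonicity in the \emph{vertex count}, not in $p$; the $p$-monotonicity you use is of course equally standard (it follows from the obvious edge-wise coupling), but the cross-reference is slightly off. (ii) Your argument implicitly needs the constants $c$ and the implied $O(\cdot)$ constant in Corollary~\ref{lemma:rg:critical:giant-concentration} to be uniform over the range of $\varepsilon$'s you employ; this is the standard reading of such bounds (they depend only on $\rho$, which you fix at $1/10$), and is supported by the underlying Lemma~\ref{T58}, so there is no real gap. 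What your approach buys is a fully in-paper proof; what the paper's approach buys is brevity at the cost of an external dependency.
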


	\begin{proof}
		Follows from Lemmas 2.13, 2.15 and 2.16 in \cite{ABthesis}.
	\end{proof}

	All the random graph facts stated so far can be either found in the literature, 
	or follow directly from well-known results. 
	The following lemmas are slightly more refined versions of similar results in the literature.



	\begin{lemma}
		\label{lemma:prelim:small-cmpt-var}
		Suppose $|\lambda| = O(h(n))$ and let $B_h = {n^{2/3}}{h(n)^{-1}}$, 
		where $h:\N \rightarrow \R$ is a positive increasing function such that $h(n)=o(\log n)$. 
		Then, for any $\alpha \in (0,1)$ there exists a constant $C = C(\alpha) > 0$ such that, with probability at least $\alpha$, 
		\[\sum\nolimits_{j:L_j(G) \le B_h} L_j(G)^2 \le  C{n^{4/3}}{h(n)^{-1/2}}. \]
	\end{lemma}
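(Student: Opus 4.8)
The plan is to bound the first moment of $S:=\sum_{j:L_j(G)\le B_h}L_j(G)^2$ and then apply Markov's inequality: once $\E[S]=O(n^{4/3}h(n)^{-1/2})$ is established, for any $\alpha\in(0,1)$ a large enough constant $C=C(\alpha)$ gives $\Pr[S>Cn^{4/3}h(n)^{-1/2}]\le \E[S]/(Cn^{4/3}h(n)^{-1/2})\le 1-\alpha$, which is the claim. We may assume $h(n)\to\infty$, since otherwise $|\lambda|=O(1)$ and $\E[S]\le\E[\mathcal{R}_1(G)]=O(n^{4/3})=O(n^{4/3}h(n)^{-1/2})$ directly by Lemma~\ref{lemma:prelim:critical-var}. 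Write $\epsilon=\lambda n^{-1/3}$, so $p=(1+\epsilon)/n$. By vertex symmetry, $S=\sum_{v}|\mathcal{C}(v)|\,\mathbbm{1}[|\mathcal{C}(v)|\le B_h]$, where $\mathcal{C}(v)$ denotes the component of $v$, and the elementary inequality $\sum_{k\le K}k\Pr[X=k]\le\sum_{k\le K}\Pr[X\ge k]$ gives, for a fixed vertex $v_1$,
\[\E[S]=n\sum_{k=1}^{\lfloor B_h\rfloor}k\,\Pr\bigl[|\mathcal{C}(v_1)|=k\bigr]\le n\sum_{k=1}^{\lfloor B_h\rfloor}\Pr\bigl[|\mathcal{C}(v_1)|\ge k\bigr].\]

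I would then split on the size of $|\lambda|$ at the threshold $\sqrt{h(n)}$, which is chosen precisely so that $|\lambda|\le\sqrt{h(n)}$ forces $\epsilon^2 B_h=\lambda^2 h(n)^{-1}\le 1$, while $\lambda\ge\sqrt{h(n)}$ forces $\epsilon^3 n=\lambda^3\ge h(n)^{3/2}\to\infty$. In the first regime, every component size $k\le B_h$ under consideration satisfies $\epsilon^2 k\le 1$, i.e.\ lies inside the critical window; there the standard near-critical component-size estimate supplies a constant $C_1$ with $\Pr[|\mathcal{C}(v_1)|\ge k]\le C_1 k^{-1/2}$ uniformly over $1\le k\le B_h$ (a mild refinement of the estimates behind Lemmas~\ref{lemma:rg:critical:exp-crude-bound} and~\ref{lemma:prelim:critical-var}, obtained by restricting to the sub-window $\epsilon^2 k=O(1)$; note that even the probability $\Theta(\epsilon)$ that $v_1$ lies in the emerging giant is $O(B_h^{-1/2})$ in this regime). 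Summing, $\E[S]\le C_1 n\sum_{k=1}^{\lfloor B_h\rfloor}k^{-1/2}\le 2C_1 n\sqrt{B_h}=2C_1 n^{4/3}h(n)^{-1/2}$.

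In the second regime, $\sqrt{h(n)}\le|\lambda|=O(h(n))$, I would use the crude pointwise bound $S\le\mathcal{R}_2(G)+B_h^2\le\mathcal{R}_2(G)+n^{4/3}h(n)^{-2}$ (the $B_h^2$ covers the largest component in case it happens to be itself $\le B_h$). If $\lambda<0$, then in fact $S\le\mathcal{R}_1(G)$, and Lemma~\ref{T512} gives $\E[S]\le\E[\mathcal{R}_1(G)]=O(n/|\epsilon|)=O(n^{4/3}|\lambda|^{-1})=O(n^{4/3}h(n)^{-1/2})$. If $\lambda>0$, then $\epsilon^3 n=\lambda^3\ge h(n)^{3/2}\ge 1$ for large $n$, so Lemma~\ref{T513} gives $\E[\mathcal{R}_2(G)]=O(n/\epsilon)=O(n^{4/3}\lambda^{-1})=O(n^{4/3}h(n)^{-1/2})$, and since $n^{4/3}h(n)^{-2}\le n^{4/3}h(n)^{-1/2}$ this again yields $\E[S]=O(n^{4/3}h(n)^{-1/2})$. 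In both regimes $\E[S]=O(n^{4/3}h(n)^{-1/2})$, and Markov's inequality concludes the argument.

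I expect the only genuinely delicate point to be the uniform square-root tail bound $\Pr[|\mathcal{C}(v_1)|\ge k]\le C_1 k^{-1/2}$ in the first regime: it must hold uniformly over $1\le k\le B_h$ \emph{and} over all $|\lambda|\le\sqrt{h(n)}$, where $\lambda$ itself may grow with $n$. The threshold $\sqrt{h(n)}$ is exactly what keeps the whole range $k\le B_h$ inside the critical window $\epsilon^2 k=O(1)$ where such tails are valid; one either cites a sufficiently general near-critical reference, or re-derives the bound from the breadth-first exploration / branching-process comparison, checking that the near-critical correction factor $e^{-\Theta(\epsilon^2 k)}$ is harmless because $\epsilon^2 k\le 1$ throughout. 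Everything else — the symmetrization identity, the elementary sum $\sum_{k\le B_h}k^{-1/2}=\Theta(\sqrt{B_h})$, and the two applications of Lemmas~\ref{T512} and~\ref{T513} — is routine.
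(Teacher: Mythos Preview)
Your proposal is correct. The large-$|\lambda|$ regime ($|\lambda|\ge\sqrt{h(n)}$) is handled exactly as in the paper, via Lemmas~\ref{T512} and~\ref{T513} together with the crude bound $S\le\mathcal{R}_2(G)+B_h^2$. The small-$|\lambda|$ regime is where you diverge: the paper splits the sum into the contribution from \emph{tree} components, bounded via the explicit moment estimate $\E[t_k]=\Theta(n/k^{5/2})$ of Claim~\ref{claim:prelim:trees}, and the contribution from \emph{complex} components, which in turn requires a further case split (when $|\lambda|=O(1)$ there are $O(1)$ complex components in expectation; when $|\lambda|\to\infty$ the paper invokes the w.h.p.\ structural result of Lemma~\ref{T7} on unicyclic components, so this sub-case is not even a pure first-moment argument). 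Your route replaces all of this by the single uniform tail bound $\Pr[|\mathcal{C}(v)|\ge k]\le C_1k^{-1/2}$ on the window $\epsilon^2 k\le 1$, which indeed follows from the usual branching-process domination (the survival probability $O(\epsilon)\le O(B_h^{-1/2})\le O(k^{-1/2})$ absorbs the giant, and the finite-progeny tail is $O(k^{-1/2})$ since $e^{-\epsilon^2 k/2}$ is bounded below). This is cleaner and sidesteps the paper's tree/complex dichotomy and its sub-case analysis; the trade-off is that you import a tail estimate that the paper does not state, whereas the paper's argument is self-contained given Claim~\ref{claim:prelim:trees}.
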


	\begin{lemma}
		\label{lemma:prelim:cmpt-count}
		Let $S_B = \{j: B \le L_j(G) \le 2B\}$ 
		and
		suppose there exists a positive increasing function $g$ such that $g(n) \rightarrow \infty$, $g(n) = o(n^{1/3})$,
		$|\lambda| \le g(n)$ and
		$B \le \frac{n^{2/3}}{g(n)^2}$.  
		If $B \rightarrow \infty$, then there exists constants $\delta_1,\delta_2 > 0$ independent of $n$ such that
		\[\Pr\left[|S_B| \le \frac{\delta_1n}{B^{3/2}}\right] \le \frac{\delta_2  B^{3/2} }{ n}.\]
	\end{lemma}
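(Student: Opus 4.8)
The plan is a second-moment (Chebyshev) argument applied directly to $|S_B| = \sum_{k=B}^{2B} N_k$, where $N_k$ is the number of connected components of $G$ of size exactly $k$. Write $\1[S\ \mathrm{comp}]$ for the indicator that $S\subseteq V$ is a connected component of $G$, so $N_k = \sum_{|S|=k}\1[S\ \mathrm{comp}]$ and $\E\,\1[S\ \mathrm{comp}] = c_{|S|}(1-p)^{|S|(n-|S|)}$, with $c_j := \Pr[G(j,p)\ \mathrm{connected}]$; equivalently $\E[N_k] = \tfrac nk\Pr[\,|\mathcal C(v)|=k\,]$ for a fixed vertex $v$. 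I will show $\E[|S_B|] = \Theta(nB^{-3/2})$ and $\var(|S_B|)=O(nB^{-3/2})$, and then Chebyshev closes the argument.

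For the lower bound on $\E[|S_B|]$, I would keep only the spanning-tree contribution $c_k \ge k^{k-2}p^{k-1}(1-p)^{\binom k2-k+1}$, and expand $\binom nk$, the internal factor $(1-p)^{\binom k2-k+1}$, and the isolation factor $(1-p)^{k(n-k)}$ via Stirling and $\log(1-p)=-p-p^2/2-\cdots$. The key bookkeeping point is that the $e^{-k^2/(2n)}$ in $\binom nk = \tfrac{n^k e^k}{k^k\sqrt{2\pi k}}e^{-k^2/(2n)+o(1)}$ together with the $e^{-k^2/(2n)}$ from $(1-p)^{\binom k2-k+1}$ exactly cancel the $e^{+k^2/n}$ from $(1-p)^{k(n-k)}$, leaving
\[
\E[N_k]\ \ge\ (1-o(1))\,\frac{n}{\sqrt{2\pi}\,k^{5/2}}\,\exp\!\Big({-}\tfrac{k\lambda^2}{2n^{2/3}} + \tfrac{k^2\lambda}{2n^{4/3}} + o(1)\Big),
\]
whose exponent is bounded for $k\in[B,2B]$ precisely because $|\lambda|\le g$ and $B\le n^{2/3}/g^2$ give $k\lambda^2 n^{-2/3}=O(1)$ and $k^2\lambda n^{-4/3}=O(g^{-3})$. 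Summing, $\E[|S_B|]\ge c_1 n\sum_{k=B}^{2B}k^{-5/2}\ge c_1' nB^{-3/2}$. For the matching upper bound I would invoke the standard near-critical estimate $\Pr[\,|\mathcal C(v)|=k\,]=O(k^{-3/2})$, uniform over the relevant range (equivalently, Wright's bound on the number of connected graphs of excess $\ell$, combined with $k^{3/2}p=O(g^{-3})\to0$, which makes the excess-$\ell$ terms a convergent geometric series); this gives $\E[N_k]=O(nk^{-5/2})$, hence $\E[|S_B|]=O(nB^{-3/2})$.

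For the variance, the crucial structural observation is that two distinct components have disjoint vertex sets, so $\E[\1[S\ \mathrm{comp}]\1[S'\ \mathrm{comp}]]=0$ whenever $S\ne S'$ and $S\cap S'\ne\emptyset$; thus $\E[|S_B|^2]=\E[|S_B|]+\sum_{S\ne S',\,S\cap S'=\emptyset}\E[\1[S\ \mathrm{comp}]\1[S'\ \mathrm{comp}]]$. Conditioned on $\{S\ \mathrm{comp}\}$ the edges inside $V\setminus S$ form an independent $G(n-|S|,p)$, so $\sum_{S'\subseteq V\setminus S}\E[\1[S\ \mathrm{comp}]\1[S'\ \mathrm{comp}]]=\E[\1[S\ \mathrm{comp}]]\cdot\E_{G(n-|S|,p)}[|S_B|]$. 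Now comes the point that makes this work: removing $m\le 2B$ vertices barely inflates the expected small-component count. Since $\E_{G(n',p)}[N_k]=\binom{n'}k c_k(1-p)^{k(n'-k)}$ with $c_k$ not depending on $n'$,
\[
\frac{\E_{G(n-m,p)}[N_k]}{\E_{G(n,p)}[N_k]}=\Big(\prod_{i=0}^{k-1}\tfrac{n-m-i}{n-i}\Big)(1-p)^{-km}\ \le\ e^{-km/n}\,e^{\,km/n+km\lambda n^{-4/3}+O(km/n^2)}\ =\ e^{\,O(B^2 g\, n^{-4/3})},
\]
using $k,m\le2B$, $|\lambda|\le g$. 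Writing $\epsilon_n:=O(B^2 g\, n^{-4/3})$ we get $\E_{G(n-m,p)}[|S_B|]\le(1+\epsilon_n)\E_{G(n,p)}[|S_B|]$, hence $\sum_{S\ne S',\text{disj}}\E[\cdots]\le(1+\epsilon_n)(\E[|S_B|])^2$ and
\[
\var(|S_B|)\ \le\ \E[|S_B|]+\epsilon_n(\E[|S_B|])^2\ =\ O(nB^{-3/2})+O\big(B^2 g\, n^{-4/3}\big)\cdot O\big(n^2 B^{-3}\big)\ =\ O(nB^{-3/2}),
\]
where the final equality is exactly where $B\le n^{2/3}/g^2$ enters (it forces $g n^{2/3}B^{-1}=O(nB^{-3/2})$).

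To conclude, Chebyshev gives $\Pr[\,|S_B|\le\tfrac12\E[|S_B|]\,]\le 4\var(|S_B|)/(\E[|S_B|])^2=O(B^{3/2}/n)$, and since $\E[|S_B|]\ge c_1' nB^{-3/2}$ we may take $\delta_1=c_1'/3$ so that $\{|S_B|\le\delta_1 nB^{-3/2}\}\subseteq\{|S_B|\le\tfrac12\E[|S_B|]\}$, yielding the claim with a suitable $\delta_2$. I expect the main obstacle to be getting $\var(|S_B|)=O(\E[|S_B|])$ rather than merely $O((\E[|S_B|])^2)$: taken individually, two size-$B$ sets are strongly positively correlated (the factor $(1-p)^{-|S||S'|}$ is large once $B$ approaches $n^{2/3}$), but this is entirely an artifact of ignoring that such sets must be \emph{disjoint} — summing over disjoint $S'$, the shrinkage $\binom{n-|S|}{k}/\binom nk = e^{-\Theta(B^2/n)}$ in the available sets exactly compensates the large correlation factor, and the leftover discrepancy is controlled precisely by $B\le n^{2/3}/g^2$. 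The other delicate step is the Stirling bookkeeping for $\E[N_k]=\Theta(nk^{-5/2})$, in particular the cancellation of the $e^{\pm k^2/(2n)}$ terms and verifying the hypotheses keep the residual exponent bounded.
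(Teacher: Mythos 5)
Your proposal is correct, and while it shares the paper's overall skeleton (first moment of order $nB^{-3/2}$, variance of the same order, Chebyshev), it diverges from the paper in two substantive ways. First, the paper works only with \emph{tree} components: it sets $T_B$ to be the number of trees with size in $[B,2B]$, uses $|S_B|\ge T_B$, and applies Chebyshev to $T_B$; this sidesteps any upper bound on the number of complex components. You instead count all components, which forces you to also prove $\E[N_k]=O(nk^{-5/2})$; your route via Wright's bounds and the observation $k^{3/2}p=O(g(n)^{-3})\to 0$ is standard and valid, but it is an extra ingredient the paper does not need. Second, and more interestingly, the variance is computed by a genuinely different mechanism. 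The paper's engine is its Claim on tree counts, which derives $\var(t_k)\le \E[t_k]+O(\lambda n^{2/3}k^{-3})$ and $\cov(t_i,t_j)\le O(\lambda n^{2/3}i^{-3/2}j^{-3/2})$ from exact combinatorial identities for $\E[t_k(t_k-1)]$ and $\E[t_it_j]$ followed by careful Taylor/Stirling cancellation. You instead exploit the structural facts that distinct components are disjoint and that, conditioned on $S$ being a component, $G[V\setminus S]\sim G(n-|S|,p)$, reducing the cross terms to the ratio $\E_{G(n-m,p)}[N_k]/\E_{G(n,p)}[N_k]\le e^{O(B^2g n^{-4/3})}$; the hypothesis $B\le n^{2/3}/g(n)^2$ enters in exactly the same place in both arguments (it is what makes the correlation correction $O(gn^{2/3}B^{-1})=O(nB^{-3/2})$, mirroring the paper's use of $B\le n^{2/3}/g(n)^2$ to control the covariance sum). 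Your conditional-independence route is arguably more conceptual and avoids the delicate exponent bookkeeping of the paper's Claim; the paper's route is more self-contained in that it needs no external input beyond Stirling, whereas yours leans on a cited near-critical cluster-tail estimate.
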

	
	The proofs of Lemmas~\ref{lemma:prelim:small-cmpt-var} and \ref{lemma:prelim:cmpt-count} are provided in Appendix~\ref{Appendix RG}.
	Finally, the following corollary of Lemma \ref{lemma:prelim:cmpt-count} will also be useful. 
	For a graph $H$, let ${N}_k(H, g)$ be the number of components of $H$ whose sizes are in the interval $\mathcal{I}_k(g)$. We note that with a slight abuse of notation, for a random-cluster configuration $X$, we also use ${N}_k(X, g)$ for the number of connected components of $X$ in $\mathcal{I}_k(g)$.

	\begin{corollary}
		\label{lemma:critical:q<2:maintain-trees}
		Let $m \in (n/2q,n]$ and let $g$ be an increasing positive function that such that $g(n)=o(m^{1/3})$, $g(n) \rightarrow \infty$ and $|\lambda| \le g(m)$.
		If $H \sim G\big(m, \frac{1 + \lambda m^{-1/3}}{m}\big)$,
		there exists a constant $b > 0$ such that, with probability at least $1-O\left(g(n)^{-3}\right)$, 
		$N_{k}(H,g) \ge b g(n)^{3 \cdot 2^{k-1}}$
		for all $k \ge 1$ such that $n^{2/3}g(n)^{-2^k}\rightarrow \infty$.
	\end{corollary}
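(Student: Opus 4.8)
\textbf{Proof plan for Corollary~\ref{lemma:critical:q<2:maintain-trees}.}
The statement is a counting analogue of Lemma~\ref{lemma:prelim:cmpt-count}: whereas the lemma gives, for a single dyadic window $[B,2B]$, a high-probability lower bound on the number $|S_B|$ of components with size in that window, the corollary needs simultaneous lower bounds on $N_k(H,g)$, the number of components with size in the window $\mathcal{I}_k(g)=[\vartheta m^{2/3}/(2g^{2^k}), \vartheta m^{2/3}/g^{2^k}]$, for every relevant $k\ge 1$. The plan is: (i) rephrase each interval $\mathcal{I}_k(g)$ in terms of a base scale $B_k := \vartheta m^{2/3}/(2 g(n)^{2^k})$ so that $\mathcal{I}_k(g)=[B_k, 2B_k]$ (up to the constant $\vartheta$, which only affects constants), (ii) check that for each relevant $k$ the quantity $B_k$ satisfies the hypotheses of Lemma~\ref{lemma:prelim:cmpt-count} with the given function $g$ --- namely $g(m)=o(m^{1/3})$, $g\to\infty$, $|\lambda|\le g(m)$, and $B_k \le m^{2/3}/g(m)^2$ --- and that $B_k\to\infty$ under the stated condition $n^{2/3} g(n)^{-2^k}\to\infty$; the requirement $B_k\le m^{2/3}/g^2$ holds because $2^k\ge 2$ for $k\ge1$, so $g^{2^k}\ge g^2$ (for $g$ large, which holds as $g\to\infty$), and $m/n = \Theta(1)$ absorbs the constant $\vartheta$ and the $m$-vs-$n$ discrepancy; (iii) apply Lemma~\ref{lemma:prelim:cmpt-count} to each $k$, obtaining $\Pr[|S_{B_k}| \le \delta_1 m/B_k^{3/2}] \le \delta_2 B_k^{3/2}/m$, and translate the lower bound $\delta_1 m/B_k^{3/2}$ into the target form $b\, g(n)^{3\cdot 2^{k-1}}$ using $B_k^{3/2}=\Theta(m\, g(n)^{-3\cdot 2^{k-1}})$, which gives $\delta_1 m/B_k^{3/2}=\Theta(g(n)^{3\cdot 2^{k-1}})$.

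The remaining work is the union bound over $k$. Let $k^\star$ be the largest index with $n^{2/3}g(n)^{-2^k}\to\infty$; note $k^\star = O(\log\log n)$ since $g\to\infty$ forces $2^{k}\le \log_g(n^{2/3})$, i.e. $k^\star = O(\log(\log n/\log g))$. The failure probability at level $k$ is at most $\delta_2 B_k^{3/2}/m = O(g(n)^{-3\cdot 2^{k-1}})$, which is a rapidly decreasing geometric-type series in $k$: the $k=1$ term is $O(g(n)^{-3})$ and subsequent terms are super-exponentially smaller. Summing, the total failure probability is dominated by the $k=1$ term and is $O(g(n)^{-3})$, matching the claimed bound. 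I would state this sum explicitly as $\sum_{k\ge 1} O(g(n)^{-3\cdot 2^{k-1}}) = O(g(n)^{-3})\sum_{k\ge1} g(n)^{-3(2^{k-1}-1)} = O(g(n)^{-3})$ since the residual sum converges to $1+o(1)$.

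The one genuinely fiddly point --- and the place I expect to spend the most care --- is bookkeeping the various constants and the $m$-versus-$n$ distinction. Lemma~\ref{lemma:prelim:cmpt-count} is stated for $G(n,\cdot)$ with windows measured against $n^{2/3}$, while the corollary works with $H\sim G(m, (1+\lambda m^{-1/3})/m)$ and windows $\mathcal{I}_k(g)$ measured against $n^{2/3}$ (not $m^{2/3}$), with $m\in(n/2q, n]$. Because $m = \Theta(n)$, we have $n^{2/3} = \Theta(m^{2/3})$ and the constant $\vartheta$ together with the $\Theta(1)$ ratio $m/n$ can be absorbed into the constants $\delta_1,\delta_2$ (hence into the final $b$ and the $O(g(n)^{-3})$); one just needs to verify that after this absorption the interval $[B_k, 2B_k]$ expressed in $m^{2/3}$-units still has the form required by the lemma and still satisfies $B_k\to\infty$ and $B_k\le m^{2/3}/g(m)^2$ under the corollary's hypotheses. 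Since $g(n)=o(m^{1/3})$ is given and $g(m)=\Theta(g(n))$ up to the usual subtleties of composing $g$ with $m=\Theta(n)$ (which is where I would be slightly careful --- strictly I would phrase everything in terms of $g(m)$ throughout, noting $g(m)\to\infty$, $g(m)=o(m^{1/3})$, and $g(m) = \Theta(g(n))$ when $g$ is, say, polynomially bounded, or more robustly just keep $g(n)$ and $g(m)$ separate and observe the final bound $O(g(n)^{-3})$ follows from $O(g(m)^{-3})$ and $g(m)=\Theta(g(n))$), everything goes through. No new probabilistic idea is needed beyond Lemma~\ref{lemma:prelim:cmpt-count}; the corollary is essentially that lemma applied dyadically plus a convergent union bound.
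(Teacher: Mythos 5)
Your proposal is correct and follows essentially the same route as the paper: apply Lemma~\ref{lemma:prelim:cmpt-count} to each dyadic window $\mathcal{I}_k(g)$ to get failure probability $O(g(n)^{-3\cdot 2^{k-1}})$ at level $k$, then union bound via $\sum_{k\ge1} g(n)^{-3\cdot 2^{k-1}} \le \sum_{i\ge1} g(n)^{-3i} = O(g(n)^{-3})$. The extra care you devote to the $m$-versus-$n$ and constant bookkeeping is reasonable but not part of the paper's (terser) argument.
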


\section{The burn-in period: proofs}
\label{Appendix Burn-in}

In this section we will provide proofs for Lemma~\ref{Lemma1} and Lemma~\ref{lemma:burn-in-subphase3}. 

\subsection{A drift function}
\label{subsec:drift}

Consider the mean-field random-cluster model with parameters $q \ge 1$ and $p = \la/n$.
In this subsection, we introduce a drift function 
captures the rate of decay of the size of the largest component in a configuration under steps of the CM dynamics which will be helpful for proving Lemma~\ref{Lemma1}; 
this function was first studied in~\cite{BSmf}.

Given $\theta \in (0,1]$, consider the equation
\begin{equation}\label{PhiDef}
e^{-\la x} = 1 - \frac{qx}{1+(q-1)\theta}
\end{equation}
and let $\phi(\theta, \la, q)$ be defined as the largest positive root of~\eqref{PhiDef}. 
We shall see that $\phi$ is not defined for all $q$ and $\la$ since there may not be a positive root. 
When $ \la$ and $q$ are clear from the context we use $\phi(\theta)=\phi(\theta, \la, q)$. 
Note that $\beta(\la)$ defined by equation~(\ref{BetaDef}) is the special case of~\eqref{PhiDef} when $q=1$;
observe that $\beta$ is only well-defined when $\la > 1$.

We let $k(\theta, q):=(1+(q-1)\theta)/q$ so that 
$\phi(\theta, \la, q)=\beta(\la \cdot k(\theta, q)) \cdot k(\theta, q).$ 
Hence, $\phi(\theta, \la, q)$ is only defined when $\la \cdot k(\theta, q)>1$; 
that is, $\theta \in (\theta_{min}, 1]$, where $\theta_{min}=\frac{q-\la}{\la(q-1)}$.
Note that when $\la = q$, $\phi(\theta)$ is defined for every $\theta \in (0, 1]$.

For fixed $\la$ and $q$, we call  $f(\theta):=\theta - \phi(\theta)$ the \textbf{drift function}. 
which is defined on $(\max\{\theta_{min}, 0\}, 1]$.

%

\begin{lemma} 
	\label{PositivityF}
	When $q = \la < 2$, the drift function $f$ is non-negative for any $\theta \in [\xi, 1]$, 
	where $\xi$ is an arbitrarily small positive constant.
\end{lemma}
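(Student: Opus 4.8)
The goal is to show that when $q = \la < 2$, the drift function $f(\theta) = \theta - \phi(\theta)$ is non-negative on $[\xi,1]$, where $\xi$ is any fixed positive constant. My strategy is to exploit the explicit description $\phi(\theta) = \beta(\la k(\theta))\cdot k(\theta)$ with $k(\theta) = (1+(q-1)\theta)/q$ and $\la = q$, which gives $\la k(\theta) = 1 + (q-1)\theta$. Setting $d = d(\theta) := \la k(\theta) = 1 + (q-1)\theta > 1$ for $\theta > 0$, we have $\phi(\theta) = \beta(d)\cdot d/q$, so $f(\theta) \ge 0$ is equivalent to $q\theta \ge d\,\beta(d)$, i.e. (since $q\theta = (d-1)q/(q-1)$) to the one-variable inequality
\[
\frac{q}{q-1}(d-1) \;\ge\; d\,\beta(d), \qquad d \in \big(1,\, q\big].
\]
So the whole lemma reduces to a statement about the single function $\beta$ defined by $e^{-d\beta(d)} = 1-\beta(d)$.

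The key step is to analyze $g(d) := \frac{q}{q-1}(d-1) - d\,\beta(d)$ on $(1,q]$. First I would check the endpoints: as $d \to 1^+$ we have $\beta(d) \to 0$, and more precisely $\beta(d) \approx 2(d-1)$ to leading order (the standard near-critical expansion of the survival probability), so $d\beta(d) \approx 2(d-1)$ while $\frac{q}{q-1}(d-1)$ has slope $\frac{q}{q-1} < 2$ when $q < 2$ — hence $g(d) < 0$ for $d$ slightly above $1$?! This sign issue is exactly why the constant $\xi$ appears in the statement: near $\theta = 0$ the drift is actually negative, and the lemma only claims non-negativity on $[\xi,1]$, i.e. for $d \ge 1 + (q-1)\xi =: d_\xi > 1$ bounded away from $1$. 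So I should instead show $g(d) \ge 0$ on $[d_\xi, q]$. At the right endpoint $d = q$ we get $k(1) = 1$, $d = q = \la$, and one checks $g(q) = q - q\beta(q)$; since $\beta(q) < 1$ always, $g(q) > 0$. The natural plan is then: (i) verify $g(q) > 0$; (ii) show $g$ has no interior zero in $(1,q)$, or more robustly, show $g$ is concave (or that $g'$ changes sign at most once) on $(1,q)$ so that its minimum on $[d_\xi,q]$ is attained at an endpoint, combined with checking $g(d_\xi) \ge 0$ — but since $\xi$ is arbitrary this last check is the delicate one.

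The cleaner route, and the one I'd actually pursue, is to prove the sharper claim that $g(d) \ge 0$ for all $d \in (d_0, q]$ where $d_0$ is the unique point where $\frac{q}{q-1}(d-1) = d\beta(d)$ (equivalently, where $f$ changes sign), and that $d_0 < 1 + (q-1)\xi$ for all small $\xi$ because... no — rather, one observes that what the lemma needs is only that $f \ge 0$ on some $[\xi,1]$, and the point $\theta$ where $f$ vanishes is a fixed constant $\theta^\star = \theta^\star(q) \in (0,1)$ depending only on $q$ (it is a root of a fixed transcendental equation, not depending on $n$), so choosing $\xi < \theta^\star$ works. Thus the real content is: (a) $f(1) \ge 0$, done by $\beta(q) < 1$; (b) $f$ is positive on $(\theta^\star, 1]$ and the set $\{f \ge 0\}$ contains an interval $(\theta^\star,1]$ with $\theta^\star$ a fixed constant $< 1$. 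For (b) I would differentiate the implicit relation $e^{-d\beta(d)} = 1 - \beta(d)$ to get $\beta'(d)$ in closed form, substitute into $f'(\theta) = 1 - \phi'(\theta)$, and show $f$ is monotone (or unimodal) on $(0,1]$, so that $\{f \ge 0\}$ is exactly an interval of the form $[\theta^\star,1]$.

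\emph{Main obstacle.} The genuinely tricky part is the monotonicity/unimodality analysis of $f$ on $(0,1]$: after computing $\beta'(d) = \dfrac{\beta(d)}{(1-\beta(d)) \, \big(e^{d\beta(d)}/d - \beta(d)\big)} = \dfrac{\beta(d)}{1/d - \beta(d) + d\beta(d)^2 /\!\big(\cdots\big)}$ — I'll need to massage this using the defining equation $e^{-d\beta} = 1-\beta$ to eliminate exponentials — one must verify a transcendental inequality in the two parameters $d$ and $q$ (with $d$ ranging and $q$ fixed in $(1,2)$) showing $\phi'(\theta) \le 1$ on the relevant range, or at least that $f$ doesn't dip back below zero. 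I expect this to require the substitution $\beta = \beta(d)$ as the free variable (so $d = -\log(1-\beta)/\beta$), reducing everything to an inequality in $\beta \in (0,1)$ and the fixed parameter $q$, which can then be verified by elementary (if somewhat tedious) calculus — checking signs of a polynomial-in-$\beta$ after clearing denominators, using $1 - \beta < e^{-\beta}$-type bounds. This is where the hypothesis $q < 2$ is essential: it is precisely the condition under which the slope comparison at the relevant scale goes the right way.
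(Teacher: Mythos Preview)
Your reduction to the one-variable inequality
\[
g(d) := \frac{q}{q-1}(d-1) - d\,\beta(d) \ge 0, \qquad d \in (1,q],
\]
is correct and a reasonable way to attack the lemma from scratch. But your endpoint analysis contains a sign error that derails the rest of the plan. You assert that $\tfrac{q}{q-1} < 2$ when $q < 2$; in fact the opposite holds: $\tfrac{q}{q-1} < 2 \iff q > 2$. So for $q \in (1,2)$ one has $\tfrac{q}{q-1} > 2$, and since $d\beta(d) \sim 2(d-1)$ as $d \downarrow 1$, we get $g(d) \sim \big(\tfrac{q}{q-1} - 2\big)(d-1) > 0$ near $d = 1$. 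Equivalently, $f(\theta) \sim \tfrac{2-q}{q}\,\theta > 0$ as $\theta \downarrow 0$. There is no negative region near $\theta = 0$ and hence no crossing point $\theta^\star$; your entire subsequent discussion (the ``delicate'' check that $d_0 < 1 + (q-1)\xi$, the unimodality analysis to locate $\theta^\star$) is built on a false premise. The constant $\xi$ in the statement is not there to avoid a region where $f < 0$; it is there only because $\phi$ is defined on the open interval $(0,1]$, and the application (a uniform positive lower bound on $f$) requires restricting to a compact subinterval.

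The paper's proof is much shorter than anything you propose: it simply cites external results (Lemma~2.5 in \cite{BGJ} and Fact~3.5 in \cite{ABthesis}) to the effect that when $\la = q < 2$ the drift function $f$ has no positive root on $(0,1]$; since $f$ is continuous there and $f(1) > 0$, it follows that $f > 0$ on all of $(0,1]$, and in particular $f \ge 0$ on $[\xi,1]$. If you want a self-contained argument, you would need to prove directly that $g(d) > 0$ on all of $(1,q]$; your final paragraph's substitution idea (take $\beta$ as the free variable, write $d = -\log(1-\beta)/\beta$, and reduce to an elementary inequality in $\beta$ and $q$) is a plausible route, but you should carry it out with the correct sign picture in mind.
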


\begin{proof}
	When $\la = q < 2$, the drift function $f$ does not have a positive root,	
	it is continuous in (0,1], and $f(1)>0$; see Lemma 2.5 in~\cite{BGJ} and Fact 3.5 in~\cite{ABthesis}.
	Since $\lim_{\theta \rightarrow 0}f(\theta) = 0$, the result follows.
\end{proof}

\subsection{Shrinking a large component: proof of Lemma~\ref{Lemma1}}

The proof of Lemma~\ref{Lemma1} uses the following lemma, which follows directly from standard random graph estimates and Hoeffding's inequality.  To simplify the notation, we let $\hat{L}(X) := L_1(X)/n^{2/3}$.
We use $A(X)$ to denote the number of vertices activated by the step CM dynamics from configuration $X$.
Let $\Lambda_t$ denote the event that the largest component of the configuration is activated in step $t$.

\begin{lemma} [\cite{ABthesis}, Claim~3.45]
	\label{C345}
	Suppose $ \mathcal{R}_2(X_t) = O(n^{4/3})$ and $\hat{L}(X_t) \geq B$ for a large constant $B$, 
	and let $C$ be a fixed large constant. Then
	\begin{enumerate}
		\item  $\Pr\left[\mathcal{R}_2(X_{t+1}) < \mathcal{R}_2(X_t) + \frac{Cn^{4/3}}{\sqrt{\hat{L}(X_t)}} ~\middle|~ X_t, \Lambda_t \right] = 1 - O\left(\hat{L}(X_{t})^{-1/2} \right)$.
		\item  $\Pr\left[\mathcal{R}_2(X_{t+1}) < \mathcal{R}_2(X_t) + \frac{Cn^{4/3}}{\sqrt{\hat{L}(X_t)}} ~\middle|~ X_t, \neg\Lambda_t \right] = 1 - O\left(\hat{L}(X_{t})^{-1/2} \right)$.
	\end{enumerate}
\end{lemma}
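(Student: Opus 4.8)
\textbf{Proof proposal for Lemma~\ref{C345}.}

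The plan is to analyze a single step of the CM dynamics from a configuration $X_t$ with $\mathcal{R}_2(X_t) = O(n^{4/3})$ and $\hat{L}(X_t) \ge B$, and to track how $\mathcal{R}_2$ changes. Recall that $\mathcal{R}_2(X_{t+1})$ counts the squared sizes of all components except the largest of $X_{t+1}$. The components of $X_{t+1}$ split into two types: those lying entirely among the inactive vertices (whose sizes are unchanged from $X_t$), and those created inside the percolation sub-step on the active vertices, which form a fresh random graph $G(m, q/n)$ where $m = A(X_t)$ is the number of active vertices. The contribution from inactive components is at most $\mathcal{R}_1(X_t)$ minus the square of whichever large component is the new largest; conditioning on $\Lambda_t$ (resp.\ $\neg\Lambda_t$) pins down whether $L_1(X_t)^2$ is removed from or retained in this count, and in either case the inactive part contributes at most $\mathcal{R}_2(X_t)$ plus possibly the squared size of the largest \emph{new} component which we then have to account for separately. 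So the heart of the matter is bounding $\mathcal{R}_2$ (and the largest new component $L_1$) of a near-critical random graph on $m \approx n/q$ vertices with edge probability $q/n$, i.e.\ a $G(m, \tfrac{1+O(m^{-1/3})}{m})$ random graph, since $m \cdot q/n \approx 1$.

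First I would establish concentration of $m = A(X_t)$. Writing $A(X_t)$ as a sum over components of $X_t$ of $|\mathcal{C}|$ times an independent $\mathrm{Bernoulli}(1/q)$, we have $\E[A(X_t)] = n/q$ and $\var(A(X_t)) = \tfrac1q(1-\tfrac1q)\mathcal{R}_1(X_t)$. Under $\Lambda_t$ we add $L_1(X_t)$ deterministically and the remaining fluctuation has variance $\Theta(\mathcal{R}_2(X_t)) = O(n^{4/3})$; under $\neg\Lambda_t$ we subtract $(1-1/q)L_1(X_t) = \Theta(n^{2/3}\hat L(X_t))$ deterministically and again the fluctuation has variance $O(n^{4/3})$. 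By Hoeffding's inequality (the bounded-difference version, with the largest step being $L_1(X_t) \le n^{2/3}\hat L(X_t)$ in the $\Lambda_t$ case, but still $O(n^{2/3})$-scale otherwise), $|A(X_t) - \E[A(X_t)\mid X_t,\Lambda_t]| \le n^{2/3}\sqrt{\hat L(X_t)}$ except with probability $e^{-\Omega(\hat L(X_t))}$, which is $O(\hat L(X_t)^{-1/2})$. Hence $m = n/q + O(n^{2/3}\sqrt{\hat L(X_t)})$ with the claimed failure probability, so the percolation random graph is $G(m, \tfrac{1+\lambda m^{-1/3}}{m})$ with $|\lambda| = O(\sqrt{\hat L(X_t)}) = o(\log n)$ (using $\hat L(X_t) = o((\log n)^2)$, which we may assume since otherwise the claim is vacuous or handled by the earlier burn-in lemma).

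Next, conditioned on this value of $m$, I would apply Lemma~\ref{lemma:prelim:critical-var} to get $\E[\mathcal{R}_1(G(m,\cdot))] = O(m^{4/3}) = O(n^{4/3})$, and then Markov's inequality to conclude that with probability $1 - O(\hat L(X_t)^{-1/2})$ we have $\mathcal{R}_1(G(m,\cdot)) \le C' n^{4/3}\sqrt{\hat L(X_t)}$ for a suitable constant $C'$. Combining: on the intersection of the good events, $\mathcal{R}_2(X_{t+1})$ is at most (the inactive contribution, which is $\le \mathcal{R}_2(X_t)$ after removing the appropriate large-component square as dictated by $\Lambda_t$ vs.\ $\neg\Lambda_t$) plus (the new-graph contribution $\le \mathcal{R}_1(G(m,\cdot))$, with its largest component possibly promoted to $L_1(X_{t+1})$ and thus removed, leaving $\mathcal{R}_2$ of the new graph, which is only smaller). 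Either way we land at $\mathcal{R}_2(X_{t+1}) < \mathcal{R}_2(X_t) + \tfrac{C n^{4/3}}{\sqrt{\hat L(X_t)}}$ once $C$ is chosen large relative to $C'$ — note the bound we want has $\sqrt{\hat L(X_t)}$ in the \emph{denominator}, so I must actually prove the sharper statement that the new-graph contribution is $O(n^{4/3}/\sqrt{\hat L(X_t)})$, not $O(n^{4/3}\sqrt{\hat L(X_t)})$; this forces me to use Markov's inequality at scale $n^{4/3}/\sqrt{\hat L(X_t)}$, giving failure probability $O(\hat L(X_t)^{1/2}) \cdot$ constant, which is too weak. The main obstacle is therefore this tension: to get both the right \emph{magnitude} $n^{4/3}/\sqrt{\hat L(X_t)}$ of the increment and the right \emph{failure probability} $O(\hat L(X_t)^{-1/2})$, I expect I need a more refined tail bound on $\mathcal{R}_2$ of a critical random graph than plain Markov — presumably a bound on $\E[\mathcal{R}_2(G)]$ together with an analysis of how much of it can concentrate in a single step, or a direct second-moment / martingale argument controlling the increment $\mathcal{R}_2(X_{t+1}) - \mathcal{R}_2(X_t)$ rather than $\mathcal{R}_2(X_{t+1})$ itself. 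This is exactly the delicate point the cited Claim 3.45 in~\cite{ABthesis} addresses, and I would follow that route: condition on $m$, use the scaling-limit estimates for $\mathcal{R}_2$ of near-critical $G(m,\cdot)$ to show the contribution beyond $\mathcal{R}_2(X_t)$ is typically only $O(n^{4/3}/\sqrt{\hat L(X_t)})$ because the genuinely new mass is spread over components each of size $O(m^{2/3})$ and there are few of them, and absorb the rare bad event into the $O(\hat L(X_t)^{-1/2})$ slack.
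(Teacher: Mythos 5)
Your decomposition of $\mathcal{R}_2(X_{t+1})$ into the inactive contribution plus the contribution of the percolation graph is the right starting point, and you correctly isolate the tension: Markov applied to $\E[\mathcal{R}_2(G)]=O(n^{4/3})$ at threshold $Cn^{4/3}/\sqrt{\hat L(X_t)}$ gives a useless failure probability. But you leave that tension unresolved, and the resolution is not a ``more refined tail bound for a critical random graph'' --- it is that \emph{the percolation graph is not critical once you condition on $\Lambda_t$ or $\neg\Lambda_t$}. Your own second paragraph contains the seed of this and then discards it: you note that conditioning on $\Lambda_t$ adds $L_1(X_t)$ deterministically (and $\neg\Lambda_t$ subtracts $L_1(X_t)/q$), but you then write $m=n/q+O(n^{2/3}\sqrt{\hat L(X_t)})$ and $|\lambda|=O(\sqrt{\hat L(X_t)})$. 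That is wrong: the deterministic shift is $\Theta(n^{2/3}\hat L(X_t))$, which dominates the $O(n^{2/3}\sqrt{\hat L(X_t)})$ fluctuation, so $\epsilon = mq/n-1 = \Theta(\hat L(X_t)n^{-1/3})$ with a definite sign --- positive (supercritical) under $\Lambda_t$, negative (subcritical) under $\neg\Lambda_t$. This sign and magnitude are exactly what make the lemma work.

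Concretely: under $\Lambda_t$ one has $\mathcal{R}_2(X_{t+1})\le \mathcal{R}_2(X_t)+\mathcal{R}_2(G)$ (since $L_1(X_{t+1})^2\ge L_1(G)^2$), and Lemma~\ref{T513} for the supercritical graph gives $\E[\mathcal{R}_2(G)]=O(m/\epsilon)=O(n^{4/3}/\hat L(X_t))$; under $\neg\Lambda_t$ one has $\mathcal{R}_2(X_{t+1})\le \mathcal{R}_2(X_t)+\mathcal{R}_1(G)$ (since the old largest component survives and $L_1(X_{t+1})\ge L_1(X_t)$), and Lemma~\ref{T512} for the subcritical graph gives $\E[\mathcal{R}_1(G)]=O(m/|\epsilon|)=O(n^{4/3}/\hat L(X_t))$. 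Now plain Markov at threshold $Cn^{4/3}/\sqrt{\hat L(X_t)}$ yields failure probability $O(\hat L(X_t)^{-1/2})$, so both the magnitude of the increment and the error probability come out right simultaneously; the Hoeffding estimate for $A(X_t)$ (with error $e^{-\Omega(\hat L(X_t))}$, absorbed into the slack) plus monotonicity in $m$ handles the conditioning on the value of $m$. This is precisely the mechanism used in the paper's proof of Lemma~\ref{C344}, where $\epsilon$ is computed explicitly in both cases; your appeal to Lemma~\ref{lemma:prelim:critical-var} (which requires $|\lambda|=O(1)$ and only gives the lossy $O(n^{4/3})$ bound) cannot close the argument.
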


\begin{proof}[Proof of Lemma~\ref{Lemma1}]
Let $\hat{T}$ be the first time $t$ when $\hat{L}(X_t) \leq \delta n^{1/3}$,  let $T'$ be a large constant we choose later; 
we set $T := \min\{\hat{T}, T'\}$.
Observe that with constant probability 
the largest component in the configuration is activated by the CM dynamics for every $t \le T'$;
i.e., the event $\Lambda_t$ occurs for every $t \le T'$.
Let us assume this is the case and fix $t < T$. 
Suppose  $\mathcal{R}_2(X_t) \leq \mathcal{R}_2(X_0) + t \cdot \frac{C}{\sqrt{\delta}} n^{\frac{7}{6}}$ where $C$ is the positive constant from Lemma~\ref{C345}.
We show that with high probability:
\begin{enumerate}[(i)]
	\item $\mathcal{R}_2(X_{t+1}) \leq \mathcal{R}_2(X_0) + t \cdot \frac{C}{\sqrt{\delta}} n^{\frac{7}{6}}$; and 
	\item $L_1(X_{t+1}) \le L_1(X_t) - \xi n$ where $\xi$ is a positive constant independent of $t$ and $n$.
\end{enumerate} 
In particular, it suffices to set $T' = (1-\delta)/\xi$ for the lemma to hold.

First, we show that $A(X_t)$ is concentrated around its mean. 	
Let $L_1(X_t) := \theta_t n$ and $L_1(X_{t+1}) := \theta_{t+1} n$.
Let $\E[A(X_t) \mid \Lambda_t] = \mu_t = \frac{n}{q}+(1-\frac{1}{q})\cdot\theta_t n$, $\gamma:=n^{5/6}$, and $J_t:=[\mu_t - \gamma, \mu_t + \gamma]$.
Hoeffding's inequality implies
\begin{align*}
\Pr\left[A(X_t) \in J_t \mid \Lambda_t\right] 
& \geq 1 - 2\exp\left(\frac{-2\gamma^2}{ \mathcal{R}_2(X_t)}\right) 
= 1 - e^{-\Omega(n^{1/3})}.
\end{align*}
If $A(X_t) \in J_t$, then the random graph $G(A(X_t), p)$ is super-critical since 
$$A(X_t) \cdot p \geq (\mu_t - \gamma)\cdot \frac{q}{n} 
= \left[\frac{n}{q}+ \left(1-\frac{1}{q}\right)\cdot\theta_t n - n^{5/6} \right] \cdot \frac{q}{n} 
= 1 + (q-1)\theta_t - o(1) > 1.$$

Next, for a super-critical random graph, 
Lemma~\ref{L27} provides a concentration bound for the size of largest new component, 
provided $A(X_t) \in J_t$.
To see this, we write $G(A(X_t), \la/n)$ as 
$$G \left(\mu_t + m, k(\theta_t, q)\cdot q/\mu_t \right),$$ 
where $m:=A(X_t)-\mu_t$; 
notice that $|m| \leq \gamma = o(n)$.
Let $H \sim G \left(\mu_t + m, k(\theta_t, q)\cdot q/\mu_t \right)$.
Since
$$k(\theta_t, q)\cdot q = 1 + (q-1) \theta_t > 1 + \delta (q-1) > 1 $$
holds regardless of $n$,
Lemma~\ref{L27} implies that for $\phi(\theta_t) > 0$ defined in Section~\ref{subsec:drift}, with high probability
$$L_1(H) \in \left[\phi(\theta_t)n - \sqrt{n\log n} - |m|, 
\phi(\theta_t)n+ \sqrt{n\log n} + |m|\right].$$

Note that $L_1(H) = \Omega(n)$ w.h.p.; hence, since $L_2(X_t) = O(n^{2/3})$ we have $L_1(X_{t+1}) = L_1(H)$ w.h.p.\ 
We have shown that w.h.p.\  
$$\theta_{t+1} - \theta_t 
\leq \phi(\theta_t) + \frac{\sqrt{n\log n}}{n} -\theta_t - \frac{|m|}{n} 
= -f(\theta_t) + \sqrt{\frac{\log n}{n}}  - \frac{|m|}{n},$$
where $f$ is the drift function defined in Section~\ref{subsec:drift}.
By Lemma~\ref{PositivityF}, we know $f(\theta_t) > \xi_1 > 0$ for sufficiently small constant $\xi_1$ (independent of $n$ and $t$). Hence, w.h.p.\ for sufficiently large $n$
$$L_1(X_{t+1}) - L_1(X_t) \leq -\xi_1 n + o(n) \leq \frac{-\xi_1 n}{2};$$
this establishes (ii) from above.
 
For (i), note that 	for $t < T$ we have $\hat{L}(X_t) > \delta n^{1/3}$, so Lemma~\ref{C345} implies, 
$$ \Pr\left[\mathcal{R}_2(X_{t+1}) < \mathcal{R}_2(X_0) + t \cdot \frac{C}{\sqrt{\delta}} n^{\frac{7}{6}} + \frac{Cn^{4/3}}{\sqrt{\delta} n^{1/6}}\right] = 1 - o(1).$$
A union bound implies that these two events occur simultaneously w.h.p.\ and the result follows.	
\end{proof}

\subsection{Shrinking a medium size component: proof of Lemma~\ref{lemma:burn-in-subphase3}}
\label{subsec:burn-in3}

In the third sub-phase of the burn-in period, we show that 
${L_1}(X_t)$ contracts at a constant rate;
the precise description of this phenomenon is captured in the following lemma.


\begin{lemma}
	\label{C344}
	Suppose $\mathcal{R}_2(X_{t}) = O(n^{4/3})$, $\delta n^{1/3} \geq \hat{L}(X_{t}) \geq B$ for a large constant $B := B(q)$, and a small constant $\delta(q, B)$. Then:
	\begin{enumerate}
		\item There exists a constant $\alpha :=\alpha(B, q, \delta) < 1$ such that 
		$$\Pr\left[L_1(X_{t+1}) \leq \max \{\alpha L_1(X_t) ,L_2(X_t)\} \mid X_t, \Lambda_t\right]\geq 1 - \exp\left(-\Omega\left(\hat{L}(X_{t})\right)\right);$$
		\item $\Pr\left[L_1(X_{t+1}) = L_1(X_t) | X_t, \neg\Lambda_t\right] \geq 1 - O\left(\hat{L}(X_{t})^{-3} \right).$ 
	\end{enumerate}
\end{lemma}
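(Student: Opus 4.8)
The two parts of Lemma~\ref{C344} describe what happens to $L_1$ in a single CM step under the hypotheses $\mathcal{R}_2(X_t) = O(n^{4/3})$ and $B \le \hat L(X_t) \le \delta n^{1/3}$. The unifying tool is the fact (already exploited in the proof of Lemma~\ref{Lemma1}) that, conditioned on $\Lambda_t$, the number of activated vertices $A(X_t)$ is tightly concentrated around its mean $\mu_t = n/q + (1-1/q)L_1(X_t)$, by Hoeffding's inequality applied with variance proxy $\mathcal{R}_2(X_t) = O(n^{4/3})$; here the deviation scale I would use is $\gamma_t \asymp \sqrt{\mathcal{R}_2(X_t)\,\hat L(X_t)} = O(n^{2/3}\sqrt{\hat L(X_t)})$, which gives a failure probability $\exp(-\Omega(\hat L(X_t)))$ matching the claimed bound in part~(1). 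On the complementary event $\neg\Lambda_t$ (largest component not activated), the largest component is untouched unless some \emph{other} component grows past $L_1(X_t)$; I would bound the probability of that using the crude second-moment control $\mathcal{R}_2(X_t) = O(n^{4/3})$ together with the fact that in the relevant near-critical $G(m,\cdot)$ graph no component reaches size $\Theta(\hat L(X_t) n^{2/3})$ except with polynomially small probability in $\hat L(X_t)$, yielding the $O(\hat L(X_t)^{-3})$ bound in part~(2).

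\textbf{Part (1): contraction under activation.} Condition on $\Lambda_t$ and on $A(X_t)$ lying in the window $J_t = [\mu_t - \gamma_t, \mu_t + \gamma_t]$ with $\gamma_t = C' n^{2/3}\sqrt{\hat L(X_t)}$; this holds with probability $1 - \exp(-\Omega(\hat L(X_t)))$. As in the proof of Lemma~\ref{Lemma1}, the percolation sub-step replaces the active vertices by $G(A(X_t),\la/n) = G(\mu_t + m, k(\theta_t,q)\cdot q/\mu_t)$ with $|m| \le \gamma_t = o(n)$, where now $\theta_t = L_1(X_t)/n = o(1)$. Since $k(\theta_t,q)\cdot q = 1 + (q-1)\theta_t$ and $\theta_t \ge B n^{-1/3}$, this graph is near-critical and super-critical with $\varepsilon := (q-1)\theta_t + o(\theta_t)$; applying Corollary~\ref{lemma:rg:critical:giant-concentration} (or Lemma~\ref{L27} adapted to $m = o(n)$ with $\varepsilon \to 0$) gives that the new giant has size $\approx 2\varepsilon \mu_t \approx 2(q-1)\theta_t \cdot n/q = \tfrac{2(q-1)}{q} L_1(X_t)$, up to a relative error that is $o(1)$ once $B$ is large and $\delta$ small. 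The key arithmetic point is that $\tfrac{2(q-1)}{q} < 1$ precisely when $q < 2$, so with $\alpha := \tfrac{2(q-1)}{q} + o(1) < 1$ we get $L_1(H) \le \alpha L_1(X_t)$. Finally $L_1(X_{t+1}) = \max\{L_1(H), L_2(X_t)\}$ since all inactive components are untouched and their sizes are at most $L_2(X_t)$; hence $L_1(X_{t+1}) \le \max\{\alpha L_1(X_t), L_2(X_t)\}$ on this event, which has the required probability.

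\textbf{Part (2): stasis under non-activation, and the main obstacle.} On $\neg\Lambda_t$ the component of size $L_1(X_t)$ survives intact, so $L_1(X_{t+1}) = L_1(X_t)$ unless the percolation sub-step on the active vertices produces a component larger than $L_1(X_t)$. The active vertices form (at most) $G(m', \la/n)$ with $m' \le n$, and since $\la = q < 2$ this subgraph is (near-)critical; I would invoke the crude bound $\E[\mathcal{R}_1(G(m',q/n))] = O(n^{4/3})$ (Lemma~\ref{lemma:rg:critical:exp-crude-bound} / Lemma~\ref{lemma:prelim:critical-var}) together with Markov's inequality to conclude $\Pr[L_1(\text{active graph}) > L_1(X_t)] \le \Pr[\mathcal{R}_1 > L_1(X_t)^2] = O(n^{4/3}/L_1(X_t)^2) = O(\hat L(X_t)^{-2})$, and then sharpen to $O(\hat L(X_t)^{-3})$ using the sharper near-critical tail estimates (Lemmas~\ref{T58}, \ref{T513}) which control the probability that a critical random graph on $\Theta(n)$ vertices has a component of size exceeding $c\,\hat L(X_t) n^{2/3}$ by $\exp(-\Omega(\hat L(X_t)^3))$ or a polynomial-in-$\hat L(X_t)$ factor, whichever the cited estimates deliver. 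The main obstacle I anticipate is getting the exponent in part~(2) right: the naive Markov bound only gives $\hat L(X_t)^{-2}$, and squeezing out the extra power requires care with the correct upper-tail estimate for $L_1$ of a (near-)critical $G(m, q/n)$ at the scale $\hat L(X_t) n^{2/3}$ — one must verify that the relevant cited random-graph tail bound is strong enough uniformly over $m \le n$ and over the whole range $B \le \hat L(X_t) \le \delta n^{1/3}$, including when $m$ itself is mildly super- or sub-critical depending on how many vertices were activated. A secondary subtlety is ensuring the $o(1)$ corrections in the giant-component size in part~(1) are genuinely uniform in $t$, which is what lets the constant $\alpha$ be chosen independently of $t$ and $n$.
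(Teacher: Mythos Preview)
Your Part~(1) is essentially correct and matches the paper's argument: Hoeffding with deviation $\gamma_t = \sqrt{\hat L(X_t)}\,n^{2/3}$, monotonicity to pass to $G(\mu_t+\gamma_t,q/n)$, then Lemma~\ref{T58} (after checking $\epsilon^3 m = \Omega(\hat L(X_t))$) to get $L_1(H) \le 2\epsilon m + O(\sqrt{\hat L(X_t)}\sqrt{m/\epsilon})$, and finally the arithmetic showing $2\epsilon m \approx \tfrac{2(q-1)}{q}L_1(X_t)$ with $\tfrac{2(q-1)}{q}<1$ for $q<2$.

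For Part~(2) you have correctly located the obstacle but are missing the idea that resolves it. Under $\neg\Lambda_t$ the mean number of activated vertices is $\mu_t' = (n-L_1(X_t))/q$, not $n/q$: the largest component is \emph{removed} from the activation pool. With the same Hoeffding deviation $\gamma_t' = \sqrt{\hat L(X_t)}\,n^{2/3}$, the upper endpoint satisfies
\[
(\mu_t'+\gamma_t')\cdot\frac{q}{n} - 1 = \bigl(q\sqrt{\hat L(X_t)} - \hat L(X_t)\bigr)\,n^{-1/3},
\]
which is \emph{negative} as soon as $\hat L(X_t) \ge B > q^2$. So the percolation graph $G^+ \sim G(\mu_t'+\gamma_t',q/n)$ is not near-critical but genuinely \emph{sub-critical}, with $|\epsilon| \asymp \hat L(X_t)\,n^{-1/3}$. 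The relevant estimate is therefore Lemma~\ref{T512}, which gives
\[
\E\bigl[L_1(G^+)^2\bigr] \le \E\bigl[\mathcal{R}_1(G^+)\bigr] = O\!\left(\frac{m}{|\epsilon|}\right) = O\!\left(\frac{n^{4/3}}{\hat L(X_t)}\right),
\]
an improvement by a factor of $\hat L(X_t)$ over the critical bound $O(n^{4/3})$ you invoked. Markov's inequality on $L_1(G^+)^2$ then yields
\[
\Pr\bigl[L_1(G^+) > \hat L(X_t)\,n^{2/3}\bigr] \le \frac{\E[L_1(G^+)^2]}{\hat L(X_t)^2 n^{4/3}} = O\bigl(\hat L(X_t)^{-3}\bigr)
\]
directly, with no need for sharper tail estimates. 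Lemmas~\ref{T58} and~\ref{T513}, which you proposed to use, apply only to the super-critical regime $\epsilon>0$ and are not the right tools here.
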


Since Lemmas~\ref{C345} suggests $\mathcal{R}_2(X_{t}) = O(n^{4/3})$ with reasonably high probability throughout the execution of the sub-phase,
Lemmas~\ref{C344} and~\ref{C345} can be combined to derive the following more accurate
contraction estimate which will be crucial in the proof Lemma~\ref{lemma:burn-in-subphase3}.
	
	\begin{lemma}
	\label{Lemma2}
	Suppose $g(n)$ is an arbitrary function with range in the interval $\left[B^6, \delta n^{1/3}\right]$ where  
	$B$ is a large enough constant such that for $x \ge B^6$ we have  $x \geq B (\log x)^8$,
	and $\delta := \delta(q, B)$ is a small constant.
	
	Suppose $X_0$ is such that $g(n) \geq \hat{L}(X_0) \geq B (\log g(n))^8$ and $\mathcal{R}_2(X_0) = O(n^{4/3})$, 
	then there exists a constant $D$ and $T = O(\log g(n))$ such that at time $T$, 
	$ \hat{L}(X_T) \leq \max\{ B (\log g(n))^8 , D\}$ and $\mathcal{R}_2(X_T) \leq \mathcal{R}_2(X_0) + O\left(\frac{n^{4/3}}{\log g(n)}\right)$ 
	with probability at least $1 - O\left(\log^{-1} g(n) \right)$.
	\end{lemma}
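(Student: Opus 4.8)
The plan is to iterate the single-step contraction estimate in Lemma~\ref{C344} over $T = O(\log g(n))$ steps, tracking both $\hat L(X_t)$ and $\mathcal{R}_2(X_t)$ simultaneously, and absorbing the various small failure probabilities via a union bound. First I would set up the stopping time $\hat T$ as the first time $t$ at which $\hat L(X_t) \le \max\{B(\log g(n))^8, D\}$, and run the argument only for $t < \hat T \wedge T$ for a suitable $T = O(\log g(n))$ to be fixed. The heuristic is that when the largest component is activated, part~(1) of Lemma~\ref{C344} gives geometric contraction $L_1(X_{t+1}) \le \max\{\alpha L_1(X_t), L_2(X_t)\}$; since $L_2(X_t) \le \mathcal{R}_2(X_t)^{1/2}$ stays $O(n^{2/3})$ throughout (as we will control $\mathcal{R}_2$), the $L_2$ term never dominates until $\hat L(X_t)$ has dropped to the target scale, so effectively $\hat L$ shrinks by a constant factor $\alpha$ each activated step. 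Thus after $O(\log g(n))$ activated steps we are below $\max\{B(\log g(n))^8, D\}$.

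Next I would handle the two complications: the steps in which $\Lambda_t$ does \emph{not} occur, and the accumulation of failure probabilities. By part~(2) of Lemma~\ref{C344}, when $\neg\Lambda_t$ holds we have $L_1(X_{t+1}) = L_1(X_t)$ except with probability $O(\hat L(X_t)^{-3})$; since $\hat L(X_t) \ge B(\log g(n))^8 \ge B$ throughout, each such step fails with probability $O((\log g(n))^{-24})$, and over $O(\log g(n))$ steps these contribute $O((\log g(n))^{-23}) = o((\log g(n))^{-1})$ to the total failure probability. Crucially, $\Lambda_t$ occurs with probability $\approx L_1(X_t)/n \cdot q^{-1}$... no --- actually the activation probability of the largest component is exactly $1/q$, a constant, so in $O(\log g(n))$ steps we get $\Omega(\log g(n))$ activated steps w.h.p.\ by a Chernoff bound; I would choose $T$ a large enough constant multiple of $\log g(n)$ so that with probability $1 - O((\log g(n))^{-1})$ at least (number of contractions needed) activated steps occur before time $T$. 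For the $\mathcal{R}_2$ bound, I iterate Lemma~\ref{C345}: conditioned on $\hat L(X_t) \ge B(\log g(n))^8$, each step adds at most $Cn^{4/3}/\sqrt{\hat L(X_t)} \le Cn^{4/3}/(B^{1/2}(\log g(n))^4)$ to $\mathcal{R}_2$, and this holds except with probability $O(\hat L(X_t)^{-1/2}) = O((\log g(n))^{-4})$ per step. Summing the deterministic increments over $O(\log g(n))$ steps gives a total increase of $O(n^{4/3}/(\log g(n))^3) = O(n^{4/3}/\log g(n))$, as required, and the summed failure probability is $O((\log g(n))^{-3}) = o((\log g(n))^{-1})$.

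Putting these together: define the good event as the intersection, over all $t < \hat T \wedge T$, of the contraction event from Lemma~\ref{C344} (appropriate to whether $\Lambda_t$ holds) and the $\mathcal{R}_2$-increment event from Lemma~\ref{C345}. On this event, $\mathcal{R}_2(X_t) \le \mathcal{R}_2(X_0) + O(n^{4/3}/\log g(n)) = O(n^{4/3})$ for all such $t$ (so the hypothesis $\mathcal{R}_2 = O(n^{4/3})$ feeding into the next application of the lemmas is maintained), and $\hat L(X_t)$ decreases by a factor $\alpha$ on each activated step while staying fixed otherwise; since at least the required number of activated steps occur before time $T$, we reach $\hat L(X_T) \le \max\{B(\log g(n))^8, D\}$ for some constant $D$ coming from the $L_2$-floor. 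A union bound over the $O(\log g(n))$ steps shows the good event has probability $1 - O((\log g(n))^{-1})$, which proves the lemma.

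The main obstacle will be bookkeeping the interplay between the two tracked quantities across the conditioning: the bounds in Lemmas~\ref{C344} and~\ref{C345} are all \emph{conditional} on $\mathcal{R}_2(X_t) = O(n^{4/3})$ and $\hat L(X_t) \ge B$, so I must argue inductively that the good event up to time $t$ guarantees these hypotheses at time $t$, then apply the one-step lemmas, then close the induction --- and do this while the failure probabilities, which degrade as $\hat L(X_t)$ approaches its floor $B(\log g(n))^8$, remain summable. A secondary subtlety is ensuring the number of non-activated runs between activated steps does not let rare ``jump'' failures accumulate; this is why the $\hat L(X_t)^{-3}$ rate in part~(2) (rather than something weaker) is needed, and I would be careful to use the full strength of that exponent. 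The choice of $T$ as a suitable constant times $\log g(n)$ must be made last, after the contraction rate $\alpha$ and the required number of activated steps are pinned down.
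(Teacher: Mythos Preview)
Your proposal is correct and follows essentially the same approach as the paper: define $T = \hat T \wedge T'$ with $T' = O(\log g(n))$, inductively maintain the hypotheses $\mathcal{R}_2(X_t) = O(n^{4/3})$ and $\hat L(X_t) \ge B(\log g(n))^8$ by applying Lemmas~\ref{C344} and~\ref{C345} step by step (the paper packages this into events it calls $Base_i$), bound the per-step failure probability by $O((\log g(n))^{-4})$ using the floor $\hat L(X_t) \ge B(\log g(n))^8$, and finish with a Chernoff bound on the number of activated steps to guarantee $\hat T \le T'$. The inductive bookkeeping you flag as the main obstacle is exactly what the paper formalizes, and your handling of the $L_2$-floor via $L_2(X_t) \le \mathcal{R}_2(X_t)^{1/2} = O(n^{2/3})$ matches the paper's choice of the constant $D$.
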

	

We first provide a proof for Lemma~\ref{lemma:burn-in-subphase3}
that recursively uses the contraction estimate of Lemma~\ref{Lemma2}. 


\begin{proof}[Proof of Lemma~\ref{lemma:burn-in-subphase3}]
	Let $B$ be a constant large enough so that $\forall x \geq B^6$, we have $x \geq (\log x)^{48}$.
	Suppose $\hat{L}(X_{0}) \leq \delta n^{1/3}$ and $\mathcal{R}_2(X_{0}) = O(n^{4/3})$ 
	for the constant $\delta = \delta(q,B)$ from Lemma~\ref{Lemma2}. 
	Suppose also $ \hat{L}(X_{0}) \geq B^6$; otherwise there is nothing to prove. 
	
	Let $g_0(n):=\delta n^{1/3}$ and $g_{i+1}(n):= B (\log g_i(n))^8$ for all $i \geq 0$.
	Let $K$ be defined as the minimum natural number such that $g_K(n) \leq B^6$.
	Note that $K = O(\log^* n)$.
	Assume at time $t \ge 0$, there exists an integer $j \ge 0$ 
	such that $X_{t}$ satisfies:
	\begin{enumerate}
	\item $g_{j+1}(n) \leq \hat{L}(X_{t}) \leq g_j(n)$, {and}
	\item  $\mathcal{R}_2(X_{t}) = O(n^{4/3}) + O\left(\sum_{k=0}^{j-1} \frac{n^{4/3}}{\log g_{k}(n)}\right)$.
	\end{enumerate}
	
	We show there exists time $t' > t$ such that properties 1 and 2 hold for
	$X_{t'}$ for a different index $j' > j$. The following 
	bounds on sums and products involving the $g_i$'s will be useful; the proof is elementary and delayed to the end of this section.
	\begin{claim}
		\label{claim1.1}
		Let $K$ be defined as above. 
		$ \forall j < K,$
		\begin{enumerate}[(i)]
			\item For any positive constant $c$, we have $\prod_{i=0}^{j} \left( 1 - \frac{c}{\log g_{i}(n)} \right) \geq 1 - \frac{1.5 c}{\log g_j(n)}$ 
			\item $\sum_{i=0}^{j} \frac{1}{\log g_i(n)} \leq \frac{1.5}{\log g_j(n)} $
		\end{enumerate}
		
	\end{claim}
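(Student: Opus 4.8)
The claim is a purely elementary estimate about the fast-shrinking sequence $g_0(n)=\delta n^{1/3}$, $g_{i+1}(n)=B(\log g_i(n))^8$. The plan is to first record the basic fact that this sequence decreases very rapidly---iterating a logarithm---so that $\log g_{i+1}(n) \le 8\log\log g_i(n) + \log B$, which is $o(\log g_i(n))$, and in fact $\log g_{i+1}(n) \le \tfrac{1}{2}\log g_i(n)$ once $g_i(n)$ exceeds a suitable constant (which holds for all $i<K$ by the choice of $B^6$ as the stopping threshold and the hypothesis $x\ge B(\log x)^{48}$ for $x\ge B^6$). Thus, writing $a_i := 1/\log g_i(n)$, the sequence $(a_i)_{i\le K}$ at least doubles at every step: $a_{i+1}\ge 2a_i$. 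Both parts of the claim then follow from this geometric-growth property of the $a_i$'s, read ``from the top index $j$ downward.''

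\textbf{Part (ii).} Since $a_i \le 2^{-(j-i)} a_j$ for all $i\le j$, we get $\sum_{i=0}^j a_i \le a_j \sum_{i=0}^{j} 2^{-(j-i)} = a_j \sum_{\ell\ge 0} 2^{-\ell} \le 2 a_j$. To sharpen $2$ to $1.5$ one uses that the doubling rate is actually much faster than a factor of $2$: from $\log g_{i+1}(n)\le 8\log\log g_i(n)+\log B$ and the fact that $g_i(n)$ is large for $i<K-1$, one has $\log g_{i+1}(n) \le c_0 \log\log g_i(n)$ for an absolute constant, so $a_{i+1}/a_i = \log g_i(n)/\log g_{i+1}(n) \to \infty$; choosing $B$ large enough that this ratio is, say, at least $3$ for all relevant $i$ gives $\sum_{i=0}^{j} a_i \le a_j\sum_{\ell\ge 0} 3^{-\ell} = \tfrac{3}{2} a_j$, which is exactly (ii). (One should be slightly careful at the last index where $g_j(n)$ may be close to $B^6$; but there the sum has $O(\log^* n)$ further terms each bounded by $1/\log B^6$, still absorbed into the $1.5/\log g_j(n)$ since $\log g_j(n)$ is itself bounded there.)

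\textbf{Part (i).} Take logarithms: $\log \prod_{i=0}^j (1 - c a_i) = \sum_{i=0}^j \log(1 - c a_i) \ge -\sum_{i=0}^j \frac{c a_i}{1 - c a_i} \ge -(1+o(1)) c \sum_{i=0}^j a_i$, using $\log(1-x)\ge -x/(1-x)$ and the fact that $c a_i = o(1)$ for $n$ large (as $g_i(n)\to\infty$). By part (ii) this is at least $-(1+o(1))\cdot \tfrac{3}{2} c a_j \ge -1.5 c a_j$ after absorbing the $o(1)$ (equivalently, carry $\tfrac{3}{2}$ through part (ii) with a slightly smaller constant like $1.4$ and let the $\exp(-x)\ge 1-x$ step eat the slack). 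Then $\prod_{i=0}^j (1 - c a_i) \ge e^{-1.5 c a_j} \ge 1 - 1.5 c a_j = 1 - \frac{1.5c}{\log g_j(n)}$, which is (i).

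\textbf{Main obstacle.} There is no deep obstacle here---the only delicate point is bookkeeping the constants so that the geometric series sums to $1.5$ rather than $2$, and handling the boundary index $j$ near $K$ where $g_j(n)$ is only a large constant rather than growing; both are handled by taking $B$ large and noting $K = O(\log^* n)$ contributes negligibly. The one genuine input needed from the setup is that $g_i(n)\to\infty$ for each fixed $i$ and, more importantly, that $g_i(n)$ stays above the constant $B^6$ for all $i<K$, which is precisely the definition of $K$; this guarantees $c a_i = o(1)$ uniformly and validates the $\log(1-x)$ and $e^{-x}$ inequalities used above.
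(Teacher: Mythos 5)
Your part (ii) is essentially the paper's argument in different packaging: the key input in both is that $\log g_{i+1}(n) \le \tfrac13 \log g_i(n)$ for all $i<K$, obtained from exactly the two bounds you cite ($\log B \le \tfrac16\log g_i(n)$ since $g_i(n)>B^6$, and $8\log\log g_i(n)\le\tfrac16\log g_i(n)$ from the hypothesis $x\ge(\log x)^{48}$). The paper records this as the single inequality \eqref{eq:claim1} and runs an induction; you sum the geometric series $\sum_{\ell\ge0}3^{-\ell}=3/2$ directly. Either way the constant comes out as exactly $1.5$, which matters for what follows.

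Part (i) as written has a gap. Writing $a_i=1/\log g_i(n)$ as you do, your chain is $\prod_{i\le j}(1-ca_i)\ge \exp\bigl(-(1+o(1))\,c\sum_{i\le j} a_i\bigr)\ge \exp(-1.5ca_j)\ge 1-1.5ca_j$, and two things go wrong. First, $ca_i$ is \emph{not} $o(1)$ uniformly over $i\le j$: for indices near $K$ the value $g_i(n)$ is merely a constant exceeding $B^6$ (indeed $g_{K-1}(n)\le e^{B^{5/8}}$), so $a_i$ is a positive constant there and the correction in $\log(1-ca_i)\ge -ca_i/(1-ca_i)$ does not vanish. Second, even granting a $(1+\epsilon)$ factor, there is no slack to absorb it: part (ii) yields the constant $1.5$ exactly (the hypotheses pin the ratio at $1/3$ for the indices near $K$), so $-(1+\epsilon)\cdot 1.5ca_j$ cannot be rounded up to $-1.5ca_j$, and your fallback of proving part (ii) with constant $1.4$ is not available from the stated hypotheses. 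The repair is one line and is what the paper does: skip the logarithm entirely and use $\prod_i(1-x_i)\ge 1-\sum_i x_i$ for $x_i\ge0$ (in the paper, the induction step $(1-x)(1-y)\ge 1-x-y$ combined with \eqref{eq:claim1}), which derives part (i) from part (ii) with no second-order loss.
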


By part (ii) of this claim, note that 
$$ 
O\left(\sum_{k=0}^{j-1} \frac{n^{4/3}}{\log g_{k}(n)}\right) =   
O\left(\frac{n^{4/3}}{\log g_{j-1}(n)}\right) = O(n^{4/3}).$$ 
	
	Hence,
	Lemma~\ref{Lemma2} implies  
	that with probability $1 - O\left((\log g_j(n))^{-1} \right)$
	there exist a time $t' \leq t + O( \log g_j(n) )$ and a large constant $D$ such that
	$ \hat{L}(X_{t'}) \leq \max\{B(\log g_j(n))^8, D\}$
	and
	$\mathcal{R}_2(X_{t'}) \leq \mathcal{R}_2(X_{t}) + O\left(\frac{n^{4/3}}{\log g_j(n)}\right).$
	If $\hat{L}(X_{t'}) \le \max \{D, B^6\}$ we are done.
	Hence, suppose otherwise that $\hat{L}(X_{t'}) \in (B^6, \log g_{j+1}(n)]$. 
	Since the interval $(B^6, \log g_{j+1}(n)]$ is completely covered by the union of the intervals $[g_{j+2}, g_{j+1}]$, ..., $[g_{K}, g_{K-1}]$,
	there must be an integer $j'\ge j+1$ such that $g_{j'+1}(n) \leq \hat{L}(X_{t'}) \leq g_{j'}(n)$. 
	Also, notice
	\begin{align*}
		\mathcal{R}_2(X_{t'}) & \le  \mathcal{R}_2(X_{t}) + O\left(\frac{n^{4/3}}{\log g_j(n)}\right) 
		=  O(n^{4/3}) + O\left(\sum_{k=0}^{j-1} \frac{n^{4/3}}{\log g_{k}(n)}\right)  + O\left(\frac{n^{4/3}}{\log g_j(n)}\right) \\
		&= O(n^{4/3}) + O\left(\sum_{k=0}^{j} \frac{n^{4/3}}{\log g_{k}(n)}\right)
		= O(n^{4/3}) + O\left(\sum_{k=0}^{j'-1} \frac{n^{4/3}}{\log g_{k}(n)}\right).
	\end{align*}
	By taking at most $K$ steps of induction,
	we obtain that there exist constants $C$ and $c$ such that 
	with probability at least 
	$\rho := \prod_{i=0}^{K-1} \left(1 - \frac{c}{\log g_{i}(n)}\right),$
	there exists a time
	$$t_{K} \leq \sum_{i=0}^{K-1} C \log g_i(n)$$ 
	that satisfies $ \hat{L}(X_{t_{K}}) \leq g_{K}(n) \leq B^6 $ and $\mathcal{R}_2(X_{t_{K}}) = O(n^{4/3})$. 
	Observe that $t_{K}$ is a time when our goal has been achieved, 
	so it only remains to show that $\rho = \Omega(1)$ and $t_{K} = O(\log n)$.
	The lower bound on $\rho$ follows from part (i) of Claim~\ref{claim1.1}:
	\begin{align*}
	\prod_{i=0}^{K-1} 1 - \frac{c}{\log g_{i}(n)} 
	& \geq 1 - \frac{1.5 c}{\log g_{K-1}(n)}  > 1 - \frac{1.5 c}{\log B^6} = \Omega(1).
	\end{align*}
	
	\noindent
	By noting that $K = O(\log^*n)$, we can also bound $t_{K}$ since
	$
	\sum_{i=0}^{K-1} C \log g_i(n)$
	is at most $\log g_0(n) + (K-1)\log g_1(n) 
	= O(\log n)$.	
\end{proof}

Before proving Lemma~\ref{Lemma2} we provide the proof of Lemma~\ref{C344}.

\begin{proof}[Proof of Lemma~\ref{C344}]
		
	We start with part 1.
	Let $\mu_t := \E[A(X_t) | \Lambda_t, X_t]$,
	$\gamma_t:=\sqrt{\hat{L}(X_{t})} \cdot n^{2/3}$, and $J_t:=[\mu_t - \gamma_t, \mu_t + \gamma_t]$. Hoeffding's inequality implies that
	\begin{equation*}
		\begin{split}
			\Pr\left[A(X_t) \in J_t \mid \Lambda_t, X_t\right] 
			& \geq 1 - 2\exp\left(\frac{-2\gamma_t^2}{ \mathcal{R}_2(X_t)}\right)   
			= 1 - \exp \left(- \Omega(\hat{L}(X_{t})) \right).
		\end{split}
	\end{equation*}
	
	Let $m := \mu_t +\gamma_t$, $G \sim G(m, \frac{q}{n})$ and $\hat{G} \sim G(A(X_t), p)$. Then, the monotonicity of the largest component in a random graph implies that for any $\ell > 0$
	\begin{align*}
			& \Pr\left[L_1(\hat{G}) > \ell \mid A(X_t)  \in J_t\right] \\
			& = \sum_{a \in J_t} \Pr\left[L_1(\hat{G}) > \ell\mid A(X_t) = a\right]\Pr\left[A(X_t) = a \mid A(X_t) \in J_t\right] \\
		   & \leq \sum_{a \in J_t} \Pr\left[L_1(\hat{G}) > \ell\mid A(X_t) = m\right] \Pr\left[A(X_t) = a \mid A(X_t) \in J_t\right] \\
			& = \Pr[L_1(G) > \ell] \sum_{a \in J_t} \Pr[A(X_t) = a \mid A(X_t) \in J_t] \\
			& = \Pr[L_1(G) > \ell].
	\end{align*}
	
	We bound next $\Pr[L_1(G) > \ell]$. For this,
	we rewrite
	$G(m, \frac{q}{n})$ as $G\left(m, \frac{1+\epsilon}{m}\right)$; since 
	$$\mu_t  = \hat{L}(X_{t}) \cdot n^{2/3} + \left(n- \hat{L}(X_{t}) n^{2/3}\right)q^{-1}$$
	we have
	$$\epsilon = m\cdot \frac{q}{n} - 1 = \left(q - 1 + \frac{q}{\sqrt{\hat{L}(X_{t})}}\right) \frac{\hat{L}(X_{t})}{n^{1/3}}.$$ 
	Thus, 
	\begin{align*}
			\epsilon^3\cdot m 
			&= \left(q - 1 + \frac{q}{\sqrt{\hat{L}(X_{t})}}\right)^3 \frac{ \hat{L}(X_{t})^3}{n} \left(\hat{L}(X_{t})  n^{2/3} + \frac{n- \hat{L}(X_{t}) n^{2/3}}{q} + \sqrt{\hat{L}(X_{t})}n^{2/3}\right) \\
			& \geq \left(q - 1 + \frac{q}{\sqrt{\hat{L}(X_{t})}}\right)^3 \cdot \frac{\hat{L}(X_{t})^3}{n} \cdot \frac{n}{q} \\
			& \geq \frac{1}{q}\cdot \left((q-1)^3 \hat{L}(X_{t})^3 + q^3\sqrt{\hat{L}(X_{t})}^3\right)\\
			& \geq q^2 \hat{L}(X_{t})^{3/2} \geq 100 \hat{L}(X_{t}),
	\end{align*}
	where the last inequality follows from the fact that $\hat{L}(X_{t}) > B$, where $B=B(q)$ is a sufficiently large constant.
	
	Since $\epsilon^3\cdot m\geq1$, Lemma~\ref{T58} implies 
	$$ \Pr\left[ \lvert L_1(G) - 2\epsilon m \rvert > \sqrt{\hat{L}(X_{t})} \sqrt{\frac{m}{\epsilon}}\right] 
	= e^{-\Omega\left(\hat{L}(X_{t})\right) }.$$
	Let $c_1=2\sqrt{\frac{1+(q-1)\delta}{q(q-1)}}$.
	The upper tail bound implies
	$$\Pr\left[L_1(G) \leq 2\epsilon m + c_1 n^{2/3}\right] \geq 1 - e^{- \Omega\left( \hat{L}(X_{t}) \right)}.$$ 
	We show next that $2\epsilon m + c_1 n^{2/3} \le \alpha L_1(X_t)$ for some $\alpha \in (0,1)$. 
	\begin{align*}
		&2\epsilon m + c_1 n^{2/3} \\
  = & 2\left(q - 1 + \frac{q}{\sqrt{\hat{L}(X_{t})}}\right) \frac{\hat{L}(X_{t})}{n^{1/3}} \left(\hat{L}(X_{t}) n^{2/3} + \frac{n-\hat{L}(X_{t}) n^{2/3}}{q} + \sqrt{\hat{L}(X_{t})}n^{2/3}\right) + c_1 n^{2/3}\\
  = & \frac{2}{q} \left(q - 1 + \frac{q}{\sqrt{\hat{L}(X_{t})}}\right) \frac{\hat{L}(X_{t})}{n^{1/3}} \left[ n + \left( q - 1 + \frac{q}{\sqrt{\hat{L}(X_{t})}}\right) \hat{L}(X_{t}) n^{2/3} \right] + c_1 n^{2/3} \\
 = & \frac{2}{q} \left(q - 1 + \frac{q}{\sqrt{\hat{L}(X_{t})}} + \frac{c_1q}{\hat{L}(X_{t})}\right) \hat{L}(X_{t}) n^{2/3} + \frac{2}{q} \left(q - 1 + \frac{q}{\sqrt{\hat{L}(X_{t})}}\right)^2 \hat{L}(X_{t})^2 n^{1/3} \\
 \leq & \frac{2}{q} \left[ \delta\left(q - 1 + O\left(\hat{L}(X_{t})^{-1/2}\right) \right)^2 + \left(q - 1 + O\left(\hat{L}(X_{t})^{-1/2} \right)\right)\right] \hat{L}(X_{t}) n^{2/3},
	\end{align*} 
	where in the last inequality we use the assumption that $\delta n^{1/3} \geq \hat{L}(X_{t})$.
	For sufficiently small $\delta$ and sufficiently large $B$, $\exists ~\alpha < 1$ such that 
	$$\alpha > \frac{2}{q}\left[ \delta\left(q - 1 + \frac{2q}{B^{1/2}}\right)^2 + \left(q - 1 + \frac{2q}{B^{1/2}}\right)\right].$$ 
	Consequently, $ L_1\left(G\right) \leq 2\epsilon m + c_1 n^{2/3} \leq \alpha L_1(X_t)$ 
	with probability $1 - \exp\left(-\Omega(\hat{L}(X_{t})\right)$. 
	If that is the case, $L_1(X_{t+1}) \leq \max\left\{\alpha L_1(X_t), L_2(X_t)\right\} =: L^+$.
	Therefore, 
	\begin{align*}
			& \Pr\left[L_1(X_{t+1}) \leq L^+ \mid X_t, \Lambda_t\right] \\
			& \geq
			\Pr\left[L_1(X_{t+1}) \leq L^+ \mid X_t, \Lambda_t , A(X_t) \in J_t\right] \cdot \Pr\left[A(X_t) \in J_t \mid X_t, \Lambda_t \right] \\
			& \geq 1 - \exp\left(-\Omega(\hat{L}(X_{t}))\right),
	\end{align*}
	which concludes the proof of part 1.
	
	For part 2, note first that 
	when the largest component is inactive, we have 
	$L_1(X_{t+1}) \geq L_1(X_t)$; hence, it is sufficient to show that $L_1(X_{t+1}) \leq L_1(X_t)$ with the desired probability.
	
	Let $\mu_t' := \E\left[A(X_t) \mid \neg \Lambda_t, X_t\right] = \left(n- \hat{L}(X_{t}) n^{2/3} \right)q^{-1}$, 
	$\gamma_t':=\sqrt{\hat{L}(X_{t})} \cdot n^{2/3}$, 
	and $J_t':= [\mu_t' - \gamma_t', \mu_t' + \gamma_t']$.
	By Hoeffding's inequality, 
	$$\Pr\left[A(X_t) \in J_t' \mid \neg\Lambda_t, X_t\right] \geq 1 - \exp\left(- \Omega\left( \hat{L}(X_{t}) \right)\right).$$ 

	Let $G\sim G(A(X_t), p)$, $m = \mu_t' + \gamma_t'$ and let $G^+ \sim G\left(\mu_t' + \gamma_t', p\right) $,  
	By monotonicity of the largest component in a random graph, 
	$$\Pr\left[L_1(G) > L_1(X_t) \mid A(X_t) \in J_t'\right] \leq \Pr\left[L_1(G^+) > L_1(X_t)\right].$$
	Rewrite $G\left(\mu_t' + \gamma_t', p\right)$ as $G\left(m, \frac{1 + \epsilon}{m}\right)$, where 
	$$\epsilon 
	= \left(\frac{n - \hat{L}(X_{t}) n^{2/3}}{q} + \sqrt{\hat{L}(X_{t})} n^{2/3} \right) \cdot \frac{q}{n} - 1 =
	\left(\sqrt{\hat{L}(X_{t})}q -\hat{L}(X_{t}) \right) n^{-1/3}. $$ 
	
	From this bound, applying Lemma~\ref{T512} to $G^+$, we obtain 
	$$\E\left[\mathcal{R}_1(G^+)\right]
	= O\left( \frac{m}{ \epsilon } \right) 
	= O\left( \frac{n^{4/3}}{\hat{L}(X_{t})} \right). $$ 
	Hence, $\E\left[L_1(G^+)^2\right] = O\left(n^{4/3}/\hat{L}(X_{t})\right)$ and
	by Markov's inequality
	$$\Pr\left[L_1(G^+) > \hat{L}(X_{t}) n^{2/3}\right] = \Pr\left[L_1(G^+)^2 > \hat{L}(X_{t})^2 n^{4/3}\right] 
	\leq \frac{\E[L_1(G^+)^2]}{\hat{L}(X_{t})^2 n^{4/3}} 
	= O\left(\frac{1}{\hat{L}(X_{t})^3} \right).$$
	
	To conclude, we observe that
	\begin{align*}
			& \Pr[L_1(X_{t+1}) \leq L_1(X_t) \mid X_t, \neg\Lambda_t] \\				
			& \geq \Pr\left[L_1(G) \leq L_1(X_t) \mid X_t, \neg\Lambda_t, A(X_t) \in J_t'\right] \Pr\left[A(X_t) \in J_t' \mid X_t, \neg\Lambda_t\right] \\
			& \geq \left(1 - e^{- \Omega\left( \hat{L}(X_{t}) \right) }\right) \left(1 - O\left(\frac{1}{\hat{L}(X_{t})^3} \right)\right) = 1 - O\left(\frac{1}{\hat{L}(X_{t})^3} \right),
	\end{align*}
	as desired.
\end{proof}


We are now ready to prove Lemma~\ref{Lemma2}.

\begin{proof} [Proof of Lemma~\ref{Lemma2}]
Suppose $\mathcal{R}_2(X_0) \le D_1^2n^{4/3}$ for a constant $D_1$.
Let $T':=B'\log g(n)$, where $B'$ is a constant such that 
$B' \log g(n)= 2q \log_{1/\alpha}\left(\frac{ g(n)}{B (\log g(n))^8}\right)$ and $\alpha := \alpha(B, q, \delta)$ is the constant from Lemma~\ref{C344}.
Let $\hat{T}$ be the first time 
$$\hat{L}(X_{t}) \leq \max\{ B (\log g(n))^8, D\},$$
where $D$ is a large constant we choose later. 
Let $T:=T'\wedge \hat{T}$, where the operator $\wedge$ takes the minimum of the two numbers.
Define $e(t)$ as the number of 
steps up to time $t$ in which the largest component of the configuration is activated.

To facilitate the notation, we define the following events. (The constants $C$ and $\alpha$ are those from Lemmas~\ref{C344} and \ref{C345}, respectively).
\begin{enumerate}
	\item Let $H_i $ denote $\hat{L}(X_{i}) > \max\{ B (\log g(n))^8, D\}$;
	\item Let $F_i $ denote $\mathcal{R}_2(X_{i}) \leq  \mathcal{R}_2(X_{i-1})  + C n^{4/3}\hat{L}(X_{i-1})^{-1/2}$; let us assume $F_0$ occurs;   
	\item Let $F'_i$ denote $\mathcal{R}_2(X_{i}) \leq \mathcal{R}_2(X_{i-1})  +  C n^{4/3} (\log g(n))^{-4} B^{-1/2}$; again, we assume $F'_0$ occurs;  
	\item Let $Q_i$ denote $\hat{L}(X_{i}) \leq \max\{\alpha^{e(i)}\hat{L}(X_{0}), D \}$;
	\item Let $Base_i$ be the intersection of $\{F'_0, Q_0, H_0\},  ..., \{F'_{i-1}, Q_{i-1}, H_{i-1}\},$ 
	and $ \{F'_i, Q_i$\}.
\end{enumerate}

By induction, we find a lower bound for the probability of $Base_T$.
For the base case, note that $\Pr[Base_0] $ $= 1$ by assumption. 
Next we show $$\Pr\left[Base_{i+1\wedge T} \mid Base_{i\wedge T}\right] = 1 - O( (\log g(n))^{-4} ).$$  
If $T \leq i$, then $Base_{i\wedge T} = Base_{ T} = Base_{i+1\wedge T}$,
so the induction holds.
If $T > i$, then we have $H_i$. 
By the induction hypothesis $F'_1, F'_2, ..., F'_{i-1}$, 
$$ \mathcal{R}_2(X_{i}) \leq \mathcal{R}_2(X_{0}) + i \cdot C n^{4/3} (\log g(n))^{-4} B^{-1/2}.$$ 
Moreover, since $i < T \leq T' = B' \log g(n)$ and $\mathcal{R}_2(X_0) \le D_1^2n^{4/3}$, we have 
$$ \mathcal{R}_2(X_{i}) \leq D_1^2n^{4/3}  + CB' n^{4/3} (\log g(n))^{-3} B^{-1/2}.$$ 
Given $\mathcal{R}_2(X_{i}) = O(n^{4/3})$ and $H_i$, Lemma \ref{C345} implies that $F_{i+1}$ occurs with probability 
$$1 - O(\hat{L}(X_i)^{-1/2}) =  1 - O( (\log g(n))^{-4} ).$$
In addition, note that $F_{i+1} \cup H_i $ leads to $F'_{i+1}$.
Let $\mathbbm{1}(\Lambda_t)$ be the indicator function for the event $\Lambda_t$. 
Given $H_i, Q_i$ and $\mathcal{R}_2(X_{i}) = O(n^{4/3})$, Lemma \ref{C344} implies 
\begin{equation}
	\label{eq:l1-contract}
	L_1(X_{i+1}) \leq \max \{\alpha^{\mathbbm{1}(\Lambda_t)} L_1(X_i) ,L_2(X_i) \}
\end{equation}
with probability at least $1 - O\left(\hat{L}(X_{t})^{-3}\right) = 1 - O\left( (\log g(n))^{-24} \right)$.

Dividing equation~(\ref{eq:l1-contract}) by $n^{2/3}$, we obtain $Q_{i+1}$ for large enough $D$. 
In particular, we can choose $D$ to be $D_1 + 2$. 
A union bound then implies
$$\Pr[Base_{i+1\wedge T} \mid Base_{i\wedge T}] \geq \Pr\left[Base_{i+1 \wedge T} \mid Base_i, H_i\right] = 1 - O( (\log g(n))^{-4} ).$$ 

The probability for $Base_{T}$ can then be bounded as follows:
$$\Pr[Base_{T}] \geq \prod_{i=0}^{T-1} \Pr[Base_{i+1 \wedge T} \mid Base_{i \wedge T}] 
= \prod_{i=0}^{T-1} 1 - O( (\log g(n))^{-4} )= 1 - O( (\log g(n))^{-3} ).$$

Next, let us assume $Base_T$. Then we have 
$$\mathcal{R}_2(X_{T}) \leq  \mathcal{R}_2(X_{0}) + T' \cdot C n^{4/3} (\log g(n))^{-4} B^{-1/2} = \mathcal{R}_2(X_{0}) + O\left( n^{4/3} (\log g(n))^{-3} \right).$$
Notice that if $T = \hat{T}$ then the proof is complete. 
Consequently, it suffices to show $\hat{T} \le T'$ with probability at least $1 - g(n)^{-\Omega(1)}$. 

Observe that $K := e(T')$ is a binomial random variable $Bin\left(T', 1/q\right)$,  
whose expectation is $\frac{T'}{q}  = \frac{B'}{q} \log g(n)$. 
By Chernoff bound 
$$\Pr\left[K < \frac{B'}{2q} \log g(n)\right] \leq \exp\left(-\frac{B'}{16q} \log g(n)\right) =  g(n)^{-\Omega(1)}.$$
If indeed $T = T'$ and $ K \geq \frac{B'}{2q} \log g(n)$, then the event $Q_T$ implies
$$\hat{L}(X_{T}) < \alpha^{e(T)} \hat{L}(X_{0}) \leq \alpha^{\log_{\alpha}\left(\frac{B (\log g(n))^8}{g(n)}\right)} \hat{L}(X_{0}) 
= \frac{B (\log g(n))^8}{g(n)} \hat{L}(X_{0}) \leq B (\log g(n))^8,$$
which leads to $\hat{T} \le T$.
Therefore, 
$$\Pr \left[\hat{T} > T' \mid Base_T\right] \le \Pr \left[K < \frac{B'}{2q} \log g(n) \right] =  g(n)^{-\Omega(1)},$$
as desired.
\end{proof}

\begin{proof}[Proof of Claim~\ref{claim1.1}]
	We first show the following inequality:
	\begin{equation}
		\label{eq:claim1}
		\frac{1.5}{\log g_j(n)} + \frac{1}{\log g_{j+1}(n)} \leq \frac{1.5}{\log g_{j+1}(n)}.
	\end{equation}
	Note that by direction computation  
	\begin{equation*}
		\begin{split}
			\frac{1.5}{\log g_j(n)} + \frac{1}{\log g_{j+1}(n)} 
			& =  \frac{1.5 (\log B + \log (\log g_j(n))^8) + \log g_j(n)}{\log g_j(n) \log g_{j+1}(n)}. \\
		\end{split}
	\end{equation*}
	From the definition of $K$, we know that $g_j(n) > B^6$ for all $j < K$.
	Hence,  $\log B < \log g_j(n)^{1/6}$. In addition, 
	recall that $B$ is such that $\forall\,x \geq B^6$, we have $x \geq (\log x)^{48}$;
	therefore, $g_j(n) \geq (\log g_j(n))^{48}$. Then, $\log (\log g_j(n))^8 \le \log g_j(n)^{1/6}$. Putting all these together,
	\begin{equation*}
		\begin{split}
		   \frac{1.5 (\log B + \log (\log g_j(n))^8) + \log g_j(n)}{\log g_j(n) \log g_{j+1}(n)}  & \leq \frac{1.5 (\frac{1}{6}\log g_j(n) + \frac{1}{6} \log g_j(n)) + \log g_j(n)}{\log g_j(n) \log g_{j+1}(n)} \\
			  & = \frac{ 1.5 \log g_j(n)}{\log g_j(n) \log g_{j+1}(n)} 
			  = \frac{ 1.5 }{ \log g_{j+1}(n)}
		\end{split}
	\end{equation*}
	The proof of part (i) is inductive. The base case ($i=0$) holds trivially.
	For the inductive step note that
	\begin{equation*}
		\begin{split}
			\prod_{i=0}^{j+1} \left(1 - \frac{c}{\log g_{i}(n)}\right) 
			& = \left(1 - \frac{c}{\log g_{j+1}(n)}\right) \prod_{i=0}^{j} \left(1 - \frac{c}{\log g_{i}(n)}\right)  \\				
			& \geq \left(1 - \frac{c}{\log g_{j+1}(n)}\right) \left(1 - \frac{1.5 c}{\log g_j(n)} \right)  \\
			& \geq 1 - c\left(\frac{1.5}{\log g_j(n)} + \frac{1}{\log g_{j+1}(n)}\right) \\
			& \geq 1 - \frac{ 1.5 c }{ \log g_{j+1}(n)},
		\end{split}
	\end{equation*}
	where the last inequality follows from \eqref{eq:claim1}.
	
	For part (ii) we also use induction. The base case ($i=0$) can be checked straightforwardly.
	For the inductive step, 
	\begin{equation*}
		\begin{split}
			\sum_{i=0}^{j+1} \frac{1}{\log g_i(n)} 
			&  \leq \frac{1}{\log g_{j+1}(n)} + 	\sum_{i=0}^{j} \frac{1}{\log g_i(n)}   \leq  \frac{1}{\log g_{j+1}(n)} + \frac{1.5}{\log g_j(n)} 
			\leq \frac{1.5}{\log g_i(n)},
		\end{split}
	\end{equation*}
	where the last inequality follows from (\ref{eq:claim1}).
	\end{proof}

\section{Coupling to the same component structure: proofs}

	In this section we provide the proofs of Lemma~\ref{lemma:critical:q<2:second-step-first}, Lemma~\ref{lemma:critical:q<2:second-step-second}, Lemma~\ref{lemma:critical:q<2:second-step-2.5} and Lemma~\ref{lemma:critical:q<2:second-step-third}.
	
	\subsection{Continuation of the burn-in phase: proof of Lemma~\ref{lemma:critical:q<2:second-step-first}}
	\label{sec:second-step-first}

	Recall that for a random-cluster configuration $X$, let $A(X)$ denote the random variable corresponding to the number of vertices activated by step~\eqref{CMdynamics1} of the CM dynamics from $X$. 		

	\begin{proof}[Proof of Lemma \ref{lemma:critical:q<2:second-step-first}]
		We show that there exist suitable constants $C$, $D > 0$ and $\alpha \in (0,1)$
		such that if $\mathcal{R}_1(X_t)\le C n^{4/3}$ and $\widetilde{\mathcal{R}}_\omega(X_t) > Dn^{4/3}\on^{-1/2}$, then
		\begin{align}		
		\mathcal{R}_1(X_{t+1}) &\le C n^{4/3},~\text{and} \label{eq:first-lemma:to-prov-1}\\ 
		\widetilde{\mathcal{R}}_\omega(X_{t+1}) &\le (1-\alpha) \widetilde{\mathcal{R}}_\omega(X_t) \label{eq:first-lemma:to-prov-2}
		\end{align}
		with probability $\rho = \Omega(1)$.
		This implies that 
		we can maintain \eqref{eq:first-lemma:to-prov-1}-\eqref{eq:first-lemma:to-prov-2} for $T$ steps with probability $\rho^T$.
		Precisely, if we let
		\begin{align*}
			\tau_1 &= \min \{t > 0: \mathcal{R}_1(X_{t}) > C n^{4/3}\}, \\
			\tau_2 &= \min \{t > 0:\widetilde{\mathcal{R}}_\omega(X_{t}) > (1-\alpha) \widetilde{\mathcal{R}}_\omega(X_{t-1})\}, \\
			T &= \min \{ \tau_1,\tau_2,c\log \on\},
		\end{align*}
		where the constant $c>0$ is chosen such that $(1-\alpha)^{c\log \on} = O(\on^{-1/2})$, 
		then $T = c\log \on$ with probability $\rho^{c \log \on}$. (Note that $\rho^{c 
		\log \on}= {\on}^{-\beta}$ for a suitable constant $\beta > 0$.)
		Hence, $\mathcal{R}_1(X_T) = O(n^{4/3})$ and 
		$$\widetilde{\mathcal{R}}_\omega(X_T) 
		\le \widetilde{\mathcal{R}}_\omega(X_0) \cdot O(\on^{-1/2}) 
		\le \mathcal{R}_1(X_0) \cdot O(\on^{-1/2})  
		= O(n^{4/3}\on^{-1/2}).$$
		The lemma then follows from the fact that $I(X_T) = \Omega(n)$ with probability $1-o(1)$ by Lemma \ref{lemma:rg:isolated} and a union bound.

		To establish \eqref{eq:first-lemma:to-prov-1}-\eqref{eq:first-lemma:to-prov-2},
		let $\mathcal{H}^1_t$ be the event that $A(X_t) \in \left[n/q - \delta n^{2/3},n/q + \delta n^{2/3}\right]$, where $\delta > 0$ is a constant.
		By Hoeffding's inequality, for a suitable $\delta > 0$, $\Pr[\mathcal{H}^1_t] \ge 1 - \frac{1}{8q^2}$ since $\mathcal{R}_1(X_t) = O(n^{4/3})$.
		Let $K_t$ denote the subgraph induced on the inactivated vertices at the step $t$.
		Observe that $\E\big[\widetilde{\mathcal{R}}_\omega(K_t)\big] = \left(1-\frac{1}{q}\right)\widetilde{\mathcal{R}}_\omega(X_{t})$.
		Similarly,  
		$\E\big[\mathcal{R}_1(K_t) - \widetilde{\mathcal{R}}_\omega(K_t)\big] 
		= \left(1-\frac{1}{q}\right) \big(\mathcal{R}_1(X_{t}) - \widetilde{\mathcal{R}}_\omega(X_{t+1})\big)$.
		Hence, by Markov's inequality and independence between activation of each component, with probability at least $1/4q^2$, 
		the activation sub-step is such that $G_u$ satisfies
		$$\widetilde{\mathcal{R}}_\omega(K_t) \le \left(1-\frac{1}{2q}\right)\widetilde{\mathcal{R}}_\omega(X_{t}),$$ 
		and 
		$$\mathcal{R}_1(K_t) - \widetilde{\mathcal{R}}_\omega(K_t) 
		\le \left(1-\frac{1}{2q}\right)\big( \mathcal{R}_1(X_{t}) - \widetilde{\mathcal{R}}_\omega(X_{t+1}) \big).$$
		We denote this event by $\mathcal{H}^2_t$. 
		It follows by a union bound that $\mathcal{H}^1_t$ and $\mathcal{H}^2_t$ happen simultaneously with probability at least $1/8q^2$.
		We assume that this is indeed the case and proceed to discuss the percolation sub-step.
	
		Lemma \ref{lemma:prelim:small-cmpt-var} implies that
		there exists $C_1 > 0$ such that with probability $99/100$,
		$$\widetilde{\mathcal{R}}_\omega\left(G\left(A(X_t), \frac{q}{n}\right)\right) \le C_1\frac{n^{4/3}}{{\on}^{1/2}}.$$
		Hence, 
		\begin{align*}
			\widetilde{\mathcal{R}}_\omega(X_{t+1}) 
			=\widetilde{\mathcal{R}}_\omega(K_t) + \widetilde{\mathcal{R}}_\omega\left(G\left(A(X_t), \frac{q}{n}\right)\right)
			\le \left(1-\frac{1}{2q}\right)\widetilde{\mathcal{R}}_\omega(X_{t}) + C_1\frac{n^{4/3}}{{\on}^{1/2}}
			 \le  (1-\alpha) \widetilde{\mathcal{R}}_\omega(X_{t}),
		\end{align*}
		where the last inequality holds for a suitable constant $\alpha \in (0,1)$ and a sufficiently large $D$ since $\widetilde{\mathcal{R}}_\omega(X_t) > Dn^{4/3}\on^{-1/2}$. 
		
		On the other hand, Lemma~\ref{lemma:prelim:critical-var} implies  
		$\E\left[\mathcal{R}_1\left(G\left(A(X_t), \frac{q}{n}\right)\right)\right] = O(n^{4/3})$. 
		By Markov's inequality, there exists $C_2$ such that, with probability $99/100$, 
		$$
		\mathcal{R}_1\left(G\left(A(X_t), \frac{q}{n}\right)\right) \le C_2n^{4/3}. 
		$$
		For large enough $C$, 
		\begin{align*}
			\mathcal{R}_1(X_{t+1}) &\le \mathcal{R}_1(K_t) +  \mathcal{R}_1\left(G\left(A(X_t), \frac{q}{n}\right)\right) 
			\le \big( 1- \frac{1}{2q} \big)\mathcal{R}_1(X_{t}) +   \mathcal{R}_1\left(G\left(A(X_t), \frac{q}{n}\right)\right) \\
			&\le \big( 1- \frac{1}{2q} \big) Cn^{4/3} +  C_2n^{4/3} 
			\le  Cn^{4/3} 
		\end{align*}
		 Finally, it follows from a union bound that  \eqref{eq:first-lemma:to-prov-1} and \eqref{eq:first-lemma:to-prov-2}
		hold simultaneously with probability at least $\frac{98}{100\cdot 8q^2}$.
	\end{proof}

	\subsection{Coupling to the same large component structure: proof of Lemma \ref{lemma:critical:q<2:second-step-second}}
	\label{sec:coup:llt}
	
	To prove Lemma \ref{lemma:critical:q<2:second-step-second}, we use a local limit theorem to construct a two-step coupling of the CM dynamics that reaches two configurations with the same large component structure.
	The construction of Markov chain couplings using local limit theorems is not common (see~\cite{LNNP} for another example), but it appears to be a powerful technique that may have other interesting applications.
	We provide next a brief introduction to local limit theorems.
	
	\medskip
	\noindent\textbf{Local limit theorem.} \
	Let $m$ be an integer.
	Let $c_1 \le \dots \le c_m$ be integers and for $i=1,\dots,m$, let $X_i$ be the random variable that is equal to $c_i$ with probability $r \in (0,1)$, and it is zero otherwise. 
	Let us assume that $X_1, \dots, X_m$ are independent random variables. Let
	$S_m = \sum_{i=1}^m X_i$, $\mu_m = \E[S_m]$ and $\sigma_m^2 = \var(S_m)$. 
	We say that a \textit{local limit theorem} holds for $S_m$ if for every integer $a \in \Z$:
	\begin{equation}
	\label{eq:prelim:llt}
	\Pr[S_m = a] = \frac{1}{\sqrt{2\pi} \sigma_m} \exp\Big(-\frac{(a-\mu_m)^2}{2\sigma_m^2}\Big) + o(\sigma_m^{-1}).
	\end{equation}
	
	We prove, under some conditions, a local limit theorem that applies to the random variables corresponding to the number of active vertices from small components. 
	Recall that for an increasing positive function $g$ and each integer $k \ge 0$, we defined the intervals
	\[\mathcal{I}_k = \left[\frac{\vartheta m^{2/3}}{2 g(m)^{2^k}},\frac{\vartheta m^{2/3}}{g(m)^{2^k}}\right].\]
	where $\vartheta>0$ is a fixed large constant.

	\begin{theorem}
		\label{thm:prelim:llt-cor}
		Let $m$ be an integer.
		 Let $c_1 \le \dots \le c_m$ be integers, and
		 suppose $X_1, ..., X_m$ are independent random variables such that
		 $X_i$ is equal to $c_i$ with probability $r \in (0,1)$, and $X_i$ is zero otherwise. 
		Let $g:\N \rightarrow \R$ be an increasing positive function such that $g(m) \rightarrow \infty$ and $g(m)=o(\log m)$.
		Suppose $c_m = O\left(m^{2/3}g(m)^{-1}\right)$, $\sum_{i=1}^m c_i^2 = O\left(m^{4/3}g(m)^{-1/2}\right)$ and
		$c_i = 1$ for all $i \le \rho m$, where $\rho \in (0,1)$ is independent of $m$.	
		Let $\ell = \ell(m, g) > 0$ be the smallest integer such that $m^{2/3}g(m)^{-2^\ell} = o(m^{1/4})$.
		If for all $1\le k \le \ell$, we have
		$|\{i:c_i \in \mathcal{I}_k(g )\}| = \Omega(g(m)^{3\cdot2^{k-1}})$, then a local limit theorem holds for $S_m = \sum_{i=1}^m X_i$.	
	\end{theorem}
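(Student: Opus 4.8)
The plan is to deduce Theorem~\ref{thm:prelim:llt-cor} from Mukhin's local limit theorem~\cite{Muk} for sums of independent lattice random variables. Mukhin's result says that a local limit theorem holds for $S_m = \sum_i X_i$ provided (a) the summands have lattice span~$1$ (i.e., the $\gcd$ of the support of $S_m$ around its mean is~$1$), (b) $\sigma_m \to \infty$, and (c) a Lindeberg-type smallness condition on the individual variances holds, specifically that $\max_i \var(X_i)/\sigma_m^2 \to 0$, together with enough ``spread'' among the $c_i$ so that the characteristic function of $S_m$ decays away from the origin. So the task reduces to verifying these hypotheses from the structural assumptions on the $c_i$'s.

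\medskip
\noindent First I would compute the basic moments. Since $X_i = c_i$ with probability $r$ and $0$ otherwise, $\E[X_i] = r c_i$ and $\var(X_i) = r(1-r) c_i^2$, so $\mu_m = r\sum_i c_i$ and $\sigma_m^2 = r(1-r)\sum_i c_i^2$. The hypothesis $c_i = 1$ for all $i \le \rho m$ gives $\sum_i c_i^2 \ge \rho m$, hence $\sigma_m^2 = \Omega(m) \to \infty$, which is condition~(b). For condition~(a), the lattice span: because there are $\Omega(m)$ indices with $c_i = 1$, the support of $S_m$ contains a full interval of consecutive integers around $\mu_m$, so the span is exactly~$1$ and $S_m$ is genuinely integer-valued with no arithmetic obstruction. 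The maximal individual variance is $r(1-r) c_m^2 = O(m^{4/3} g(m)^{-2})$, and dividing by $\sigma_m^2 = \Omega(m)$ gives $\max_i \var(X_i)/\sigma_m^2 = O(m^{1/3} g(m)^{-2})$ — this does \emph{not} go to zero, so the naive Lindeberg condition fails, and this is precisely why the more refined hypotheses (the counts $|\{i : c_i \in \mathcal{I}_k(g)\}|$) are needed.

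\medskip
\noindent The heart of the argument is bounding the characteristic function $\varphi_m(t) = \E[e^{itS_m}] = \prod_i (1 - r + r e^{itc_i})$ away from $t = 0$, which is what Mukhin's criterion really demands. We have $|1 - r + re^{itc_i}|^2 = 1 - 2r(1-r)(1 - \cos(tc_i))$, so $|\varphi_m(t)|^2 \le \exp\!\big(-2r(1-r)\sum_i (1 - \cos(tc_i))\big)$. For a given $t$ in a dyadic-type range, one finds the scale $k$ for which the interval $\mathcal{I}_k(g)$ sits at the ``right'' distance — so that $tc_i$ is bounded away from multiples of $2\pi$ for the $\Omega(g(m)^{3\cdot 2^{k-1}})$ values of $c_i$ lying in $\mathcal{I}_k(g)$, making each such term $1 - \cos(tc_i) = \Omega(1)$. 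This forces $\sum_i(1-\cos(tc_i)) = \Omega(g(m)^{3\cdot 2^{k-1}})$, and one checks that this quantity dominates $\log\sigma_m$ uniformly over the relevant $t$-range, which is exactly the decay estimate Mukhin needs (combined with handling $t$ near zero via the quadratic Taylor expansion of $\varphi_m$, and handling $|t|$ near $\pi$ using the $c_i = 1$ summands, which give $1 - \cos t = \Omega(1)$ there). The small components near $c_m$ cover the coarsest scales and the unit-size components cover the finest, with the intermediate $\mathcal{I}_k(g)$ scales interpolating; the choice of $\ell$ as the smallest integer with $m^{2/3} g(m)^{-2^\ell} = o(m^{1/4})$ is calibrated so that once the scale drops below $m^{1/4}$ one is already in the regime where $\sigma_m^{-1}$-type error terms are negligible and no further counting is needed.

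\medskip
\noindent The main obstacle I expect is the middle range of $t$ — neither close to $0$ (where Taylor expansion suffices) nor close to $\pi$ (where the unit components suffice) — where one must partition the interval $[\text{const}/\sigma_m,\, \pi - \text{const}]$ into pieces indexed by the scales $k = 1, \dots, \ell$ and argue that on the $k$-th piece enough of the $c_i \in \mathcal{I}_k(g)$ produce non-negligible $1 - \cos(tc_i)$. The delicate point is that $tc_i$ must avoid a neighborhood of every multiple of $2\pi$, not just of $0$; handling this requires a pigeonhole/counting argument showing that for $t$ in the $k$-th range, a constant fraction of the $\Omega(g(m)^{3 \cdot 2^{k-1}})$ relevant indices land in a ``good'' residue window modulo $2\pi/t$ — this is where the precise exponent $3 \cdot 2^{k-1}$ in the lower bound on $|\{i : c_i \in \mathcal{I}_k(g)\}|$ (supplied by Corollary~\ref{lemma:critical:q<2:maintain-trees}) must be matched against the growth of $\log \sigma_m$ to close the estimate. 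Invoking Mukhin's theorem with these verified hypotheses then yields~\eqref{eq:prelim:llt}.
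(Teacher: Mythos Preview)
Your overall strategy (invoke Mukhin, or equivalently bound the characteristic function directly) matches the paper, but two points in your execution diverge from what actually makes the argument go through.

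\medskip
\textbf{The lower bound on $\sigma_m$.} You only extract $\sigma_m^2=\Omega(m)$ from the unit components, and then correctly observe that with this bound $c_m^2/\sigma_m^2=O(m^{1/3}g(m)^{-2})$ need not vanish, so Lyapunov fails. But the interval hypothesis already gives much more: the $\Omega(g(m)^3)$ components in $\mathcal I_1(g)$ each have $c_i^2=\Theta(m^{4/3}g(m)^{-4})$, so $\sigma_m^2=\Omega(m^{4/3}g(m)^{-1})$. With this bound $c_m/\sigma_m=O(g(m)^{-1/2})\to 0$ and Lyapunov's condition \emph{does} hold, giving the CLT immediately (this is Lemma~\ref{lemma:clt1} in the paper). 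You should use the interval counts for $\sigma_m$, not just for the characteristic-function decay.

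\medskip
\textbf{The middle range of $t$.} Your plan is to match each $t$ to a scale $k$ so that $tc_i$ is bounded away from $2\pi\Z$ for $c_i\in\mathcal I_k(g)$, and you flag the residue issue as the delicate point. This is where the paper takes a cleaner route that sidesteps residues entirely. Working with the normalized characteristic function of $(S_m-\mu_m)/\sigma_m$, for any $t$ the paper sets $u(t)\approx\sigma_m/t$ and uses only those $c_j\le u(t)$, so that $tc_j/\sigma_m<1$ and the plain quadratic bound $1-\cos x\ge x^2/4$ applies with no wraparound. What one then needs is the \emph{cumulative} lower bound
\[
\sum_{j:\,c_j\le u} c_j^2 \;\ge\; \alpha\, u\,\sigma_m \qquad\text{for all } u\in[1,\sigma_m],
\]
which is exactly the second hypothesis in Mukhin's Theorem~\ref{thm:prelim:llt-main}. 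The paper verifies this (Lemma~\ref{lemma:clt2}) by, for each $u$, picking the largest $k$ with $\mathcal I_{k+1}(g)\subset\{c\le u\}$ and computing $\sum_{c_i\in\mathcal I_{k+1}}c_i^2=\Omega(m^{4/3}g(m)^{-2^k})\gg u\sigma_m$. Your pigeonhole argument on residues mod $2\pi$ is not needed, and as you stated it has a gap: the hypothesis only bounds $|\{i:c_i\in\mathcal I_k(g)\}|$ from below, not how the $c_i$ are distributed within the interval, so all of them could share a single value whose product with $t$ lands on a multiple of $2\pi$. The cumulative sum-of-squares condition is both what Mukhin actually requires and what the interval counts naturally deliver.
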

	
	Theorem~\ref{thm:prelim:llt-cor} follows from a general local limit theorem proved in~\cite{Muk};
	a proof is given in Appendix~\ref{app:llt}.
	We provide next the proof of Lemma~\ref{lemma:critical:q<2:second-step-second}.

		\begin{proof}[Proof of Lemma \ref{lemma:critical:q<2:second-step-second}]
			First, both $\{X_t\}$, $\{Y_t\}$ perform one independent CM step from the initial configurations $X_0$, $Y_0$. 
			We start by establishing that $X_1$ and $Y_1$ preserve the structural properties assumed for $X_0$ and $Y_0$.
			
			By assumption $\mathcal{R}_1(X_{0}) = O(n^{4/3})$, so
			Hoeffding's inequality implies that the number of activated vertices from $X_0$ is such that 
			\[
			A(X_0) \in I := \left[n/q - O( n^{2/3}),n/q + O(n^{2/3})\right]
			\]
			with probability $\Omega(1)$.
			Then, the percolation step is distributed as a 
			\[G\left(A(X_0), \frac{1 + \lambda A(X_0)^{-1/3}}{A(X_0)}\right)\] 
			random graph,
			with $|\lambda| = O(1)$ with probability $\Omega(1)$.
			 Conditioning on this event, from Lemma~\ref{lemma:rg:isolated} 
			 we obtain that $I(X_1)=\Omega(n)$ w.h.p.
			 Moreover, from Lemma \ref{lemma:prelim:critical-var} and Markov's inequality we obtain that $\mathcal{R}_1(X_1) = O(n^{4/3})$ with probability at least $99/100$ and from
			 Lemma~\ref{lemma:prelim:small-cmpt-var} that $\widetilde{\mathcal{R}}_\omega(X_1) = O(n^{4/3}{\on}^{-1/2})$ also with probability at least $99/100$.
			
			We show next that $X_1$ and $Y_1$, in addition to preserving the structural properties of $X_0$ and $Y_0$, 
			also have many connected components with sizes in certain carefully chosen intervals. This fact will be crucial in the design of our coupling. When $A(X_0) \in I$, by Lemmas~\ref{lemma:prelim:cmpt-count} and \ref{lemma:critical:q<2:maintain-trees} and a union bound, for all integer $k \ge 0$ such that $n^{2/3}\on^{-2^k} \rightarrow \infty$, 
			$N_{k}(X_1, \omega) = \Omega(\on^{3 \cdot 2^{k-1}})$ w.h.p. (Recall, that
			$N_{k}(X_1, \omega)$ denotes the number of connected components of $X_1$ with sizes in the interval $\mathcal I_k(\omega)$.)
			We will also require a bound for the number of components with sizes in the interval
			\[J = \left[\frac{cn^{2/3}}{\on^{6}},\frac{2cn^{2/3}}{\on^{6}}\right], \]
			where $c > 0$ is a constant such that $J$ does not intersect any of the $\mathcal I_k(\omega)$'s intervals.
			Let $W_X$ (resp., $W_Y$) be the set of components of $X_1$ (resp., $Y_1$)
			with sizes in the interval $J$.		
			Lemma \ref{lemma:prelim:cmpt-count} then implies that for some positive constants $\delta_1, \delta_2$ independent of $n$,
			$$
			\Pr\Big[|W_X|\ge \delta_1n \big(\frac{\on^{6}}{cn^{2/3}}\big)^{3/2} \Big] 
			\ge 1- \frac{\delta_2}{n}  \Big(\frac{cn^{2/3}}{\on^{6}}\Big)^{3/2}
			= 1 - O(\on^{-9}).
			$$
			All the bounds above apply also to the analogous quantities for $Y_1$ with the same respective probabilities. 
			Therefore, by a union bound, all these properties hold simultaneously for both $X_1$ and $Y_1$ with probability $\Omega(1)$. 
			We assume that this is indeed the case and proceed to describe the second step of the coupling, in which we shall use each of the established properties for $X_1$ and $Y_1$.
			
			Recall $\mathcal{S}_{\omega}(X_1)$ and $\mathcal{S}_{\omega}(Y_1)$ denote the sets of connected components in $X_1$ and $Y_1$, respectively,
			with sizes larger than $B_\omega$. (Recall that $B_\omega = n^{2/3} \on^{-1}$, where $\on = \log \log \log \log n$.)
			Since $\mathcal{R}_1(X_1) = O(n^{4/3})$, the total number of components in $\mathcal{S}_{\omega}(X_1)$ is $O(\on^2)$; 
			moreover, it follows from the Cauchy–Schwarz inequality
			that the total number of vertices in the components in $\mathcal{S}_{\omega}(X_1)$, denoted $\|\mathcal{S}_{\omega}(X_1)\|$, is $O(n^{2/3}\on)$; 
			the same holds for $\mathcal{S}_{\omega}(Y_1)$.	
			Without loss of generality, let us assume that $\|\mathcal{S}_{\omega}(X_1)\| \ge \|\mathcal{S}_{\omega}(Y_1)\|$.		
			Let 
			$$
			\varGamma  = \{C \subset W_Y: \|\mathcal{S}_{\omega}(Y_1) \cup C\| \ge \|\mathcal{S}_{\omega}(X_1)\| \},
			$$
			and let $C_{\rm min} = \arg \min_{C \in \varGamma} \|\mathcal{S}_{\omega}(Y_1) \cup C\|$.
			In words, $C_{\rm min}$ is the smallest subset $C$ of components of $W_Y$ so that 
			the number of vertices in the union of $\mathcal{S}_{\omega}(Y_1)$ and $C$ is greater than that in $\mathcal{S}_{\omega}(X_1)$.
			Since every component in $W_Y$ has size at least $cn^{2/3}\on^{-6}$ and $|W_Y| = \Omega(\on^9)$,
			the number of vertices in $W_Y$ is $\Omega(n^{2/3}\on^3)$ and so $\varGamma \neq \emptyset$.
			In addition, the number of components in $C_{\rm min}$ is $O(\on^9)$. 
			Let $\mathcal{S}_{\omega}'(Y_1) = \mathcal{S}_{\omega}(Y_1) \cup C_{\rm min}$ and
			observe that the number of components in $\mathcal{S}_{\omega}'(Y_1)$ is also $O(\on^9)$
			and that 
			\[0 \le \|\mathcal{S}_{\omega}'(Y_1)\| - \|\mathcal{S}_{\omega}(X_1)\| \le 2cn^{2/3}\on^{-6}. \]
			Note that $\|\mathcal{S}_{\omega}(X_1)\| - \|\mathcal{S}_{\omega}(Y_1)\|$ may be $\Omega(n^{2/3}\on)$ (i.e., much larger than $ \|\mathcal{S}_{\omega}'(Y_1)\| - \|\mathcal{S}_{\omega}(X_1)\| $). 
			Hence, if all the components from $\mathcal{S}_{\omega}(Y_1)$ and $\mathcal{S}_{\omega}(X_1)$ were activated, the difference in the number of active vertices could be $\Omega(n^{2/3}\on)$. This difference cannot be corrected by our coupling for the activation of the small components.
			We shall require instead that all the components from $\mathcal{S}_{\omega}'(Y_1)$ and $\mathcal{S}_{\omega}(X_1)$ are activated so that the difference is $O(n^{2/3}\on^{-6})$ instead.
					
			We now describe a coupling of the activation sub-step for the second step of the CM dynamics. As mentioned, our goal is to design a coupling in which the same number of vertices are activated from each copy.
			If indeed $A(X_1) = A(Y_1)$, 
			then we can choose an arbitrary bijective map $\varphi$ between the activated vertices of $X_1$ and the activated vertices of $Y_1$ and use $\varphi$ to couple the percolation sub-step. Specifically,
			if $u$ and $v$ were activated in $X_1$, the state of the edges $\{u,v\}$ in $X_2$ and $\{\varphi(u),\varphi(v)\}$ in $Y_2$ would be the same. 
			This yields a coupling of the percolation sub-step
			such that $X_2$ and $Y_2$ agree on the subgraph update at time $1$.	
					
			Suppose then that in the second CM step all the components in $\mathcal{S}_{\omega}(X_1)$ and $\mathcal{S}_{\omega}'(Y_1)$ are activated simultaneously.
			If this is the case, then the difference in the number of activated vertices is $d \le 2c n^{2/3}\on^{-6}$.
			We will use a local limit theorem (i.e., Theorem \ref{thm:prelim:llt-cor})
			to argue that there is a coupling of the activation of the remaining components in $X_1$ and $Y_1$ such that the total number of active vertices in both copies is the same with probability $\Omega(1)$.
			Since all the components in $\mathcal{S}_{\omega}(X_1)$ and $\mathcal{S}_{\omega}'(Y_1)$ are activated with probability $\exp(-O(\on^9))$, the overall
			success probability of the coupling will be $\exp(-O(\on^9))$.
				
			Now, let $x_1,x_2,\dots,x_m$ be the sizes of the components of $X_1$ that are not in $\mathcal{S}_{\omega}(X_1)$ (in increasing order). 
			Let $\hat{A}(X_1)$ be the random variable corresponding to the number of active vertices from these components.
			Observe that $\hat{A}(X_1)$ is the sum of $m$ independent random variables, where the $j$-th variable in the sum is equal to $x_j$ with probability $1/q$, and it is $0$ otherwise.		
			We claim that sequence $x_1,x_2,\dots,x_m$ satisfies all the conditions
			in Theorem~\ref{thm:prelim:llt-cor}.
			
			First, note that since the number of isolated vertices in $X_1$ is $\Omega(n)$, $m = \Theta(n)$ and consequently
			$x_m = O(m^{2/3}\omega(m)^{-1})$, $\sum_{i=1}^m x_i^2 = \tilde{R}_\omega(X_1) = O(m^{4/3}\omega(m)^{-1/2})$ and $x_i=1$ for all $i \le \rho m$, 
			where $\rho \in (0,1)$ is independent of $m$. 
			Moreover,  since
			$N_{k}(X_1, \omega) = \Omega(\on^{3 \cdot 2^{k-1}})$
			for all $k \ge 1$ such that $n^{2/3}\on^{-2^k} \rightarrow \infty$, 
			\[|\{i:x_i \in \mathcal{I}_k(\omega )\}| = \Omega(\omega(m)^{3\cdot2^{k-1}}).\]
			 Since $N_0(X_1,\omega)	= \Omega( \on^{3/2}) $, we also have
			$$
			\sum\nolimits_{i=1}^m x_i^2 
			\ge  N_0(X_1,\omega) \cdot \frac{\vartheta^2 n^{4/3}}{4\on^2}
			= \Omega ({m^{4/3}\omega(m)^{-1/2}} ).
			$$		
			Let
			$\mu_X = \E[\hat{A}(X_1)] = q^{-1}\sum_{i=1}^m x_i$ and
			let 
			\[\sigma_X^2 = \var(\hat{A}(X_1)) = q^{-1}(1-q^{-1}) \sum_{i=1}^m x_i^2 = \Theta(m^{4/3}\omega(m)^{-1/2}). \]
			Hence, Theorem \ref{thm:prelim:llt-cor} implies that 
			$\Pr [\hat{A}(X_1) = a] = \Omega\left(\sigma_X^{-1}\right)$
			for any $a \in [\mu_X-\sigma_X,\mu_X+\sigma_X]$.
			Similarly, we get $\Pr [\hat{A}(Y_1) = a ] = \Omega(\sigma_Y^{-1})$ for any $a \in [\mu_Y-\sigma_Y,\mu_Y+\sigma_Y]$,		
			with $\hat{A}(Y_1)$, $\mu_Y$ and $\sigma_Y$ defined analogously for  $Y_1$.
			Note that $\mu_X - \mu_Y  = O(n^{2/3}\on^{-6})$ and $\sigma_X , \sigma_Y= \Theta(n^{2/3} \on^{-1/4})$.
			Without loss of generality, suppose $\sigma_X < \sigma_Y$.
			Then for any $a \in [\mu_X-\sigma_X /2,\mu_Y+\sigma_X /2]$ and $d = O(n^{2/3}\on^{-6})$, we have
			\[
				\min \left\{\Pr [\hat{A}(X_1) = a ], \Pr [\hat{A}(Y_1) = a - d ]\right\} 
				= \min \left\{\Omega ( \sigma_X^{-1}), \Omega ( \sigma_Y^{-1}) \right\}
				= \Omega (\sigma_Y^{-1}).
			\]
			Hence, there exists a coupling $\mathbb P$ of $\hat{A}(X_1)$ and $\hat{A}(Y_1)$ so that 
			$\mathbb P[\hat{A}(X_1) = a,  \hat{A}(Y_1) = a - d] 
			= \Omega(\sigma_Y^{-1})$ for all  $a \in \left[\mu_X-\sigma_X /2,\mu_Y+\sigma_X /2\right]$.
			Therefore, there is a coupling of $\hat{A}(X_1)$ and $\hat{A}(Y_1)$ 
			such that
			$$
			\Pr[\hat{A}(X_1) - \hat{A}(Y_1) = d ] = \Omega \left( {\sigma_X}/{\sigma_Y} \right) = \Omega(1).
			$$ 
					
			Putting all these together, we deduce that $A(X_1) = A(Y_1)$ with probability $\exp(-O(\on^{9}))$.
			If this is the case, the edge re-sampling step is coupled bijectively (as described above) so that $\mathcal{S}_{\omega}(X_2) = \mathcal{S}_{\omega}(Y_2)$.
			
			It remains for us to guarantee the additional desired structural properties of $X_2$ and $Y_2$, 
			which follow straightforwardly from the random graph estimates stated in Section~\ref{RG}.
			First note that by Hoeffding's inequality, with probability $\Omega(1)$,
			$$ \left| A(X_1) - \frac{n}{q} - \frac{(q-1)\|\mathcal{S}_{\omega}(X_1)\|}{q}  \right| = O(n^{2/3}).$$
			
			Hence, in the percolation sub-step the active subgraph is replaced by $F \sim G\left(A(X_1), \frac{1 + \lambda A(X_1)^{-1/3}}{A(X_1)}\right)$, where $|\lambda| = O(\on)$ with probability $\Omega(1)$ since $\|\mathcal{S}_{\omega}(X_1)\| = O(n^{2/3}\on)$.
			Conditioning on this event,
			since the components of $F$ contribute to both $X_2$ and $Y_2$,
			Corollary~\ref{lemma:critical:q<2:maintain-trees} implies that
			w.h.p.\ $\hat{N}_k(2, \on) $ 
			$ = \Omega( \on^{3 \cdot 2^{k-1}} )$
			for all $k \ge 1$ such that $n^{2/3}\on^{-2^k}\rightarrow \infty$.
			Moreover, from Lemma~\ref{lemma:rg:isolated} we obtain that $I(X_2)=\Omega(n)$ w.h.p.
			From Lemma~\ref{lemma:rg:critical:exp-crude-bound} and Markov's inequality, 
			we obtain that $\mathcal{R}_2(X_2) = O(n^{4/3})$ with probability at least $99/100$ and from
			Lemma~\ref{lemma:prelim:small-cmpt-var} that $\widetilde{\mathcal{R}}_\omega(X_2) = O(n^{4/3}{\on}^{-1/2})$ also with probability at least $99/100$.
			All these bounds apply also to the analogous quantities for $Y_2$ with the same respective probabilities. 
		
			Finally, we derive the bound for $L_1(X_2)$ and $L_1(Y_2)$.
			First, notice $L_1(F)$ is stochastically dominated by $L_1(F')$, where
			$F'\sim G\Big(A(X_1), \frac{1 + |\lambda| A(X_1)^{-1/3}}{A(X_1)}\Big)$.
			Under the assumption that $|\lambda| = O(\on)$, if $|\lambda| \rightarrow \infty$, then
			Corollary \ref{lemma:rg:critical:giant-concentration} implies that $L_1(F') = O(|\lambda| A(X_1)^{2/3}) = O(n^{2/3} \on)$ w.h.p.;
			otherwise, $|\lambda| = O(1)$ and by Lemma~\ref{lemma:prelim:critical-var} and Markov's inequality, 
			$L_1(F') = O(n^{2/3})$ with probability at least $99/100$. 
			Thus, $L_1(F) = O(n^{2/3}\on)$ with probability at least $99/100$.
			We also know that the largest inactivated component in $X_1$ has size less than $n^{2/3}\on^{-1}$,
			so $L_1(X_2) = O(n^{2/3} \on)$ with probability at least $99/100$.
			The same holds for $Y_2$.
			Therefore, by a union bound, all these properties hold simultaneously for both $X_2$ and $Y_2$ with probability $\Omega(1)$, as claimed.
		\end{proof}

		\subsection{Re-contracting largest component: proof of Lemma~\ref{lemma:critical:q<2:second-step-2.5}}
		\label{subsec:general-llt}
	
	In Section~\ref{sec:coup:llt}, we designed a coupling argument to ensure that
	the largest components of both configurations have the same size.
	For this, we needed to relax our constraint on the size of the largest component of the configurations.
	In this section we prove Lemma~\ref{lemma:critical:q<2:second-step-2.5}, which ensures that after $O(\log \on)$ steps the largest components of each configuration have size $O(n^{2/3})$ again.
	
	The following lemma is the core of the proof Lemma~\ref{lemma:critical:q<2:second-step-2.5}
	and it may be viewed as a generalization of the coupling from the proof of Lemma \ref{lemma:critical:q<2:second-step-second} 
	using the local limit theorem from Section~\ref{sec:coup:llt}.
	
	We recall some notation from the proof sketch.
	Given two random-cluster configurations $X_t$ and $Y_t$, $W_t$ is maximal matching  between the components of $X_t$ and $Y_t$ that only matches components of equal size to each other.
	We use $M (X_t)$, $M(Y_t)$ for the components in $W_t$ from $X_t$, $Y_t$, respectively, $D(X_t)$, $D(Y_t)$ for the complements of $M (X_t )$, $M (Y_t)$, and
	$
	Z_t = \sum_{\mathcal{C} \in D(X_t) \cup D(Y_t)} |\mathcal{C}|^2.
	$
	For an increasing positive function $g$ and each integer $k \ge 1$, define $\hat{N}_k(t, g) := \hat{N}_k(X_t,Y_t, g)$ as the number of matched pairs in $W_t$ whose component sizes are in the interval 
\[\mathcal{I}_{k}(g) = \left[\frac{\vartheta n^{2/3}}{2g(n)^{2^k}},\frac{\vartheta  n^{2/3}}{g(n)^{2^k}}\right],\]
where $\vartheta>0$ is a fixed large constant (independent of $n$). 
	
	\begin{lemma}
		\label{lemma:activation-second-step-2.5}
		There exists a coupling of the activation sub-step of the CM dynamics such that
		$A(X_t) = A(Y_t)$ with at least $\Omega \left(\frac{1}{\on} \right)$ probability,
		provided $X_t$ and $Y_t$ are random-cluster configurations satisfying
		\begin{enumerate}
			\item $\mathcal{S}_{\omega}(X_t) = \mathcal{S}_{\omega}(Y_t)$;
			\item $Z_t = O\left(\frac{n^{4/3}}{\on^{1/2}}\right)$;
			\item $\hat{N}_k(X_t, Y_t, \on) = \Omega\left(\on ^ {3 \cdot 2 ^{k-1}}\right)$ 	for all $k \ge 1$ such that $n^{2/3}\on^{-2^k}\rightarrow \infty$;
			\item $I(X_t), I(Y_t) = \Omega(n)$.
		\end{enumerate}
	\end{lemma}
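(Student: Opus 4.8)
The plan is to couple the activation sub-step component by component, handling separately the large components, the matched small components, and the unmatched small components. Since $\mathcal{S}_{\omega}(X_t)=\mathcal{S}_{\omega}(Y_t)$ by hypothesis~(1), every large component is matched by $W_t$; I would couple each such matched pair so that the two copies activate it together, so that large components contribute the same number of vertices to $A(X_t)$ and to $A(Y_t)$. The unmatched small components $D(X_t),D(Y_t)$ (those counted in $Z_t$) are all activated independently, producing a mean-zero discrepancy $\mathcal{D}_t$ between the numbers of active vertices in the two copies. The remaining (matched, small) components are then coupled so that their contributions to $X_t$ and $Y_t$ differ by exactly $-\mathcal{D}_t$, which forces $A(X_t)=A(Y_t)$.

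The heart of the argument is a ``corrector set'' $A$ of matched small components whose joint activation can absorb a small discrepancy. I would take $A$ to consist of all matched pairs of isolated vertices — of which there are $\Omega(n)$ by~(4) — together with, for each $k\ge 1$ with $n^{2/3}\on^{-2^k}\to\infty$, exactly $\Theta(\on^{3\cdot 2^{k-1}})$ matched pairs of common size in $\mathcal{I}_k(\on)$, which exist by~(3). Let $\mathcal{A}(X_t)$ and $\mathcal{A}(Y_t)$ be the numbers of active vertices contributed to each copy by the components of $A$; because matched pairs have equal size, these two random variables have the \emph{same} law, namely a sum of independent Bernoulli$(1/q)$-scaled terms, with variance $\sigma_A^2=\Theta\!\big(\sum_{\mathcal{C}\in A}|\mathcal{C}|^2\big)$. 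The sizes in $A$ then satisfy the hypotheses of Theorem~\ref{thm:prelim:llt-cor} with $g=\on$ and $m=|A|=\Theta(n)$: a positive fraction of the terms equal $1$, the largest is at most $\vartheta n^{2/3}\on^{-2}=O(m^{2/3}\on^{-1})$, the interval-count condition is exactly~(3) (and the range $1\le k\le\ell$ it requires lies inside the range $n^{2/3}\on^{-2^k}\to\infty$ where~(3) applies, since $2^{\ell}\asymp\tfrac{5}{12}\log n/\log\on<\tfrac23\log n/\log\on$), and $\sum_{\mathcal{C}\in A}|\mathcal{C}|^2=\Theta(n^{4/3}\on^{-1})=o(n^{4/3}\on^{-1/2})$ because the contributions of the $\mathcal{I}_k(\on)$'s form a geometric series dominated by $k=1$ while the isolated vertices add only $\Omega(n)$. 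Hence the local limit theorem gives $\Pr[\mathcal{A}(X_t)=a]=\Omega(\sigma_A^{-1})$ for all $a$ in an interval of length $\Theta(\sigma_A)$ around $\E[\mathcal{A}(X_t)]$, with $\sigma_A=\Theta(n^{2/3}\on^{-1/2})$, and likewise for $\mathcal{A}(Y_t)$.

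To finish: couple the matched small components not in $A$ identically (equal contribution), activate the unmatched ones independently to reveal $\mathcal{D}_t$, and then — conditioned on $\mathcal{D}_t$ — couple the equidistributed variables $\mathcal{A}(X_t),\mathcal{A}(Y_t)$ so that their difference equals $-\mathcal{D}_t$ with probability $\sum_a\min\!\big(\Pr[\mathcal{A}(X_t)=a],\,\Pr[\mathcal{A}(X_t)=a+\mathcal{D}_t]\big)$, extending this to an honest coupling of the activation sub-step by drawing, given each sum, which components are active from the respective conditional laws. By the local limit bound this probability is $\Omega(1)$ whenever $|\mathcal{D}_t|\le\sigma_A$. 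It remains to control $\mathcal{D}_t$, which is a symmetric random walk with steps bounded by $B_\omega=n^{2/3}\on^{-1}$ and $\var(\mathcal{D}_t)=\Theta(Z_t)=O(n^{4/3}\on^{-1/2})$: if $Z_t=O(n^{4/3}\on^{-1})$ then Chebyshev already gives $\Pr[|\mathcal{D}_t|\le\sigma_A]=\Omega(1)$, and otherwise the number of unmatched components tends to infinity and a local central limit / anti-concentration estimate for sums of bounded i.i.d.\ variables yields $\Pr[|\mathcal{D}_t|\le\sigma_A]=\Omega\!\big(\sigma_A/\sqrt{Z_t}\big)=\Omega(\on^{-1/4})=\Omega(\on^{-1})$. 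Multiplying the two (constant or $\Omega(\on^{-1})$) factors gives $A(X_t)=A(Y_t)$ with probability $\Omega(\on^{-1})$, as claimed.

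I expect the main obstacle to be the tension in the choice of $A$: it must be rich enough at every dyadic scale $\mathcal{I}_k(\on)$ to satisfy Theorem~\ref{thm:prelim:llt-cor} (so that $\mathcal{A}(X_t)$ is genuinely spread out), yet have small enough total squared size to stay within that theorem's regime, while still being large enough that $\sigma_A$ is comparable to the fluctuations of $\mathcal{D}_t$. It is precisely the unavoidable $\on^{1/2}$ gap between the variance $n^{4/3}\on^{-1}$ that conditions~(3)--(4) guarantee for $A$ and the variance $n^{4/3}\on^{-1/2}$ that $\mathcal{D}_t$ may have that costs a factor $\on^{1/4}$ (absorbed into the stated $\Omega(\on^{-1})$) rather than yielding success probability $\Omega(1)$.
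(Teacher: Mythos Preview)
Your proposal is correct and lands on the same bound $\Omega(\on^{-1/4})\subseteq\Omega(\on^{-1})$, but the organization differs from the paper's in one notable way. You reveal the discrepancy $\mathcal{D}_t$ from the unmatched components $D(X_t),D(Y_t)$ first and then use the reserved ``corrector'' set $A$ (via the LLT) to absorb it; this forces you to argue separately that $\Pr[|\mathcal{D}_t|\le\sigma_A]=\Omega(\on^{-1/4})$, which you do via a Berry--Esseen type bound (your appeal to ``anti-concentration'' is justified since every step of $\mathcal{D}_t$ has size at most $B_\omega=n^{2/3}\on^{-1}=o(\sigma_A)$, so the Berry--Esseen error $O(B_\omega/\sqrt{Z_t})$ is a factor $\on^{-1/2}$ smaller than the target $\sigma_A/\sqrt{Z_t}$).

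The paper instead \emph{folds} the unmatched components into the LLT sums: it sets $\mathcal{C}(X_t)=D(X_t)\cup A_X$ and $\mathcal{C}(Y_t)=D(Y_t)\cup A_Y$ (where $A_X,A_Y$ are the reserved matched pairs on each side), applies Theorem~\ref{thm:prelim:llt-cor} to the activation counts $A'(X_t),A'(Y_t)$ over these enlarged sets, and couples them to be equal. This works because $\sum_{\mathcal{C}\in D(X_t)}|\mathcal{C}|=\sum_{\mathcal{C}\in D(Y_t)}|\mathcal{C}|$ (both equal $n$ minus the matched total), so $\E[A'(X_t)]=\E[A'(Y_t)]$; the LLT then gives $\Pr[A'(X_t)=A'(Y_t)]=\Omega(\sigma_y/\sigma_x)=\Omega(\on^{-1/4})$ directly from the variance ratio, with no separate anti-concentration step for $\mathcal{D}_t$. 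The paper's route is thus a bit cleaner---it trades your two-stage argument (reveal, then correct) for a single LLT overlap estimate---while your decomposition is more explicit about where the $\on^{1/4}$ loss enters.
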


	\begin{proof} 
		The activation coupling has two parts.
		First we use the maximal matching $W_t$ to couple the activation
		of a subset of the components in $M(X_t)$ and $M(Y_t)$.
		Specifically, let $\ell$ be defined as in Theorem~\ref{thm:prelim:llt-cor};
		for all $k \in [1, \ell]$, 
		we exclude $\Theta(\on ^ {3 \cdot 2 ^{k-1}})$ pairs of components of size in the interval $\mathcal{I}_k(\omega)$
		and we exclude $\Theta(n)$ pairs of matched isolated vertices.
		(These components exist by assumptions 3 and 4.) 
		All other pairs of components matched by $W_t$ are jointly activated (or not).
		Hence, the number of vertices activated from $X_t$ in this first part of the coupling is the same as that from $Y_t$.
		
		Let $\mathcal{C}(X_t)$ and $\mathcal{C}(Y_t)$ denote the sets containing 
		the components in $X_t$ and components in $Y_t$ not considered to be activated in the first step of the coupling.
		This includes all the components from $D(X_t)$ and $D(Y_t)$, and all the components from 
		$M(X_t)$ and $M(Y_t)$ excluded in the first part of the coupling.	
		Let $A'(X_t)$ and $A'(Y_t)$ denote the number of activated vertices from $\mathcal{C}(X_t)$ and $\mathcal{C}(Y_t)$ respectively.
		The second part is a coupling of the activation sub-step in a way such that 
		$$\Pr \left[ A'(X_t) = A'(Y_t) \right] = \Omega(\on^{-1}).$$
		
		Let $m_x := |\mathcal{C}(X_t)| = \Theta(n)$, and similarly for $m_y:=|\mathcal{C}(Y_t)|$.
		Let $\mathcal{C}_1 \le \dots \le \mathcal{C}_{m_x}$ (resp., $\mathcal{C'}_1 \le \dots \le \mathcal{C'}_{m_y}$) be sizes of components in $\mathcal{C}(X_t)$ (resp., $\mathcal{C}(Y_t)$) in ascending order.
		For all $i\le m_x$, let $\mathcal{X}_i$ be a random variable that
		equals to $\mathcal{C}_i$ with probability $1/q$ and $0$ otherwise,
		which corresponds to the number of activated vertices from $i$th component in $\mathcal{C}(X_t)$. 
		Note that $\mathcal{X}_1, \dots, \mathcal{X}_{m_x}$ are independent.  
		We check that $\mathcal{X}_1, \dots, \mathcal{X}_{m_x}$ satisfy all other conditions of Theorem~\ref{thm:prelim:llt-cor}. 	 
		
		Assumption $\mathcal{S}_{\omega}(X_t) = \mathcal{S}_{\omega}(Y_t)$ and the first part of the activation ensure that 
		$$\mathcal{C}_{m_x} \le B_\omega = O\left( n^{2/3}\on^{-1} \right) = O\left(m_x^{2/3}\omega(m_x)^{-1}\right).$$
		Observe also that there exists a constant $\rho$ such that $\mathcal{C}_i = 1$ for $i \le \rho m_x$
		and $\lvert\{i : \mathcal{C}_i  \in \mathcal{I}_k(\omega)\}\rvert = \Theta\left(\on ^ {3 \cdot 2 ^{k-1}}\right)$ for $ 1 \le k \le \ell$; 
		lastly, from assumption $Z_t = O\left(\frac{n^{4/3}}{\on^{1/2}}\right)$, we obtain
		\begin{align} 		
		\sum_{i = 1}^{m_x} \mathcal{C}_i^2 & \le Z_t + O(\rho m_x) + \sum_{k=1}^{\ell} \frac{\vartheta n^{4/3}}{\omega(n)^{2^{k+1}}} \cdot O\left(\on^{3\cdot 2^{k-1}} \right)  \nonumber \\
		& = O\left(\frac{m_x^{4/3}}{\sqrt{\omega(m_x)}}\right) + O\left( \sum_{k=1}^{\ell} \frac{m_x^{4/3}}{\omega(m_x)^{2^{k-1}}}  \right) \nonumber \\
		& = O\left(\frac{m_x^{4/3}}{\sqrt{\omega(m_x)}}\right) + O\left( \sum_{k=1}^{\ell} \frac{m_x^{4/3}}{\omega(m_x)^{k}}  \right) \nonumber \\
		& = O\left(\frac{m_x^{4/3}}{\sqrt{\omega(m_x)}}\right) + O\left(\frac{m_x^{4/3}}{\omega(m_x)}\right) = O\left(\frac{m_x^{4/3}}{\sqrt{\omega(m_x)}}\right) \label{eq:ub-sos} . 
		\end{align}
		Therefore, if $\mu_x = \E \left[ \sum_{i=1}^{m_x} \mathcal{X}_i \right] $ and $\sigma_x^2 = Var\left( \sum_{i=1}^{m_x} \mathcal{X}_i \right)$, 
		Theorem~\ref{thm:prelim:llt-cor} implies that for any $x \in [\mu_x - \sigma_x, \mu_x + \sigma_x]$, 
		$$\Pr \left[A'(X_t) = x\right] = \Pr \left[\sum_{i=1}^{m_x} \mathcal{X}_i = x\right] = \frac{1}{\sqrt{2\pi} \sigma_x} \exp\left(-\frac{(x-\mu_x)^2}{2\sigma_x^2}\right) + o\left(\frac{1}{\sigma_x}\right) = \Omega \left( \frac{1}{\sigma_x}\right)  .$$
		Similarly, we get that $\Pr \left[A'(Y_t) = y\right] = \Omega (\sigma_y^{-1})$ for any $y \in [\mu_y - \sigma_y, \mu_y + \sigma_y]$, with $\mu_y$ and $\sigma_y$ defined analogously. 
		Without loss of generality, suppose $\sigma_y \le \sigma_x$.
		Since $\mu_x = \mu_y$, for $x \in \left[\mu_x - \sigma_y, \mu_x + \sigma_y\right]$, we obtain
		$$
		\min \left\{\Pr \left[A'(X_t) = x \right], \Pr \left[A'(Y_t) = x \right]\right\} = \Omega \left( \frac{1}{\sigma_x}\right).
		$$
		Hence, we can couple $(A'(X_t), A'(Y_t))$ so that $\Pr [A'(X_t) = A'(Y_t) = x] = \Omega(\sigma_x^{-1})$ for all $x \in [\mu_x - \sigma_y, \mu_x + \sigma_y]$.
		Consequently, under this coupling,
		$$
		\Pr \left[ A'(X_t) = A'(Y_t)   \right] =  \Omega\left(\frac{\sigma_y}{\sigma_x}\right).
		$$
		
		Since $\mathcal{X}_1, \dots, \mathcal{X}_{m_x}$ are independent, 
		$\sigma_x^2 = \Theta \left(\sum_{i = 1}^{m_x} \mathcal{C}_i^2\right)$, 
		and similarly  $\sigma_y^2 = \Theta \left(\sum_{i = 1}^{m_y} \mathcal{C'}_i^2\right)$.
		Hence, inequality~\eqref{eq:ub-sos} gives an upper bound for $\sigma_x^2$; 
		meanwhile, a lower bound for $\sigma_y^2$ can be obtained by counting components in the largest interval:
		$$ 
		\sum_{i = 1}^{m_y} \mathcal{C'}_i^2 
		\ge \sum_{i: \mathcal{C'}_i \in \mathcal{I}_1(\omega)} \mathcal{C'}_i^2 
		\ge \frac{Bn^{4/3}}{\omega(n)^{4}} \cdot \Theta\left(\on^{3} \right) 
		= \Omega\left(\frac{n^{4/3}}{\omega(n)}  \right).
		$$
		Therefore, 
		$$
		\Pr \left[ A'(X_t) = A'(Y_t) \right] 
		= \Omega\left(\frac{n^{2/3}}{\on^{1/2}} \cdot \frac{\omega(m_x)^{1/4} }{m_x^{2/3}}\right) 
		= \Omega \left( \frac{1}{\on}\right),
		$$
		as desired.
	\end{proof}
	
	We are now ready to prove Lemma~\ref{lemma:critical:q<2:second-step-2.5}. 
	
	\begin{proof} [Proof of Lemma~\ref{lemma:critical:q<2:second-step-2.5}]
		Let $C_1$ be a suitable constant that we choose later.
		We wish to maintain the following properties for all $t \le T := C_1 \log \on$:
		\begin{enumerate}
			\item $\mathcal{S}_{\omega}(X_t) = \mathcal{S}_{\omega}(Y_t)$;
			\item $Z_t = O\left(\frac{n^{4/3}}{\on^{1/2}}\right)$;
			\item $\hat{N}_k(X_t, Y_t, \on) = \Omega\left(\on ^ {3 \cdot 2 ^{k-1}}\right)$ 	for all $k \ge 1$ such that $n^{2/3}\on^{-2^k}\rightarrow \infty$;
			\item $I(X_t), I(Y_t) = \Omega(n)$;
			\item $\mathcal{R}_2(X_t), \mathcal{R}_2(Y_t) = O(n^{4/3})$;
			\item $L_1 (X_t) \le \alpha^t L_1(X_0), L_1 (Y_t) \le \alpha^t L_1(Y_0)$ for some constant $\alpha$ independent of $t$.
		\end{enumerate}
		
		By assumption, $X_0$ and $Y_0$ satisfy these properties.
		Suppose that $X_t$ and $Y_t$ satisfy these properties at step $t \le T$. 
		We show that there exists a one-step coupling of the CM dynamics such that
		$X_{t+1}$ and $Y_{t+1}$ preserve all six properties with probability $\Omega \left(\on^{-1} \right)$.
		
		We provide the high-level ideas of the proof first. 
		We will crucially exploit the coupling from Lemma ~\ref{lemma:activation-second-step-2.5}.  
		Assuming $A(X_t) = A(Y_t)$, properties 1 and 2 hold immediately at $t + 1$, 
		and properties 3 and 4 can be shown by a ``standard'' approach used throughout the paper.
		In addition, we reuse simple arguments from previous stages to 
		guarantee properties 5 and 6.
		
		Consider first the activation sub-step. 
		By Lemma~\ref{lemma:activation-second-step-2.5}, $A(X_t) = A(Y_t)$ with probability at least $\Omega(\on^{-1})$.
		If the number of vertices in the percolation is the same in both copies, we can couple the edge re-sampling so that the updated part of the configuration is identical in both copies.
		In other words, all new components created in this step are automatically contained in the component matching $W_{t+1}$; this includes
		all new components whose sizes are greater than $B_\omega$. 
		Since none of the new components contributes to $Z_{t+1}$, we obtain $Z_{t+1} \le Z_t = O\left(\frac{n^{4/3}}{\on^{1/2}}\right)$.
		Therefore,  $A(X_t) = A(Y_t)$ immediately implies properties 1 and 2 at time $t+1$. 
	
		With probability $1/q$, the largest components of $X_t$ and $Y_t$ are activated simultaneously.
		Suppose that this is the case. 
		By Hoeffding's inequality, for constant $K>0$, we have
		\[\Pr \left[ \left\lvert A(X_t) -\E\left[A(X_t)\right] \right\rvert \ge Kn^{2/3} \right] \le \exp\left(-\frac{K^2 n^{4/3}}{\mathcal{R}_2(X_t)}\right).\]  
		Property 5 and the observation that $\E\left[A(X_t)\right] = L_1(X_t) + \frac{n - L_1(X_t)}{q}$ imply that 
		\[\Pr \left[ \left\lvert A(X_t) -L_1(X_t) - \frac{n - L_1(X_t)}{q} \right\rvert \ge Kn^{2/3} \right] = O(1).\]
		By noting that $L_1(Y_0), L_1(X_0) \le n^{2/3} \on$, property 6 implies that 
		\begin{equation}
		\label{eq:second-step-activation}
		\Pr\left[A(X_t) \cdot \frac{q}{n} \le 1 + \frac{(q-1)\on + Kq}{n^{1/3}} \right] = \Omega(1).
		\end{equation}

		We denote $A(X_t) = A(Y_t)$ by $m$. 
		By inequality (\ref{eq:second-step-activation}), with at least constant probability,
		the random graph for both chains is $H \sim G\left(m, \frac{1 + \lambda m^{-1/3}}{m}\right)$, 
		where $\lambda \le \omega(m)$.
		Let us assume that is the case.
		Corollary~\ref{lemma:critical:q<2:maintain-trees} ensures that 
		there exists a constant $b > 0$ such that, with probability at least $1- O(\on^{-3})$, 
		$N_{k}(H,\on) \ge b \on^{3 \cdot 2^{k-1}}$
		for all $k \ge 1$ such that $n^{2/3}\on^{-2^k}\rightarrow \infty$.
		Since components in $H$ are simultaneously added to both $X_{t+1}$ and $Y_{t+1}$, 
		property 3 is satisfied. 
		Moreover, Lemma~\ref{lemma:rg:isolated} implies that with high probability $\Omega(n)$
		isolated vertices are added to $X_{t+1}$ and $Y_{t+1}$, and thus
		property 4 is satisfied at time $t+1$.

		In addition, Lemma~\ref{lemma:rg:critical:exp-crude-bound} and Markov's inequality imply that 
		there exists a constant $C_2$ such that
		$$\Pr \left[\mathcal{R}_2(X_{t+1}) = C_2n^{4/3} \right] \ge \frac{99}{100};$$ 
		By Lemma~\ref{C344}, 
		there exists $\alpha < 1$ such that with 
		at least probability $99/100$ 
		$$L_1(X_{t+1}) \le \max \{\alpha L_1(X_t) ,L_2(X_t) \},$$ 
		where $\alpha$ is independent of $t$ and $n$. 
		Potentially, property 6 may not hold when $\alpha L_1(X_t) < L_1(X_{t+1}) \le  L_2(X_t) = O(n^{2/3})$, 
		but then we stop at this point. (We will argue that in this case all the desired properties are also established shortly.)
		Hence, we suppose otherwise and establish properties 5 and 6 for $X_{t+1}$. 
		Similar bounds hold for $Y_{t+1}$.
		
		By a union bound, $X_{t+1}$ and $Y_{t+1}$ have all six properties with probability at least $92/100$, 
		assuming the activation sub-step satisfies all the desired properties, 
		and thus overall with probability $\Omega \left(\on^{-1} \right)$.
		Inductively, the probability that $X_T$ and $Y_T$ satisfy the six properties is
		\[O\left(\on\right)^{-C_1 \log \on} = \exp\left( \log O(\on)^{-C_1 \log \on}\right) = \exp \left(- O\left( (\log \on)^2 \right) \right).\]
		
		Suppose $X_T$ and $Y_T$ have the six properties. By choosing $C_1 > 1 / \log \frac{1}{\alpha}$,
		properties 5 and 6 imply
		\[\mathcal{R}_1(X_T) = L_1(X_T)^2 + \mathcal{R}_2(X_T) \le \left( \alpha^{C_1 \log \on} n^{2/3} \on \right)^2 + O(n^{4/3}) = O(n^{4/3}),\]
		and $\mathcal{R}_1(Y_T) = O(n^{4/3})$. While the lemma almost follows from these properties, 
		notice that property 3 does not match the desired bounds on the components in the lemma statement. To fix this issue, we perform one additional step of the coupling.
		
		Consider the activation sub-step at $T$. Assume again $A(X_T) = A(Y_T) =: m'$.
		By Hoeffding's inequality, for some constant $K'$, we obtain
		\begin{equation}
		\label{eq:second-step-activation2}
		\Pr\left[ \left| m' \cdot \frac{q}{n} - 1 \right| > \frac{K'}{n^{1/3}} \right]
		=\Pr\left[ \left| m' - \frac{n}{q} \right| > K'n^{2/3} \right] \le \exp\left( \frac{-{K'}^2 n^{4/3}}{\mathcal{R}_1(X_T)}\right) = O(1). 
		\end{equation}

		Let $\lambda' := (m'qn^{-1} - 1)\cdot m'^{1/3}$. Inequality (\ref{eq:second-step-activation2}) implies with at least constant probability
		the random graph in the percolation step is $H' \sim G\left(m', \frac{1 + \lambda' m'^{-1/3}}{m'}\right)$, 
		where $\lambda' \le K'$ and $m' \in (n/2q,n)$.
		If so, Corollary~\ref{lemma:critical:q<2:maintain-trees} ensures with high probability
		$\hat{N}_k(X_{T+1}, Y_{T+1}, \on^{1/2})= \Omega\left({\on}^{3 \cdot 2^{k-2}}\right)$
		for all $k \ge 1$ such that $n^{2/3}\on^{-2^{k-1}}\rightarrow \infty$.
		
		By the preceding argument, with $\Omega(\on^{-1})$ probability, the six properties are still valid at step $T+1$ , so the proof is complete.
		We note that if we had to stop earlier because
		property 6 did not hold, we perform one extra step (as above) to ensure that 
		$\hat{N}_k(X_{T+1}, Y_{T+1}, \on^{1/2})= \Omega({\on}^{3 \cdot 2^{k-2}})$
		for all $k \ge 1$ such that $n^{2/3}\on^{-2^{k-1}}\rightarrow \infty$.
	\end{proof}

	\subsection{A four-phase analysis using random walks couplings: proof of Lemma~\ref{lemma:critical:q<2:second-step-third}}
	\label{BigProof}
	
	We introduce first some notation that will be useful in the proof of Lemma \ref{lemma:critical:q<2:second-step-third}. Let $S(X_0) = \emptyset$, and given $S(X_t)$, $S(X_{t+1})$ is obtained as follows:
	\begin{enumerate}[(i)]
		\item $S(X_{t+1})= S(X_{t})$;
		\item every component in $S(X_{t})$ activated by the CM dynamics at time $t$ is removed from $S(X_{t+1})$; and
		\item the largest new component (breaking ties arbitrarily) is added to $S(X_{t+1})$.
	\end{enumerate}
	
	Let $\mathcal{C}(X_t)$ denote the set of connected components of $X_t$ and note that $S(X_t)$ is a subset of $\mathcal{C}(X_t)$; we use $|S(X_t)|$ to denote the total number of vertices of the components in $S(X_t)$.
	Finally,  let 
	$$
	Q(X_t) = \sum_{\mathcal{C} \in \mathcal{C}(X_t)\setminus S(X_{t})} |\mathcal{C}|^2.
	$$
	
	In the proof of Lemma~\ref{lemma:critical:q<2:second-step-third}, we use the following lemmas. 
	
	\begin{lemma}
		\label{lemma:critical:q<2:maintain-var} 
		Let $r$ be an increasing positive function such that $r(n) = o(n^{1/15})$ and let $c > 0$ be a sufficiently large constant.
		Suppose $|S(X_t)| \le c t n^{2/3} r(n)$, $Q(X_t) \le t n^{4/3} r(n) + O(n^{4/3})$ and $t \le r(n)/\log r(n)$.
		Then, with probability at least $1 - O\left(r(n)^{-1}\right)$, 
		$|S(X_{t+1})| \le c(t+1) n^{2/3} r(n)$ and $Q(X_{t+1}) \le (t+1) n^{4/3} r(n) + O(n^{4/3})$.
	\end{lemma}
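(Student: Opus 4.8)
The plan is to establish the lemma as the one-step inductive move in the process analyzed in Lemma~\ref{lemma:critical:q<2:second-step-third}: condition on $X_t$ (assumed to satisfy the hypotheses) and examine a single CM step. Write $m = A(X_t)$ for the number of activated vertices and let $G \sim G(m,q/n)$ be the subgraph that replaces the active part in the percolation sub-step. By the definitions of $S(\cdot)$ and $Q(\cdot)$, the components of $X_{t+1}$ are the inactivated components of $X_t$ together with the new components of $G$; moreover $S(X_{t+1})$ consists of the inactivated components of $S(X_t)$ together with the largest component of $G$. Hence
\begin{equation*}
|S(X_{t+1})| \le |S(X_t)| + L_1(G), \qquad Q(X_{t+1}) \le Q(X_t) + \mathcal{R}_2(G),
\end{equation*}
where for the second bound the contribution of the non-tracked inactivated components is a sub-sum of $Q(X_t)$, and the new components other than the largest contribute exactly $\mathcal{R}_2(G)$. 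So it suffices to show $L_1(G) = O(n^{2/3}r(n))$ and $\mathcal{R}_2(G) \le n^{4/3}r(n)$, each with probability $1 - O(r(n)^{-1})$; the first then gives $|S(X_{t+1})| \le c(t+1)n^{2/3}r(n)$ for $c$ large enough, and the second gives $Q(X_{t+1}) \le (t+1)n^{4/3}r(n) + O(n^{4/3})$ \emph{with the hidden constant of the $O(n^{4/3})$ term unchanged}, which is what makes the induction close.

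The bound on $\mathcal{R}_2(G)$ is immediate: by Lemma~\ref{lemma:rg:critical:exp-crude-bound}, $\E[\mathcal{R}_2(G)\mid m] = O(m^{4/3}) = O(n^{4/3})$ for every value of $m$, so Markov's inequality at threshold $n^{4/3}r(n)$ gives $\mathcal{R}_2(G) \le n^{4/3}r(n)$ with probability $1 - O(r(n)^{-1})$. For $L_1(G)$ I first need $m$ to be close to $n/q$. Split the activation as $A(X_t) = A_{\bar S} + A_S$, the contributions of the components \emph{not} in $S(X_t)$ and those \emph{in} $S(X_t)$, respectively; these are independent. Now $A_{\bar S}$ is a sum of independent bounded terms with $\var(A_{\bar S}) = \Theta(Q(X_t)) = O\!\left(t\,n^{4/3}r(n)\right)$, so, using $t \le r(n)/\log r(n)$ so that this variance is $O(n^{4/3}r(n)^2/\log r(n))$, Hoeffding's inequality yields $|A_{\bar S} - \E A_{\bar S}| = O(n^{2/3}r(n))$ with probability $1 - O(r(n)^{-1})$ (the Hoeffding exponent at deviation $\Theta(n^{2/3}r(n))$ is $\Omega(\log r(n))$). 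The term $A_S$ is a sum of at most $t+1$ independent bounded terms, since one component is added to $S(\cdot)$ per step; using the bounds on the individual component sizes of $S(X_t)$ carried by the induction, Hoeffding again gives $|A_S - \E A_S| = O(n^{2/3}r(n))$ with probability $1 - O(r(n)^{-1})$. Combining, $|m - n/q| = O(n^{2/3}r(n))$ with probability $1 - O(r(n)^{-1})$, so $G$ is distributed as $G\!\left(m, (1+\lambda m^{-1/3})/m\right)$ with $|\lambda| = O(r(n))$.

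With this in hand I would bound $L_1(G)$ using the near-critical/slightly-supercritical random-graph estimates of Section~\ref{RG}, with a case split on the size of $\lambda$: when $\lambda$ is bounded (or grows slowly), Lemma~\ref{lemma:prelim:critical-var} together with Markov's inequality gives $\mathcal{R}_1(G) = O(n^{4/3}r(n))$, hence $L_1(G) = O(n^{2/3}r(n)^{1/2}) = O(n^{2/3}r(n))$, with probability $1 - O(r(n)^{-1})$; when $\lambda$ exceeds a suitable slowly growing threshold, $G$ is slightly supercritical and Corollary~\ref{lemma:rg:critical:giant-concentration} gives $L_1(G) = O(\lambda n^{2/3}) = O(n^{2/3}r(n))$ with probability $1 - O(e^{-c\lambda^3}) = 1 - O(r(n)^{-1})$. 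A union bound over the $O(1)$ failure events (concentration of $A_{\bar S}$, of $A_S$, of $\mathcal{R}_2(G)$, and of $L_1(G)$) gives the claimed $1 - O(r(n)^{-1})$.

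The main obstacle is the concentration of $m = A(X_t)$. The hypotheses bound only $Q(X_t)$, i.e. the sum of squares of the components \emph{outside} the tracked set $S(X_t)$, and \emph{not} the full second moment $\mathcal{R}_1(X_t)$, so Hoeffding cannot be applied to $A(X_t)$ directly; siphoning the few potentially-large components into a set of at most $t+1$ components that is controlled separately is exactly the purpose of introducing $S(\cdot)$. The delicate point is that a crude bound gives only a $\Theta(|S(X_t)|)$-sized deviation for $A_S$, which is too large, so one must carry along the induction of Lemma~\ref{lemma:critical:q<2:second-step-third} strong enough bounds on the sizes of the individual components of $S(X_t)$, in conjunction with the restriction $t \le r(n)/\log r(n)$, to push the deviation down to $O(n^{2/3}r(n))$. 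A secondary technical nuisance is ensuring the $O(r(n)^{-1})$ failure probability holds uniformly across the regimes of the resampled graph $G$ (bounded versus slowly growing effective density parameter $\lambda$).
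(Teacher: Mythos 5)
Your reduction to ``$L_1(G)=O(n^{2/3}r(n))$ with probability $1-O(r(n)^{-1})$'' is where the argument breaks, and the step you flag as the ``main obstacle'' --- concentration of $m=A(X_t)$ around $n/q$ at scale $n^{2/3}r(n)$ --- is genuinely unobtainable from the hypotheses, nor can it be rescued by strengthening the induction. The set $S(X_t)$ contains at most $t+1$ components, but nothing prevents a single one of them from having size $\Theta(|S(X_t)|)=\Theta(tn^{2/3}r(n))$: indeed, in the regime where a mass $s\gg n^{2/3}r(n)$ of $S(X_t)$ is activated, the percolation step is supercritical with $\varepsilon M\approx\frac{(q-1)s}{q}$, and the new largest component (which is precisely what gets added to $S(X_{t+1})$) has size $\approx\frac{2(q-1)s}{q}=\Theta(s)$. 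So the individual components of $S$ are intrinsically of the same order as $|S|$ itself, $A_S$ fluctuates by $\Theta(|S(X_t)|)$ with constant probability, and $m$ is \emph{not} concentrated at scale $n^{2/3}r(n)$. (Even granting individual bounds of $O(n^{2/3}r(n))$ on each of the $t+1$ components, Hoeffding at failure probability $O(r(n)^{-1})$ only gives a deviation $O(n^{2/3}r(n)\sqrt{t\log r(n)})=O(n^{2/3}r(n)^{3/2})$ for $A_S$, still too large.) Consequently $L_1(G)$ can be $\Theta(tn^{2/3}r(n))$, and the additive bound $|S(X_{t+1})|\le|S(X_t)|+L_1(G)$ cannot close the induction with increment $cn^{2/3}r(n)$.

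The missing idea is a contraction argument that crucially uses $q<2$, and it is why the paper conditions on $s$ (the activated mass from $S(X_t)$) and splits into cases rather than concentrating $m$. Writing $|S(X_{t+1})|=|S(X_t)|-s+N$ (you discard the $-s$ term, which is exactly the term that saves the day): when $\delta(q-1)s/q\ge n^{2/3}r(n)$, one has $A(X_t)\le\frac nq+\frac{(1+\delta)(q-1)s}{q}$ by Hoeffding applied only to the non-$S$ components, and Corollary~\ref{lemma:rg:critical:giant-concentration} gives $N\le(2+\rho)(1+2\delta)\frac{(q-1)}{q}s<s$ with probability $1-e^{-\Omega(r(n)^3)}$, since $\frac{2(q-1)}{q}<1$ precisely for $q<2$; hence $|S(X_{t+1})|\le|S(X_t)|$ and $S$ does not grow at all. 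Only in the complementary case $s=O(n^{2/3}r(n))$ does one bound $N=O(n^{2/3}r(n))$ and add it on. Your treatment of $Q(X_{t+1})$ via $\mathcal R_2(G)$, Lemma~\ref{lemma:rg:critical:exp-crude-bound} and Markov is correct and matches the paper, but the $|S(X_{t+1})|$ half of the lemma cannot be proved along the route you propose.
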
	
	
	
	
	\begin{lemma}
		\label{lemma:critical:q<2:estimate-lambda}
		Let $f$ be a positive function such that $f(n)=o(n^{1/3})$.
		Suppose a configuration $X_t$ satisfies $\mathcal{R}_1(X_t) = O\left(n^{4/3} f(n)^2 (\log f(n))^{-1}\right)$.
		Let $m$ denote the number of activated vertices in this step, and $\lambda:=(mq/n - 1)\cdot m^{1/3}$.
		With probability $1-O\left(f(n)^{-1}\right)$,
		$m \in (n/2q,n)$ and $|\lambda| \le f(n)$. 
	\end{lemma}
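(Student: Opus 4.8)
The plan is to apply Hoeffding's inequality to the number $m=A(X_t)$ of activated vertices and then read off both claimed bounds from the concentration of $m$ around $n/q$. Writing $\mathcal{C}(X_t)$ for the set of connected components of $X_t$, the activation sub-step~\eqref{CMdynamics1} gives $A(X_t)=\sum_{\mathcal{C}\in\mathcal{C}(X_t)}|\mathcal{C}|\cdot B_{\mathcal{C}}$, where the $B_{\mathcal{C}}$ are independent Bernoulli$(1/q)$ random variables, one per component. Since the component sizes sum to $n$, we have $\E[A(X_t)]=n/q$ exactly; each term lies in $[0,|\mathcal{C}|]$, so the range parameter relevant for Hoeffding's inequality is $\sum_{\mathcal{C}}|\mathcal{C}|^2=\mathcal{R}_1(X_t)$.

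First I would record a deterministic reduction. Since at most $n$ vertices can be activated, $m\le n$ and hence $m^{1/3}\le n^{1/3}$, while $|mq/n-1|=(q/n)\,|A(X_t)-n/q|$; it follows that $\{|\lambda|>f(n)\}\subseteq\{|A(X_t)-n/q|>f(n)n^{2/3}/q\}$. Moreover, on the event $|A(X_t)-n/q|\le f(n)n^{2/3}/q$, which is $o(n)$ because $f(n)=o(n^{1/3})$, one has $n/q-o(n)\le m\le n$, so $m\in(n/2q,n)$ for large $n$ (using $q>1$). Hence it suffices to bound $\Pr[|A(X_t)-n/q|>f(n)n^{2/3}/q]$.

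By Hoeffding's inequality, $\Pr[|A(X_t)-n/q|\ge\tau]\le 2\exp(-2\tau^2/\mathcal{R}_1(X_t))$. Taking $\tau=f(n)n^{2/3}/q$ and using the hypothesis $\mathcal{R}_1(X_t)=O(n^{4/3}f(n)^2(\log f(n))^{-1})$, the exponent is at least a constant multiple of $\log f(n)$ (and in our applications, where in fact $\mathcal{R}_1(X_t)=O(n^{4/3})$, it is much larger); with the constants arranged so that this multiple is at least one, the failure probability is $O(f(n)^{-1})$. Combined with the reduction above this proves the lemma.

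I do not expect a genuine obstacle here — this is a routine tail bound. The only point requiring a little care is the constant bookkeeping: the conclusion $m\in(n/2q,n)$ and the bound $|\lambda|\le f(n)$ (with leading constant $1$, not merely $O(1)$) must both be extracted from a single deviation estimate, which dictates the choice $\tau=f(n)n^{2/3}/q$, and the $\log f(n)$ factor in the hypothesis on $\mathcal{R}_1(X_t)$ is exactly what is needed to drive the Hoeffding tail below $f(n)^{-1}$.
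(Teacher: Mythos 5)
Your proof is correct and follows essentially the same route as the paper's: a single application of Hoeffding's inequality to $A(X_t)$ with deviation $f(n)n^{2/3}/q$, followed by the deterministic observation that this concentration event forces both $m\in(n/2q,n)$ and $|\lambda|\le f(n)$ (the paper writes $m=\theta n$ and uses $\theta^{1/3}\le 1$ where you bound $m^{1/3}\le n^{1/3}$, which is the same computation). Your explicit remark about needing the implied constant in the hypothesis on $\mathcal{R}_1(X_t)$ to be small enough for the Hoeffding exponent to reach $\log f(n)$ is a fair point that the paper glosses over, and you resolve it the same way the paper implicitly does.
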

	
	\begin{lemma}
		\label{lemma:critical:q<2:iterative-process} 
		Let $g$ and $h$ be two increasing positive functions of $n$.
		Assume $g(n) = o(n^{1/6})$.
		Let $X_t$ and $Y_t$ be two random-cluster configuration such that
		$\hat{N}_{k}(X_t, Y_t, g) \ge b g(n)^{3 \cdot 2^{k-1}}$ 
		for some fixed constant $b>0$ independent of $n$
		and for all $k \ge 1$ such that $n^{2/3}g(n)^{-2^k}\rightarrow \infty$.
		Assume also that $Z_t \le C n^{4/3}h(n)^{-1}$
		for some constant $C>0$.
		Lastly, assume $I(X_t), I(Y_t) = \Omega(n)$.
		Then for every positive function $\eta$ there exists a coupling for the activation sub-step of the components of $X_t$ and $Y_t$
		such that 
		\[\Pr[A(X_t) = A(Y_t)] \ge 1- 4e^{-2\eta(n)}- \sqrt{\frac{g(n)\eta(n)}{h(n)}} - \frac{\delta}{g(n)},\]
		for some constant $\delta > 0$ independent of $n$.
	\end{lemma}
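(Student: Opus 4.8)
\emph{Stage 1 (matched pairs, and the residual discrepancy).} Use the maximal matching $W_t$: for each pair of equal-size components matched by $W_t$ we activate both or neither jointly (so they contribute nothing to $A(X_t)-A(Y_t)$), \emph{reserving} for Stage~2 a subset consisting of, for each scale $k$ with $n^{2/3}g(n)^{-2^k}\to\infty$ (in particular all $k=1,\dots,\ell$, with $\ell$ the smallest integer such that $n^{2/3}g(n)^{-2^\ell}=o(n^{1/4})$, as in Theorem~\ref{thm:prelim:llt-cor}), a family of $\hat N_k(X_t,Y_t,g)=\Omega(g(n)^{3\cdot2^{k-1}})$ matched pairs of common size in $\mathcal I_k(g)$, together with $\Omega(n)$ matched pairs of isolated vertices (these exist by hypotheses 3 and 4). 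Every component \emph{not} matched by $W_t$ — i.e.\ those counted in $Z_t$ — is activated independently in the two copies. Since matched components cover the same number of vertices in both copies, the discrepancy $\mathcal D:=A(X_t)-A(Y_t)$ produced so far comes only from unmatched components; it has $\E[\mathcal D]=0$ and $\var(\mathcal D)=\Theta(Z_t)=O(n^{4/3}h(n)^{-1})$. Hoeffding's inequality (applied to the unmatched components of each copy) gives $|\mathcal D|=O\!\big(\sqrt{\eta(n)\,Z_t}\big)=O\!\big(n^{2/3}\sqrt{\eta(n)/h(n)}\big)$ except with probability at most $4e^{-2\eta(n)}$.

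\textbf{Stage 2 (cancelling $\mathcal D$ by a hierarchy of symmetric random walks).} For a reserved pair of common size $s$, coupling the two activation indicators as negatively as possible contributes $+s$, $-s$, or $0$ to $A(X_t)-A(Y_t)$ with $\Pr[+s]=\Pr[-s]=\min(1/q,1-1/q)>0$ — a symmetric mean-zero step — whereas reverting that pair to the identity coupling contributes $0$. Anti-coupling all scale-$k$ reserved pairs yields a symmetric random walk $S^{(k)}$ with $\hat N_k$ steps, each of size $\Theta(n^{2/3}g(n)^{-2^k})$, so $\sigma_k^2:=\var\big(S^{(k)}_{\hat N_k}\big)=\Theta\!\big(\hat N_k\,n^{4/3}g(n)^{-2^{k+1}}\big)=\Omega\!\big(n^{4/3}g(n)^{-2^{k-1}}\big)$, and the individual steps are $o(\sigma_k)$. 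Process the scales $k=1,2,\dots$ in order, maintaining a residual target $r$ (initially $-\mathcal D$): at scale $k$ run $S^{(k)}$, keeping all other reserved pairs on the identity coupling, and stop it the first time its position is within one step-size $\Theta(n^{2/3}g(n)^{-2^k})$ of $r$; then freeze the remaining scale-$k$ pairs to the identity coupling and replace $r$ by the new (smaller) residual. Since a single step never exceeds the half-width of the target window, the walk cannot cross the window without landing in it, so this stopping succeeds exactly when $\max_j S^{(k)}_j$ (or $\min_j S^{(k)}_j$, per the sign of $r$) reaches level $r$; by our estimate for the maximum of a symmetric integer random walk with varying step sizes (the tool flagged in Section~1), this fails with probability $O(|r|/\sigma_k)$. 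Thus scale~$1$ fails with probability $O\!\big(|\mathcal D|/\sigma_1\big)=O(\sqrt{g(n)\eta(n)/h(n)})$; after scale $k$ the residual is $O(n^{2/3}g(n)^{-2^k})$, so scale $k+1$ fails with probability $O(g(n)^{-2^{k-1}})$, and $\sum_{k\ge1}g(n)^{-2^{k-1}}=O(g(n)^{-1})$.

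After all available scales the residual has shrunk to $o(\sqrt n)$ (the case $g(n)=o(n^{1/6})$ guarantees enough scales are available). We finish with the $\Omega(n)$ reserved matched isolated vertices: their anti-coupling is a lazy $\pm1$ random walk of variance $\Omega(n)$; being nearest-neighbour it hits every integer it would pass, and it reaches the residual $o(\sqrt n)$ with probability $1-o(1)$, at which point we freeze it — so the residual is driven to $0$, i.e.\ $A(X_t)=A(Y_t)$. Collecting failures gives $4e^{-2\eta(n)}+O(\sqrt{g(n)\eta(n)/h(n)})+O(g(n)^{-1})+o(1)$, which is at most $4e^{-2\eta(n)}+\sqrt{g(n)\eta(n)/h(n)}+\delta/g(n)$ for a suitable constant $\delta$ once implicit constants are tracked (the bound is only nontrivial when $g(n)\eta(n)/h(n)=o(1)$). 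This is a legitimate coupling of the activation: the reserved families at distinct scales and the isolated vertices are pairwise disjoint, so the walks are independent; and the adaptive ``anti-couple vs.\ identity'' choice for each pair is made before that pair's randomness is revealed, with both options having $\mathrm{Bernoulli}(1/q)$ marginals in each copy, so every component's activation marginal is preserved.

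\textbf{Main obstacle.} The crux is the quantitative control of the hierarchical correction. One needs a clean lower bound for the maximum of a symmetric integer random walk with non-identical bounded step sizes, of the form $\Pr[\max_j S_j\ge x]\ge 1-O(x/\sigma)$ uniformly in the step sizes provided they are $o(\sigma)$ — precisely the ``precise bound on the maximum of symmetric random walks with varying step sizes'' announced in the introduction — and then one must verify that the per-scale residuals shrink fast enough in the exponent that the failures $O(g(n)^{-2^{k-1}})$ sum to $O(g(n)^{-1})$, and that after the last scale the residual lies below $\sqrt n$ so the $\pm1$ isolated-vertex walk can complete the correction \emph{exactly}. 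The remaining checks — disjointness (hence independence) of the reserved families, and preservation of all marginals when unused pairs are frozen to the identity coupling — are routine but must be spelled out.
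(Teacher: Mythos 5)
Your proposal is correct and follows essentially the same route as the paper's proof: joint activation via the matching $W_t$ with reserved families of matched pairs at each scale $\mathcal{I}_k(g)$ plus matched isolated vertices, independent activation of the unmatched components with a Hoeffding bound yielding the $4e^{-2\eta(n)}$ term, a scale-by-scale correction of the discrepancy using the maximum-of-a-symmetric-random-walk coupling (the paper's Theorem~\ref{thm:prelim:rw-coupling}), and a final exact correction via the matched isolated vertices (the paper's Lemma~\ref{lemma:binomialcoupling}). The one loose point is your terminal ``$o(1)$'' term: the residual after the last scale is in fact $O(n^{1/3})$ by the definition of $k^*$, so the isolated-vertex step fails with probability $O(n^{-1/6})=o(1/g(n))$ thanks to the hypothesis $g(n)=o(n^{1/6})$, which is what lets it be absorbed into $\delta/g(n)$ rather than merely being $o(1)$.
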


	The proofs of these lemmas are given in Section~\ref{subsec:aux:rw}.
	In particular, as mentioned, to prove Lemma~\ref{lemma:critical:q<2:iterative-process} we use a precise estimate on the maximum of a random walk on $\Z$ with steps of different sizes (see Theorem~\ref{thm:prelim:rw-coupling}).
	
	\begin{proof}[Proof of Lemma~\ref{lemma:critical:q<2:second-step-third}]
		The coupling has four phases:
		in phase 1 we will consider $O(\log \log \log \log n)$ steps of the coupling,
		$O(\log \log \log n)$ steps in phase 2,
		$O(\log \log n)$ steps in phase 3 
		and phase 4 consists of $O(\log n)$ steps. 
		
		We will keep track of the random variables $ \mathcal{R}_1(X_t), \mathcal{R}_1(Y_t), I(X_t), I(Y_t), Z_t$ and $\hat{N}_{k}(t, g)$ for a function $g$ we shall carefully choose for each phase, 
		and use these random variables to derive bounds on the probability of various events. 
		
		\textbf{Phase 1.} 
		We set $g_1(n) =\on^{1/2}$ and $h_1(n) = K^2\on^{1/2}$ where $K>0$ is a constant we choose. 
		Let $a_1 := 1 - \frac{1}{2q}$ and
		let $T_1:=-12\log_{a_1} (\log \log \log n)$, and we fix $t < T_1$. 
		Suppose we have $\mathcal{R}_1(X_t) + \mathcal{R}_1(Y_t) \le C_1 n^{4/3}$,
		$I(X_t), I(Y_t) = \Omega(n)$, 
		and $\hat{N}_{k}(t, g_1) = \Omega(g_1(n)^{3 \cdot 2^{k-1} })$ for all $k \ge 1$ such that $n^{2/3}g_1(n)^{-2^k}\rightarrow \infty$, 
		where $C_1 > 0$ is a large constant that we choose later.
		
		By Lemma~\ref{lemma:critical:q<2:iterative-process}, for a sufficiently large constant $K>0$,
		we obtain a coupling for the activation of $X_t$ and $Y_t$ such that 
		the same number of vertices are activated in $X_t$ and $Y_t$, with probability at least
		\[1- 4e^{-2K}- \sqrt{\frac{K\on^{1/2}}{K^2\on^{1/2}}} - \frac{\delta}{\on^{1/2}} \ge 1 - \frac{1}{16q^2}.\]
		By Lemma~\ref{lemma:critical:q<2:estimate-lambda}, 
		$A(X_t) \in (n/2q,n)$ and $\lambda := (A(X_t)q/n - 1)\cdot A(X_t)^{1/3} = O(1)$ 
		with probability at least $ 1 - \frac{1}{16q^2}$.
		It follows a union bound that $A(X_t) = A(Y_t)$, $A(X_t) \in (n/2q,n)$ and $\lambda = O(1)$ 
		with probability $1 - \frac{1}{8q^2}$.
		We call this event $\mathcal{H}^1_t$. 

		Let $D_t'$ denote the inactivated components in $D(X_t) \cup D(Y_t)$ at the step $t$,
		and $M_t'$ the inactivated components in $M(X_t) \cup M(X_t)$.
		Observe that 
		$$\E\left[\sum\nolimits_{\mathcal{C} \in D_t'} |\mathcal{C}|^2\right] 
		= \left(1 -\frac{1}{q} \right) \sum\nolimits_{\mathcal{C} \in D(X_t) \cup D(Y_t)} |\mathcal{C}|^2 
		=  \left(1 -\frac{1}{q} \right) Z_t.$$
		Similarly,  
		$$
		\E\left[\sum\nolimits_{\mathcal{C} \in M_t'} |\mathcal{C}|^2\right] 
		= \left(1 -\frac{1}{q} \right) \sum\nolimits_{\mathcal{C} \in M(X_t) \cup M(Y_t)} |\mathcal{C}|^2 
		=  \left(1 -\frac{1}{q} \right) \left(\mathcal{R}_1(X_t) + \mathcal{R}_1(Y_t) - Z_t \right).
		$$
		Hence, by Markov's inequality and independence between activation of components in $D_t'$ and components in $M_t'$, 
		with probability at least $1/4q^2$, 
		the activation sub-step is such that 
		$$\sum\nolimits_{\mathcal{C} \in D_t'} |\mathcal{C}|^2 \le \left(1-\frac{1}{2q}\right) Z_t,$$ 
		and 
		$$\sum\nolimits_{\mathcal{C} \in M_t'} |\mathcal{C}|^2
		\le \left(1-\frac{1}{2q}\right)
		 \left(\mathcal{R}_1(X_t) + \mathcal{R}_1(Y_t) - Z_t \right).$$
		We denote this event by $\mathcal{H}^2_t$. 
		By a union bound, $\mathcal{H}^1_t$ and $\mathcal{H}^2_t$ happen simultaneously with probability $1/8q^2$.

		Suppose all these events indeed happen; 
		then we couple the percolation step so that the components newly generated in both copies are exactly identical, 
		and we claim that all of the following holds with at least constant probability:
		
		\begin{enumerate}
			\item $\mathcal{R}_1(X_{t+1}) + \mathcal{R}_1(Y_{t+1}) \le C_1 n^{4/3}$;
			\item $Z_{t+1} \le a_1 Z_t$;
				\item $I(X_{t+1}), I(Y_{t+1}) = \Omega(n)$;
			\item $\hat{N}_{k}(t + 1, g_1) = \Omega(g_1(n)^{3 \cdot 2^{k-1}})$ for all $k \ge 1$ such that $n^{2/3}g_1(n)^{-2^k}\rightarrow \infty$.
		\end{enumerate}
		
		First, note that $Z_{t+1}$ can not possibly increase because
		the matching $W_{t+1}$ can only grow under the coupling if indeed $A(X_t) = A(Y_t)$.
		Observe that only the inactivated components in $X_t$ and $Y_t$ would contribute to $Z_{t+1}$, 
		so 
		$$Z_{t+1} = \sum\nolimits_{\mathcal{C} \in D_t'} |\mathcal{C}|^2 \le a_1 Z_t.$$
		
		
		Next, we establish the properties 3 and 4.
		For this, notice that the percolation step is distributed as a
		$H \sim $ $G\left(A(X_t), \frac{1 + \lambda A(X_t)^{-1/3}}{A(X_t)}\right)$ random graph.
		Corollary~\ref{lemma:critical:q<2:maintain-trees} implies 
		$N_{k}(H, g_1) = \Omega( g_1(n)^{3 \cdot 2^{k-1}})$
		for all $k \ge 1$ such that $n^{2/3}g_1(n)^{-2^k}\rightarrow \infty$,
		with probability at least $1 - O\left(g_1(n)^{-3}\right) $. 
		Moreover, Lemma~\ref{lemma:rg:isolated} implies that with high probability $I(H) = \Omega(n)$.
		Since the percolation step is coupled, 
		this implies that
		both $X_{t+1}$ and $Y_{t+1}$ will have all the components in $H$, so we have
		$\hat{N}_{k}(t+1, g_1) = \Omega(g_1(n)^{3 \cdot 2^{k-1}})$ for all $k \ge 1$ such that $n^{2/3}g_1(n)^{-2^k}\rightarrow \infty$,
		and $I(X_{t+1}), I(Y_{t+1}) = \Omega(n)$, w.h.p.

		Finally, assuming that $|\lambda| = O(1)$, 
		by Lemma~\ref{lemma:prelim:critical-var} and Markov's inequality, 
		there exists $C_2 > 0$ such that  
		$\E \left[\mathcal{R}_1(H) \right] = C_2n^{4/3}$
		with probability at least $99/100$.
		Then 
		\begin{align*}
			\mathcal{R}_1(X_{t+1}) + \mathcal{R}_1(Y_{t+1})
			&= \sum\nolimits_{\mathcal{C} \in D_t'} |\mathcal{C}|^2 + \sum\nolimits_{\mathcal{C} \in M_t'} |\mathcal{C}|^2
			+ \mathcal{R}_1(H) \\
		& \le a_1 \left(\mathcal{R}_1(X_{t}) + \mathcal{R}_1(Y_{t})\right) + C_2n^{4/3} 
		\le a_1 C_1 n^{4/3} + C_2n^{4/3} 
		\le C_1 n^{4/3}, 
		\end{align*}
		for large enough $C_1$. 
		A union bound implies that all four properties hold with at least constant probability 
		$\varrho > 0$. 
		
		Thus, the probability that at each step of update all four properties can be maintained throughout Phase 1 is at least
		$$\varrho^{T_1} = \varrho^{-12\log_{a_1} (\log \log \log n)} = \left(\log \log \log n\right)^{-12\log_\varrho (a_1)}.$$
		
		
		If property 2 holds at the end of Phase 1, we have
		$$Z_{T_1} = O\left( \frac{n^{4/3}}{h_1(n)} \cdot a_1^{T_1} \right)
		= O\left( \frac{n^{4/3}}{h_1(n)} \cdot a_1^{ -12 \log_{a_1}(\log \log \log n)} \right) 
		= O\left( \frac{n^{4/3}}{(\log \log \log n)^{12}} \right).$$
	
		To facilitate discussions in Phase 2,
		we show that the two copies of chains satisfy one additional property at the end of Phase 1.
		In particular, there exist a lower bound for the number of components in a different set of intervals.
		We consider the last percolation step in Phase 1. 
		Then, Corollary~\ref{lemma:critical:q<2:maintain-trees} 
		with $g_2(n) := \left( \log\log\log n \cdot \log\log\log\log n \right)^2$ implies
		$\hat{N}_{k}(T_1, g_2) = \Omega(g_2(n)^{3 \cdot 2^{k-1}})$ for all $k \ge 1$ 
		such that $n^{2/3}g_2(n)^{-2^k}\rightarrow \infty$, with high probability.
		
		
		
		Recall $S(X)$ and $Q(X)$ defined at the beginning of Section~\ref{BigProof}.
		In Phase 2, 3 and 4, a new element of the argument is to also control the behavior $S(X_t)$ and $Q(X_t)$. 
		We provide a general result that will be used in the analysis of the three phases:
		\begin{claim}
			\label{longClaim}
			Given positive increasing functions $T, g, h$ and $r$ that tend to infinity and satisfy
			\begin{enumerate}
				\item $g(n) = o(n^{1/6})$;
				\item $T = o(g)$; 
				\item $r(n) = o(n^{1/15})$;
				\item $T(n)^2 \cdot r(n)^2 \leq g(n)^2/\log g(n)$;
				\item $T(n) \leq r(n) / \log r(n)$;
				\item $ g(n) \log g(n) \leq h(n)^{1/3}$.
			\end{enumerate} 
			and random-cluster configurations $X_0, Y_0$ satisfying
			\begin{enumerate}
				\item $Z_0 = O\left( \frac{n^{4/3}}{h(n)} \right)$;
				\item $|S(X_0)|, |S(Y_0)| \le  n^{2/3} r(n) $;
				\item $Q(X_0), Q(Y_0) =  O(n^{4/3}r(n)) $;
				\item $I(X_{0}), I(Y_{0}) = \Omega(n)$;
				\item $\hat{N}_{k}(0, g) = \Omega\left( g(n)^{3 \cdot 2^{k-1}}\right)$ for all $k \ge 1$ such that $n^{2/3}g(n)^{-2^k}\rightarrow \infty$.
			\end{enumerate}
			There exists a coupling of CM steps such that after $T=T(n)$ steps, with $\Omega(1)$ probability,
			\begin{enumerate}
				\item $Z_T = O\left( \frac{n^{4/3}}{a^{T(n)}} \right)$;
				\item $|S(X_T)|, |S(Y_T)| =  O(n^{2/3} r(n) T(n)) $;
				\item $Q(X_T), Q(Y_T) =  O(n^{4/3}r(n)T(n)) $;
				\item $I(X_{T}), I(Y_{T}) = \Omega(n)$;
				\item If a function $g'$ satisfies $g'\geq g$  and $g'(n) = o(n^{1/3})$, 
				then $\hat{N}_{k}(T, g') = \Omega\left( g'(n)^{3 \cdot 2^{k-1}}\right)$ for all $k \ge 1$ such that $n^{2/3}g'(n)^{-2^k}\rightarrow \infty$.
			\end{enumerate}
		\end{claim}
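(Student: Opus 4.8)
\emph{Overall strategy.} The plan is to run a single coupling for $T:=T(n)$ steps and maintain, by induction on $t\le T$, the invariants: (I1) $Z_t = O(n^{4/3}h(n)^{-1})$; (I2) $|S(X_t)|,|S(Y_t)| = O((t+1)n^{2/3}r(n))$ and $Q(X_t),Q(Y_t) = O((t+1)n^{4/3}r(n))+O(n^{4/3})$; (I3) $I(X_t),I(Y_t)=\Omega(n)$; and (I4) $\hat{N}_k(t,g)=\Omega(g(n)^{3\cdot 2^{k-1}})$ for every $k\ge1$ with $n^{2/3}g(n)^{-2^k}\to\infty$. The hypotheses of the claim give (I1)--(I4) at $t=0$, up to a harmless re-indexing $t\mapsto t+O(1)$ in (I2) (since $S(X_0),Q(X_0)$ need not be minimal) and possibly replacing $r$ by $2r$. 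A key bookkeeping observation is that (I2), the Cauchy--Schwarz inequality $\sum_{\mathcal C\in S(X_t)}|\mathcal C|^2\le|S(X_t)|^2$, and condition~4 together give $\mathcal{R}_1(X_t),\mathcal{R}_1(Y_t)=O(n^{4/3}T(n)^2 r(n)^2)=O(n^{4/3}g(n)^2(\log g(n))^{-1})$, which is \emph{exactly} the hypothesis of Lemma~\ref{lemma:critical:q<2:estimate-lambda} with $f=g$; so we never need a separate concentration bound on $\mathcal{R}_1$. Because $T(n)\to\infty$, the essential point --- and the reason the constant-probability Markov bounds that sufficed in the short Phase~1 are not enough here --- is that each step must fail with probability only $o(1/T)$.

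\emph{One step.} Fix $t<T$ and assume (I1)--(I4). First couple the activation sub-step. By Lemma~\ref{lemma:critical:q<2:estimate-lambda} with $f=g$, with probability $1-O(g(n)^{-1})$ the number of activated vertices satisfies $m\in(n/2q,n)$ and $|\lambda|:=|(mq/n-1)m^{1/3}|\le g(n)$; a routine Hoeffding estimate (using $L_1(X_t)\le\sqrt{\mathcal{R}_1(X_t)}$) in fact gives $|\lambda|=o(g(n))$, so $|\lambda|\le g(m)$ as well --- which is what Corollary~\ref{lemma:critical:q<2:maintain-trees} requires. By Lemma~\ref{lemma:critical:q<2:iterative-process}, applied with the functions $g,h$ and the choice $\eta(n)=\log g(n)$ --- note $\eta\to\infty$ is essential so that $4e^{-2\eta}=4g(n)^{-2}\to0$, and condition~6 makes $\sqrt{g\eta/h}=O((g\log g)^{-1})$ --- there is a coupling of the activations with $\Pr[A(X_t)=A(Y_t)]\ge 1-O(g(n)^{-1})$. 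When $A(X_t)=A(Y_t)=:m$, couple the percolation sub-step via an arbitrary bijection between the $m$ activated vertices of the two copies, so that both receive the \emph{same} random graph $H\sim G(m,(1+\lambda m^{-1/3})/m)$. Then: Corollary~\ref{lemma:critical:q<2:maintain-trees} (with $g$, and with $g'$ at the final step $t=T-1$) preserves (I4) (resp.\ yields conclusion~5) with probability $1-O(g(n)^{-3})$, since the new components lie in $W_{t+1}$; Lemma~\ref{lemma:rg:isolated} gives $I(H)=\Omega(m)=\Omega(n)$ with probability $1-O(n^{-1})$, preserving (I3); Lemma~\ref{lemma:critical:q<2:maintain-var} (applicable since $t\le T\le r/\log r$ by condition~5) preserves (I2) for each copy with probability $1-O(r(n)^{-1})$; and (I1) is preserved at \emph{no} probabilistic cost, because the coupling of Lemma~\ref{lemma:critical:q<2:iterative-process} only enlarges the set of matched components, so only inactive, previously-unmatched components contribute to $Z_{t+1}$, whence $Z_{t+1}\le Z_t$ deterministically while $\E[Z_{t+1}\mid\mathcal F_t]=(1-q^{-1})Z_t$. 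A union bound gives that one step preserves all of (I1)--(I4) with probability $1-O(r(n)^{-1})$ (using $r\le g$ by condition~4, so $O(g^{-1})$ is absorbed). Iterating, all invariants survive through time $T$ with probability at least $(1-O(r^{-1}))^{T} = 1-O(T/r) = 1-O((\log r)^{-1}) = 1-o(1)$, by conditions~2 and~5; and on this event conclusions~2--5 are immediate from (I1)--(I4) at $t=T$.

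\emph{Geometric decay of $Z_T$.} It remains to upgrade the crude bound $Z_T = O(n^{4/3}h(n)^{-1})$ in (I1) to the decaying bound $Z_T=O(n^{4/3}a^{-T})$ of conclusion~1. Asking for $Z_{t+1}\le(1-\tfrac{1}{2q})Z_t$ at \emph{every} step holds only with a constant probability $p_0:=1/(2q-1)$ (Markov's inequality, from $\E[Z_{t+1}\mid\mathcal F_t]=(1-q^{-1})Z_t$), and $p_0^{T}\to0$. Instead I would count: each coupled step is ``contracting'' (i.e.\ $Z_{t+1}\le(1-\tfrac1{2q})Z_t$) with conditional probability at least $p_0$, so the number of contracting steps among the first $T$ stochastically dominates $\mathrm{Bin}(T,p_0)$ and is therefore at least $p_0 T/2$ with probability $1-e^{-\Omega(T)}=1-o(1)$. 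Since $Z_{t+1}\le Z_t$ on every coupled step, on the $(1-o(1))$-probability event that all $T$ steps are coupled we get $Z_T\le(1-\tfrac1{2q})^{p_0T/2}Z_0=O(n^{4/3}a^{-T})$ for a constant $a>1$ depending only on $q$. Intersecting with the event of the previous paragraph, all five conclusions hold simultaneously with probability $1-o(1)=\Omega(1)$.

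\emph{Main obstacle.} Matching each invariant to the lemma that propagates it is routine; the real work is (i) arranging that the per-step failure probability is $o(1/T)$ rather than merely bounded away from $1$ --- this is precisely where all six growth conditions are used (condition~4 lets $\mathcal{R}_1$ be controlled through $S$ and $Q$, with no separate tail estimate feeding Lemma~\ref{lemma:critical:q<2:estimate-lambda}; condition~6 kills the $\sqrt{g\eta/h}$ term of Lemma~\ref{lemma:critical:q<2:iterative-process}; condition~5 makes $(1-O(1/r))^{T}=\Omega(1)$; condition~2 absorbs $O(T/g)$; and the choice $\eta=\log g\to\infty$ is forced) --- and (ii) obtaining the geometric decay of $Z_T$, which cannot come from a union bound and instead needs the ``count the contracting steps'' argument, exploiting $T\to\infty$. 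I expect (ii), together with the verification that the Lemma~\ref{lemma:critical:q<2:iterative-process} coupling only enlarges the matched-component set (so that $Z_{t+1}\le Z_t$), to be the most delicate points.
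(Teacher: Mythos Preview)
Your proposal is correct and follows the same inductive scheme as the paper: maintain the invariants (I1)--(I4) step by step with per-step failure probability $O(g(n)^{-1})+O(r(n)^{-1})$, invoking exactly Lemmas~\ref{lemma:critical:q<2:maintain-var}, \ref{lemma:critical:q<2:estimate-lambda}, \ref{lemma:critical:q<2:iterative-process}, Corollary~\ref{lemma:critical:q<2:maintain-trees} and Lemma~\ref{lemma:rg:isolated}, and take a product over $T$ steps. The bookkeeping identity $\mathcal{R}_1(X_t)\le Q(X_t)+|S(X_t)|^2=O(n^{4/3}T^2r^2)=O(n^{4/3}g^2/\log g)$ feeding Lemma~\ref{lemma:critical:q<2:estimate-lambda} is exactly what the paper does, and your choice $\eta=\log g$ matches the paper's $\eta=\tfrac12\log g$.

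The one substantive difference is the $Z_T$ decay. You count ``contracting'' steps and apply a Chernoff bound to their number; this is valid but more than needed. The paper instead uses a first-moment argument: conditional on all $T$ activation couplings succeeding, only inactive components from $D(X_t)\cup D(Y_t)$ contribute to $Z_{t+1}$, each inactive with marginal probability $1-1/q$, so $\E[Z_{t+1}\mid Z_t]=(1-1/q)Z_t$; iterating gives $\E[Z_T]=(1-1/q)^T Z_0$, and a single application of Markov's inequality yields $Z_T=O(n^{4/3}a^{-T})$ with constant probability. Since the claim only asserts $\Omega(1)$ probability, the paper's route is shorter; your binomial-counting argument buys the stronger $1-o(1)$ bound, which is not needed here.
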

		
		Proof of this claim is provided later in Section~\ref{subsec:aux:rw}. 
	
		\textbf{Phase 2.}
		Let $a= q/(q-1)$.
		For Phase 2, we set $g_2(n) = \left( \log\log\log n \cdot \log\log\log\log n \right)^2$,
		$g_3(n) = (\log\log n \cdot \log \log \log n )^2$,
		$h_2(n) = \left(\log \log \log n\right)^{12} $, 
		$r_2(n) = 13 \log_a \log \log n \cdot \log \log_a \log \log n$
		and  $T_2 = T_1 + 12 \log_a \log \log n$.
		Notice these functions satisfy the conditions of \ref{longClaim}: 
		\begin{enumerate}
			\item $g_2(n) = o(n^{1/6})$;
			\item $T_2 - T_1 = o(g_2(n))$;
			\item $r_2(n) = o(n^{1/15})$;
			\item $(T_2 - T_1)^2 r_2(n)^2 \le 10^6(\log_a \log \log n)^4 ( \log \log_a \log \log n)^2 \le g_2(n)^2/\log g_2(n)$;
			\item $T_2 - T_1  = 12 \log_a \log \log n 
			\le r_2(n)/\log r_2(n)$;
			\item $g_2(n)/\log g_2(n) 
			\le \left(\log \log \log n\right)^{4} = h_2(n)^{1/3}$.
		\end{enumerate}
		Suppose that we have all the desired properties from Phase 1, so at the beginning of Phase 2 we have: 
		\begin{enumerate}
			\item 	$Z_{T_1} = O\left( \frac{n^{4/3}}{(\log \log \log n)^{12}} \right) = O\left( \frac{n^{4/3}}{h_2(n)}\right)$;
			\item 	$S(X_{T_1}) \le \sqrt{ R_1(X_{T_1})} \le n^{2/3}r_2(n)$, $S(Y_{T_1}) = \sqrt{ R_1(Y_{T_1})} \le n^{2/3}r_2(n)$;
			\item 	$I(X_{T_1}) = \Omega(n), I(Y_{T_1}) = \Omega(n)$;
			\item $ Q(X_{T_1}) \le R_1(X_{T_1}) = O(n^{4/3})$, $Q(Y_{T_1}) \le R_1(Y_{T_1}) = O(n^{4/3})$;
			\item $\hat{N}_{k}(T_1, g_2) =\Omega\left( g_2(n)^{3 \cdot 2^{k-1}}\right)$
			for all $k \ge 1$ such that $n^{2/3}g_2(n)^{-2^k}\rightarrow \infty$.
		\end{enumerate}   
		Claim~\ref{longClaim} implies there exists a coupling such that with $\Omega(1)$ probability
		\begin{enumerate}
			\item $Z_{T_2} = O\left( \frac{n^{4/3}}{(\log \log n)^{12}}\right)$;
			\item $\lvert S(X_{T_2}) \rvert \le  n^{2/3} r_2(n) \log_a \log \log n$, $\lvert S(Y_{T_2}) \rvert \le n^{2/3} r_2(n) \log_a \log \log n$;
			\item $ Q(Y_{T_2}) = O(n^{4/3} r_2(n) \log_a \log \log n)$, $Q(X_{T_2}) = O(n^{4/3} r_2(n) \log_a \log \log n)$;
			\item $I(X_{T_2}) = \Omega(n), I(Y_{T_2}) = \Omega(n)$;
			\item  $\hat{N}_{k}(T_2, g_3) = \Omega( g_3(n)^{3 \cdot 2^{k-1}})$
			for all $k \ge 1$ such that $n^{2/3}g_3(n)^{-2^k}\rightarrow \infty$.
		\end{enumerate}
		
		\textbf{Phase 3.}
		Suppose the coupling in Phase 2 succeeds. 
	
		For Phase 3, we set the functions as
		$g_3(n) = \left( \log\log n \cdot \log\log\log n \right)^2$,
		$g_4(n) = (\log n \cdot \log \log n )^2$,
		$h_3(n) = \left(\log \log n\right)^{12} $, 
		$r_3(n) = 20\log_a \log n \cdot \log \log_a \log n$
		and  $T_3 = T_2 + 10 \log_a \log n$.
		Claim~\ref{longClaim} implies there exists a coupling such that with $\Omega(1)$ probability
		\begin{enumerate}
			\item $Z_{T_3} = O\left(  \frac{n^{4/3}}{(\log n)^{10}}\right)$;
			\item $|S(X_{T_3})| = O(n^{2/3} r_3(n) \log_a \log n ), |S(Y_{T_3})| = O(n^{2/3} r_3(n) \log_a \log n)$;
			\item $Q(X_{T_3}) = O(n^{4/3} r_3(n) \log_a \log n)$, $ Q(Y_{T_3}) = O(n^{2/3} r_3(n) \log_a \log n)$; 
			\item $I(X_{T_3}) = \Omega(n), I(Y_{T_3}) = \Omega(n)$,
			\item  $\hat{N}_{k}(T_3, g_4) = \Omega( g_4(n)^{3 \cdot 2^{k-1}})$ for all $k \ge 1$ such that $n^{2/3}g_4(n)^{-2^k}\rightarrow \infty$. 
		\end{enumerate}
		
		\textbf{Phase 4.}
		Suppose the coupling in Phase 3 succeeds.
		Let $C_2$ be a constant greater than $4/3$. 
		We set $g_4(n) = \left( \log n \cdot \log\log n \right)^2$,
		$h_4(n) = \left(\log n\right)^{10} $, 
		$r_4(n) = 2C_2 \log_a n \cdot \log \log_a n$
		and  $T_4 = T_3 + C_2 \log_a n$.
		Claim~\ref{longClaim} implies there exists a coupling such that with $\Omega(1)$ probability
		$Z_{T_4} < 1$.
		Since $Z_{T_4}$ is a non-negative integer-value random variable,
		$\Pr[Z_{T_4} < 1] = \Pr[Z_{T_4} = 0]$. 
		When $Z_{T_4} = 0$, $X_{T_4}$ and $Y_{T_4}$ have the same component structure. 
		
		Therefore, if the coupling in every phase succeeds, 
		$X_{T_4}$ and $Y_{T_4}$ have the same component structure. 
		The probability that coupling in Phase 1 succeeds is $\left(\log \log \log n\right)^{- O(\log_\varrho (1/a_1))}$.
		Conditional on the success of their previous phases, 
		couplings in Phase 2, 3 and 4 succeed respectively with at least constant probability.   
		Thus, the entire coupling succeeds with probability
		\[
		\left(\log \log \log n\right)^{- O(\log_\varrho (1/a_1))}	
		\cdot \Omega(1) \cdot \Omega(1) \cdot \Omega(1) 
		= \left(\frac{1}{\log \log \log n}\right)^{\beta},\]  
		where $\beta$ is a positive constant.
	\end{proof}

	\subsubsection{Proof of lemmas used in Section~\ref{BigProof}}
	\label{subsec:aux:rw}
	
	\begin{proof} [Proof of Claim~\ref{longClaim}]
		We will show that given the following properties at any time $t \le T(n)$,
		we can maintain them at time $t+1$ with probability at least
		$1 - O(g(n)^{-1}) - O(r(n)^{-1})$:
		
		\begin{enumerate}
			\item  $Z_t = O\left( \frac{n^{4/3}}{h(n)} \right)$;
			\item $|S(X_t)|, |S(Y_t)| \le C_3 t n^{2/3} r(n) $ for a constant $C_3 > 0$;
			\item $Q(X_t), Q(Y_t) \le t n^{4/3}r(n) + O(n^{4/3})$;
			\item $I(X_{t}), I(Y_{t}) = \Omega(n)$;
			\item $\hat{N}_{k}(t, g) = \Omega( g(n)^{3 \cdot 2^{k-1}})$ for all $k \ge 1$ such that $n^{2/3}g(n)^{-2^k}\rightarrow \infty$.
		\end{enumerate}
		
		By assumption, $t \le T(n) \le r(n)/\log r(n)$.
		According to Lemma~\ref{lemma:critical:q<2:maintain-var},
		$X_{t+1}$ and $Y_{t+1}$ retain properties 2 and 3 with probability at least $1 - O(r(n)^{-1})$. 
		
		Given properties 1, 4 and 5, 
		Lemma~\ref{lemma:critical:q<2:iterative-process} (with $\eta = \log g(n)/2$) implies
		that there exist a constant $\delta > 0$ and a coupling for the activation sub-step of $X_t$ and $Y_t$ 
		\begin{align*}
		\Pr \left[A(X_t) = A(Y_t) \right] 
		& \ge 1- 4e^{-\log g(n)}- \sqrt{\frac{g(n)\log g(n)}{2h(n)}} - \frac{\delta}{g(n)} \\ 
		& = 1 - O\left(\frac{1}{h(n)^{1/3}} \right) - O\left(\frac{1}{g(n)}\right)
		= 1 - O\left(\frac{1}{g(n)}\right).
		\end{align*}
		Note that condition $ g(n) \log g(n) \leq h(n)^{1/3}$ is used to deduce the inequality above. 
		Suppose $A(X_t)=A(Y_t)$;
		we couple components generated in the percolation step and preclude the growth of $Z_t$. 
		Hence, $Z_{t+1} \le Z_t = O\left( \frac{n^{4/3}}{h(n)} \right)$,
		and property 1 holds immediately. 
		
		Recall that $\mathcal{R}_1(X) = Q(X) + \lvert S(X) \rvert^2$.
		Properties 2 and 3 imply that $\mathcal{R}_1(X_t) = O(t^2 n^{4/3} r(n)^2)$ and $\mathcal{R}_1(Y_t) = O(t^2  n^{4/3} r(n)^2)$.
		Since $t < T(n)$ and $T(n)^2 \cdot r(n)^2 \leq g(n)^2/\log g(n)$, we can upper bound $\mathcal{R}_1(X_t)$ and $\mathcal{R}_1(Y_t) $  by 
		$$O\left(n^{4/3} \left(T(n) \cdot r(n) \right)^2 \right) = O\left( \frac{n^{4/3} g(n)^2 }{\log g(n)} \right).$$
		
		We establish properties 4 and 5 with a similar argument as the one used in Phase 1.
	
		Let $H_t\sim G(A(X_t), n/q)$. Due to Lemma~\ref{lemma:critical:q<2:estimate-lambda} (with $f = g$) and
		Corollary~\ref{lemma:critical:q<2:maintain-trees}  with probability at least 
		$1 - O(g(n)^{-1})$, $N_{k}(H_t,g) = \Omega( g(n)^{3 \cdot 2^{k-1}})$
		for all $k \ge 1$ such that $n^{2/3}g(n)^{-2^k}\rightarrow \infty$. In addition, $I(H_t)=\Omega(n)$ with probability $1-O(n^{-1})$ by Lemma~\ref{lemma:rg:isolated}. 
		Since the coupling adds components in $H_t$ to both $X_{t+1}$ and $Y_{t+1}$, 
		properties 4 and 5 are maintained at time $t+1$, with probability at least $1 - O(g(n)^{-1})$.
		
		A union bound concludes that at time $t+1$ we can maintain all five properties with probability at least 
		$1  - O(g(n)^{-1})- O(r(n)^{-1})$.
		Hence, the probability that $X_{T(n)}$ and $Y_{T(n)}$ still satisfy the listed 5 properties above is
		\[\left[1 - O\left( \frac{1}{g(n)} \right) - O\left( \frac{1}{r(n)} \right)\right]^{T(n)} = 1 - o(1).\]
		
		It remains for us to show the bound for $Z_T$ and 
		that for a given function $g'$ satisfying $g'\geq g$ and $g'(n) = o(n^{1/3})$, 
		then $\hat{N}_{k}(T, g') = \Omega\left( g'(n)^{3 \cdot 2^{k-1}}\right)$ for all $k \ge 1$ such that $n^{2/3}g'(n)^{-2^k}\rightarrow \infty$.
			
		Conditioned on $A(X_t) = A(Y_t)$ for every activation sub-step in this phase, 
		a bound for $Z_{T}$ can be obtained through a first moment method. 
		On expectation $Z_t$ contract by a factor of $\frac{1}{a} =  1 - \frac{1}{q}$ each step.
		Thus, we can recursively compute the expectation of $\E[Z_{T}]$:
		\begin{align}
		\label{eq:eofzt1}
		\E[Z_{T}] &=  \E[\E[Z_{T} \mid Z_{T - 1}]] = \frac{1}{a} \cdot \E[Z_{T - 1} ] 
		= ...= \left( \frac{1}{a}\right)^{T} \E[Z_0] 
		= O\left(\left( \frac{1}{a}\right)^{T} \cdot \frac{n^{4/3}}{h(n)} \right).
		\end{align}
		
		It follows from Markov's inequality that with at least constant probability 
		$$Z_{T} = O\left( \frac{n^{4/3}}{a^{T(n)}} \right).$$
		
		Finally, in the last percolation step in this phase, 
		Corollary~\ref{lemma:critical:q<2:maintain-trees} guarantees that with high probability
		$\hat{N}_{k}(T, g') $ $ =\Omega( g'(n)^{3 \cdot 2^{k-1}})$ for all $k \ge 1$ such that $n^{2/3}g'(n)^{-2^k}\rightarrow \infty$.
		The claim follows from a union bound. 
	\end{proof}
	
	\begin{proof}[Proof of Lemma \ref{lemma:critical:q<2:maintain-var}]
		We establish first the bound for $|S(X_{t+1})|$.
		Suppose $s$ vertices are activated from $S(X_t)$. By assumption 
		$$Q(X_t) \le  t n^{4/3} r(n) + O(n^{4/3}) \le \frac{2 n^{4/3} r(n)^2}{\log r(n)},$$
		for sufficiently large $n$. Hence, Hoeffding's inequality implies that 
		$$
		A(X_t) \le s +\frac{n-|S(X_t)|}{q} + n^{2/3} r(n)  \le \frac{n}{q} +\frac{(q-1)s}{q} + n^{2/3} r(n),
		$$
		with probability at least $1-O(r(n)^{-1})$. 
		
		We consider two cases. First suppose that $ \delta(q-1)s/q \ge n^{2/3} r(n)$, where $\delta > 0$ is a sufficiently small constant we choose later.
		Then,
		$$
		A(X_t) \le \frac{n}{q} +\frac{(1+\delta)(q-1)s}{q} =: M.
		$$
		The largest new component corresponds to the largest component of a $G(A(X_t),q/n)$ random graph.
		Let $N$ be the size of that component, and  
		let $N_M$ be the size of the largest component of a $G\left(M,\frac{1+\varepsilon}{M}\right)$ random graph,
		where $\varepsilon = qM/n - 1$.
		By Fact~\ref{fact:GraphMonotone}, $N$ is stochastically dominated by $N_M$.
		Then by Corollary \ref{lemma:rg:critical:giant-concentration} there exists a constant $c>0$ such that
		\begin{equation}
		\label{eq:critical-concentration}
		\Pr\left[N > (2 + \rho)\varepsilon M\right] \le \Pr[N_M > (2 + \rho)\varepsilon M]  = O({\e}^{-c\varepsilon^3 M}),
		\end{equation} 
		for any $\rho < 1/10$.
		Now,
		\begin{align}
		\varepsilon M 
		&= \frac{(1+\delta)(q-1)s}{n} \left(\frac{n}{q} +\frac{(1+\delta)(q-1)s}{q}\right) \notag \\
		&= \frac{(1+\delta)(q-1)s}{q}  + O\left(\frac{s^2}{n}\right) \notag \\
		&\le \frac{(1+\delta)(q-1)s}{q}  + O(n^{1/3}r(n)^4), \notag \\
		&\le \frac{(1+2\delta)(q-1)s}{q} , \label{eq:boundofeM}
		\end{align}
		where for the second to last inequality we use that $s \le |S(X_t)| = O(n^{2/3}r(n)^2)$,
		and the last inequality follows from the assumptions $\frac{\delta(q-1)s}{q} \ge n^{2/3} r(n)$ and $r(n) = o\left(n^{1/15}\right)$. 
		Also, since $s =  O(n^{2/3}r(n)^2)$ and $r(n) = o\left(n^{1/15}\right)$, 
		\begin{align}	
		\varepsilon^3 M 
		=  \left[\frac{(1+\delta)(q-1)s}{n}\right]^3 \left(\frac{n}{q} +\frac{(1+\delta)(q-1)s}{q}\right) 
		= \Omega\left(\frac{s^3}{n^2} + \frac{s^4}{n^3}\right) 
		= \Omega(r(n)^3). \label{eq:boundofe3M}
		\end{align}
		Hence, \eqref{eq:critical-concentration}, \eqref{eq:boundofeM} and \eqref{eq:boundofe3M} imply
		$$
		\Pr\left[N \ge \frac{(2 + \rho)(1+2\delta)(q-1)s}{q}\right] = {\e}^{-\Omega(r(n)^3)}.
		$$
		Since $q < 2$, for sufficiently small $\rho$ and $\delta$
		$$
		\frac{(2 + \rho)(1+2\delta)(q-1)}{q} < 1.
		$$
		Therefore, $N \le s$ with probability $1 - \exp(-\Omega(r(n)^3))$.
		If this is the case, then $|S(X_{t+1})| \le |S(X_{t})|$ and so by a union bound
		$|S(X_{t+1})| \le c(t+1) n^{2/3} r(n)$ with probability at least $1-O(r(n)^{-1})$.
		
		For the second case we assume $ \frac{\delta(q-1)s}{q} < n^{2/3} r(n)$ and proceed in similar fashion. 
		In this case, Hoeffding's inequality implies with probability at least $1-O(r(n)^{-1})$,
		$$
		A(X_t) \le \frac{n}{q} +(1+1/\delta)n^{2/3} r(n) =: M'.
		$$
		The size of the largest new component, denoted $N'$, is stochastically dominated by the size of the largest component of a $G(M',\frac{1+\varepsilon'}{M'})$ random graph,
		with $\varepsilon' = qM'/n - 1$.
		Now, since we assume $r(n) = o\left(n^{1/15}\right)$, 
		\begin{align*}
		\varepsilon' M' 
		&\le \frac{q(1+1/\delta)r(n)}{n^{1/3}}\left[\frac{n}{q} +(1+1/\delta)n^{2/3} r(n)\right] \\
		&= (1+1/\delta) n^{2/3} r(n) + O(n^{1/3}r(n)^2) \le \frac{c}{3} n^{2/3} r(n),
		\end{align*}
		where the last inequality holds  for large $n$ and a sufficiently large constant $c$. 
		Moreover, 
		\begin{align*}	
		\left({\varepsilon'}\right)^3 M' 
		=  \Omega\left(\frac{r(n)^3}{n}\left[\frac{n}{q} + n^{2/3} r(n) \right]\right) = \Omega(r(n)^3).
		\end{align*}
		Hence, 
		$$
		\Pr\left[N' \ge c n^{2/3} r(n)\right] 
		\le \Pr\left[N' \ge \frac{(2 + \rho)c n^{2/3} r(n)}{3}\right] 
		\le \Pr\left[N' \ge (2 + \rho) \epsilon'M' \right],
		$$
		where $\rho < 1/10$, and by
		Corollary \ref{lemma:rg:critical:giant-concentration} 
		\begin{align*}
			\Pr\left[N' \ge (2 + \rho) \epsilon'M' \right]
			= {\e}^{-\Omega(\left({\varepsilon'}\right)^3 M' )} 
			={\e}^{-\Omega(r(n)^3)}.
		\end{align*}
		Since, 
		$|S(X_{t+1})| \le |S(X_t)| + N'$, 
		a union bound implies that $|S(X_{t+1})| \le c(t+1)n^{2/3} r(n)$ 
		with probability at least $1-O(r(n)^{-1})$ as desired.
		
		Finally, to bound $Q(X_{t+1})$ we observe that
		if $C_1,\dots,C_k$ are all the new components in order of their sizes,
		then by Lemma \ref{lemma:rg:critical:exp-crude-bound} and Markov's inequality:
		$$
		\Pr\left[\sum_{j \ge 2} |C_j|^2 \ge n^{4/3} r(n)\right] = O(r(n)^{-1}).
		$$
		Thus,
		$Q(X_{t+1}) \le Q(X_t) + n^{4/3} r(n) \le (t+1)n^{4/3} r(n)$
		with probability at least $1-O(r(n)^{-1})$ as claimed.
		The lemma follows from a union bound. 
	\end{proof}
	
	\begin{proof}[Proof of Lemma~\ref{lemma:critical:q<2:estimate-lambda}]
		Since $\mathcal{R}_1(X_t) = O\left(n^{4/3} f(n)^2 (\log f(n))^{-1}\right)$, by Hoeffding's inequality
		$$
		A(X_t) \in \left[\frac{n- n^{2/3}f(n)}{q} ,\frac{n+ n^{2/3}f(n)}{q} \right] =: J,
		$$
		with probability at least $1 - O(f(n)^{-1})$.
		The new connected components in $X_{t+1}$ correspond to those of a $G(A(X_t),\frac{1+\varepsilon}{A(X_t)})$ random graph, where
		$\varepsilon = A(X_t) q/n -1$. If $A(X_t) \in J$, then
		\begin{equation}
		\label{eq:boundone}
		-n^{-1/3}f(n) \le \varepsilon \le n^{-1/3}f(n).
		\end{equation}
		
		Since $A(X_t) \in J$ we can also define $m := A(X_t) = \theta n$ for $\theta \in (1/2q,1)$, and $\lambda := \varepsilon m^{1/3}$, so we may rewrite (\ref{eq:boundone}) as
		$$
		-f(n) \le - \theta^{1/3} f(n) \le \lambda \le \theta^{1/3} f(n) \le f(n),
		$$
		and the lemma follows. 
	\end{proof}
	
	An important tool used in the proof of Lemma~\ref{lemma:critical:q<2:iterative-process} is the following 
	coupling on a (lazy) symmetric random walk on $\mathbb{Z}$; its proof is given in Appendix~\ref{Appendix RW}.
	
	\begin{theorem}
		\label{thm:prelim:rw-coupling}
		Let $A > 0$ and let 
		$A \le c_1,c_2,\dots,c_m \le 2A$ be positive integers.
		Let $r \in (0,1/2]$ and consider the sequences of random variables $X_1,\dots,X_m$ and $Y_1,\dots,Y_m$ where
		for each $i = 1,\dots,m$:
		$X_i = c_i$ with probability $r$; $X_i = -c_i$ with probability $r$;  $X_i = 0$ otherwise and $Y_i$ has the same distribution as $X_i$.
		Let $X = \sum_{i=1}^m X_i$ and $Y = \sum_{i=1}^mY_i$. Then for any $d > 0$, there exist a constant $\delta := \delta(r) > 0$ and a coupling of $X$ and $Y$ such that
		\[\Pr[d + 2A \ge X - Y \ge d]  \ge 1- \frac{\delta (d+A)}{A\sqrt{m}}.\]
	\end{theorem}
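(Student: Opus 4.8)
The plan is to make the difference $X-Y$ behave like a symmetric random walk on $\Z$ that we drive up to the target window and then freeze. The first ingredient is a \emph{per-index coupling}: for each $i$ we fix a coupling of $(X_i,Y_i)$ that respects both marginals and forces $\xi_i:=X_i-Y_i$ into $\{0,c_i,-c_i\}$, with $\Pr[\xi_i=c_i]=\Pr[\xi_i=-c_i]=s$ for a constant $s=s(r)>0$. Such a coupling exists whenever $r<1/2$: place mass $s/2$ on each of $(c_i,0),(0,-c_i),(0,c_i),(-c_i,0)$ and split the remaining mass $1-2s$ on $\{X_i=Y_i\}$ among $(c_i,c_i),(0,0),(-c_i,-c_i)$ so that the marginals come out right; this is feasible for $s=\tfrac12\min(2r,1-2r)$. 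The degenerate case $r=1/2$, where $\xi_i$ is forced into $\{0,\pm2c_i\}$, requires a minor variant and does not occur in our application (there $r=(q-1)/q^2<1/4$).

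Next I would set $W_0=0$, $W_k=\sum_{i\le k}\xi_i$, and let $\tau=\inf\{k\ge 1:W_k\ge d\}$. The coupling uses the per-index coupling above for the first $\tau$ steps and the identical coupling $X_i=Y_i$ for $i>\tau$; since $\tau$ is a stopping time and every per-index coupling preserves the individual marginals, this is a legitimate coupling of $X$ and $Y$, and $X-Y=W_{\tau\wedge m}$. The key deterministic point is that the overshoot is tiny: on $\{\tau\le m\}$ we have $W_{\tau-1}<d$ and $W_\tau=W_{\tau-1}+\xi_\tau$ with $\xi_\tau=c_\tau\le\max_i c_i\le 2A$, so $W_\tau\in[d,d+2A)$. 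Hence $\{\tau\le m\}\subseteq\{d\le X-Y\le d+2A\}$, and it suffices to prove $\Pr[\tau\le m]\ge 1-\delta(d+A)/(A\sqrt m)$ (the statement being vacuous when the right-hand side is $\le 0$).

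To lower-bound $\Pr[\tau\le m]$ I would run a reflection argument on the un-stopped walk $W'$ (the sum of all $m$ independent $\xi$-steps, so $W'_k=W_k$ for $k\le\tau$). Conditioning on $\{\tau=k\}$ and reflecting the post-$\tau$ increments (which are symmetric), the measure-preserving involution $(\xi_{k+1},\dots,\xi_m)\mapsto(-\xi_{k+1},\dots,-\xi_m)$ shows $\Pr[W'_m<d\mid\tau=k]=\Pr[W'_m>2W'_k-d\mid\tau=k]$; since $d\le W'_k<d+R$ with $R:=\max_i c_i\le 2A$, we have $2W'_k-d<d+2R$, so on $\{\tau=k\}$ the event $\{W'_m\ge d+2R\}$ is contained in $\{W'_m>2W'_k-d\}$. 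Summing over $k\le m$ and using that $\{W'_m\ge d+2R\}\subseteq\{\tau\le m\}$ gives
\[
\Pr[\tau\le m]\;\ge\;\Pr[W'_m\ge d]\;+\;\Pr[W'_m\ge d+2R].
\]
Now $W'_m$ is a sum of $m$ independent mean-zero increments with $\var(\xi_i)=2sc_i^2\in[2sA^2,8sA^2]$ and $\E|\xi_i|^3\le 8sA^3$, so its normalized third moment is of order $m^{-1/2}$ and the Berry--Esseen theorem yields $\Pr[W'_m\ge t]\ge \tfrac12-C(r)\,(t+A)/(A\sqrt m)$ for all $t\ge 0$, with $C(r)$ depending only on $r$. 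Applying this with $t=d$ and with $t=d+2R\le d+4A$ and adding, we obtain $\Pr[\tau\le m]\ge 1-\delta(d+A)/(A\sqrt m)$ for a suitable $\delta=\delta(r)>0$, which is the claim.

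I expect the main obstacle to be making the reflection step fully rigorous for a general (non-simple) lattice walk: because the overshoot $W'_\tau-d$ is random, the reflected endpoint $2W'_\tau-d$ varies across paths, so one cannot appeal to a clean bijection onto $\{W'_m<d\}$ and must instead argue via the one-sided inclusion above. A secondary, bookkeeping-type obstacle is tracking the dependence on $r$ (through $s(r)$) uniformly in $m$ in the Berry--Esseen estimate and disposing of the $r=1/2$ boundary case.
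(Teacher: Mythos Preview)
Your argument is correct and follows the same route as the paper's: couple step by step, freeze once the running difference first reaches $d$, lower-bound the hitting probability by a reflection inequality for the un-stopped symmetric walk, and close with Berry--Esseen. The only difference is that the paper samples $X_i,Y_i$ \emph{independently} before freezing (so a single step of $X_i-Y_i$ can be as large as $2c_i\le 4A$, making the paper's stated $2A$ overshoot slightly loose), whereas your tailored per-index coupling keeps $|\xi_i|\le c_i\le 2A$ and delivers the stated overshoot bound directly, at the acknowledged cost of excluding the boundary case $r=1/2$, which indeed does not arise in the application.
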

	
	We note that Theorem~\ref{thm:prelim:rw-coupling} is a generalization of the following more standard fact which will also be useful to us.
	
	\begin{lemma} [\cite{ABthesis}, Lemma 2.18] 
		\label{lemma:binomialcoupling}
		Let $X$ and $Y$ be binomial random variables with parameters $m$ and $r$, where $r \in (0,1)$
		is a constant. Then, for any integer $ y>0$, there exists a coupling $(X, Y)$ such that for a suitable
		constant $\gamma = \gamma(r) > 0$, 
		\[\Pr[X-Y = y] \ge 1 - \frac{\gamma y}{\sqrt{m}}.\]
	\end{lemma}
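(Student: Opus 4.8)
The plan is to obtain the coupling abstractly, by reducing the claim to a total‑variation estimate between $\mathrm{Bin}(m,r)$ and an integer translate of itself. Write $\mu=\mathrm{Bin}(m,r)$ and let $\mu^{(y)}$ be the law of $W+y$ when $W\sim\mu$. Since we only need to exhibit \emph{some} coupling with the stated property, it is enough to take a maximal coupling: let $(A,B)$ be a coupling of $\mu$ and $\mu^{(y)}$ with $\Pr[A=B]=1-\lVert\mu-\mu^{(y)}\rVert_{\textsc{\tiny TV}}$, and set $X=A$, $Y=B-y$. Then $X\sim\mu$, and because $B\sim\mu^{(y)}$ the variable $Y=B-y$ has law $\mu$, so $(X,Y)$ is a legitimate coupling and $\Pr[X-Y=y]=\Pr[A=B]=1-\lVert\mu-\mu^{(y)}\rVert_{\textsc{\tiny TV}}$. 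It is worth stressing that this coupling is deliberately \emph{not} built by matching the $m$ Bernoulli coordinates one at a time: in any coupling of two $\mathrm{Bern}(r)$ variables the events $\{\xi_i=1,\eta_i=0\}$ and $\{\xi_i=0,\eta_i=1\}$ are equiprobable, so a coordinate‑wise product coupling would force $X-Y$ to be symmetric about $0$ and could never concentrate at $y$; the gain comes entirely from the non‑product maximal coupling. It therefore remains to show $\lVert\mu-\mu^{(y)}\rVert_{\textsc{\tiny TV}}\le \gamma y/\sqrt m$ for a constant $\gamma=\gamma(r)$, and since the lemma is vacuous once $\gamma y/\sqrt m\ge 1$ we may assume $y=O(\sqrt m)$.

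The second step reduces $y$ to a unit shift: total‑variation distance is translation invariant, so the triangle inequality gives
\[
\lVert\mu-\mu^{(y)}\rVert_{\textsc{\tiny TV}}\ \le\ \sum_{j=1}^{y}\lVert\mu^{(j-1)}-\mu^{(j)}\rVert_{\textsc{\tiny TV}}\ =\ y\,\lVert\mu-\mu^{(1)}\rVert_{\textsc{\tiny TV}}.
\]
The third step bounds the unit shift using unimodality of the binomial pmf: writing $p_k=\Pr[\mathrm{Bin}(m,r)=k]$, the successive ratios $p_k/p_{k-1}=\tfrac{(m-k+1)r}{k(1-r)}$ are strictly decreasing in $k$ and cross $1$ exactly once, so $p_k-p_{k-1}$ changes sign exactly once. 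Hence $\sum_k|p_k-p_{k-1}|$ telescopes separately on each side of the mode and equals $2\max_k p_k$, giving $\lVert\mu-\mu^{(1)}\rVert_{\textsc{\tiny TV}}=\tfrac12\sum_k|p_k-p_{k-1}|=\max_k p_k$. A routine Stirling estimate (equivalently, the local central limit theorem for the binomial) shows $\max_k p_k=O\bigl(1/\sqrt{m\,r(1-r)}\bigr)$, which is $O(1/\sqrt m)$ with the implied constant depending only on the fixed parameter $r$. Chaining the three bounds yields a coupling with $\Pr[X-Y=y]\ge 1-\gamma y/\sqrt m$ for a suitable $\gamma=\gamma(r)$, as claimed.

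There is no genuine obstacle in this argument; the only steps requiring (entirely classical) care are the telescoping identity $\lVert\mu-\mu^{(1)}\rVert_{\textsc{\tiny TV}}=\max_k p_k$ and the $O(1/\sqrt m)$ bound on the modal probability of a binomial. If one prefers to avoid the unimodality bookkeeping, one can instead estimate $\sum_k|p_k-p_{k-1}|\le\sum_k p_{k-1}\bigl|\tfrac{(m-k+1)r}{k(1-r)}-1\bigr|$ and control the right‑hand side by splitting according to whether $k$ is within $O(\sqrt m)$ of the mean (where the ratio is $1+O(|k-rm|/m)$ and one uses $\E\,|W-\E W|=O(\sqrt m)$) or far from it (where $p_{k-1}$ is exponentially small); either route produces the constant $\gamma=\gamma(r)$.
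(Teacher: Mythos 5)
Your proof is correct. The maximal-coupling reduction is sound: with $(A,B)$ an optimal coupling of $\mu=\mathrm{Bin}(m,r)$ and its translate $\mu^{(y)}$, setting $Y=B-y$ does give a valid coupling with $\Pr[X-Y=y]=1-\lVert\mu-\mu^{(y)}\rVert_{\textsc{\tiny TV}}$; the triangle-inequality reduction to a unit shift, the telescoping identity $\lVert\mu-\mu^{(1)}\rVert_{\textsc{\tiny TV}}=\max_k p_k$ (valid by unimodality of the binomial pmf, even if the mode is attained at two adjacent points), and the $O(1/\sqrt{mr(1-r)})$ bound on the modal probability are all standard and correctly applied.

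This is, however, a genuinely different route from the one this paper takes for the corresponding results. The paper only cites the lemma, but its own generalization (Theorem~\ref{thm:prelim:rw-coupling}) is proved \emph{dynamically}: sample the two walks independently until the difference process first reaches the target value, then glue them together, and lower-bound the hitting probability via the reflection principle for the maximum of a lazy symmetric walk together with a Berry--Ess\'een estimate. Your static argument is cleaner and entirely elementary for the i.i.d.\ Bernoulli case --- it avoids Berry--Ess\'een altogether and yields the exact identity for the unit-shift total variation. The trade-off is robustness: the run-independently-then-merge construction extends directly to sums with varying step sizes $c_i$ (where one only wants the difference to land in a window $[d,d+2A]$, and where the translate $\mu^{(d)}$ need not be close to $\mu$ in total variation because of lattice effects), which is exactly the setting of Theorem~\ref{thm:prelim:rw-coupling} needed elsewhere in the paper. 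For the binomial lemma as stated, either approach suffices.
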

	
	\begin{proof}[Proof of Lemma \ref{lemma:critical:q<2:iterative-process}]
		
		For ease of notation let $\mathcal{I}_k = \mathcal{I}_{k}(g)$ and $\hat{N}_{k} = \hat{N}_{k}(t,g)$ for each $k \ge 1$.
		Also recall the notations $W_t$, $M(X)$ and $D(X)$ defined in Section~\ref{subsec:couple}.
		Let $\hat{I} (X_t)$ and $\hat{I} (Y_t)$ be the isolated vertices in $W_t$ from $X_t$ and $Y_t$, respectively.
		
		Let $k^* := \min_k \{ k \in \mathbb{Z}  : g(n)^{2^{k}} \ge \vartheta n^{1/3} \}$. 
		The activation of the non-trivial components in $M(X_t)$ and $M(Y_t)$
		whose sizes are not in $\{1\} \cup \mathcal{I}_1 \cup \dots \cup \mathcal{I}_{k^*} $ 
		is coupled using the matching $W_t$. That is, $c \in M(X_t)$ and $W_t(c) \in M(Y_t)$ are activated simultaneously with probability $1/q$. 
		The components in $D(X_t)$ and $D(Y_t)$ are activated independently. 
		After independently activating these components, the  number of active vertices from each copy is not necessarily the same.
		The idea is to couple the activation of the remaining components in $M(X_t)$ and $M(Y_t)$ in way that corrects this difference. 
		
		Let $A_0(X_t)$ and $A_0(Y_t)$ be number of active vertices from $X_t$ and $Y_t$, respectively,
		after the activation of the components from $D(X_t)$ and $D(Y_t)$. 
		Observe that $\E[A_0(X_t)] = \E[A_0(Y_t)] =: \mu$ and that by
		Hoeffding's inequality, for any $\eta(n) > 0$
		\[\Pr \left[  \lvert A_0(X_t) - \mu \rvert \ge \sqrt{\eta(n) Z_t } \right] \le 2e^{-2 \eta(n)}.\]
		
		Recall $Z_t \le \frac{C n^{4/3}}{h(n)}$. Hence, with probability at least $1 - 4\exp\left(-2\eta(n)\right)$,
		\[d_0 := \left|A_0(X_t) - A_0(Y_t)\right|  \le 2 \sqrt{\eta(n) Z_t } \le \frac{2{\sqrt{C \eta(n)}  n^{2/3}}}{\sqrt{h(n)}}.\]
		
		We first couple the activation of the components in $\mathcal{I}_1$, then in $\mathcal{I}_2$ and so on up to $\mathcal{I}_{k^*}$. 
		Without loss of generality, suppose that $d_0 = A_0(Y_t) - A_0(X_t)$.
		If $d_0 \le \frac{\vartheta n^{2/3}}{g(n)^2}$,
		we simply couple the components with sizes in $\mathcal{I}_1$ using the matching $W_t$. 
		Suppose otherwise that $ d_0 > \frac{\vartheta n^{2/3}}{g(n)^2}$.
		Let $A_1(X_t)$ and $A_1(Y_t)$ be random variables corresponding to the numbers of active vertices from $M(X_t)$ and $M(Y_t)$
		with sizes in $\mathcal{I}_1$ respectively.
		By assumption $\hat{N}_1 \ge b g(n)^{3}$. Hence,
		Theorem~\ref{thm:prelim:rw-coupling} implies that
		for $\delta = \delta(q) > 0$,
		there exists a coupling for the activation of the components in $M(X_t)$ and $M(Y_t)$ with sizes in $\mathcal{I}_1$
		such that 
		$$
		d_0 \ge A_1(X_t) - A_1(Y_t) \ge d_0 - \frac{\vartheta n^{2/3}}{g(n)^2} 
		$$
		with probability at least 
		\[1 - \frac{\delta \left( d_0 - \frac{\vartheta n^{2/3}}{2g(n)^2}\right)}{\frac{\vartheta n^{2/3}}{2g(n)^2} \sqrt{b g(n)^{3}}} 
		\ge 1 - \frac{\delta d_0}{\frac{\vartheta n^{2/3}}{2g(n)^2} \sqrt{b g(n)^{3}}}  
		\ge 1 - \frac{4\delta \sqrt{C \eta(n) g(n)} }{\vartheta  \sqrt{b h(n)} } 
		\ge 1 - \sqrt{\frac{\eta(n)g(n)}{ h(n)}} ,\]
		where the last inequality holds for $\vartheta $ large enough. 
		Let $d_1 := \left(A_0(Y_t) - A_0(X_t)\right) + \left(A_1(Y_t)  - A_1(X_t)\right)$. 
		If the coupling succeeds,  
		we have
		$0 \le d_1 \le \frac{\vartheta n^{2/3}}{g(n)^2}$. 
		Thus, we have shown that $d_1 \le \frac{\vartheta n^{2/3}}{g(n)^2}$ with probability at least
		$$\left(1 - 4e^{-2\eta(n)}\right)\left(1 - \sqrt{\frac{\eta(n)g(n)}{ h(n)}}\right) 
		\ge 1 - 4e^{-2\eta(n)} - \sqrt{\frac{\eta(n)g(n)}{ h(n)}}.$$
		
		Now, let $d_k$ be the difference in the number of active vertices
		after activating the components in $\mathcal{I}_k$.
		Suppose that $d_k \le \frac{\vartheta  n^{2/3}}{g(n)^{2^{k}}}$, for $k \le k^*$.
		By assumption, $\hat{N}_{k+1} \ge b g(n)^{3 \cdot 2^{k}}$.
		Thus, 
		using Theorem~\ref{thm:prelim:rw-coupling} again we get that there exists a coupling for the activation of the components in $\mathcal{I}_{k+1}$
		such that
		\[\Pr\left[d_{k+1} 	\le \frac{\vartheta n^{2/3}}{g(n)^{2^{k+1}}} \,\middle\vert\, d_k 	\le \frac{\vartheta  n^{2/3}}{g(n)^{2^{k}}} \right] 
		\ge 1 - \frac{\delta d_k}{\frac{\vartheta n^{2/3}}{2g(n)^{2^{k+1}}} \sqrt{b g(n)^{3 \cdot 2^{k}}}}  
		\ge 1 - \frac{2\delta}{\sqrt{b}g(n)^{2^{k-1}}}.\]
		
		Therefore, there is a coupling of the activation components in $\mathcal{I}_2, \mathcal{I}_3, \dots, \mathcal{I}_{k^*}$ such that
		\[\Pr\left[ d_{k^*} \le n^{1/3} \,\middle\vert\, d_1 \le \frac{\vartheta n^{2/3}}{g(n)^2} \right] \ge \prod_{k = 2}^{k^*}  \left(1 - \frac{\delta'}{g(n)^{2^{k-1}}} \right),\]
		where $\delta' = 2\delta/\sqrt{b}$.
		Note that for a suitable constant $\delta'' > 0$, we have
		\begin{equation}
		\prod_{k \ge 2}^{k^*}  \left(1 - \frac{\delta'}{g(n)^{2^{k-1}}} \right) 
		= \exp\left( \sum_{k \ge 1}^{k^*} \ln\left(1 - \frac{\delta'}{g(n)^{2^{k}}}\right)\right) 
		\ge \exp\left(-\delta'' \sum_{k \ge 1}^{k^*} \frac{1}{g(n)^{2^{k}}}\right),  \notag
		\end{equation}
		and since
		\[\sum_{k \ge 1}^{k^*} \frac{1}{g(n)^{2^{k}}} \le
		\sum_{k \ge 1}^\infty \frac{1}{g(n)^{2^{k}}} \le \sum_{k \ge 1}^\infty  \frac{1}{g(n)^{{k}}}  \le \frac{1}{g(n)^2-g(n)}, \] 
		we get
		\[\prod_{k = 2}^{k^*}  \left(1 - \frac{\delta'}{g(n)^{2^{k-1}}} \right)  \ge \exp\left( -\frac{\delta''}{g(n)^2-g(n)} \right) \ge 1 - \frac{\delta''}{g(n)^2-g(n)}.\]
		
		Finally, we couple $\hat{I}(X_t)$ and $\hat{I}(Y_t)$ to fix $d_{k^*}$. 
		By assumption $I(X_t), I(Y_t) = \Omega(n)$, so $m := |\hat{I}(X_t)| = |\hat{I}(Y_t)| = \Omega(n)$.
		Let $A_I(X_t)$ and $A_I(Y_t)$ denote the total number of activated isolated vertices from $\hat{I}(X_t)$ and $\hat{I}(Y_t)$ respectively.
		We activate all isolated vertices independently, so $A_I(X_t)$ and $A_I(Y_t)$ can be seen as two binomial random variables with the same parameters $m$ and $1/q$.
		Lemma~\ref{lemma:binomialcoupling} gives a coupling for binomial random variables such that for $r \le n^{1/3}$,
		\[\Pr\left[ A_I(X_t) - A_I(Y_t) = r \right] \ge 1 - O\left(\frac{1}{n^{1/6}}\right) = 1 - o\left(\frac{1}{g(n)}\right).\]
		
		Therefore,
		\[\Pr \left[A(X_t) = A(Y_t)\right]  \ge 1 - 4e^{-2\eta(n)} - \sqrt{\frac{\eta(n)g(n)}{ h(n)}} - O\left(\frac{1}{g(n)}\right),\]
		as claimed.
	\end{proof}
	
	\section{New mixing time for the Glauber dynamics via comparison}
	\label{sec:GD}
	
	In this section, we establish a comparison inequality between the mixing times of the CM dynamics and of the \emph{heat-bath Glauber dynamics} 
	for the random-cluster model for a general graph $G = (V,E)$.
	The Glauber dynamics is defined as follows.
	Given a random-cluster configuration $A_t$, one step of this chain is given by:
	\begin{enumerate}[(i)]
		\item pick an edge $e \in E$ uniformly at random; 
		\item replace the current configuration $A_t$ by $A_t\cup \{e\}$ with probability
		$$
		\frac{
			\mu_{G,p,q} (A_t \cup \{e\})
		}
		{
			\mu_{G,p,q} (A_t \cup \{e\}) + 
			\mu_{G,p,q} (A_t \setminus \{e\})
		};
		$$ 
		\item else replace $A_t$ by $A_t \setminus \{e\}$.	
	\end{enumerate} 
	It is immediate from its definition this chain is reversible with respect to $\mu = \mu_{G,p,q}$ and thus converges to it.
	
	The following comparison inequality was proved in \cite{BSmf}:
	\begin{equation}
		\label{eq:CM-GD}
		\gsm(\mathrm{GD}) \le O(m\log m) \cdot \gsm(\mathrm{CM}),
	\end{equation}
	where $m$ denotes the number of edges in $G$, 
	and $\gs(\mathrm{CM})$, $\gs(\mathrm{GD})$ the spectral gaps of the transition matrices of the CM and Glauber dynamics, respectively.
	The standard connection between the spectral gap and the mixing time (see, e.g., Theorem 12.3 in \cite{LPW}) yields
	\begin{equation}
	\label{eq:comp:inequality}
	\taumixGD \le O(m\log m) \cdot \taumixCM \cdot \log \mu_{\mathrm{min}}^{-1},
	\end{equation}
	where $\mu_{\mathrm{min}} = \min_{A \in \Omega} \mu(A)$ with $\Omega$ denoting the set of random-cluster configurations on $G$. 
	In some cases, such as in the mean-field model with $p = \Theta(n^{-1})$,
	$\log \mu_{\mathrm{min}}^{-1} = \Omega(m \log m)$, 
	and a factor of $O(m^2 (\log m)^2)$ is thus lost in the comparison.
	We provide here an improved version of this inequality.
	
	\begin{theorem}
		\label{thm:GDCM-comparison}
		For any $q > 1$ and any $p\in (0,1)$,
		the mixing time of Glauber dynamics for the random-cluster model on a graph $G$ with $n$ vertices and $m$ edges satisfies
		$$
		\taumixGD \le O\left(m n \log n +   p m^2 \log n\cdot\log \frac{1}{\min\{p, 1-p\}}\right)\cdot \taumixCM.
		$$
	\end{theorem}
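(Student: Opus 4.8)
The plan is to build on the spectral-gap comparison $\gsm(\mathrm{GD}) \le O(m\log m)\cdot\gsm(\mathrm{CM})$ from \eqref{eq:CM-GD}, together with the standard relaxation-versus-mixing relation $\gsm(\mathrm{CM}) = O(\taumixCM)$, which already gives $\gsm(\mathrm{GD}) \le O(m\log m)\cdot\taumixCM$. The only lossy step in \eqref{eq:comp:inequality} is the passage from this spectral-gap bound to a mixing-time bound: the crude $\chi^2$/$L^2$ estimate from a point-mass start contributes the factor $\log\mu_{\mathrm{min}}^{-1}$, which in the mean-field regime $p=\Theta(n^{-1})$ is as large as $\Theta(m\log n)$. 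The idea is to replace $\log\mu_{\mathrm{min}}^{-1}$ by a ``typical-configuration'' analogue by prepending a short \emph{burn-in} of Glauber steps, after which the configuration already has essentially the stationary edge density; the value of $\mu$ on such configurations is exponentially larger than $\mu_{\mathrm{min}}$.

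The first ingredient is a burn-in lemma: from an \emph{arbitrary} initial configuration, after $t_0 = O(m\log m)$ Glauber steps the law $\nu$ of the chain is supported, up to total-variation error $1/16$, on the down-set $\mathcal{G} = \{A : |A|\le K\}$ with $K = O(pm+1)$. To prove this I would use that for $q\ge1$ the random-cluster heat-bath dynamics is monotone (its acceptance probability, which equals $p$ or $p/(p+(1-p)q)$ depending on whether the endpoints of the chosen edge lie in distinct components, is non-decreasing in the configuration, so there is a grand coupling preserving $\subseteq$); since $\mathcal G$ is a down-set it suffices to start from the full edge set $E$. Coupling this chain with the $q=1$ (independent bond percolation) heat-bath chain through common uniform random variables, the containment $A_t\subseteq A_t^{\mathrm{perc}}$ holds for all $t$ because the random-cluster threshold never exceeds $p$. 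After $t_0=O(m\log m)$ steps every edge has been selected at least once except with probability at most $1/32$ (coupon collector), and on that event $|A_{t_0}^{\mathrm{perc}}|$ is distributed exactly as $\mathrm{Bin}(m,p)$; a Chernoff/Poisson tail bound then gives $\Pr[\,|A_{t_0}|>K\,]\le 1/16$ for a suitable $K=O(pm+1)$. This is the one place the hypothesis $q>1$ is used.

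Given the burn-in, write $\tilde\nu=\nu(\,\cdot\mid\mathcal G\,)$, so $\|\nu-\tilde\nu\|_{\mathrm{TV}}=\nu(\mathcal G^c)\le 1/16$ and $\tilde\nu$ is supported on $\mathcal G$. Then $\|\tilde\nu/\mu-1\|_2^2\le\sum_{A\in\mathcal G}\tilde\nu(A)^2/\mu(A)\le\mu_{\mathcal G}^{-1}$, where $\mu_{\mathcal G}:=\min_{A\in\mathcal G}\mu(A)$. Since the heat-bath Glauber transition matrix $P$ is reversible and positive semidefinite with gap $\gs(\mathrm{GD})$, the $L^2$-contraction bound gives $\|\tilde\nu P^{t_1}-\mu\|_{\mathrm{TV}}\le\frac12 e^{-t_1/\gsm(\mathrm{GD})}\mu_{\mathcal G}^{-1/2}\le 1/16$ once $t_1=\gsm(\mathrm{GD})\cdot O(\log\mu_{\mathcal G}^{-1})$, and combining the two phases $\taumixGD\le t_0+t_1 = O(m\log m)+\gsm(\mathrm{GD})\cdot O(\log\mu_{\mathcal G}^{-1})$. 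It then remains to estimate $\log\mu_{\mathcal G}^{-1}$: for $A\in\mathcal G$,
\[
\log\frac{1}{\mu(A)} = |A|\log\frac1p + (m-|A|)\log\frac1{1-p} - c(A)\log q + \log Z \le K\log\frac1p + m\log\frac1{1-p} + n\log q,
\]
using $Z\le q^n$. Substituting $K=O(pm+1)$ and carrying out a short case analysis on whether $p\le 1/2$ (and separately dispatching the degenerate ranges where $p$ is, say, doubly-exponentially close to $0$ or $1$, in which the Glauber dynamics trivially mixes in $O(m\log m)$ steps) yields $\log\mu_{\mathcal G}^{-1}=O\big(n\log q+pm\log\frac1{\min\{p,1-p\}}\big)$. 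Plugging in $\gsm(\mathrm{GD})\le O(m\log m)\cdot\taumixCM$ and $\log m=O(\log n)$ then gives exactly $\taumixGD\le O\big(mn\log n+pm^2\log n\cdot\log\frac1{\min\{p,1-p\}}\big)\cdot\taumixCM$.

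I expect the main obstacle to be the burn-in lemma, and specifically controlling the edge count \emph{from a worst-case, possibly edge-dense, start}: one must argue that $O(m\log m)$ Glauber steps suffice not merely to ``forget'' the initial configuration but to bring the number of edges all the way down to its stationary scale $O(pm)$. Monotonicity of the $q\ge1$ random-cluster heat-bath dynamics together with stochastic domination by independent percolation is the structural fact that makes this go through, and it is the reason the argument is restricted to $q>1$. Everything else --- the coupon-collector estimate, the $L^2$ contraction, and the final bound on $\log\mu_{\mathcal G}^{-1}$ --- is routine; the only mild nuisance is making that last estimate uniform over the entire range $p\in(0,1)$, which is why the statement carries the factor $\log\frac1{\min\{p,1-p\}}$ rather than $\log\frac1p$.
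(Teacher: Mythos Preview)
Your proposal is correct and follows essentially the same approach as the paper: burn in for $O(m\log m)$ Glauber steps so that the configuration lies (up to small TV error) in the set $\{A:|A|=O(pm)\}$, then replace the crude $\log\mu_{\min}^{-1}$ in the spectral-to-mixing conversion by $\log\mu_{\mathcal G}^{-1}=O(n+pm\log\frac{1}{\min\{p,1-p\}})$ using $Z\le q^n$. The paper packages the burn-in plus $L^2$-contraction step as a standalone lemma (Theorem~\ref{thm:mt-spectral-gap}), and for the burn-in it uses the slightly simpler observation that each edge update opens with probability at most $p$ (so after coupon-collector time the edge set is dominated by $G(n,p)$) rather than invoking monotonicity of the dynamics, but the content is the same.
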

	
	We note that in the mean-field model, where $m = \Theta(n^2)$ and we take $p = \la/n$ with $\la = O(1)$, this theorem yields that 
	$\taumixGD = O(n^3 (\log n)^2) \cdot \taumixCM$, which establishes Theorem~\ref{thm:intro:comparison} from the introduction and improves by a factor of $O(n)$ the best previously known bound for the Glauber dynamics on the complete graph.
	
	To prove Theorem~\ref{thm:GDCM-comparison} we use the following standard fact.
	
	\begin{theorem}
		\label{thm:mt-spectral-gap}
		Let $P$ be a Markov chain on state space $\Gamma$ with stationary distribution $\pi$.
		Suppose there exist a subset of states $\Gamma_0 \subseteq \Gamma$ and a time $T$, such that
		for any $t\ge T$ and any $x \in \Gamma$ we have
		$
			P^t(x, \Gamma\setminus\Gamma_0) \le \frac{1}{16}.
		$
		Then 
		\begin{equation}
			\label{def:pi0}
			\taumix^P = O\left(T + \gsm(P) \log (8\pi_0^{-1})\right), 
		\end{equation}
		where $\pi_0 := \min_{\omega\in \Gamma_0} \pi(\omega)$.
	\end{theorem}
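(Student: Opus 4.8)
The plan is to reduce the statement to the standard spectral–gap mixing bound for reversible chains, but applied to the set $\Gamma_0$ rather than to all of $\Gamma$. Fix an arbitrary starting state $x \in \Gamma$ and set $\epsilon := P^T(x,\Gamma\setminus\Gamma_0)$, so $\epsilon \le \tfrac1{16}$ by hypothesis. I would decompose the time-$T$ distribution as $P^T(x,\cdot) = (1-\epsilon)\,\nu_0 + \epsilon\,\nu_1$, where $\nu_0$ is $P^T(x,\cdot)$ conditioned on $\Gamma_0$ — a probability measure supported on $\Gamma_0$ — and $\nu_1$ is $P^T(x,\cdot)$ conditioned on $\Gamma\setminus\Gamma_0$. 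Since $P^{T+s}(x,\cdot) = (1-\epsilon)\,\nu_0 P^s + \epsilon\,\nu_1 P^s$ for every $s\ge 0$, the triangle inequality together with the trivial bound $\|\nu_1 P^s - \pi\|_{\mathrm{TV}}\le 1$ gives
\[
\|P^{T+s}(x,\cdot) - \pi\|_{\mathrm{TV}} \le \|\nu_0 P^s - \pi\|_{\mathrm{TV}} + \epsilon \le \|\nu_0 P^s - \pi\|_{\mathrm{TV}} + \tfrac1{16}.
\]

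The second step is to control $\|\nu_0 P^s - \pi\|_{\mathrm{TV}}$ via the usual $L^2$-contraction estimate behind Theorem~12.3 of~\cite{LPW}, applied to a general starting distribution: for reversible $P$ and any probability measure $\nu$ on $\Gamma$, $4\|\nu P^s - \pi\|_{\mathrm{TV}}^2 \le e^{-2s\,\gs(P)}\big(\sum_{y}\nu(y)^2/\pi(y) - 1\big)$. Because $\nu_0$ is supported on $\Gamma_0$ and is a probability vector, $\sum_y \nu_0(y)^2/\pi(y) \le \pi_0^{-1}\sum_y \nu_0(y)^2 \le \pi_0^{-1}$, and hence $\|\nu_0 P^s - \pi\|_{\mathrm{TV}} \le \tfrac12\,\pi_0^{-1/2}\,e^{-s\,\gs(P)}$. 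Taking $s := \lceil \gsm(P)\log(8\pi_0^{-1})\rceil$ makes $e^{-s\,\gs(P)} \le \pi_0/8$, so this term is at most $\tfrac1{16}\sqrt{\pi_0}\le\tfrac1{16}$; combined with the previous display, $\|P^{T+s}(x,\cdot)-\pi\|_{\mathrm{TV}}\le \tfrac18 < \tfrac14$ for every $x$, whence $\taumix^P \le T + s = O\big(T + \gsm(P)\log(8\pi_0^{-1})\big)$.

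This is essentially a folklore sharpening of the $\gsm\log\pi_{\mathrm{min}}^{-1}$ bound, so there is no serious obstacle; the only points requiring care are: (i) the mass $\epsilon$ outside $\Gamma_0$ must be absorbed purely through the hypothesis $\epsilon\le\tfrac1{16}$ and the trivial $\mathrm{TV}\le 1$ bound — no lower bound on $\pi(\Gamma\setminus\Gamma_0)$ is available, which is why one cannot simply bound the $L^2$ norm of $P^T(x,\cdot)$ directly; (ii) $\pi_0$ rather than $\pi_{\mathrm{min}}$ enters precisely because, after the conditioning, $\nu_0$ lives on $\Gamma_0$, so only the restriction of $\pi$ to $\Gamma_0$ appears in the $L^2$ estimate; and (iii) the $L^2$-to-$\mathrm{TV}$ conversion contributes the factor $\pi_0^{-1/2}$, which is exactly why the bound carries the constant $8$ inside the logarithm, so that $e^{-s\,\gs(P)}=\pi_0/8$ cancels it cleanly. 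Reversibility is used only in the $L^2$ estimate, which is harmless since the chain to which this theorem is applied — the heat-bath Glauber dynamics — is reversible.
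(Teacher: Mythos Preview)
Your proof is correct, and it takes a somewhat different route from the paper's. Both arguments rest on the same core idea---after time $T$ the chain is mostly in $\Gamma_0$, and spectral contraction from $\Gamma_0$ involves only $\pi_0$ rather than $\pi_{\min}$---but the executions differ. The paper first applies the pointwise bound $|1-P^t(x,y)/\pi(y)|\le e^{-\gs(P)t}/\sqrt{\pi(x)\pi(y)}$, which requires \emph{both} $x$ and $y$ in $\Gamma_0$ to replace $\pi_{\min}$ by $\pi_0$; this forces a two-stage argument (first $x\in\Gamma_0$, then a coupling to extend to arbitrary $x$), and a final boosting step since the resulting TV bound is only $5/8$. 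Your decomposition $P^T(x,\cdot)=(1-\epsilon)\nu_0+\epsilon\nu_1$ handles all starting points at once and replaces the pointwise bound by the $L^2$ estimate $\|\nu_0 P^s-\pi\|_{\mathrm{TV}}\le\tfrac12 e^{-s\,\gs(P)}\|\nu_0/\pi-1\|_{L^2(\pi)}$, which needs only that $\nu_0$ is supported on $\Gamma_0$; this avoids the coupling and boosting entirely and even gives a slightly sharper prefactor ($\pi_0^{-1/2}$ versus the paper's $\pi_0^{-1}$). Both proofs implicitly use reversibility through the spectral contraction step, which you correctly flag.
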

	Note that $\pi_0$ is the minimum probability of any configuration on $\Gamma_0$. Without the additional assumptions in the theorem, the best possible bound involves a factor of $\pi_{\mathrm{min}} = \min_{A\in \Gamma} \pi(A)$ instead.
	We remark that there are related conditions under which~\eqref{def:pi0} holds; we choose the condition that $P^t(x, \Gamma\setminus\Gamma_0) \le \frac{1}{16}$ for every $x$ and every $t \ge T$ for convenience.
	
	
	
	We can now provide the proof of Theorem~\ref{thm:GDCM-comparison}.

	\begin{proof}[Proof of Theorem \ref{thm:GDCM-comparison}]
		First note that if $p=\Omega(1)$, it 
		suffices to prove that 
		$$\taumixGD = O\left(m n \log n + m^2 \log n \log \frac{1}{\min\{p,1-p\}}\right) \cdot \taumixCM.$$
		This follows from~\eqref{eq:comp:inequality} and the fact that 
		$$
		\mu_{\mathrm{min}} \ge \frac{\min\{p,1-p\}^{m}}{q^{n-1}}
		$$
		since the partition function for the random-cluster model on $G$ satisfies $Z_{G} \le q^n$ (see, e.g., Theorem 3.60 in \cite{Grimmett}).
		
		Thus, we may assume $p \le 1/100$.
		From~\eqref{eq:CM-GD} and the standard relationship between the spectral gap and the mixing time (see, e.g., Theorem 12.4 in \cite{LPW}) we obtain:
		\begin{equation}
			\label{eq:GD-spectral}
			\gsm(\mathrm{GD}) \le \taumixCM \cdot O( m\log n). 
		\end{equation}
		Let $P$ denote the transition matrix of the Glauber dynamics.
		In order to apply Theorem~\ref{thm:mt-spectral-gap}, we have to find a suitable subset of states $\Omega_0 \subseteq \Omega$ and a suitable time $T$ so that 
		$P^t(A, \Omega\setminus\Omega_0) \le \frac{1}{16}$, for every $A \in \Omega$ and every $t \ge T$.
		
		We let $\Omega_0 = \{A \subseteq E:  |A| \le 100mp\}$
		and $T = C m \log m$ for a sufficiently large constant $C > 0$.		
		When an edge is selected for update by the Glauber dynamics, 
		it is set to be open with probability $p/(p+q(1-p))$ if it is a ``cut edge'' 
		or with probability $p$ if it is not; recall that we say an edge $e$ is {open} if the edge is present in the random-cluster configuration.
		Therefore, since $p \ge p/(p+q(1-p))$ when $q > 1$, after every edge has been updated at least once
		the number of open edges in any configuration is stochastically dominated by the number of edges in a $G(n,p)$ random graph.
		By the coupon collector bound, every edge has been updated at least once at time $T$ w.h.p.\ for large enough $C$. Moreover, if all edges are indeed updated by time $T$, the number of open edges in $X_t$ at any time $t \ge T$ is at most $100 m p$
		with probability at least $19/20$ by Markov's inequality.
		Therefore, the Glauber dynamics satisfies condition in Theorem~\ref{thm:mt-spectral-gap} for these choices of $T$ and $\Omega_0$. 
	
		It remains for us to estimate $\pi_0$. 
		Let $\pi_{m}$ denote the probability of the configuration where all the edges are open; then, 
		\begin{equation}
			\label{eq:pim}
		\pi_m = \frac{p^{m} q}{Z_{G}} \ge \frac{p^m}{q^{n-1}},
		\end{equation}
		where the inequality follows from the fact that $Z_G \le q^n$.
		Moreover,
		since $1-p>p$ when $p \le 1/100$, then $\pi_0 \ge qp^{100mp}(1-p)^{m-100mp}/Z_{G}$ and so
		\begin{equation}
			\label{eq:pi0}
			\frac{\pi_0}{\pi_m} \ge \frac{qp^{100mp}(1-p)^{m-100mp}}{p^m q} 
			= \left(\frac{1-p}{p}\right)^{m-100mp}.
		\end{equation}
		Using \eqref{eq:pim}, \eqref{eq:pi0} and the fact that $p \le 1/100$, we obtain:
		\begin{align*}
			\log \frac{1}{\pi_0} &= \log\frac{1}{\pi_m} + \log\frac{\pi_m}{\pi_0} 
			\le (n - 1)\log q + m \log p^{-1} - (m - 100mp) \log \frac{1-p}{p} \\
			& = 100mp\log p^{-1} + m(1- 100p)\log  \frac{1}{1-p}  + O(n)\\
			&\le 100mp\log p^{-1} + \frac{mp(1-100p)}{1-p} + O(n) = O(n + mp(\log p^{-1})).
		\end{align*}
	
		Therefore, from~\eqref{eq:GD-spectral} and Theorem~\ref{thm:mt-spectral-gap} we obtain:
		$$
		\taumixGD \le O(m\log m) + \taumixCM \cdot O( m\log n) \cdot O(n + mp(\log p^{-1}))
		=  O(m \log n \cdot(n+mp\log p^{-1})) \cdot \taumixCM,
		$$
		as claimed.
	\end{proof}
	
	
	For the sake of completeness, we conclude this section with a proof of Theorem~\ref{thm:mt-spectral-gap}.

	\begin{proof} [Proof of Theorem~\ref{thm:mt-spectral-gap}]
	
	For $x \in \Gamma$ and $t \ge T$, we have 
		\begin{align*}
			{\|P^t(x,\cdot)-\pi(\cdot)\|}_{\textsc{\tiny TV}} &= \sum_{y\in \Gamma: P^t(x,y) > \pi(y)} P^t(x,y) -\pi(y)  \\
			&\le \sum_{y \in\Gamma_0: P^t(x,y) > \pi(y)} \pi(y) \left|1 - \frac{P^t(x,y)}{\pi(y)} \right|
			+\sum_{y \notin \Gamma_0: P^t(x,y) > \pi(y)} P^t(x,y) \left|1 - \frac{\pi(y)}{P^t(x,y)} \right| \\
			&\le \pi(\Gamma_0) \max_{y\in \Gamma_0}\left|1 - \frac{P^t(x,y)}{\pi(y)} \right| 
			+  P^t(x, \Gamma \setminus \Gamma_0) \\
			&\le \max_{y\in \Gamma_0}\left|1 - \frac{P^t(x,y)}{\pi(y)} \right|  + \frac{1}{16},
		\end{align*}
		where the last inequality follows from the theorem assumption for $t \ge T$. 
		
		For any $y \in \Gamma$, we have
		$$
		\left|1 - \frac{P^t(x,y)}{\pi(y)} \right|  \le \frac{e^{-\gs(P) \cdot t} }{\sqrt{\pi(x)\pi(y)}};
		$$
		see inequality (12.11) in \cite{LPW}. Hence, for any $x  \in \Gamma_0$ we have
		\begin{equation}
		\label{eq:lb:eq1}
		{\|P^t(x,\cdot)-\pi(\cdot)\|}_{\textsc{\tiny TV}} \le \max_{y\in \Gamma_0} \frac{e^{-\gs(P) \cdot t} }{\sqrt{\pi(x)\pi(y)}} + \frac{1}{16} \le \frac{e^{-\gs(P) \cdot t} }{\pi_0} + \frac{1}{16}.
		\end{equation}
	%
	Letting $\taumix^P(x) =  \min \left\{{t \ge 0 : \|P^t(x,\cdot)-\pi(\cdot)\|}_{\textsc{\tiny TV}}  \le 1/4 \right\}$, 
	we deduce from~\eqref{eq:lb:eq1} that for $x \in \Gamma_0$
	\begin{equation}
	\label{eq:mt:bound}
	\taumix^P(x) \le \max \left\{T, \gsm(P) \log \frac{8}{\pi_0}\right\}.
	\end{equation}
	
	Since $\taumix^P = \max_{x \in \Gamma} \taumix(x)$, it remains for us to provide a bound for $\taumix(x)$ when $x \in \Gamma\setminus\Gamma_0$.
	Consider two copies $\{X_t\}$, $\{Y_t\}$ of the chain $P$.
	For $t>T$ let $\mathbb P$ be the coupling of $X_t$, $Y_t$ such that
	the two copies evolve independently up to time $T$
	and if $ X_T = x'$ and $Y_T = y'$ for some $x', y'\in \Gamma_0$ then the optimal coupling is used so that
	$$
	\mathbb P[X_t \neq Y_t \mid X_T = x', Y_T = y'] = {\|P^{t-T}(x',\cdot)-P^{t-T}(y',\cdot)\|}_{\textsc{\tiny TV}};
	$$
	recall that the existence of an optimal coupling is guaranteed by the coupling lemma (see, e.g., Proposition 4.7 in \cite{LPW}).
	Then, for any $x,y \in \Gamma$
	\begin{align*}
	\mathbb P[X_t \neq Y_t &\mid X_0 = x, Y_0 = y ] \\&\le 
	\mathbb P[X_T \notin \Gamma_0 \mid X_0=x] + \mathbb P[Y_T \notin \Gamma_0\mid Y_0=y]  + \max_{x',y'\in \Gamma_0} \mathbb P[X_t \neq Y_t \mid X_T = x', Y_T = y']  \\
	&\le \max_{x',y'\in \Gamma_0} \mathbb P[X_t \neq Y_t \mid X_T = x', Y_T = y'] + \frac{1}{8}\\
	&\le \max_{x',y'\in \Gamma_0}{\|P^{t-T}(x',\cdot)-P^{t-T}(y',\cdot)\|}_{\textsc{\tiny TV}} + \frac{1}{8}\\
	&\le 2 \max_{x' \in \Gamma_0}{\|P^{t-T}(x',\cdot)-\pi(\cdot)\|}_{\textsc{\tiny TV}} + \frac{1}{8},
	\end{align*}
	where the last inequality follows from the triangle inequality.
	Now, 
	\begin{align*}
	\max_{x \in \Gamma} {\|P^{t}(x,\cdot)-\pi(\cdot)\|}_{\textsc{\tiny TV}}
	&\le \max_{x,y \in \Gamma} \mathbb P [X_t \neq Y_t \mid X_0 = x, Y_0 = y] \le 2 \max_{x' \in \Gamma_0}{\|P^{t-T}(x',\cdot)-\pi(\cdot)\|}_{\textsc{\tiny TV}} + \frac{1}{8} \le \frac 58.
	\end{align*}
	provided $t \ge T + \max_{z \in \Gamma_0} \taumix^P(z)$.
	Using a standard boosting argument (see~(4.36) in~\cite{LPW}) and \eqref{eq:mt:bound} we deduce that $\taumix^P  = O(T + \gsm(P) \log \frac{8}{\pi_0})$ as claimed.	
\end{proof}

\bibliographystyle{plain}
\bibliography{ver2}

\appendix

\section{Proof of the local limit theorem}
\label{app:llt}

In this appendix, we prove Theorem~\ref{thm:prelim:llt-cor}.
First, we introduce some notation.
For a random variable $X$ and $d \in \R$, let $H(X,d) = \E[\langle X^* d \rangle^2]$, where 
$\langle \cdot \rangle$ denotes distance to the closest integer and
$X^*$ is a \textit{symmetrized} version of $X$; i.e., $X^*= X - X'$ where $X'$ is an i.i.d.\ copy of $X$.
Let
$H_m  = \inf_{d \in [\frac{1}{4},\frac{1}{2}]} \,\, \sum_{i=1}^m H(X_i,d)$.
The following local limit theorem is due to Mukhin \cite{Muk} (all limits are taken as $m \rightarrow \infty$).

\begin{theorem}[\cite{Muk}, Theorem 1]
	\label{thm:prelim:llt-main}
	Suppose that the sequence $\frac{S_m-\mu_m}{\sigma_m}$ converges in distribution to a standard normal random variable and that $\sigma_m \rightarrow \infty$.	
	If $H_m \rightarrow \infty$ and there exists $\alpha > 0$ such that $\forall u \in [H_m^{1/4},\sigma_m]$ we have
	$\sum_{i:c_i \le u} c_i^2 \ge \alpha u \sigma_m ,$
	then the local limit theorem holds.
\end{theorem}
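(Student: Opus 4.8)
\medskip
\noindent\textbf{Proof proposal.}
The plan is to prove the local limit theorem~\eqref{eq:prelim:llt} by Fourier inversion. Write $\phi_m(t)=\E[e^{itS_m}]=\prod_{i=1}^m\bigl(1-r+re^{itc_i}\bigr)$ for the characteristic function of the $\Z$-valued variable $S_m$, and $\psi_m(t)=e^{it\mu_m}e^{-\sigma_m^2t^2/2}$. Since $S_m$ is integer-valued, $\Pr[S_m=a]=\frac1{2\pi}\int_{-\pi}^{\pi}e^{-ita}\phi_m(t)\,dt$, while $\frac{1}{\sqrt{2\pi}\,\sigma_m}\exp\!\bigl(-\tfrac{(a-\mu_m)^2}{2\sigma_m^2}\bigr)=\frac1{2\pi}\int_{\R}e^{-ita}\psi_m(t)\,dt$. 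Subtracting these, using $|e^{-ita}|=1$, and noting that $\int_{|t|>\pi}|\psi_m(t)|\,dt$ is exponentially small in $\sigma_m^2$ (hence $o(\sigma_m^{-1})$), the whole theorem reduces to the single estimate, uniform in $a$,
\[
\int_{-\pi}^{\pi}\bigl|\phi_m(t)-\psi_m(t)\bigr|\,dt=o\bigl(\sigma_m^{-1}\bigr).
\]
I would fix a cutoff $\varepsilon_m=T_m/\sigma_m$ with $T_m\to\infty$ arbitrarily slowly, and split this integral into the central part $|t|\le\varepsilon_m$ and the tail $\varepsilon_m\le|t|\le\pi$.

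For the central part, substitute $t=s/\sigma_m$; the hypothesis that $(S_m-\mu_m)/\sigma_m$ tends to a standard normal gives $\phi_m(s/\sigma_m)e^{-is\mu_m/\sigma_m}\to e^{-s^2/2}$ pointwise. To push this through the integral over the growing window $|s|\le T_m$ and get a contribution $o(\sigma_m^{-1})$, I need an $m$-uniform integrable majorant of $|\phi_m(s/\sigma_m)|$ there. From $|\phi_m(t)|^2=\prod_i\bigl(1-2r(1-r)(1-\cos tc_i)\bigr)\le\exp\!\bigl(-2r(1-r)\sum_{i:\,c_i\le 1/(4|t|)}(1-\cos tc_i)\bigr)$ and $1-\cos\theta\ge\theta^2/3$ for $|\theta|\le\tfrac14$, this is at most $\exp\!\bigl(-\tfrac{2r(1-r)}{3}t^2\sum_{i:\,c_i\le 1/(4|t|)}c_i^2\bigr)$; bounding $\sum_ic_i^2=\sigma_m^2/(2r(1-r))$ when $|s|\le\tfrac14$ and invoking the concentration hypothesis with $u=1/(4|t|)\in[H_m^{1/4},\sigma_m]$ when $\tfrac14\le|s|\le T_m$ (legitimate for large $m$, as $\sigma_m/H_m^{1/4}\ge\sqrt2\,\sigma_m^{1/2}\to\infty$ while $T_m$ grows slowly) yields $|\phi_m(s/\sigma_m)|\le\exp\!\bigl(-c\min\{s^2,|s|\}\bigr)$ for a constant $c=c(r,\alpha)>0$. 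Dominated convergence then finishes the central part.

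For the tail it suffices to show $\int_{\varepsilon_m\le|t|\le\pi}|\phi_m(t)|\,dt=o(\sigma_m^{-1})$, the $\psi_m$-contribution there being a Gaussian tail. I would bound $|\phi_m(t)|$ in two overlapping regimes. For $\varepsilon_m\le|t|\le\tfrac14 H_m^{-1/4}$ the same computation with $u=1/(4|t|)$ gives $|\phi_m(t)|\le\exp(-c'\sigma_m|t|)$, so $\int_{\varepsilon_m}^{\infty}\exp(-c'\sigma_m t)\,dt=O\bigl(\sigma_m^{-1}e^{-c'T_m}\bigr)=o(\sigma_m^{-1})$. For $\tfrac14 H_m^{-1/4}\le|t|\le\pi$, I would invoke $H_m\to\infty$ via the elementary inequality $\langle x\rangle\ge\tfrac12\langle 2x\rangle$ (distance to the nearest integer): writing $d=t/(2\pi)$, choosing the least $J\ge0$ with $2^Jd\in(\tfrac14,\tfrac12]$ (so $4^{-J}\ge 4d^2$), and iterating, one gets $\sum_i\langle dc_i\rangle^2\ge 4^{-J}\sum_i\langle 2^Jdc_i\rangle^2\ge 4d^2\cdot\frac{H_m}{2r(1-r)}$, using $H(X_i,d)=2r(1-r)\langle dc_i\rangle^2$ and $\sum_iH(X_i,2^Jd)\ge H_m$ on the reference window $[\tfrac14,\tfrac12]$; together with $|\phi_m(t)|^2\le\exp\!\bigl(-8\sum_iH(X_i,d)\bigr)$ this gives $|\phi_m(t)|^2\le\exp(-32\,d^2H_m)$. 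Integrating $\exp(-16\,d^2H_m)$ against $dd$ over $d\in[\tfrac18 H_m^{-1/4},\tfrac12]$ (substitute $v=d\sqrt{H_m}$) produces a bound of order $H_m^{-1/2}\exp(-cH_m^{1/2})$, which is $o(\sigma_m^{-1})$ as soon as $H_m$ outgrows a fixed power of $\log\sigma_m$; this is automatic in every application of interest (e.g.\ in Theorem~\ref{thm:prelim:llt-cor}, where $c_i=1$ for a linear fraction of the indices forces $H_m=\Omega(m)$). Summing the three regional contributions gives the displayed estimate, hence the theorem.

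The step I expect to be the genuine obstacle is the last one: getting a bound on $|\phi_m(t)|$ for $|t|$ of \emph{constant} order that is sharp enough for its integral over an $\Theta(1)$-length frequency window to be $o(\sigma_m^{-1})$ under only the bare hypothesis $H_m\to\infty$. This is exactly where the two hypotheses must be dovetailed — the concentration condition controls the ``resolved'' small components $c_i\le 1/(4|t|)$, while $H_m\to\infty$ (through the dyadic descent above and the fact that $H_m$ being large precludes the $c_i$ from lying in a common sublattice) controls the rest — and it is essentially Mukhin's Theorem~1. The remaining pieces — the Fourier-inversion scaffolding, the choice of the slowly growing cutoff $T_m$, and checking that each regional term is genuinely $o(\sigma_m^{-1})$ — are routine bookkeeping.
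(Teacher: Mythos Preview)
The paper does not prove Theorem~\ref{thm:prelim:llt-main}; it is quoted verbatim as Theorem~1 of Mukhin~\cite{Muk} and used as a black box to derive Theorem~\ref{thm:prelim:llt-cor}. So there is no in-paper proof to compare against.

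That said, your Fourier-inversion scaffolding is exactly the approach the paper takes in Appendix~\ref{app:llt2}, where Theorem~\ref{thm:prelim:llt-cor} is proved from scratch without invoking Mukhin. The paper's argument there splits $[-\pi\sigma_m,\pi\sigma_m]$ (in the rescaled variable $s=t\sigma_m$) into four pieces: a constant-width central window handled by the CLT and dominated convergence; a range $[A,\sigma_m/2]$ handled via the concentration hypothesis (their Lemma~\ref{lemma:clt2}), yielding $|\phi(t)|\le e^{-ct}$; a Gaussian tail; and an outermost window $[\sigma_m/2,\pi\sigma_m]$. For that last window the paper does \emph{not} use $H_m$ at all---it uses the specific hypothesis of the corollary that a positive fraction $\rho m$ of the $c_i$ equal~$1$, so that each corresponding Bernoulli factor satisfies $|\phi_j(t)|\le\eta<1$ uniformly on this range, whence $|\phi(t)|\le\eta^{\rho m}$, which beats any power of $\sigma_m$.

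You correctly isolate the obstacle in your attempt at the \emph{general} statement: on $|t|\in[\tfrac14H_m^{-1/4},\pi]$ your dyadic-descent bound $|\phi_m(t)|\le\exp(-ct^2H_m)$ integrates to $O\bigl(H_m^{-1/2}e^{-cH_m^{1/2}}\bigr)$, which is not $o(\sigma_m^{-1})$ under the bare hypothesis $H_m\to\infty$; one needs $H_m^{1/2}\gg\log\sigma_m$. So as written you prove something strictly weaker than Mukhin's theorem (though, as you note, amply sufficient for Theorem~\ref{thm:prelim:llt-cor}, where $H_m=\Omega(m)$). Closing this gap under only $H_m\to\infty$ requires a more careful dovetailing of the two hypotheses than a disjoint-regimes split---that is the actual content of~\cite{Muk}---and the paper makes no attempt to reproduce it.
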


Next, we show how to derive Theorem~\ref{thm:prelim:llt-cor} from Theorem~\ref{thm:prelim:llt-main}. The proof involves the following two lemmas.

\begin{lemma}
	\label{lemma:clt1}
	For the random variables satisfying the conditions from Theorem~\ref{thm:prelim:llt-cor},
	$\sigma_m \rightarrow \infty$ and
	$\frac{S_m-\mu_m}{\sigma_m}$ converges in distribution to a standard normal random variable.
\end{lemma}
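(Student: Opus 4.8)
The plan is to apply a standard central limit theorem for triangular arrays of independent (non-identically distributed) random variables; I will use Lyapunov's criterion with third moments, though the Lindeberg--Feller theorem works equally well. Recall that $\mu_m = r\sum_{i=1}^m c_i$ and $\sigma_m^2 = \var(S_m) = r(1-r)\sum_{i=1}^m c_i^2$, and that $|X_i - \E[X_i]| \le c_i$ deterministically, so $\E[|X_i - \E[X_i]|^3] \le c_i^3$ and $\sum_{i=1}^m c_i^2 = \sigma_m^2/(r(1-r))$.

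The key quantitative input is a sufficiently strong lower bound on $\sigma_m$. The crude bound $\sigma_m^2 \ge r(1-r)\rho m$, which follows from the $\rho m$ unit components ($c_i = 1$ for $i \le \rho m$), already shows $\sigma_m \to \infty$, but it is not enough to control the Lyapunov ratio: since $c_m = O(m^{2/3}g(m)^{-1})$ can be much larger than $\sqrt m$, the ratio $c_m/\sqrt m$ need not vanish. Instead I would invoke the component-count hypothesis for $k=1$ (which is in range since $\ell$ is a positive integer): because $|\{i : c_i \in \mathcal{I}_1(g)\}| = \Omega(g(m)^3)$ and every such $c_i$ is at least $\vartheta m^{2/3}/(2g(m)^2)$,
\[
\sigma_m^2 \;\ge\; r(1-r)\!\!\sum_{i:\,c_i \in \mathcal{I}_1(g)}\!\! c_i^2 \;=\; \Omega\!\left( g(m)^3 \cdot \frac{m^{4/3}}{g(m)^4}\right) \;=\; \Omega\!\left( \frac{m^{4/3}}{g(m)}\right),
\]
so that $\sigma_m = \Omega(m^{2/3}g(m)^{-1/2})$. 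Combined with $c_m = O(m^{2/3}g(m)^{-1})$ and $g(m)\to\infty$, this yields $c_m/\sigma_m = O(g(m)^{-1/2}) = o(1)$.

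With this in hand the Lyapunov condition with exponent $3$ is immediate:
\[
\frac{1}{\sigma_m^3}\sum_{i=1}^m \E\big[|X_i - \E[X_i]|^3\big] \;\le\; \frac{1}{\sigma_m^3}\sum_{i=1}^m c_i^3 \;\le\; \frac{c_m}{\sigma_m^3}\sum_{i=1}^m c_i^2 \;=\; \frac{c_m}{r(1-r)\,\sigma_m} \;=\; O\!\left( g(m)^{-1/2}\right) \;\longrightarrow\; 0 .
\]
Therefore $\frac{S_m - \mu_m}{\sigma_m}$ converges in distribution to a standard normal, and $\sigma_m \to \infty$, which is exactly the statement. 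I do not anticipate a genuine obstacle here: the only step that requires care is resisting the temptation to use the trivial bound $\sigma_m = \Omega(\sqrt m)$ (insufficient) and instead extracting $\sigma_m = \Omega(m^{2/3}g(m)^{-1/2})$ from the hypothesis on the number of components of size in $\mathcal{I}_1(g)$; everything else is a routine verification of the Lyapunov (equivalently, Lindeberg) condition.
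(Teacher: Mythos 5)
Your proposal is correct and follows essentially the same route as the paper: lower-bound $\sigma_m^2 = \Omega(m^{4/3}/g(m))$ using the $\Omega(g(m)^3)$ components with sizes in $\mathcal{I}_1(g)$, then verify Lyapunov's third-moment condition via $\sum_i c_i^3 \le c_m \sum_i c_i^2$, giving a ratio $O(c_m/\sigma_m) = O(g(m)^{-1/2}) \to 0$. The only cosmetic difference is that you use the crude bound $\E[|X_i-\E[X_i]|^3] \le c_i^3$ where the paper writes the exact factor $r(1-r)c_i^3$; both suffice.
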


\begin{proof}
	Observe that 
	$$\sigma_m^2 = r(1-r) \sum_{i=1}^m c_i^2 \ge r(1-r) \sum_{i: c_i \in I_1} c_i^2
	= \Omega\left(\frac{m^{4/3}}{g(m)^4} \cdot g(m)^3\right) 
	= \Omega\left(\frac{m^{4/3}}{g(m)}\right)
	\rightarrow \infty,$$
	and also
	\begin{align*}
		\frac{1}{\sigma_m^3} \sum_{i=1}^m \E[|X_i - \E[X_i]|^3] &= \frac{1}{\sigma_m^3} \sum_{i=1}^m r(1-r) c_i^3
		\le \frac{\sigma^2_m c_m}{\sigma^3_m}	
		= O\left(\frac{c_m}{\sigma_m}\right) \\
		&= O\left( \frac{m^{2/3}g(m)^{-1}}{m^{2/3}g(m)^{-1/2}} \right) 
		= O\left(g(m)^{-1/2}\right) \rightarrow 0.
	\end{align*}
	Hence, the random variables $\{X_i\}$ satisfy Lyapunov's central limit theorem conditions (see, e.g., \cite{D}), 
	and so
	$\frac{S_m-\mu_m}{\sigma_m}$ converges in distribution to a standard normal random variable.	
\end{proof}

\begin{lemma}
	\label{lemma:clt2}
	Suppose $c_1, \dots, c_m$ satisfy the conditions from Theorem~\ref{thm:prelim:llt-cor}.
 	For any $u$ satisfying $\sigma_m \ge u\ge 1$, 
	$\sum_{j:c_j \le u} c_j^2 \ge u \sigma_m / r(1-r)$.
\end{lemma}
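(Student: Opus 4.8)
The goal is to show that for every $u$ with $1 \le u \le \sigma_m$, the truncated second moment $\sum_{j : c_j \le u} c_j^2$ is at least $u\sigma_m / (r(1-r))$. Recall that $\sigma_m^2 = r(1-r)\sum_{i=1}^m c_i^2$, so an equivalent and cleaner target is
\[
\sum_{j : c_j \le u} c_j^2 \ge \frac{u}{\sqrt{r(1-r)}}\Big(\sum_{i=1}^m c_i^2\Big)^{1/2}.
\]
The plan is to exploit the hypothesis on $\mathcal{I}_k(g)$: by assumption there are $\Omega(g(m)^{3\cdot 2^{k-1}})$ indices $i$ with $c_i \in \mathcal{I}_k(g) = [\tfrac{\vartheta m^{2/3}}{2 g(m)^{2^k}}, \tfrac{\vartheta m^{2/3}}{g(m)^{2^k}}]$, for every $1 \le k \le \ell$, where $\ell$ is the smallest integer with $m^{2/3} g(m)^{-2^\ell} = o(m^{1/4})$. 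The key structural point is that these intervals $\mathcal{I}_k(g)$ geometrically tile the range from roughly $m^{1/4}$ up to $\Theta(m^{2/3})$, and that the contribution of $\mathcal{I}_k(g)$ to the sum of squares is $\Theta(g(m)^{3\cdot 2^{k-1}}) \cdot \Theta(m^{4/3} g(m)^{-2^{k+1}}) = \Theta(m^{4/3} g(m)^{-2^{k-1}})$, which is dominated by the $k=1$ term and is $\Theta(m^{4/3} g(m)^{-1/2})$. Combined with the hypothesis $\sum_i c_i^2 = O(m^{4/3} g(m)^{-1/2})$, this pins down $\sigma_m = \Theta(m^{2/3} g(m)^{-1/4})$, which will be used on the right-hand side.

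First I would dispose of the easy regime. If $u \ge \tfrac{\vartheta m^{2/3}}{2 g(m)^2}$ (i.e. $u$ is at least the left endpoint of $\mathcal{I}_1(g)$), then $\{j : c_j \le u\}$ already contains all the $c_i = 1$ terms (there are $\ge \rho m$ of them) plus everything in $\mathcal{I}_1(g)$, so $\sum_{j : c_j \le u} c_j^2 \ge \rho m = \Omega(m)$, while $u \sigma_m/(r(1-r)) = O(m^{2/3}\cdot m^{2/3} g(m)^{-1/4}) = O(m^{4/3})$ — wait, this needs care since $m^{4/3}$ beats $m$. So in fact I must be more precise: for $u$ in the top range, use that $\sum_{j : c_j \le u} c_j^2 \ge \sum_{j : c_j \in \mathcal{I}_1(g) \text{ or smaller}} c_j^2 = \Theta(m^{4/3} g(m)^{-1/2}) = \Theta(\sigma_m^2)$, so the bound $\sum_{j:c_j\le u} c_j^2 \ge c\,\sigma_m^2 \ge c\,\sigma_m \cdot u$ follows once $u \le \sigma_m$ (absorbing constants into $r(1-r)$; if the constant is not clean one rescales, but since the lemma is used only qualitatively in Theorem~\ref{thm:prelim:llt-main} with an arbitrary $\alpha>0$, the precise constant is immaterial).

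The main work is the intermediate regime $1 \le u < \tfrac{\vartheta m^{2/3}}{2 g(m)^2}$. Here I locate the index $k = k(u) \in \{1,\dots,\ell\}$ for which $u$ falls into or just above $\mathcal{I}_k(g)$: precisely, pick $k$ maximal such that $\tfrac{\vartheta m^{2/3}}{2 g(m)^{2^k}} \le u$; such a $k$ exists in $\{1,\dots,\ell\}$ provided $u$ is not below the bottom interval, and the bottom interval sits at scale $m^{2/3} g(m)^{-2^\ell} = \Theta(m^{1/4})$, so for $u$ down to $\Theta(m^{1/4})$ this works, and for $1 \le u = O(m^{1/4})$ one falls back on the $c_i=1$ terms: $\sum_{j:c_j\le u} c_j^2 \ge \rho m \ge u\sigma_m/(r(1-r))$ since $u\sigma_m = O(m^{1/4}\cdot m^{2/3}) = O(m^{11/12}) = o(m)$. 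Granting that $k = k(u)$ exists, all components in $\mathcal{I}_k(g)$ have size $\le \tfrac{\vartheta m^{2/3}}{g(m)^{2^k}} \le g(m) \cdot u \le$ — hmm, that's a factor $g(m)$ too big; instead note $u \ge \tfrac{\vartheta m^{2/3}}{2g(m)^{2^k}}$ means the upper endpoint of $\mathcal{I}_k(g)$ is at most $2u$, so $\{j : c_j \le 2u\} \supseteq \mathcal{I}_k(g)$, giving $\sum_{j : c_j \le 2u} c_j^2 \ge \Omega(g(m)^{3\cdot 2^{k-1}}) \cdot \Omega(m^{4/3} g(m)^{-2^{k+1}}) = \Omega(m^{4/3} g(m)^{-2^{k-1}})$. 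On the other hand $u \le \tfrac{\vartheta m^{2/3}}{2g(m)^{2^{k+1}}}$-ish from maximality of $k$... actually from maximality, $u < \tfrac{\vartheta m^{2/3}}{2 g(m)^{2^{k+1}}}$ if $k<\ell$, so $u\,\sigma_m = O\!\big(m^{2/3} g(m)^{-2^{k+1}} \cdot m^{2/3} g(m)^{-1/4}\big) = O(m^{4/3} g(m)^{-2^{k+1}-1/4})$, which is comfortably smaller than $m^{4/3} g(m)^{-2^{k-1}}$. This gives $\sum_{j:c_j\le 2u} c_j^2 \ge C\, u\sigma_m$; replacing $2u$ by $u$ costs at most a bounded factor in the argument (or one restates the lemma with the interval $[1,\sigma_m/2]$, which is all Theorem~\ref{thm:prelim:llt-main} requires since the condition there is quantified with $\forall u \in [H_m^{1/4}, \sigma_m]$ and we may shrink by a constant).

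\textbf{The main obstacle} I anticipate is bookkeeping the constants and the edge cases at the two ends of the range ($u$ near $1$ versus $u$ near $\sigma_m$), and in particular making sure the geometric decay of the interval contributions $m^{4/3} g(m)^{-2^{k-1}}$ really does dominate the competing linear-in-$u$ term $u\sigma_m$ uniformly — this is where the specific placement of the intervals $\mathcal{I}_k$ (doubling exponents $2^k$) and the cutoff $\ell$ defined via $m^{2/3} g(m)^{-2^\ell} = o(m^{1/4})$ are essential. Once the monotone tiling is set up carefully, each regime reduces to a one-line comparison of powers of $g(m)$ and $m$.
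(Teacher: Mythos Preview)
Your three-regime split (large $u\ge c_m$ via the full sum $=\sigma_m^2/(r(1-r))$; small $u\le m^{1/4}$ via the $\rho m$ unit terms; intermediate $u$ via a well-placed interval $\mathcal{I}_k$) is exactly the paper's argument. One genuine slip in the intermediate regime: the left endpoints $\vartheta m^{2/3}/(2g(m)^{2^k})$ are \emph{decreasing} in $k$, so ``$k$ maximal with left endpoint $\le u$'' always returns $k=\ell$ and is useless; you want the \emph{minimal} such $k$, and then minimality gives $u<\vartheta m^{2/3}/(2g(m)^{2^{k-1}})$ (exponent $2^{k-1}$, not $2^{k+1}$). With that correction your comparison $\sum_{c_j\in\mathcal{I}_k}c_j^2=\Omega(m^{4/3}g(m)^{-2^{k-1}})$ versus $u\sigma_m=O(m^{4/3}g(m)^{-2^{k-1}-1/4})$ does go through. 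On the $2u$-versus-$u$ issue: your substitution idea is legitimate (if $\sum_{c_j\le 2u}c_j^2\ge Cu\sigma_m$ holds for all relevant $u$, setting $v=2u$ yields $\sum_{c_j\le v}c_j^2\ge (C/2)v\sigma_m$), but the paper avoids it more cleanly by bracketing $u$ between the \emph{scales} $\vartheta m^{2/3}/g(m)^{2^k}$ (i.e.\ the upper endpoints) rather than the left endpoints, and then lower-bounding by the contribution of $\mathcal{I}_{k+1}$, whose upper endpoint is then automatically below $u$. Finally, as you correctly observe, only the regime $u\ge c_m$ produces the stated constant $1/(r(1-r))$; in the other two regimes both you and the paper obtain only $\sum_{c_j\le u}c_j^2\gg u\sigma_m$, which is all that Theorem~\ref{thm:prelim:llt-main} (and the direct proof in Appendix~\ref{app:llt2}) actually need.
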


\begin{proof}
	We have
	$\sigma_m^2 = r(1-r)\sum_{i=1}^m c_i^2  = O\left( \frac{m^{4/3}}{\sqrt{g(m)}} \right).$ We consider three cases. First,
	if $m^{1/4} \le u \le c_m = O\left(m^{2/3}g(m)^{-1}\right)$, 
	there exists a largest integer $k \in [0, \ell)$ such that $u = O\left( \frac{ \vartheta m^{2/3}}{g(m)^{2^k}} \right)$,
	where $\ell > 0$ is the smallest integer such that $ m^{2/3}g(m)^{-2^\ell} = o(m^{1/4})$.
	Then,
	\[\sum_{i:c_i \le u} c_i^2 \ge \sum_{i:c_i \in I_{k+1}} c_i^2  \ge \frac{\vartheta^2 m^{4/3}}{4 g(m)^{2^{k+2}}} g(m)^{3\cdot 2^{k}} = \frac{\vartheta^2 m^{4/3}}{4 g(m)^{2^{k}}} \gg u \sigma_m;\]
	by $\gg$ we mean that $u \sigma_m$ is of lower order with respect to $\frac{\vartheta^2 m^{4/3}}{4 g(m)^{2^{k}}}$.
	Now, when $\sigma_m \ge u \ge c_m$, we have
	\[\sum_{i:c_i \le u} c_i^2 = \sum_{i=1}^m c_i^2 = \frac{\sigma_m^2}{r(1-r)} \ge \frac{u \sigma_m}{r(1-r)}.\]
	Finally, if $1 \le u \le m^{1/4}$, $u\sigma_m$ is sublinear and so
	$$
	\sum_{i:c_i \le u} c_i^2 \ge \sum_{i=1}^{\rho m} c_i^2 = \rho m \gg m^{1/4} \sigma_m \ge u\sigma_m,
	$$
	as claimed.
\end{proof}

\begin{proof}[Proof of Theorem~\ref{thm:prelim:llt-cor}]
	We check that the $X_i$'s satisfy the conditions from Theorem \ref{thm:prelim:llt-main}. 
	Lemma~\ref{lemma:clt1} implies 
	$\sigma_m \rightarrow \infty$ and $\frac{S_m - \mu_m}{\sigma_m} \rightarrow N(0,1)$;
	by Lemma~\ref{lemma:clt2} we also have that
	for any $u$ satisfying $\sigma_m \ge u\ge 1$, $\sum_{j:c_j \le u} c_j^2 \ge u \sigma_m / r(1-r)$.
	It remains to show that $H_m \rightarrow \infty$.
	
	Now, for $i \le \rho m$, 
	observe that the value of 
	$X^*_i$ equals to $1$ with probability $r(1-r)$, 
	$-1$ with probability $r(1-r)$,
	and $0$ otherwise.
	Then for $1/4 \le d \le 1/2$, 
	$\langle X_i^* d \rangle^2$ evaluates to $d^2$ with probability $2r(1-r)$, and $0$ otherwise.
	Therefore, for $i \le \rho m$ and $1/4 \le d \le 1/2$ we have that 
	$
	\E[\langle X_i^* d \rangle^2] = 2 r(1-r)d^2.
	$
	Thus,
	\[H_m  = \inf_{\frac{1}{4} \le d \le\frac{1}{2}} \sum_{i=1}^{m} H(X_i,d) 
	\ge \inf_{\frac{1}{4} \le d \le\frac{1}{2}} \sum_{i=1}^{\lfloor\rho m \rfloor} H(X_i,d) 
	= \inf_{\frac{1}{4} \le d \le\frac{1}{2}} \sum_{i=1}^{\lfloor\rho m \rfloor} 2r(1-r)d^2  
	= \Omega(m) \rightarrow \infty.\]
	
	
	Since we have shown that the $X_i$'s satisfy all the conditions from Theorem \ref{thm:prelim:llt-main}, the result follows.
\end{proof}

For completeness, we also derive Theorem~\ref{thm:prelim:llt-cor} from first principles (i.e. without using Mukhin's result \cite{Muk}) in Appendix~\ref{app:llt2}. 

\section{Proofs of random walk couplings}
\label{Appendix RW}

Another important tool in our proofs are couplings based on the evolutions of certain random walks.
In this section we consider a (lazy) symmetric random walk $(S_k)$ on $\mathbb{Z}$ with bounded step size, and the first result we present is 
an estimate on $M_k = \max\{S_1, \dots, S_k\}$ which is based on the well-known
reflection principle (see, e.g., Chapter 2.7 in \cite{levin2017markov}).

\begin{lemma}
	\label{lemma:prelim:rw-maximum}
	Let $A > 0$ and let
	$A \le c_1,c_2,\dots,c_n \le 4A$ be positive integers.
	Let $r \in (0,1/2]$ and consider the sequence of random variables $X_1,\dots,X_n$ where
	for each $i = 1,\dots,n$:
	$X_i = c_i$ with probability $r$; $X_i = -c_i$ with probability $r$; and $X_i = 0$ otherwise. 
	Let $S_k = \sum_{i=1}^k X_i $ and $M_k = \max\{S_1,\dots,S_k\}$.
	Then, for any $y \ge 0$
	\[\Pr[M_n \ge y] \ge 2 \Pr[S_n \ge y + 8A + 1].\]	
\end{lemma}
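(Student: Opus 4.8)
The statement is a variant of the classical reflection-principle bound $\Pr[M_n \ge y] \ge 2\Pr[S_n \ge y]$, with a correction term $8A+1$ absorbing the fact that the walk has bounded (but non-unit) step sizes, so it does not land exactly on any prescribed level. The plan is to run the standard first-passage argument: let $\tau = \min\{k \le n : S_k \ge y\}$ be the first time the partial sums reach level $y$ (with $\tau = \infty$ if this never happens). On the event $\{\tau \le n\}$ the walk is at some position $S_\tau \in [y, y + 4A - 1]$, since the last step had size at most $4A$ and the walk was below $y$ just before. I would then condition on $\tau = k$ and on the value $S_k = v$, and use the symmetry of the increments $X_{k+1}, \dots, X_n$ (each $X_i$ is symmetric about $0$) together with the strong Markov property to argue that, conditionally, $S_n - S_k$ is symmetric about $0$; hence $\Pr[S_n \ge v \mid \tau = k, S_k = v] \ge \tfrac12$.

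\textbf{Key steps.} First, set up $\tau$ and observe $\{M_n \ge y\} = \{\tau \le n\}$. Second, establish the overshoot bound: on $\{\tau \le n\}$ we have $y \le S_\tau \le y + 4A - 1 < y + 4A$, because $S_{\tau - 1} < y$ (or $\tau = 1$ and $S_0 = 0 < y$ when $y \ge 1$; the $y = 0$ case is trivial since $S_n \ge 8A+1 > 0$ already forces $M_n \ge 0$) and $|X_\tau| \le 4A$. Third, condition: for each $k \le n$ and each attainable value $v$ with $y \le v < y + 4A$, use that $X_{k+1}, \dots, X_n$ are independent of $\{\tau = k, S_k = v\}$ and each symmetric, so $\Pr[S_n - S_k \ge 0 \mid \tau = k, S_k = v] \ge \tfrac12$; therefore $\Pr[S_n \ge v \mid \tau=k, S_k=v] \ge \tfrac12$, and a fortiori $\Pr[S_n \ge y + 8A + 1 \mid \ldots]$ is at most this — wait, the inequality needs to go the other way, so instead I would show $\Pr[S_n \ge y + 8A + 1, \tau \le n] \le \tfrac12 \Pr[\tau \le n]$ by the argument that whenever $S_n \ge y + 8A + 1$ we certainly have $\tau \le n$, and given $\tau = k, S_k = v < y + 4A$, the further increment $S_n - S_k$ must exceed $(y + 8A+1) - v > 4A + 1 > 0$; by symmetry of $S_n - S_k$ this event has conditional probability at most $\Pr[S_n - S_k > 0] \le \tfrac12$. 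Summing over $k$ and $v$ gives $\Pr[S_n \ge y + 8A+1] \le \tfrac12 \Pr[\tau \le n] = \tfrac12 \Pr[M_n \ge y]$, which rearranges to the claim.

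\textbf{Main obstacle.} The delicate point is the second step — controlling the overshoot and making sure the correction constant $8A+1$ is large enough that the event $\{S_n \ge y + 8A+1\}$ is comfortably nested inside $\{\tau \le n\}$ with the conditional bound $\tfrac12$ being valid regardless of where exactly the walk first crosses level $y$. One must be careful that the conditioning event $\{\tau = k, S_k = v\}$ is indeed measurable with respect to $X_1, \dots, X_k$ (it is, by definition of $\tau$), so that the remaining increments are genuinely independent and symmetric; then the reflection/symmetry step is routine. A minor subtlety is the boundary case $\tau = 1$, handled by noting $S_0 = 0$, and the degenerate case $y = 0$, handled separately as above. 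I would also double-check the exact value of the additive constant: with $|X_i| \le 4A$ the overshoot is at most $4A - 1$, and requiring the excess increment to be strictly positive after subtracting $v \le y + 4A - 1$ from $y + 8A + 1$ leaves a margin of $4A + 2 > 0$, so $8A+1$ is safely sufficient (indeed slightly more than needed, which is harmless for a lower bound).
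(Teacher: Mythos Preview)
Your proposal is correct and takes essentially the same approach as the paper: both arguments condition on the first time the walk reaches level $y$, use that the overshoot is at most $4A-1$ (since step sizes are bounded by $4A$), and then exploit the symmetry of the remaining increments. The paper carries this out via an explicit path-reflection bijection (writing $\Pr[M_n\ge y,S_n=k,W_n=b]=\Pr[M_n\ge y,S_n=2b-k,W_n=b]$ and summing), whereas you phrase it as the cleaner conditional inequality $\Pr[S_n-S_\tau>0\mid \tau,S_\tau]\le \tfrac12$; these are two packagings of the same reflection-principle idea.
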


\begin{proof} 
	First, note that
	\begin{align}
	\Pr[M_n \ge y] 
	&= \sum_{k=y}^{4An}	\Pr[M_n \ge y,S_n=k] + \sum_{k=-4An}^{y-1}	\Pr[M_n \ge y,S_n=k] \notag\\
	&= \Pr[S_n \ge y]  + \sum_{k=-4An}^{y-1}	\Pr[M_n \ge y,S_n=k]. \label{eq:prelim:rw-1}		
	\end{align}
	If $M_n \ge y$,
	let $W_n$ be the value of the random walk $\{S_i\}$ the first time
	its value was at least $y$. Then,
	\begin{align}
	\Pr[M_n \ge y,S_n=k]
	&= \sum_{b = y}^{y+4A-1} \Pr[M_n \ge y,S_n=k,W_n=b] \notag\\
	&= \sum_{b = y}^{y+4A-1} \Pr[M_n \ge y,S_n=2b-k,W_n=b] \notag\\
	&= \sum_{b = y}^{y+4A-1} \Pr[S_n=2b-k,W_n=b],\notag		
	\end{align}
	where in the second equality we used the fact that the random walk is symmetric and the last one follows from 
	the fact that $2b - k \ge y$. Plugging this into (\ref{eq:prelim:rw-1}), we get
	\begin{align}
	\Pr[M_n \ge y] 
	&= \Pr[S_n \ge y]  + \sum_{b = y}^{y+4A-1}\sum_{k=-4An}^{y-1} \Pr[S_n=2b-k,W_n=b] \notag\\
	&= \Pr[S_n \ge y]  + \sum_{b = y}^{y+4A-1}\sum_{k=2b-y+1}^{4An} \Pr[S_n = k,W_n=b] \notag\\
	&= \Pr[S_n \ge y]  + \sum_{b = y}^{y+4A-1} \Pr[S_n \ge 2b-y+1,W_n=b] \notag\\
	&\ge \Pr[S_n \ge y]  + \sum_{b = y}^{y+4A-1} \Pr[S_n \ge y+8A+1,W_n=b] \notag,	
	\end{align}
	since $b < y + 4A$. Finally, observe that
	\[\sum_{b = y}^{y+4A-1} \Pr[S_n \ge y+8A+1,W_n=b] = \Pr[S_n \ge y+8A+1]\]
	and so
	\[
	\Pr[M_n \ge y] 
	\ge \Pr[S_n \ge y]  + \Pr[S_n \ge y+8A+1] \ge 2 \Pr[S_n \ge y+8A+1],	
	\]
	as desired.
\end{proof}	


We can now prove Theorem~\ref{thm:prelim:rw-coupling}.

\begin{proof} [Proof of Theorem~\ref{thm:prelim:rw-coupling}]
	Set $\delta = \frac{10}{\sqrt{r}}$.
	Let $D_k = \sum_{i=1}^k (X_i - Y_i)$ for each $k \in \{1,\dots,m\}$. We construct a coupling for $(X,Y)$ by coupling each $(X_k,Y_k)$ as follows:
	\begin{enumerate}
		\item If $D_k < d$, sample $X_{k+1}$ and $Y_{k+1}$ independently.
		\item If $D_k \ge d$, set $X_{k+1} = Y_{k+1}$.
	\end{enumerate}
	Observe that if $D_k \ge d$ for any $k \le m$, then $d + 2A \ge X - Y \ge d$. Therefore,
	\[\Pr[d+2A \ge X-Y \ge d] \ge \Pr[M_m \ge d]\]
	 where $M_m = \max\{D_0,...,D_m\}$. 
	Note that $\{D_k\}$ behaves like a (lazy) symmetric random walk
	until the first time $\tau$ it is at least $d$; after that $\{D_k\}$ stays put.

	Let $\{D'_k\}$ denote such random walk which does not stop after $\tau$, and $M'_m:=\max\{D'_0,...,D'_m\}$.
	Notice 
	$$
	\Pr[M_m \ge d] = \Pr[M'_m \ge d].
	$$
	Since the step size of $\{D'_k\}$ is at least $A$ and at most $4A$,
	by Lemma \ref{lemma:prelim:rw-maximum} for any $d \ge 0$
	\[\Pr[M'_m \ge d] \ge 2 \Pr[D'_m \ge d + 8A + 1].\]
	Let $\sigma^2 = \sum_{i=1}^m \E[(X_i-Y_i)^2] = 4r\sum_{i=1}^m c_i^2$ and $\rho =  \sum_{i=1}^m \E[|X_i - Y_i|^3] = 4r(1+2r)\sum_{i=1}^m c_i^3$.
	By the Berry-Ess\'{e}en theorem for independent (but not necessarily identical) random variables (see, e.g. \cite{Berry1941TheAO}), we get that 
	for any $y \in \R$
	\[\left| \Pr[D'_m > y\sigma] - \Pr[N > y] \right| \le \frac{c \rho}{\sigma^{3}} \le \frac{2cA}{\sigma}.\]
	 where $N$ is a standard normal random variable, 
	 and $c\in [0.4, 0.6]$ is an absolute constant. Then,
	\begin{equation}
	\label{eq:normal}
	\Pr[D'_m >  y \sigma] \ge \Pr[N>y]  - \frac{2cA}{\sigma}.
	\end{equation}
	
	Notice $\sigma \ge 2A \sqrt{rm}$.
	If $d + 8A \ge \sigma$,
	the theorem holds vacuously since
	$$ 1- \frac{\delta (d+A)}{A\sqrt{m}} = 1 - \frac{10(d+A)}{A\sqrt{rm}} 
	< 1 - \frac{d+8A}{A\sqrt{rm}} \le 1 - \frac{\sigma}{A\sqrt{rm}} 
	\le 1-2 <0.$$

	If $d + 8A < \sigma$, since it can be checked via a Taylor's expansion that $2 \Pr[N > y] \ge 1 - \sqrt{\frac{2}{\pi}}y$  for $y < 1$,
	we get from~\eqref{eq:normal}
	\begin{align}
		\Pr[M_m \ge d] \ge		2 \Pr[D'_m > d + 8A] 
		& \ge 2\Pr\left[N>\frac{d + 8A}{\sigma}\right]  - \frac{4cA}{\sigma} \notag\\
	 & \ge 1 - \frac{\sqrt{2/\pi}(d+8A)}{\sigma} - \frac{4cA}{\sigma} \notag\\
	 &\ge 1 -  \frac{9 (d+A)}{\sigma} \notag \\
	 & \ge 1- \frac{\delta (d+A)}{A\sqrt{m}}, \notag
	\end{align}
	as claimed.
\end{proof}



\section{Random graphs estimates}
\label{Appendix RG}

In this section, 
we provide proofs of lemmas which do not appear in the literature.



Recall $G \sim G(n,\frac{1+\lambda n^{-1/3}}{n})$, where $\lambda = \lambda(n)$ may depend on~$n$. 
Both of Lemmas \ref{lemma:prelim:small-cmpt-var} and \ref{lemma:prelim:cmpt-count} are proved using the following precise estimates on the moments of the number of trees of a given size in $G$. 
We note that similar estimates can be found in the literature (see, e.g., \cite{Pittel,PM}); a proof is included for completeness.

\begin{claim}
	\label{claim:prelim:trees}
	Let $t_k$ be the number of trees of size $k$ in $G$.
	Suppose there exists a positive increasing function $g$ such that $g(n) \rightarrow \infty$,
	$|\lambda| \le g(n)$ and
	$i,j,k \le \frac{n^{2/3}}{g(n)^2}$. 
	If $i,j,k\rightarrow \infty$ as $n \rightarrow \infty$, then:
	\begin{enumerate}[(i)]
		\item 	
		$\E[t_k] 
		= \Theta\left(\frac{n}{k^{5/2}}\right)$;
		\label{claim:prelim:trees:exp}
		\item $\var(t_k) \le \E[t_k] + \frac{(1+o(1)) \lambda n^{2/3}}{2\pi k^{3}}$; \label{claim:prelim:trees:var}
		\item For $i \neq j$, $\cov(t_i,t_j) \le \frac{(1+o(1))\lambda n^{2/3}}{2\pi i^{3/2} j^{3/2}}$. \label{claim:prelim:trees:cov}
	\end{enumerate}	
\end{claim}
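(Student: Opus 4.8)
\textbf{Proof plan for Claim~\ref{claim:prelim:trees}.}
The plan is to compute the moments of $t_k$ via the standard first-moment/second-moment method applied to the indicator variables $\mathbbm{1}[S \text{ spans a tree component of } G]$ over all $k$-subsets $S$ of vertices. First I would write $\E[t_k] = \binom{n}{k} k^{k-2} p^{k-1} (1-p)^{\binom{k}{2}-(k-1)} (1-p)^{k(n-k)}$, where $k^{k-2}$ is Cayley's formula for the number of labeled trees on $k$ vertices, $p^{k-1}$ accounts for the tree edges being present, $(1-p)^{\binom{k}{2}-(k-1)}$ for the absence of the remaining possible edges inside $S$, and $(1-p)^{k(n-k)}$ for the component being isolated. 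With $p = (1+\lambda n^{-1/3})/n$ and $k \le n^{2/3} g(n)^{-2}$, I would apply Stirling's formula to $\binom{n}{k}$ and $k^{k-2}$, and use $\log(1-p) = -p - p^2/2 - O(p^3)$ to expand the exponential factors. The dominant contributions are: $\binom{n}{k} \sim \frac{n^k}{k!}e^{-k^2/(2n)}(1+o(1))$, $k^{k-2} = \frac{k!}{\sqrt{2\pi} k^{5/2}} e^{k}(1+o(1))$, $p^{k-1} \sim n^{-(k-1)}(1+\lambda n^{-1/3})^{k-1}$, and $(1-p)^{k(n-k)} \sim e^{-pk(n-k)} = e^{-k(1+\lambda n^{-1/3})(1-k/n)}$. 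Collecting terms and using $k = o(n^{3/4})$ (which kills cubic-order terms like $\lambda^3 k / n$, $\lambda k^2/n^{4/3}$, etc., since $|\lambda| \le g(n)$ and $k \le n^{2/3}g(n)^{-2}$) one finds the exponential corrections combine to $e^{-\lambda^2 k n^{-2/3}/2 + \lambda^2 k^2/(2n) + \cdots} = \Theta(1)$, and hence $\E[t_k] = \Theta(n/k^{5/2})$. The precise bookkeeping of which error terms vanish is the one place requiring care, but it is purely mechanical given the constraint $k \le n^{2/3}g(n)^{-2}$ with $g(n)\to\infty$.

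For part~\ref{claim:prelim:trees:var}, I would write $\var(t_k) = \E[t_k] + \sum_{S \neq S'} \big(\Pr[S, S' \text{ both tree components}] - \Pr[S]\Pr[S']\big)$, where the sum is over ordered pairs of distinct $k$-subsets. Disjoint pairs ($S \cap S' = \emptyset$) contribute the bulk of $\E[t_k]^2$; comparing $\Pr[S,S' \text{ both tree components}]$ with $\Pr[S]\Pr[S']$, the difference comes only from the $(1-p)^{-k^2}$ factor of cross-edges that is double-counted, giving a multiplicative correction $(1-p)^{-k^2} - 1 = O(pk^2) = O(k^2/n)$ relative to $\E[t_k]^2$; this yields a contribution of order $\E[t_k]^2 \cdot O(k^2/n) = O(n^2 k^{-5} \cdot k^2/n) = O(n k^{-3})$, and tracking the constant $\lambda$-dependence through $p = (1+\lambda n^{-1/3})/n$ gives the stated $\frac{(1+o(1))\lambda n^{2/3}}{2\pi k^3}$ after substituting $\E[t_k] \sim \frac{n}{\sqrt{2\pi}k^{5/2}}$ and $pk^2 \approx k^2(1+\lambda n^{-1/3})/n$. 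Overlapping pairs (sharing between $1$ and $k-1$ vertices) cannot both be tree components (a vertex cannot lie in two distinct components), so those terms contribute $-\Pr[S]\Pr[S'] < 0$ and only help the upper bound. Part~\ref{claim:prelim:trees:cov} is entirely analogous: $\cov(t_i,t_j) = \sum_{S,S'} (\Pr[S \text{ is an } i\text{-tree component}, S' \text{ a } j\text{-tree component}] - \Pr[S]\Pr[S'])$, where now $|S|=i$, $|S'|=j$; overlapping pairs again give negative terms, disjoint pairs give a correction factor $(1-p)^{-ij}-1 = O(pij) = O(ij/n)$ times $\E[t_i]\E[t_j] = \Theta(n^2 i^{-5/2}j^{-5/2})$, producing $O(n i^{-3/2}j^{-3/2})$ with the $\lambda$ constant emerging from $pij \approx ij(1+\lambda n^{-1/3})/n$.

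The main obstacle I anticipate is not conceptual but the precise asymptotic accounting in part~\ref{claim:prelim:trees:exp}: one must verify that \emph{all} the lower-order terms in the exponent — those of the form $\lambda^j k^a / n^b$ arising from expanding $(1-p)^{\binom{k}{2}}$, $(1-p)^{k(n-k)}$, $(1+\lambda n^{-1/3})^{k-1}$, and the Stirling corrections to $\binom nk$ and $k^{k-2}$ — actually tend to $0$ under the hypothesis $k \le n^{2/3}g(n)^{-2}$, $|\lambda|\le g(n)$, $g(n)\to\infty$. The borderline terms are roughly $\lambda^2 k n^{-2/3}$ (which is $\le g(n)^2 \cdot n^{2/3}g(n)^{-2} \cdot n^{-2/3} = 1$, so one must be careful — in fact this term combines with $\lambda^2 k^2/(2n)$ and does \emph{not} vanish, contributing an $O(1)$ factor, which is exactly why the conclusion is $\Theta(n/k^{5/2})$ and not $(1+o(1))\frac{n}{\sqrt{2\pi}k^{5/2}}$) and $\lambda k^2 n^{-1} = \lambda \cdot k \cdot (k/n)$, which is $\le g(n) \cdot n^{2/3}g(n)^{-2} \cdot n^{-1/3}g(n)^{-2} = n^{1/3}g(n)^{-3}$ — this does \emph{not} obviously vanish, so one must retain it; it is precisely this term that produces the $\lambda n^{2/3}/k$-type corrections in parts (ii) and (iii). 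I would handle this by being explicit about which terms are kept ($O(1)$ multiplicative, hence $\Theta$) versus genuinely $o(1)$, and by citing \cite{Pittel,PM} for the form of the estimate while including the short self-contained computation for completeness as the statement promises.
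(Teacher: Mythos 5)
Your overall route---the exact combinatorial identities for $\E[t_k]$, $\E[t_k(t_k-1)]$ and $\E[t_it_j]$, followed by Stirling's formula and Taylor expansion of the logarithms---is precisely the paper's proof, and your outline of part (i) is sound: after the cancellations the only non-vanishing exponential correction is $\lambda^2 k/(2n^{2/3})=O(1)$, which is exactly why the conclusion is $\Theta(n/k^{5/2})$ rather than a sharp asymptotic. (Two of the ``borderline terms'' you flag are misidentified, though: the cross term coming from $pk^2$ is $\lambda k^2/n^{4/3}\le g(n)^{-3}\to 0$, not $\lambda k^2/n$, and no $\lambda^2k^2/n$ term arises; with the correct exponents the bookkeeping in (i) closes cleanly.)

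The genuine gap is in parts (ii) and (iii). You attribute the entire positive contribution of the disjoint pairs to the cross-edge factor $(1-p)^{-k^2}-1=O(pk^2)=O(k^2/n)$, which multiplied by $\E[t_k]^2=\Theta(n^2k^{-5})$ gives only $O(nk^{-3})$, and analogously $O(n\,i^{-3/2}j^{-3/2})$ for the covariance. That is weaker than the claimed bounds by a diverging factor of order $n^{1/3}/\lambda$, and for part (iii) this weaker bound is not enough for the downstream application in Lemma~\ref{lemma:prelim:cmpt-count} (the Chebyshev step there needs the extra $\lambda n^{-1/3}$ savings). The missing idea is a cancellation: the number of ordered pairs of \emph{disjoint} $k$-sets is $\binom{n}{k}\binom{n-k}{k}$, and $\binom{n-k}{k}\big/\binom{n}{k}=\exp\bigl(-k^2/n+O(k^3/n^2)\bigr)$, which exactly cancels the $e^{+k^2/n}$ coming from $(1-p)^{-k^2}$. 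What survives is only the $\lambda$-part of $pk^2$, namely $\lambda k^2/n^{4/3}$ (together with a cubic term that only helps the upper bound), and $\E[t_k]^2\cdot\lambda k^2n^{-4/3}$ is the stated $\frac{\lambda n^{2/3}}{2\pi k^3}$; the same cancellation between $(1-p)^{-ij}$ and the factor $\frac{n!}{i!\,j!\,(n-i-j)!}$ versus $\binom{n}{i}\binom{n}{j}$ produces $\lambda ij/n^{4/3}$ in part (iii). If you carry out the full Stirling/Taylor computation for $\E[t_k(t_k-1)]$ and $\E[t_it_j]$ exactly as you did for $\E[t_k]$, this cancellation emerges automatically; but as proposed, the step from the ``$O(pij)$ correction factor'' to the stated bounds does not go through.
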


To prove Lemma~\ref{lemma:prelim:small-cmpt-var}, we also use the following result.

\begin{lemma}
	\label{T7}
	Suppose $\epsilon^3 n \rightarrow \infty$ and $\epsilon = o(1)$.
	Then w.h.p.\ the largest component of $G \sim G\left(n, \frac{1 + \epsilon}{n}\right)$ is the only component of $G$ which contains more than one cycle.
	Also, w.h.p.\ the number of vertices contained in the unicyclic components of $G$ is less than $g(n) \epsilon^{-2}$
	for any function $g(n) \rightarrow \infty$.
\end{lemma}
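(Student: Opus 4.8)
The plan is to reduce both assertions to first--moment computations over the non-giant components, after using the concentration results already available both to localize the giant and, crucially, to show that \emph{every} other component is small enough that the first moment is accurate. By Corollary~\ref{lemma:rg:critical:giant-concentration} we have $L_1(G)\in[\epsilon n,3\epsilon n]$ w.h.p., and by Lemma~\ref{T513} together with Markov's inequality $\mathcal{R}_2(G)=O\!\big(\log(\epsilon^3 n)\cdot n/\epsilon\big)$ w.h.p., so every component other than the largest one $\mathcal{C}_1$ has at most $s_n:=\sqrt{n\log(\epsilon^3 n)/\epsilon}$ vertices w.h.p.; since $\epsilon^3 n\to\infty$ one checks that $s_n=o(n^{2/3})$ and $s_n<\epsilon n/2$. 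I would also use the classical fact (see, e.g., \cite{Pittel}) that $\mathcal{C}_1$ has surplus $\Theta(\epsilon^3 n)\to\infty$ in this regime, hence is complex (contains more than one cycle); alternatively this can be sketched via the exploration process, where the number of surplus edges found while exploring $\mathcal{C}_1$ has mean $p\cdot\Theta(\epsilon^3 n^2)=\Theta(\epsilon^3 n)$ and concentrates.

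The technical core is a family of first--moment estimates. For $k\ge1$ and an integer $\ell\ge0$, the expected number of components spanning exactly $k$ vertices and having $k-1+\ell$ edges equals
\[
\binom nk\, C(k,k-1+\ell)\, p^{\,k-1+\ell}\,(1-p)^{\binom k2-(k-1+\ell)+k(n-k)},
\]
where $C(k,j)$ denotes the number of connected labelled graphs on $k$ vertices with $j$ edges; for each fixed $\ell\ge0$, $C(k,k-1+\ell)=\Theta\!\big(k^{\,k-2+3\ell/2}\big)$ by Wright's formula (with $\ell=0,1,2$ counting trees, unicyclic and surplus--two connected graphs). For $k\le s_n=o(n^{2/3})$ the ratio of the $(\ell+1)$--term to the $\ell$--term above is $\Theta(k^{3/2}p)=o(1)$, so that, summing over $\ell$, the count of unicyclic components is dominated by $\ell=1$ and that of complex components by $\ell=2$. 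A routine Stirling expansion --- in which it is essential to keep the internal non--edge factor $(1-p)^{\binom k2-(k-1+\ell)}$, since this is what supplies the $k^2/n$ correction --- then gives, uniformly for $1\le k\le s_n$,
\[
\E\big[\#\{\text{unicyclic components of size }k\}\big]=O\!\Big(\tfrac1k\,e^{-\epsilon^2 k/2+\epsilon k^2/(2n)}\Big),\qquad
\E\big[\#\{\text{complex components of size }k\}\big]=O\!\Big(\tfrac{\sqrt k}{n}\,e^{-\epsilon^2 k/2+\epsilon k^2/(2n)}\Big);
\]
moreover $k\le s_n<\epsilon n/2$ forces $\epsilon k^2/(2n)\le\epsilon^2 k/4$, so the exponent is at most $-\epsilon^2 k/4$.

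Given these estimates both parts follow quickly. For the first, $\sum_{k\le s_n}\E\big[\#\{\text{complex components of size }k\}\big]\le\sum_{k\le s_n}O(\sqrt k/n)=O(s_n^{3/2}/n)=o(1)$ since $s_n=o(n^{2/3})$, so w.h.p.\ there is no complex component of size at most $s_n$; intersecting this with the w.h.p.\ events that every non-giant component has size $\le s_n$ and that $\mathcal{C}_1$ is complex shows that w.h.p.\ $\mathcal{C}_1$ is the unique component containing more than one cycle. For the second, on the same w.h.p.\ event every unicyclic component is non-giant and thus has size at most $s_n$, so the number of vertices lying in unicyclic components is bounded above by $\sum_{k\le s_n}k\cdot\#\{\text{unicyclic components of size }k\}$, whose expectation is $\sum_{k\le s_n}k\cdot O\!\big(k^{-1}e^{-\epsilon^2 k/4}\big)=O(\epsilon^{-2})$. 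By Markov's inequality and a union bound, for any $g(n)\to\infty$ the probability that the number of vertices in unicyclic components is at least $g(n)\epsilon^{-2}$ is $O(1/g(n))+o(1)\to0$, which is the assertion.

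The main obstacle is precisely the step that licenses the first--moment method here: in the supercritical regime the expected number of components of \emph{intermediate} size --- roughly between $\Theta(\epsilon^2 n)$ and $\Theta(\epsilon n)$ --- is exponentially large, because once $k$ leaves the range $o(n^{2/3})$ one can no longer neglect the higher--surplus contributions nor approximate $\binom nk$ by $n^k/k!$, so summing expectations over \emph{all} component sizes is meaningless. Confining every non-giant component below $s_n=o(n^{2/3})$ through the a priori second--moment bound $\E[\mathcal{R}_2(G)]=O(n/\epsilon)$ is what resolves this and restricts the sums to the range where the estimates are honest; the remaining delicate point --- that the giant itself is complex --- I would settle by quoting the classical surplus estimate for $G(n,(1+\epsilon)/n)$.
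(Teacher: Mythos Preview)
Your argument is correct, but it is a genuinely different route from the paper's. The paper does not prove Lemma~\ref{T7} from scratch at all: it simply observes that the corresponding statement for the $G(n,M)$ model is Theorem~7 of \cite{LuczakCycles}, and then transfers it to $G(n,p)$ via the standard asymptotic equivalence of the two models (e.g.\ Proposition~1.12 in~\cite{JLR}). Your approach, by contrast, is a self-contained first-moment computation: you use the in-paper tools (Corollary~\ref{lemma:rg:critical:giant-concentration} and Lemma~\ref{T513}) to force every non-giant component below $s_n=o(n^{2/3})$, then apply Wright's coefficients $C(k,k-1+\ell)=\Theta(k^{k-2+3\ell/2})$ together with a careful Stirling expansion to bound the expected number of small complex and unicyclic components. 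The paper's approach is shorter and leverages existing literature; yours has the advantage of staying within the $G(n,p)$ model and reusing machinery already developed in the paper, at the cost of importing two external facts (Wright's uniform bound on $C(k,k-1+\ell)$ so that summing over the surplus $\ell$ is a genuine geometric series, and the classical result that the giant has surplus $\Theta(\epsilon^3 n)$ and is therefore complex). Both are valid; just be aware that your ``w.h.p.'' for $\mathcal{R}_2(G)\le \log(\epsilon^3 n)\cdot n/\epsilon$ is only $1-O(1/\log(\epsilon^3 n))$, which suffices here but is weaker than the exponential bounds elsewhere in the paper.
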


\begin{proof}
	An equivalent result was established in~\cite{LuczakCycles}  for the $G(n, M)$ model, in which exactly $M$ edges
	are chosen independently at random from the set of all $\binom{n}{2}$ possible edges (see Theorem 7 in \cite{LuczakCycles}).
	The result follows from
	the asymptotic equivalence between the $G(n, p)$ and $G(n, M)$ models when $M = \binom{n}{2} p$ (see, e.g., Proposition 1.12 in \cite{JLR}).	
\end{proof}

\begin{proof}[Proof of Lemma~\ref{lemma:prelim:small-cmpt-var}]
	Let us fix $\alpha > 0$ and consider first the case when $|\lambda|$ is large.  
	If $\lambda < 0$ and $|\lambda| = \Omega(h(n)^{1/2})$, then Lemma \ref{T512} implies that
	$$\E\left[\sum\nolimits_{j:L_j(G) \le B_h} L_j(G)^2\right] \le \E[\mathcal{R}_1(G)] 
	= O\left(\frac{n}{\lambda n^{-1/3}} \right) = O\left(\frac{n^{4/3}}{h(n)^{1/2}} \right).$$
	Similarly, if $\lambda > 0$ and $\lambda = \Omega(h(n)^{1/2})$, then Lemma \ref{T513} implies that 
	$\E[\mathcal{R}_2(G)] = O(n^{4/3}h(n)^{-1/2})$. 
	We may assume $L_1(G) \le B_h$ since otherwise the size of the largest component does not contribute to the sum. 
	Then,
	$$\E\left[\sum\nolimits_{j:L_j(G) \le B_h} L_j(G)^2\right] \le \E[\mathcal{R}_2(G)] + B_h^2 = O\left(\frac{n^{4/3}}{h(n)^{1/2}} \right).$$
	Hence, if $|\lambda| = \Omega(h(n)^{-1/2})$, the result follows from Markov's inequality.
	
	Suppose next $|\lambda| \le \sqrt{h(n)}$.
	Let $t_k$ be the number of trees of size $k$ in $G$ and
	let $\mathcal{T}_{B_h}$ be the set of trees of size at most $B_h$ in $G$. 
	By Claim~\ref{claim:prelim:trees}.\ref{claim:prelim:trees:exp},
	\begin{align}
	\label{eq:prelim:exp-sum-sq-tree}
	\E\left[\sum\nolimits_{\tau \in \mathcal{T}_{B_h}} |\tau|^2\right] 
	&= \sum_{k=1}^{B_h} k^2\E[t_k] = O(n h(n)^2)  + \sum_{k=\lfloor h(n) \rfloor}^{B_h} k^2\E[t_k] \notag\\
	&= O(n h(n)^2) + O(n) \sum_{k=\lfloor h(n) \rfloor}^{B_h} \frac{1}{k^{1/2}}  
	= O\left(\frac{n^{4/3}}{h(n)^{1/2}}\right).
	\end{align}
	By Markov's inequality, we get that $\sum\nolimits_{\tau \in \mathcal{T}_{B_h}} |\tau|^2 \le  An^{4/3}h(n)^{-1/2}$ with probability at least $\gamma$, for any desired $\gamma > 0$ for a suitable constant $A = A(\gamma) > 0$.
	
	All that is left to prove is that the contribution from complex  (non-tree) components is small.
	When $|\lambda| = O(1)$, this follows immediately from the fact that 
	the expected number of complex components is $O(1)$ (see, e.g., Lemma 2.1 in \cite{LuPiWi}). 
	Then, if $\mathcal{C}_{B_h}$ is the set of complex components in $G$ of size at most $B_h$, we have
	\begin{equation}
	\label{eq:prelim:exp-sum-sq-complex}
	\E\left[\sum\nolimits_{C \in \mathcal{C}_{B_h}} |C|^2\right] 
	= O\left(\frac{n^{4/3}}{h(n)^{2}} \right) \E\left[\left\lvert\mathcal{C}_{B_h}\right\rvert\right] 
	= O\left(\frac{n^{4/3}}{h(n)^{2}} \right),\notag
	\end{equation}
	and the result follows again from Markov's inequality and a union bound.
	
	Finally, when $\sqrt{h(n)} \ge |\lambda|  \rightarrow \infty$, Lemma \ref{T7} implies that w.h.p.\ there is no multicyclic component except the largest component
	and that the number of vertices in unicyclic components is bounded by $n^{2/3} g(n)/\lambda^2$, for any function $g(n) \rightarrow \infty$. 
	Hence, w.h.p.,
	$$\sum\nolimits_{C \in \mathcal{C}_{B_h}} |C| \le \frac{n^{2/3} g(n)}{\lambda^2}+ B_h.$$ 
	Setting $g(n)=\lambda^2$, it follows that w.h.p.\
	$$		\sum\nolimits_{C \in \mathcal{C}_{B_h}} |C|^2  \le B_h \left(\frac{n^{2/3} g(n)}{\lambda^2}+ B_h\right) \le \frac{n^{4/3}}{h(n)}.
	$$
	This, combined with~\eqref{eq:prelim:exp-sum-sq-tree}, Markov's inequality
	and a union bound yields the result.
\end{proof}







\begin{proof}[Proof of Lemma \ref{lemma:prelim:cmpt-count}]
	Let $T_B$ be number of trees in $G$ with size in the interval $[B,2B]$; then $|S_B| \ge T_B$. By Chebyshev's inequality for $a>0$:
	\[\Pr[T_B \le \E[T_B] - a \sigma] \le \frac{1}{a^2},\]
	where $\sigma^2 = \var(T_B)$. By Claim~\ref{claim:prelim:trees}.\ref{claim:prelim:trees:exp},
	\[\E[T_B] = \sum_{k=B}^{2B} \E[T_k] \ge \frac{c_1 n}{B^{3/2}}\]
	for a suitable constant $c_1 > 0$. 
	Now,
	\[\var(T_B) = \sum_{k=B}^{2B} \var(t_k) + \sum_{j \neq i: j,i \in [B,2B]} \cov(t_i,t_j).\]
	By Claim~\ref{claim:prelim:trees}.\ref{claim:prelim:trees:exp} and Claim~\ref{claim:prelim:trees}.\ref{claim:prelim:trees:var},
	\[\sum_{k=B}^{2B} \var(t_k) \le \sum_{k=B}^{2B} \E[t_k] +  \sum_{k=B}^{2B} \frac{(1+o(1)) \lambda n^{2/3}}{2\pi k^{3}} = O\left(\frac{n}{B^{3/2}}\right) + O\left(\frac{|\lambda| n^{2/3}}{B^2}\right) = O\left(\frac{n}{B^{3/2}}\right),\]
	where in the last equality we used the assumption that $\lambda = o(n^{1/3})$.
	Similarly, by Claim~\ref{claim:prelim:trees}.\ref{claim:prelim:trees:cov}
	\[\sum_{j \neq i: j,i \in [B,2B]} \cov(t_i,t_j) \le \sum_{j \neq i: j,i \in [B,2B]} \frac{(1+o(1))\lambda n^{2/3}}{2\pi i^{3/2} j^{3/2}}  \le \frac{(1+o(1))|\lambda| n^{2/3}}{2\pi B} =  O\left(\frac{n}{B^{3/2}}\right),\]
	where the last inequality follows from the assumption that $B \le \frac{n^{2/3}}{g(n)^2}$. Hence, for a suitable constant $c_2 > 0$
	\[\var(T_B) \le \frac{c_2 n}{B^{3/2}}\]
	and taking $a = \frac{c_1 n}{2 B^{3/2} \sigma}$ we get
	\[\Pr\left[|S_B| \le \frac{c_1 n}{2B^{3/2}}\right] \le \Pr\left[T_B \le \frac{c_1 n}{2B^{3/2}}\right] 
	\le \left(\frac{2 B^{3/2} \sigma}{c_1 n}\right)^2 \le 
	\frac{4c_2B^{3/2}}{c_1^2 n},\]
	as desired.	\end{proof}

	\begin{proof}[Proof Corollary~\ref{lemma:critical:q<2:maintain-trees}]
	
		Lemma \ref{lemma:prelim:cmpt-count} 
		implies that for a suitable constant $b>0$
		$$
		\Pr\left[N_{k}(X_{t+1},g) < b g(n)^{3 \cdot 2^{k-1}}\right] = O(g(n)^{-3 \cdot 2^{k-1}}),
		$$ 
		for any $k \ge 1$ such that $g(n)^{2^k} = o(m^{2/3})$.
		Observe that
		$$
		\sum_{k \ge 1} \frac{1}{g(n)^{3 \cdot 2^{k-1}}} 
		\le \sum_{i \ge 1} \frac{1}{g(n)^{3i}}
		= O(g(n)^{-3}).
		$$
		Hence,
		a union bound over $k$,  i.e., over the intervals $\mathcal{I}_k(g)$, implies that, with probability at least  $1-O(g(n)^{-3})$,
		$N_{k}(X_{t+1},g) \ge b g(n)^{3 \cdot 2^{k-1}}$
		for all $k \ge 1$ such that $n^{2/3}g(n)^{-2^k}\rightarrow \infty$, as claimed.
	\end{proof}

\begin{proof}[Proof of Claim \ref{claim:prelim:trees}]
	Let $c = 1+\lambda n^{-1/3}$. The following combinatorial identity follows immediately from the fact that there are exactly $k^{k-2}$ trees of size $k$.
	\[\E[t_k] =  \binom{n}{k} k^{k-2} \left(\frac{c}{n}\right)^{k-1}\left(1-\frac{c}{n}\right)^{k(n-k)+ \binom{k}{2}-k+1}.\]
	Using the Taylor expansion for $\ln(1-x)$ and the fact that $k = o(n^{2/3})$, we get
	\begin{align}
	\frac{n!}{(n-k)!} &= n^k \, \prod_{i=1}^{k-1}  \left(1-\frac{i}{n}\right) 
	=n^k \exp\left(-\frac{k^2}{2n}-\frac{k^3}{6n^2}+o(1)\right).
	\end{align}
	Similarly,
	\begin{align}
	\left(\frac{c}{n}\right)^{k-1} &= \frac{1}{n^{k-1}} \exp\left(\frac{\lambda k}{n^{1/3}} - \frac{\lambda^2 k}{2 n^{2/3}}+o(1)\right), \notag\\
	\left(1-\frac{c}{n}\right)^{k(n-k)+ \binom{k}{2}-k+1} &= \exp\left(-k-\frac{\lambda k}{n^{1/3}}+\frac{k^2}{2n}+\frac{\lambda k^2}{2n^{4/3}}+o(1)\right).\notag
	\end{align}
	Since $k \rightarrow \infty$, Stirling's approximation gives
	\begin{equation}
	\frac{k^{k-2}}{k!} = \frac{(1+o(1)) e^k}{\sqrt{2\pi}k^{5/2}}. \label{eq:prelim:stirling}
	\end{equation}
	Putting all these bounds together, we get
	\begin{align}
	\E[t_k] 
	= \frac{(1+o(1))n}{\sqrt{2\pi} k^{5/2}} \exp\left(-\frac{\lambda^2 k}{2n^{2/3}} + \frac{\lambda k^2}{2n^{4/3}} - \frac{k^3}{6n^2}\right)
	= \Theta\left(\frac{n}{k^{5/2}}\right),
	\label{eq:prelim:tree-exp-approx}
	\end{align}
	where in the last inequality we used the assumptions that $|\lambda| \le g(n)$ and $k \le \frac{n^{2/3}}{g(n)^2}$.
	This establishes part (i).
	
	For part (ii) we proceed in similar fashion, starting instead from the following combinatorial identity:
	\[\E[t_k(t_k-1)] = \frac{n!}{k!k!(n-2k)!} (k^{k-2})^2 \left(\frac{c}{n}\right)^{2k-2} \left(1-\frac{c}{n}\right)^{m},\] 
	where $m = 2\binom{k}{2}-2(k-1)+k^2+2k(n-2k)$ (see, e.g., \cite{Pittel}). Using the Taylor expansion for $\ln (1-x)$, we get
	\begin{align}
	\frac{n!}{(n-2k)!} &= n^{2k} \exp\left(-\frac{2k^2}{n}-\frac{4k^3}{3n^2}+o(1)\right), \notag\\
	\left(\frac{c}{n}\right)^{2k-2} &= \frac{1}{n^{2k-2}} \exp\left(\frac{2\lambda k}{n^{1/3}}-\frac{\lambda^2 k}{n^{2/3}}+o(1)\right),\notag\\
	\left(1-\frac{c}{n}\right)^{m} &= \exp\left(-2k+\frac{2k^2}{n}-\frac{2\lambda k}{n^{1/3}}+\frac{2\lambda k^2}{n^{4/3}}+o(1)\right).\notag
	\end{align}
	These three bounds together with (\ref{eq:prelim:stirling}) imply
	\[\E[t_k(t_k-1)] = \frac{(1+o(1))n^2}{2\pi k^5} \exp\left(-\frac{4k^3}{3n^2}-\frac{\lambda^2k}{n^{2/3}}+\frac{2\lambda k^2}{n^{4/3}}\right).\]
	From (\ref{eq:prelim:tree-exp-approx}), we get 
	\[\E[t_k]^2 = \frac{(1+o(1))n^2}{2\pi k^5} \exp\left(-\frac{\lambda^2 k}{n^{2/3}} + \frac{\lambda k^2}{n^{4/3}} - \frac{k^3}{3n^2} \right).\]
	Hence,
	\begin{align}
	\var(t_k) 
	&= \E[t_k]+\frac{(1+o(1))n^2}{2\pi k^5} \exp\left(-\frac{\lambda^2 k}{n^{2/3}} + \frac{\lambda k^2}{n^{4/3}} - \frac{k^3}{3n^2}\right) \left[\exp\left(\frac{\lambda k^2}{n^{4/3}}-\frac{k^3}{n^2}\right) - 1\right] \notag\\
	&= \E[t_k]+\frac{(1+o(1))n^2}{2\pi k^5}\left[\exp\left(\frac{\lambda k^2}{n^{4/3}}-\frac{k^3}{n^2}\right) - 1\right] \notag\\
	&\le \E[t_k]+\frac{(1+o(1))n^2}{2\pi k^5}\left[\exp\left(\frac{\lambda k^2}{n^{4/3}}\right) - 1\right] \notag\\
	&\le \E[t_k]+\frac{(1+o(1)) \lambda n^{2/3}}{2\pi k^3} \notag
	\end{align}
	where in the second equality we used the assumptions that 
	$|\lambda| \le g(n)$ and 
	$k \le \frac{n^{2/3}}{g(n)^2}$ and for the last inequality we used the Taylor expansion for $e^x$. This completes the proof of part (ii).
	
	For part (iii), let $\ell = i+j$.
	When $i \neq j$ we have
	the following combinatorial identity (see, e.g., \cite{Pittel}):
	\[\E[t_it_j] = \frac{n!}{i!j!(n-\ell)!} i^{i-2} j^{j-2} \left(\frac{c}{n}\right)^{\ell-2} \left(1-\frac{c}{n}\right)^{m'},\] 
	where $m' = \binom{i}{2}-(i-1)+\binom{j}{2}-(j-1)+ij+\ell(n-\ell)$. 
	Using Taylor expansions and Stirling's approximation as in the previous two parts, we get
	\[\E[t_it_j] = \frac{(1+o(1))n^2}{2\pi i^{5/2}j^{5/2}}\exp\left(-\frac{\ell^3}{6n^2}-\frac{\lambda^2 \ell}{2n^{2/3}} + \frac{\lambda \ell^2}{2 n^{4/3}}\right).\]
	Moreover, from (\ref{eq:prelim:tree-exp-approx}) we have
	\[\E[t_i]\E[t_j] = \frac{(1+o(1))n^2}{2\pi i^{5/2} j^{5/2}} \exp\left(-\frac{\lambda^2 \ell}{2n^{2/3}} + \frac{\lambda (i^2+j^2)}{2n^{4/3}} - \frac{i^3+j^3}{6n^2} + o(1)\right),\]
	and so
	\begin{align}
	\cov(t_i,t_j) 
	&= \E[t_it_j]-\E[t_i]\E[t_j] \notag\\
	&= \frac{(1+o(1))n^2}{2\pi i^{5/2} j^{5/2}} \exp\left(-\frac{\ell^3}{6n^2}-\frac{\lambda^2 \ell}{2n^{2/3}} + \frac{\lambda \ell^2}{2 n^{4/3}}\right) 
	\left[1-\exp\left(-\frac{\lambda ij}{n^{4/3}}+\frac{ij\ell}{2n^2}\right)\right] \notag\\
	&= \frac{(1+o(1))n^2}{2\pi i^{5/2} j^{5/2}} \left[1-\exp\left(-\frac{\lambda ij}{n^{4/3}}+\frac{ij(i+j)}{2n^2}\right)\right] \notag\\
	&\le \frac{(1+o(1)) \lambda n^{2/3}}{2\pi i^{3/2} j^{3/2}} \notag
	\end{align}
	where in the third equality we used the assumptions that 
	$|\lambda| \le g(n)$ and 
	$i,j \le \frac{n^{2/3}}{g(n)^2}$
	and the last inequality follows from the Taylor expansion for $e^x$.
\end{proof}

\section{The second proof of the local limit theorem}
\label{app:llt2}
In this appendix, we provide an alternative proof of Theorem~\ref{thm:prelim:llt-cor} that does not use Theorem~\ref{thm:prelim:llt-main}.

\begin{proof}[Proof of Theorem~\ref{thm:prelim:llt-cor}]
	Let $\Phi(\cdot)$ denote the probability density function of a standard normal distribution. 
	We will show for any fixed $a \in \R$,
	\begin{equation}
		\label{eq:llt2}
		\left\lvert \Pr\left[ \frac{S_m - \mu_m }{\sigma_m }= a\right] - \frac{\Phi(a)}{\sigma_m} \right\rvert = o\left(\frac{1}{\sigma_m}\right),
	\end{equation}
	which is equivalent to \eqref{eq:prelim:llt}.

	Let $\phi(t)$ denote the characteristic function for the random variable $(S_m - \mu_m )/\sigma_m$. 
	By applying the inversion formula (see Theorem 3.3.14 and Exercise 3.3.2 in \cite{D}), 
	$$
	\Phi(a) = \frac{1}{2\pi} \int_{-\infty}^\infty e^{-ita} e^{-t^2/2} dt,
	$$
	and
	$$
	\Pr\left[ \frac{S_m - \mu_m }{\sigma_m }= a\right] = \frac{1}{2\pi \sigma_m} \int_{-\pi \sigma_m}^{\pi \sigma_m} e^{-ita} \phi(t) dt.
	$$
	Hence, the left hand side of \eqref{eq:llt2} can be bounded from above by
	$$
	\frac{1}{2\pi \sigma_m} \left[ \int_{-\pi \sigma_m}^{\pi \sigma_m} \left\lvert e^{-ita} \left(\phi(t) -e^{-\frac{t^2}{2}} \right) \right\rvert dt 
	+ 2 \int_{\pi \sigma_m}^\infty e^{-\frac{t^2}{2}} dt \right].
	$$
	Since $|e^{-ita}| \le 1$, it suffices to show that
	for all $\epsilon > 0$ there exists $M>0$ such that if $m >M$ then
	\begin{equation}
		\label{eq:llt3}
		\int_{-\pi \sigma_m}^{\pi \sigma_m} \left\lvert  \phi(t) -e^{-\frac{t^2}{2}} \right\rvert dt 
	+ 2 \int_{\pi \sigma_m}^\infty e^{-\frac{t^2}{2}}  dt \le \epsilon.
	\end{equation}
	We can bound from above the left hand side of \eqref{eq:llt3} by:
	\begin{equation}
		\label{eq:llt:destruction}
		\int_{-A}^{A} \left\lvert  \phi(t) -e^{-\frac{t^2}{2}} \right\rvert dt + 
		2\int_{A}^{ \sigma_m/2} \left\lvert  \phi(t)  \right\rvert dt + 
		2\int_{ \sigma_m /2}^{\pi \sigma_m} \left\lvert  \phi(t)  \right\rvert dt +
		2\int_{A}^{\infty} e^{-\frac{t^2}{2}}  dt.
	\end{equation}
	The division depends on some constant $A$ that we will choose soon. 
	We proceed to bound integral terms in~\eqref{eq:llt:destruction} independently. 

	Lemma~\ref{lemma:clt1} implies that $\frac{S_m-\mu_m}{\sigma_m}$ converges 
	in distribution to a standard normal. Combined with the continuity theorem (see Theorem 3.3.17 in \cite{D}), we get that $\phi(t) \rightarrow e^{-\frac{t^2}{2}}$ 
	as $m \rightarrow \infty$.
	The dominated convergence theorem (see Theorem 1.5.8 in \cite{D}) hence implies that for any $A < \infty$ the first integral of \eqref{eq:llt:destruction} converges to 0.
	We select $M$ large enough so that the integral is less than $\epsilon/4$.

	The last integral of \eqref{eq:llt:destruction} is the standard normal tail that goes to $0$ exponentially fast as $A$ increases (see e.g. Proposition 2.1.2 in \cite{Vershynin}).
	Therefore, we are able to select $A$ large enough so that each tail has probability mass less than $\epsilon/8$.

	To bound the remaining two terms we use the properties of the characteristic function $\phi(t)$.
	By definition and the independence between $X_i$'s, 
	$$\phi(t) = \E \left[\exp\left(it\cdot \frac{S_m - \mu_m}{\sigma_m}\right)\right] = \exp\left(- \frac{it\mu_m}{\sigma_m}\right) \prod_{j=1}^m \phi_j(t),$$ 
	where $\phi_j(t)$ denotes the characteristic function of $X_j/\sigma_m$.
	Since $\exp(-\frac{it \mu_m}{\sigma_m}) $ always has modulo 1, $\lvert \phi(t) \rvert \le \prod_{j=1}^m \lvert \phi_j(t)\rvert $.

	We proceed to bound the third integral of \eqref{eq:llt:destruction}. Note that $\lvert \phi_j(t)\rvert \le 1$ for all $j$ and $t$. Therefore,
	$$
	\lvert \phi(t) \rvert \le \prod_{j=1}^m |\phi_j(t)| \le \prod_{j\le\rho m} |\phi_j(t)|.
	$$
	Notice that the $X_j$'s for $j \le \rho m$ are Bernoulli random variables.
	By periodicity (see Theorem 3.5.2 in \cite{D}),
	$|\phi_j(t)|$ equals to 1 only when $t$ equals to the multiples of $2\pi \sigma_m$.
	For $t \in [\sigma_m/2, \sigma_m\pi]$, $|\phi_j(t)|$ is bounded away from $1$, and
	there exists a constant $\eta<1$ such that  $\lvert \phi_j(t) \rvert \le \eta$. 
	Hence, $\lvert \phi(t) \rvert \le \eta^{\rho m}$. By choosing $M$ to be sufficiently large, we may bound the integral for $m > M$:
	$$
	\int_{ \sigma_m /2}^{\pi \sigma_m} \left\lvert  \phi(t)  \right\rvert dt \le 
	\int_{ \sigma_m /2}^{\pi \sigma_m}  \eta^{\rho m} dt \le 
	\pi \sigma_m \eta^{\rho m} \le m  \eta^{\rho m} \le \frac{\epsilon}{8}.
	$$

	Finally, we bound the second integral of \eqref{eq:llt:destruction}. By the definition of $X_j$, we have
	$$
		\phi_j(t) = r e^{it \cdot \frac{c_j}{\sigma_m}} + (1-r)
		= r\cdot \left(\cos \frac{c_j t}{\sigma_m} + i \cdot\sin \frac{c_j t}{\sigma_m} \right) + 1 - r,
	$$
	where the last identity uses Euler' formula.
	Take the modulo of both sides, 
	\begin{align*}
		\lvert \phi_j(t) \rvert &= \sqrt{r^2 \sin^2 \frac{c_j t}{\sigma_m} + \left(r \frac{c_j t}{\sigma_m} + 1 -r \right)^2}\\
		&= \sqrt{r^2 \sin^2 \frac{c_j t}{\sigma_m} + r^2 \cos^2 \frac{c_j t}{\sigma_m} + (1-r)^2 + 2r(1-r) \cos \frac{c_j t}{\sigma_m}}\\
		&= \sqrt{r^2 +  (1-r)^2 + 2r(1-r) \cos \frac{c_j t}{\sigma_m}} \\
		& = \sqrt{1 - 2r(1-r)\left(1-\cos \frac{c_j t}{\sigma_m}\right)} \\
		& = 1 - r(1-r)\left(1-\cos \frac{c_j t}{\sigma_m}\right) - \frac{1}{2}r^2(1-r)^2\left(1-\cos \frac{c_j t}{\sigma_m}\right)^2 - \dots, 
	\end{align*}
	where the last equality corresponds to the Taylor expansion for $\sqrt{1+y}$ when $\lvert y \rvert \le 1$. 
	We can also Taylor expand $\cos \frac{c_j t}{\sigma_m}$ as 
	$$
	1 -  \frac{c_j^2 t^2}{2\sigma_m^2} + \frac{c_j^4 t^4}{4!\sigma_m^4} - \frac{c_j^6 t^6}{6!\sigma_m^6} + \dots
	$$
	Observe that if $\frac{c_j t}{\sigma_m} < 1$, then we can bound $\cos \frac{c_j t}{\sigma_m}$ from above by $1 -  \frac{c_j^2 t^2}{4\sigma_m^2}$.
	Furthermore, if we keep only the first order term from the expansion for  $\lvert \phi_j(t) \rvert$, we have
	\begin{equation}
		\label{eq:phijt}
		\lvert \phi_j(t) \rvert \le 1 - r(1-r)  \frac{c_j^2 t^2}{4\sigma_m^2}
		\le \exp \left(- r(1-r)  \frac{c_j^2 t^2}{4\sigma_m^2} \right).
	\end{equation}
	
	Note that \eqref{eq:phijt} only holds if $c_j t< \sigma_m$. 
	However, if we fix $\sigma_m$ then for every $t \in [A, \sigma_m/2]$, 
	there always exists a real number $u(t)\ge 1$ such that $u(t) t < \sigma_m \le 2u(t)t$, 
	which implies for all $c_j \le u(t)$, \eqref{eq:phijt} can be established. 
	Now we aggregate all $\lvert \phi_j(t) \rvert$ for which we could claim \eqref{eq:phijt}.
	\begin{equation}
		\label{eq:phit}
		\lvert \phi(t) \rvert 
		\le \prod_{j:c_j \le u(t)} \lvert \phi_j(t) \rvert 
		\le \exp \left(- r(1-r)  \sum_{j:c_j \le u(t)} \frac{c_j^2 t^2}{4\sigma_m^2} \right).
	\end{equation}

	Without loss of generality, we assume $A>1$. Consequently, $u(t) \le \sigma_m$. 
	Lemma~\ref{lemma:clt2} implies for $\sigma_m\ge u(t) \ge 1$, $\sum_{j:c_j \le u(t)} c_j^2 \ge u(t) \sigma_m / r(1-r)$.
	Plugging this inequality into \eqref{eq:phit}, we obtain
	$$
	\lvert \phi(t) \rvert 
	\le \exp\left(-  \frac{ u(t) \sigma_m t^2}{4\sigma_m^2} \right) = 
	\exp \left(-\frac{ t}{8}\cdot  \frac{2u(t)t}{\sigma_m} \right) \le \exp \left( - \frac{ t}{8} \right).
	$$ 
	Therefore, for sufficiently large $A$,
	$$
	\int_{A}^{ \sigma_m/2} \left\lvert  \phi(t)  \right\rvert dt 
	\le \int_{A}^{ \infty} \exp \left( - \frac{ t}{8} \right) dt
	\le \frac{e^{- A/8}}{8}  \le \frac{\epsilon}{8}.
	$$
	Thus we established \eqref{eq:llt3} and the proof is complete.
\end{proof}

\end{document}